\documentclass{amsart}
\usepackage[T1]{fontenc}
\usepackage{amsmath}
\usepackage{amssymb}
\usepackage{amsthm}
\usepackage{enumerate}
\usepackage[hyphens]{url}
\usepackage{hyperref}
\usepackage{tikz}
\usepackage{tikz-cd}
\usetikzlibrary{decorations.pathmorphing}
\hypersetup{
    colorlinks=true,
    linkcolor=blue,
}

\usepackage[
    backend=biber,
    style=alphabetic,
    sorting=nyt,
    useprefix=true,
    doi=false,
    maxnames=99
]{biblatex}
\newtheorem{theorem}{Theorem}[section]
\newtheorem{proposition}[theorem]{Proposition}
\newtheorem{lemma}[theorem]{Lemma}
\newtheorem{corollary}[theorem]{Corollary}

\theoremstyle{definition}
\newtheorem{definition}[theorem]{Definition}
\newtheorem{example}[theorem]{Example}
\newtheorem{construction}[theorem]{Construction}
\newtheorem{remark}[theorem]{Remark}

\newcommand{\EE}{\mathbb{E}}

\newcommand{\NN}{\mathbb{N}}

\newcommand{\ZZ}{\mathbb{Z}}

\newcommand{\bA}{\mathbf{A}}
\newcommand{\bB}{\mathbf{B}}

\newcommand{\bP}{\mathbf{P}}
\newcommand{\bV}{\mathbf{V}}
\newcommand{\Gm}{\mathbf{G}_{\mathrm{m}}}

\newcommand{\cA}{\mathcal{A}}

\newcommand{\cC}{\mathcal{C}}

\newcommand{\cE}{\mathcal{E}}
\newcommand{\cF}{\mathcal{F}}
\newcommand{\cG}{\mathcal{G}}
\newcommand{\cI}{\mathcal{I}}
\newcommand{\cL}{\mathcal{L}}
\newcommand{\M}{\mathcal{M}}
\newcommand{\N}{\mathcal{N}}
\newcommand{\cO}{\mathcal{O}}
\newcommand{\cP}{\mathcal{P}}

\newcommand{\Alg}{\mathsf{Alg}}
\newcommand{\Ani}{\mathsf{Ani}}

\newcommand{\CAlg}{\mathsf{CAlg}}

\newcommand{\Cat}{\mathsf{Cat}}

\newcommand{\De}{\mathsf{D}}

\newcommand{\Exc}{\mathsf{Exc}}

\newcommand{\Fin}{\mathsf{Fin}}

\newcommand{\logSH}{\mathsf{logSH}}
\newcommand{\MH}{\mathsf{H}}
\newcommand{\Mod}{\mathsf{Mod}}
\newcommand{\MS}{\mathsf{MS}}

\newcommand{\PrL}{\mathsf{Pr}^\mathrm{L}}

\newcommand{\PS}{\mathbf{P}\mathsf{S}}

\newcommand{\Ring}{\mathsf{Ring}}
\newcommand{\Sch}{\mathsf{Sch}}
\newcommand{\SH}{\mathsf{SH}}
\newcommand{\Sh}{\mathsf{Sh}}
\newcommand{\Sm}{\mathsf{Sm}}
\newcommand{\Sp}{\mathsf{Sp}}
\newcommand{\Span}{\mathsf{Span}}
\newcommand{\Stack}{\mathsf{Stack}}

\newcommand{\Vect}{\mathsf{Vect}}

\newcommand{\B}{\mathrm{B}}
\newcommand{\Bl}{\mathrm{Bl}}
\newcommand{\C}{\mathrm{C}}

\newcommand{\cc}{\mathrm{L}}
\newcommand{\cn}{\mathrm{N}}

\newcommand{\crys}{\mathrm{crys}}
\newcommand{\D}{\mathrm{D}}

\newcommand{\emb}{\mathrm{emb}}
\newcommand{\E}{\mathrm{E}}

\newcommand{\gp}{\mathrm{gp}}
\newcommand{\ho}{\mathrm{H}}
\newcommand{\hypo}{\mathrm{hypo}}
\newcommand{\I}{\mathrm{I}}

\newcommand{\K}{\mathrm{K}}
\newcommand{\Nis}{\mathrm{Nis}}
\newcommand{\op}{\mathrm{op}}

\newcommand{\ps}{\mathrm{P}}
\newcommand{\Q}{\mathrm{Q}}
\newcommand{\qbu}{\mathrm{qbu}}

\newcommand{\RG}{\mathrm{R}\Gamma}
\newcommand{\Sone}{\mathrm{S}^1}
\newcommand{\st}{\mathrm{st}}
\newcommand{\un}{\mathrm{un}}
\newcommand{\vect}{\mathrm{Vect}}
\newcommand{\ws}{\mathrm{S}}
\newcommand{\Zar}{\mathrm{Zar}}

\newcommand{\fD}{\mathfrak{D}}
\newcommand{\fN}{\mathfrak{N}}

\DeclareMathOperator{\id}{id}
\DeclareMathOperator{\Hom}{Hom}
\DeclareMathOperator{\Map}{Map}

\DeclareMathOperator*{\colim}{colim}
\DeclareMathOperator{\Spec}{Spec}

\DeclareMathOperator{\fib}{fib}
\DeclareMathOperator{\cofib}{cofib}

\DeclareMathOperator{\LSym}{LSym}
\DeclareMathOperator{\Th}{Th}

\DeclareMathOperator{\Proj}{Proj}

\DeclareMathOperator{\gys}{gys}
\DeclareMathOperator{\Witt}{W}

\usepackage{relsize}
\usepackage[bbgreekl]{mathbbol}
\usepackage{amsfonts}
\DeclareSymbolFontAlphabet{\mathbb}{AMSb} 
\DeclareSymbolFontAlphabet{\mathbbl}{bbold}

\begin{document}

\title{The $\bP^1$-motivic Gysin map}
\author{Longke Tang}

\begin{abstract}
We develop a $\bP^1$-unstable non-$\bA^1$-invariant theory of motivic spaces and spectra, and construct the Gysin map therein for regular immersions. This in particular gives the Gysin map in the Annala--Hoyois--Iwasa $\bP^1$-motivic spectra, and thus gives a uniform construction for the Gysin maps of various cohomology theories. 
\end{abstract}
 
\maketitle

\setcounter{tocdepth}{1}

\tableofcontents

\section{Introduction}

The \emph{$\bP^1$-motivic homotopy theory} is a version of motivic homotopy theory recently developed by Annala--Hoyois--Iwasa \cite{annala-hoyois-iwasa-2023}. It is a generalization of Voevodsky's $\bP^1$-stable $\bA^1$-homotopy theory \cite{voevodsky-motivic-2002}, in that it keeps $\bP^1$-stability but replaces $\bA^1$-invariance with the weaker condition called \emph{smooth blowup excision} \cite[\S 2]{annala-hoyois-iwasa-2023}, cf.\ Definition \ref{definition-qbu}. This makes it suitable for studying many non-$\bA^1$-invariant cohomology theories, e.g.\ Hodge cohomology \cite[\texttt{0FM4}]{stacks}, de Rham cohomology \cite[\texttt{0FL6}]{stacks} away from characteristic $0$, crystalline cohomology \cite[\texttt{07GI}]{stacks}, and the recently developed prismatic cohomology \cite{bhatt-scholze-prismatic}, as they all satisfy the smooth blowup excision \cite[\S 9.4]{bhatt-lurie-absoluteprismaticcohomology}. 

Unlike the study of $\bA^1$-homotopy theory which has been carried out for decades and successfully applied to solve conjectures including Bloch--Kato \cite{voevodsky-2011-Zl-coefficients}, the study of $\bP^1$-motivic homotopy theory has just got started: there are many basic constructions yet to be done and many structures yet to be revealed, among which stands the \emph{Gysin map}, the topic of this paper. 

\subsection{The Gysin map}
In algebraic geometry, the \emph{Gysin map} typically refers to a wrong-way map between cohomologies associated to a closed immersion of varieties. More specifically, let $i\colon Y\to X$ be a closed immersion between schemes smooth over a common base $S$ and let $\RG$ be any Weil cohomology, e.g.\ \'etale, Hodge, or crystalline. Then the Gysin map has the form
$$\gys(i)\colon\RG(Y)(-?)\to\RG(X),$$
where $(-?)$ denotes the relevant twist, typically a cohomological shift of degree twice the codimension of $i$, possibly followed by some Tate twist. It is an important construction for any cohomology theory: for example, it gives the \emph{cycle class} of $Y$ in $\ho^?(X)$ by plugging in $1\in\ho^0(Y)$. However, classically it was constructed separately for various cohomology theories, each by a very nontrivial process. 

Now suppose that we were doing manifold topology instead of algebraic geometry. Then the Gysin map is actually very easy to construct: for a closed embedding $i\colon Y\to X$ of oriented manifolds of codimension $c$, its Gysin map comes from the \emph{Gysin--Thom isomorphism} \cite{gysin1942}
$$\RG(\Th_Y(\N_i))=\RG(Y)[-c],$$
where $\Th_Y(\N_i)$ denote the \emph{Thom space} of the \emph{normal bundle} $\N_i$, along with the \emph{Pontryagin--Thom collapse map} $X\to\Th_Y(\N_i)$ provided by the \emph{tubular neighborhood lemma}, which says that for certain open neighborhoods $U$ of $Y$ in $X$,
$$\Th_Y(\N_i)=\bar{U}/\partial U=\frac{X}{X\setminus U}.$$
Putting together gives the Gysin map $\RG(Y)[-c]=\RG(\Th_Y(\N_i))\to\RG(X)$. Note that $\Th_Y(\N_i)$ is no longer a manifold but only a pointed space, and we are tacitly using that our $\RG$ (e.g.\ singular cohomology) is also defined for pointed spaces.

Of course, in algebraic geometry, there is no tubular neighborhood lemma. Fortunately, in $\bA^1$-homotopy theory, Morel--Voevodsky \cite[\S 3, Theorem 2.23]{morel-voevodsky-a1} has proved an analogous statement
$$\Th_Y(\N_i)=\frac{X}{X\setminus Y},$$
giving an analog of the Pontryagin--Thom collapse map in their category of $\bA^1$-motivic spaces, and thus a unified construction for the Gysin maps for all $\bA^1$-invariant cohomology theories. 

Unfortunately, for non-$\bA^1$-invariant cohomology theories like Hodge and crystalline, the Gysin maps are still constructed separately in the literature (see for example \cite{srinivas-gysin-hodge-1993}), making it difficult to compare the Gysin maps of different cohomology theories and to construct them for new cohomology theories like prismatic cohomology. This paper, or this project, aims to solve the issue once and for all, by constructing a Pontryagin--Thom collapse map in the non-$\bA^1$-invariant motivic homotopy theory recently developed by Annala--Hoyois--Iwasa \cite{annala-hoyois-iwasa-2023}: 

\begin{theorem}[{Construction \ref{construction-gysin}, Remark \ref{trivial-functoriality-gysin}; Proposition \ref{compare-ph-ms}}]\label{gysin-introduction}
    Let $i\colon Y\to X$ be a closed immersion between schemes smooth over a common base $S$. Then there is a canonical map
    $$\gys(i)\colon X\to\Th_Y(\N_i)$$
    in the Nisnevich version of the stable $\infty$-category $\MS_S$ of $\bP^1$-spectra in \cite{annala-hoyois-iwasa-2023}, which is functorial in excessive maps (Definition \ref{definition-blowup-poset}) of closed immersions. In particular, pulling back along the open complement gives a canonical map
    $$\frac{\gys(i)}{\gys(i_{X\setminus Y})}\colon\frac{X}{X\setminus Y}\to\Th_Y(\N_i).$$
\end{theorem}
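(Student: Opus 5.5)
The approach is deformation to the normal cone, which replaces the tubular neighbourhood. Let $D = \Bl_{Y\times 0}(X\times\bP^1)\setminus \Bl_Y X$ be the deformation space over $\bP^1$. Its fiber over $\infty$ (or over $1$) is $X$, and its fiber over $0$ is the projective completion $\bP(\N_i\oplus\cO)$ minus the hyperplane at infinity. I would rather work with the closed variant $\tilde D = \Bl_{Y\times 0}(X\times\bP^1)$. Its special fiber is $\Bl_Y X \cup_{\bP(\N_i)} \bP(\N_i\oplus\cO)$, and $Y\times\bP^1$ embeds in it via the strict transform.

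The plan is to produce a zig-zag
$$X \simeq \frac{X}{X\setminus Y} \leftarrow \frac{\tilde D}{\tilde D\setminus (Y\times\bP^1)} \to \frac{\bP(\N_i\oplus\cO)}{\bP(\N_i\oplus\cO)\setminus Y} \simeq \Th_Y(\N_i).$$
Here the outer identifications hold in the appropriate quotient sense. The key claim is that both restriction maps, to the fiber over $1$ and to the fiber over $0$, are equivalences in $\MS_S$ after passing to quotients by complements of $Y$. This plays the role of Morel--Voevodsky's homotopy purity, but the $\bA^1$-homotopy $t\mapsto$ fiber is no longer available. Instead:

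\begin{enumerate}[(1)]
\item First treat the split case $X = \bV(\N)$, the total space of a vector bundle over $Y$ with zero section. There the deformation space is $\bV(\N)\times\bA^1$ with a twisted structure. The identification $\frac{\bP(\N\oplus\cO)}{\bP(\N\oplus\cO)\setminus Y}\simeq \Th_Y(\N)$ should follow from smooth blowup excision (Definition \ref{definition-qbu}) together with the elementary blowup square $\Bl_Y \bP(\N\oplus\cO)\to\bP(\N\oplus\cO)$, whose exceptional divisor is $\bP(\N)$.
\item Then reduce the general case to the split one. Both sides are Nisnevich-local on $X$ near $Y$, and $\MS_S$ is built on Nisnevich sheaves. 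Nisnevich-locally, $Y\to X$ is étale-equivalent to a zero section $Y\to\bV(\N_i)$ after choosing coordinates, and Nisnevich excision identifies $X/(X\setminus Y)$ with $\bV(\N_i)/(\bV(\N_i)\setminus Y)$ locally. The gluing must be canonical, so I would phrase it via the deformation space, which exists globally, rather than through choices.
\item Realize the isomorphism on the fiber over $1$ using $\bP^1$-stability. The two fibers are linearly equivalent divisors on $\tilde D$. In $\MS_S$ the $\bP^1$-bundle formula, i.e.\ the elementary blowup excision plus $\bP^1$-invertibility, should show that $\tilde D/(\ldots)$ restricts equivalently to either divisor, provided each fiber inclusion is a regular immersion whose normal data are compatible.
\end{enumerate}

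Functoriality in excessive maps (Definition \ref{definition-blowup-poset}) should follow because the deformation space is functorial for such maps: an excessive map $(X',Y')\to(X,Y)$ induces $\Bl_{Y'\times0}(X'\times\bP^1)\to\Bl_{Y\times0}(X\times\bP^1)$, compatibly with the strict transforms, precisely when the preimage of $Y$ is controlled. Applying the construction to $(X\setminus Y,\emptyset)$ gives the zero map, which yields the stated quotient map.

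The main obstacle is step (3): proving that the restriction to the fiber over $1$ is an equivalence without $\bA^1$-invariance. I would first try to prove it in the split case by an explicit computation, using that all the spaces involved are iterated projective bundles and blowups along smooth centers, whose motives are computed by the projective bundle formula in $\MS_S$. I would then deduce the general case by Nisnevich descent as in step (2).
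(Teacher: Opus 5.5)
Your central claim is false, so the construction cannot go through. If both maps in your zig-zag became equivalences, you would get homotopy purity $X/(X\setminus Y)\simeq\Th_Y(\N_i)$ in $\MS_S$. Purity fails without $\bA^1$-invariance: for $\infty\to\bP^1$ it identifies $\bP^1/\bA^1$ with $\Th_Y(\N_i)=\bP^1/0$, which amounts to $\bA^1\to S$ being an equivalence (this is the remark following Theorem~\ref{gysin-introduction}).

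The individual steps already fail in trivial cases.
\begin{itemize}
\item In step (3), take $X=Y=S$. Your deformation space is $\bP^1_S$, both complements are empty, and the fiber map is $1\colon S\to\bP^1_S$. But $\bP^1_S\simeq S\oplus\bP^1/\infty$ in $\MS_S$ with $\bP^1/\infty$ invertible, so the projective bundle formula shows the opposite of what step (3) needs.
\item In step (1), $\bP(\N\oplus\cO)\setminus Y=\bV_{\bP(\N)}(\cO(-1))$. The natural comparison between $\Th_Y(\N)=\bP(\N\oplus\cO)/\bP(\N)$ and $\bP(\N\oplus\cO)/(\bP(\N\oplus\cO)\setminus Y)$ is therefore an equivalence only if the zero section of this line bundle is one. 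Proposition~\ref{proposition-unstable-gysin} gives only a retraction.
\item $\MS_S$ is covariant in schemes, so fiber inclusions map \emph{into} the deformation space and your arrows point the wrong way. Also, $X\to X/(X\setminus Y)$ is a quotient map, not an identification.
\end{itemize}
No Nisnevich-local reduction can repair this, since there is no equivalence to invert.

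The missing idea is that you only need a map, and it comes from a cofiber sequence plus a nullhomotopy, not from purity. Rather than removing $\tilde D\setminus(Y\times\bP^1)$, quotient $\tilde D=\Bl_{Y\times0}(X\times\bP^1)$ by the strict transform $\Bl_YX$ of $X\times0$ to get $\Q(i)$. Blowup excision, applied to the map from the blowup square of $Y\to X$ to that of $Y\times0\to X\times\bP^1$, yields a cofiber sequence $\Th_Y(\N_i)\to\Q(i)\to X\wedge\bP^1/0$.

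The fiber $X\times1$ gives a map $X\to\Q(i)$. Its composite to $X\wedge\bP^1/0$ is null, because $\bA^1\subset\bP^1$ is nullhomotopic relative to $0$ in $\MS_S$ \cite[Proposition 4.9]{annala-hoyois-iwasa-2023}, so the points $1$ and $0$ of $\bP^1$ become homotopic. This $\bP^1$-homotopy is the substitute for the $\bA^1$-contraction you were trying to use. By stability, $X\to\Q(i)$ then lifts to $\gys(i)\colon X\to\Th_Y(\N_i)$.

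This is Construction~\ref{construction-gysin}, carried out in $\PS(S)$ and pushed to $\MS_S$. It is global and choice-free, and it is not an equivalence in general. Your argument for functoriality in excessive maps, and for the factorization through $X/(X\setminus Y)$ via the pair $(X\setminus Y,\varnothing)$, does match the paper's.
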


\begin{remark}
    In constrast with the $\bA^1$-invariant situation, the above map is in general \emph{not} an isomorphism. In fact, it being an isomorphism implies $\bA^1$-invariance: take $i$ to be the closed immersion $\infty\to\bP^1$; then the map becomes
    $$\bP^1/\bA^1\to\Th_Y(\N_{\infty/\bP^1})=\bP^1/0,$$
    which is an isomorphism if and only if the map $\bA^1\to0$ is, which is $\bA^1$-invariance. 
\end{remark}

We also establish basic properties of our Gysin map: 

\begin{theorem}\label{gysin-properties-introduction}
    The Gysin map in Theorem \ref{gysin-introduction} satisfies: 
    \begin{description}
        \item[Symmetric monoidality (Theorem \ref{gysin-symmetric-monoidal})] for a finite family of closed immersions $(i^\omega\colon Y^\omega\to X^\omega)_{\omega\in\Omega}$ of schemes smooth over $S$, there is a canonical commutative diagram
        $$\begin{tikzcd}
            \prod_{\omega\in\Omega}X^\omega\ar[rr,"\bigotimes_{\omega\in\Omega}\gys(i^\omega)"]\ar[d,equal]&&\bigotimes_{\omega\in\Omega}\Th_{Y^\omega}(\N_{i^\omega})\ar[d,"\text{Proposition \ref{thom-space-additive}}"]\\
            \prod_{\omega\in\Omega}X^\omega\ar[rr,"\gys\left(\prod_{\omega\in\Omega}i^\omega\right)"']&&\Th_{\prod_{\omega\in\Omega}Y^\omega}\left(\N_{\prod_{\omega\in\Omega}i^\omega}\right)
        \end{tikzcd}$$
        \item[Normalization (Theorem \ref{gysin-normalization})] when $X=\bP_Y(\cE\oplus\cO)$ for a vector bundle $\cE/Y$ and $i$ is the immersion $Y=\bP_Y(\cO)\to\bP_Y(\cE\oplus\cO)$, the Gysin map is the quotient map
        $$\bP_Y(\cE\oplus\cO)\to\frac{\bP_Y(\cE\oplus\cO)}{\bP_Y(\cE)}=:\Th_Y(\cE).$$
        \item[Composition (Theorem \ref{theorem-composition-gysin})] for a chain of two closed immersions $Z\xrightarrow{j}Y\xrightarrow{i}X$ of schemes smooth over $S$, there is a canonical commutative square
        $$\begin{tikzcd}
            X\ar[rrr,"\gys(ij)"]\ar[d,"\gys(i)"']&&&\Th_Z(\N_{ij})\ar[d,"\gys(\Th_Z(\N_{Z/i}))"]\\
            \Th_Y(\N_i)\ar[rrr,"\gys(\Th_j(\N_i))"']&&&\Th_Z(j^*\N_i)\otimes_Z\Th_Z(\N_j)
        \end{tikzcd}.$$
        functorial with respect to $\Omega$, where the $\Pi$ and $\otimes$ above are both over $S$. 
    \end{description}
\end{theorem}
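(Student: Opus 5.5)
The plan is to reduce all three properties to the construction of $\gys(i)$ through \emph{deformation to the normal cone}, and to use the functoriality in excessive maps stated in Theorem \ref{gysin-introduction} to move every check to the normal-cone model. Recall the shape of Construction \ref{construction-gysin}: one forms the deformation space $D_i = \Bl_{Y\times 0}(X\times\bP^1)\setminus \Bl_Y X$. Its fiber over $\infty$ is $X$, and its fiber over $0$ is $\bP_Y(\N_i\oplus\cO)$. The Gysin map is obtained by applying smooth blowup excision to the square relating $\Bl_Y X$, $X$, the exceptional divisor $\bP_Y(\N_i)$, and $Y$. Combined with the normalization case, this identifies the cofiber of the restriction to the special fiber with the Thom space. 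I will take this normalization case as the base case of everything that follows.

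\textbf{Normalization.} When $X=\bP_Y(\cE\oplus\cO)$ with the zero section, the deformation space is isomorphic to the pullback of $X\times\bP^1$. This is because the scaling action $\Gm\curvearrowright\cE$ trivializes the deformation: the map $(v,t)\mapsto (v/t,t)$ extends across $t=0$. Hence the map from the $\infty$-fiber to the $0$-fiber is the identity of $\bP_Y(\cE\oplus\cO)$, and $\gys(i)$ reduces to the tautological quotient map to $\bP_Y(\cE\oplus\cO)/\bP_Y(\cE)$. The one thing to check is that the identifications used in Construction \ref{construction-gysin} are compatible with this $\Gm$-trivialization. I expect this to be a direct inspection.

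\textbf{Symmetric monoidality.} Deformation to the normal cone is compatible with products: $D_{\prod i^\omega}$ is obtained from $\prod_\omega D_{i^\omega}$ by pulling back along the diagonal $\bP^1\to(\bP^1)^\Omega$. The diagonal is an excessive map in the sense of Definition \ref{definition-blowup-poset}, which I will verify from the definition. Functoriality of $\gys$ in excessive maps then gives the square. On the special fiber, Proposition \ref{thom-space-additive} identifies the tensor product of the Thom spaces with the Thom space of the sum. To get functoriality in $\Omega$, I will run the whole construction over the category $\Fin$ of finite sets, so that the coherences are produced as a functor rather than checked one at a time.

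\textbf{Composition.} I will use the double deformation space $D_{Z\subset Y\subset X}$ over $\bP^1\times\bP^1$. This is the iterated blowup of $X\times\bA^2$ along $Z\times 0\times\bA^1$ and $Y\times\bA^1\times 0$, minus strict transforms. Its four corner fibers are:
\begin{itemize}
\item $X$ at $(\infty,\infty)$;
\item $\bP_Y(\N_i\oplus\cO)$ along one edge;
\item $\bP_Z(\N_{ij}\oplus\cO)$ along the other edge;
\item the iterated projective bundle at $(0,0)$, whose Thom quotient is $\Th_Z(j^*\N_i)\otimes_Z\Th_Z(\N_j)$.
\end{itemize}
Applying $\gys$ along the two paths around the square gives the two composites. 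Commutativity comes from the $\bP^1\times\bP^1$ family: restricting to the diagonal and to the two boundary paths yields homotopic maps, by functoriality for excessive maps together with smooth blowup excision on $\bP^1\times\bP^1$.

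\textbf{Main obstacle.} The hardest step is the corner $(0,0)$. I must identify the Gysin map of $\Th_j(\N_i)$, which is a map of Thom spaces rather than schemes, with the one read off from the double deformation. This requires extending $\gys$ to closed immersions of projective-bundle pairs, and showing that the resulting map only depends on the filtration $\N_j\subset\N_{ij}$ up to the canonical splitting. I would first treat the split case, where $\N_{ij}=j^*\N_i\oplus\N_j$, by applying symmetric monoidality and normalization. I would then deduce the general case by deforming the extension class to zero along $\bA^1\subset\bP^1$. This last deformation is itself an instance of the excessive-functoriality of Theorem \ref{gysin-introduction}.
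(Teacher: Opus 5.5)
Your plan has genuine gaps, and they share a root. In $\PS(\cP_S)$, neither open deformation spaces nor $\bA^1$-families carry homotopy information; the only homotopies are the $\bP$-homotopies of Definition \ref{definition-motivic-space}(\ref{definition-motivic-space:p-homotopy}). That is why Construction \ref{construction-gysin} uses the full blowup $\Bl_{Y\times0}(X\times\bP^1)$ modulo the strict transform $\Bl_YX$, not your open $D_i$, whose fiber over $0$ is $\bV_Y(\N_i)$ rather than $\bP_Y(\N_i\oplus\cO)$. Blowup excision alone then gives $\Th_Y(\N_i)\to\Q(i)\to X\wedge\bP^1/0$, so no normalization input is needed for the construction.

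This is exactly where your symmetric monoidality step fails. The identity $D_{\prod i}=\Delta^*\prod_\omega D_{i^\omega}$ holds for the open spaces, but not for the blowups the construction uses. Take $0\in\bA^1_x$ and $0\in\bA^1_y$. The fiber product $\Bl_{0\times0}(\bA^1_x\times\bP^1)\times_{\bP^1}\Bl_{0\times0}(\bA^1_y\times\bP^1)$ is locally the singular cone $\{xu=yv\}$, not $\Bl_{0\times0}(\bA^2\times\bP^1)$. Neither maps to the other: $(x,y,t)$ is not invertible on the cone, and $(x,t)$ is not invertible on the blowup of $\bA^3$. So excessive functoriality gives no comparison between them. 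Moreover, Remark \ref{trivial-functoriality-gysin} only relates Gysin maps of two immersions, and $\bigotimes_\omega\gys(i^\omega)$ is not the Gysin map of any immersion. The missing idea is a common refinement: the multiple deformation space $\D(i)=\Bl(\C(i))$ of Construction \ref{multiple-deformation-space}. It maps to $\Bl_{Y^\Omega\times0}(X^\Omega\times\bP^1)$ and, over the diagonal, to $\prod_\omega\Bl_{Y^\omega\times0}(X^\omega\times\bP^1)$. Theorem \ref{theorem-blowup-cofiber-comparison} and Corollary \ref{corollary-blowup-cofiber-comparison-minimum} then identify its fundamental cofiber sequence with both.

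The same issue affects the other two parts. For normalization, your rescaling trivializes only over $\bA^1$. Over $\bP^1$, the open deformation space of $0\colon Y\to\bV_Y(\cE)$ is a twisted bundle over $Y\times\bP^1$. For $X=\bP_Y(\cE\oplus\cO)$ its special fiber is $\bV_Y(\cE)$, not $X$. The paper instead takes $X=\bV_Y(\cE)$ and uses the map $\Bl_{(0,0)}(\bV_Y(\cE)\times\bP^1)\to\bP_Y(\cE\oplus\cO)$, $(v,t)\mapsto[v:t]$. This is your rescaling pushed into the compactification, and it splits the fundamental cofiber sequence. The step you call a direct inspection is the real content. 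You must match the nullhomotopy on $\bV_Y(\cE)\setminus0$ with the one from Proposition \ref{proposition-unstable-gysin}, and this is what lets you pass to $\bP_Y(\cE\oplus\cO)$.

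For composition, your double deformation corresponds to the paper's pushout-blowup. However, deforming the extension class to zero along $\bA^1$ identifies nothing without $\bA^1$-invariance. Excessive functoriality produces commuting squares, not homotopies between the fibers of a family. The step is also unnecessary. Lemma \ref{semi-composition-gysin} puts $\gys(0\colon Z\to\Th_Y(\N_i))$ on the bottom. The possibly nonsplit sequence $0\to j^*\N_i\to\N_{ij}\to\N_j\to0$ is absorbed into $\gys(\Th_Z(\N_{Z/i}))$, which is an isomorphism because this can be checked Zariski locally where the sequence splits (Corollary \ref{thom-splitting}). To identify the bottom arrow with $\gys(\Th_j(\N_i))$, the paper applies the same lemma again to $Z\to\bP_Z(j^*\N_i\oplus\cO)\to\bP_Y(\N_i\oplus\cO)$, modulo $\varnothing\to\bP_Z(j^*\N_i)\to\bP_Y(\N_i)$. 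The conormal sequence of that chain is canonically split, so Theorem \ref{gysin-normalization} and Corollary \ref{gysin-normalization-corollary} finish the argument.
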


Since the construction of our Gysin map relies heavily on the blowup excision of $\bP^1$-spectra, we have to use various identities between nested blowups in order to prove Theorem \ref{gysin-properties-introduction}. We are unable to find these identities in the literature at least in the derived case, so we include the basic theory of blowups in derived algebraic geometry as well as the proof of these identities in Appendix \ref{appendix-dag}. 

\begin{remark}
    Of course, the \textbf{Composition} in Theorem \ref{gysin-properties-introduction} is handicapped from the $\infty$-categorical viewpoint: a full composition compatibility ought to contain larger commutative diagrams for arbitrarily long chains of closed immersions. However, the full composition is already extremely complicated even in the $\bA^1$-invariant case as in \cite{deglisefeldjin2026homotopycoherentgysinfunctoriality}: it needs a significantly heavier theory of nested blowups than what we want to develop here in Appendix \ref{appendix-dag}, so we leave it to our future work. 
\end{remark}

\subsection{A \texorpdfstring{$\bP^1$}{P1}-unstable theory}
Compared to the $\bA^1$-homotopy theory, another missing aspect of the Annala--Hoyois--Iwasa $\bP^1$-motivic homotopy theory is the \emph{unstable} theory, i.e.\ a theory where the pointed $\bP^1$ is not tensor-invertible. During the study of the Gysin map, we notice that we have only used blowup excision and $\bP^1$-homotopy, but not $\bP^1$-stability anywhere, so we propose a $\bP^1$-unstable theory of non-$\bA^1$-invariant motives here: 

\begin{theorem}[Definition \ref{definition-motivic-space}; Propositions \ref{compare-ph-mv}, \ref{compare-ph-ms}]
    For any scheme $S$, there is a natural presentably symmetric monoidal stable $\infty$-category $\PS(S)$ of \emph{unstable $\bP$-motivic spectra}, which supports canonical presentably symmetric monoidal functors $\PS(S)\to\SH^{\Sone}(S)$ and $\PS(S)\to\MS_S$, to the Morel--Voevodsky's category of $\bA^1$-motivic $\Sone$-spectra and the Annala--Hoyois--Iwasa's category of unstable $\bP^1$-motivic spectra, respectively. 

    Moreover, the Gysin map in Theorem \ref{gysin-introduction} and the isomorphisms in \ref{gysin-properties-introduction} all come from $\PS(S)$ through the stabilized version $\PS(S)\to\MS_S$ of the above functor. 
\end{theorem}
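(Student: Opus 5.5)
The plan is to define $\PS(S)$ as a localization of a presheaf category and then deduce both comparison functors from universal properties of localizations. Concretely, start from $\Sm_S$ (or all finitely presented $S$-schemes), form stable presheaves $\mathcal{P}(\Sm_S;\Sp)$ with Day convolution from the cartesian product, and localize at a set $W$ of maps generated by three families. The first is Nisnevich (or Zariski) descent, i.e.\ elementary distinguished squares. The second is smooth blowup excision: for a smooth closed $Z\subset X$, the square with $\Bl_Z X$ and the exceptional divisor $\bP(\N_{Z/X})$ becomes cocartesian (Definition \ref{definition-qbu}). The third is $\bP^1$-homotopy, or elementary $\bP$-bundle invariance in the form used by Annala--Hoyois--Iwasa, namely $\bP^1\times X\to X$ being inverted on the pointed level.

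Each family is stable under $-\times T$ for $T\in\Sm_S$, since base change preserves Nisnevich squares, smooth blowups and projective lines. Hence the strongly saturated class they generate is compatible with Day convolution. The localization $\PS(S)$ is therefore presentably symmetric monoidal and stable, and the localization functor is symmetric monoidal.

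For the functor to $\SH^{\Sone}(S)$, identify $\SH^{\Sone}(S)$ as the localization of the same presheaf category at Nisnevich descent and $\bA^1$-invariance. By the universal property, it suffices to show that the images of all maps in $W$ are inverted there.
\begin{itemize}
\item Nisnevich descent holds in $\SH^{\Sone}(S)$ by definition.
\item $\bP^1$-homotopy is a consequence of $\bA^1$-invariance.
\item Smooth blowup excision in $\SH^{\Sone}(S)$ follows from Morel--Voevodsky homotopy purity. Both $X/(X\setminus Z)$ and $\Bl_ZX/(\Bl_ZX\setminus E)$ are identified with Thom spaces, namely $\Th(\N_{Z/X})$ and $\Th_E(\cO(-1))$, and the blowup square reduces to the known cocartesian square $\bP(\N)\to\bP(\N\oplus\cO)$ versus $Z$ with Thom spaces.
\end{itemize}
Symmetric monoidality is inherited because everything is a localization of the same Day convolution.

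For the functor to $\MS_S$, I recall that Annala--Hoyois--Iwasa build $\MS_S$ from Nisnevich sheaves on $\Sm_S$ satisfying elementary blowup excision, then invert pointed $\bP^1$. The required map is the composite $\PS(S)\to\MS_S$ obtained from the universal property of $\PS(S)$ followed by $\bP^1$-stabilization $\PS(S)\to\PS(S)[(\bP^1)^{-1}]$. The remaining step is to check that $\bP^1$-homotopy holds in $\MS_S$. This is essentially \cite[\S 3]{annala-hoyois-iwasa-2023}, where $\bP^1$-stability together with blowup excision forces $\bP$-bundle formulas, and in particular the $\bP^1$-homotopy invariance of $\MS$-theories. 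I would quote their projective bundle formula to conclude that $X\times\bP^1\to X$ splits off $X\otimes\bP^1$ in the appropriate sense.

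The last claim, that the Gysin map and the isomorphisms of Theorem \ref{gysin-properties-introduction} come from $\PS(S)$, is structural. The constructions are carried out inside $\PS(S)$ using only blowup excision and $\bP^1$-homotopy, and then pushed forward along the symmetric monoidal colimit-preserving functor. The only check is that this functor sends Thom spaces and the deformation-to-the-normal-cone diagrams to their namesakes, which holds because it is induced from the Yoneda embedding.

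I expect the main obstacle to be the precise matching with the Annala--Hoyois--Iwasa generators. Their excision may be phrased for all smooth blowups, or with a different form of $\bP^1$-invariance, such as $\bP^1$-homotopy via the $\bP^1$-interval rather than the projection. I would first try to show that the generators of $W$ are literally among theirs after stabilization, falling back on their computation $\bP(\cE)\simeq\bigoplus \Sigma^{i}_{\bP^1}$ otherwise.
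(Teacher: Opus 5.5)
There is a genuine gap in your third family of generators. Inverting $\bP^1\times X\to X$ fails in both targets. In $\MS_S$, the projective bundle formula you invoke gives $\bP^1\times X\simeq X\oplus X\otimes(\bP^1/\infty)$ with $\bP^1/\infty$ tensor-invertible. Inverting the projection would therefore force $\bP^1/\infty\simeq0$, and hence $\MS_S\simeq0$. In $\SH^{\Sone}(S)$ one has $\bP^1/\infty\simeq\Sone\wedge\Gm\not\simeq0$, so ``$\bP^1$-homotopy is a consequence of $\bA^1$-invariance'' is false in the form you state it. Your universal property thus yields neither comparison functor.

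What the Gysin construction actually uses (Construction \ref{construction-gysin}, Remark \ref{f-sharp-p-homotopy}, Proposition \ref{proposition-unstable-gysin}) is much weaker. It needs a homotopy between the open immersion $j\colon\bV(\cL)\to\bP(\cL\oplus\cO)$ and the constant map $z$ through the zero section. This homotopy is taken in the undercategory of $S$, so that it restricts to the identity along $0\colon S\to\bV(\cL)$, and it is natural in $S$ and in $\cL\in\B\Gm(S)$.

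This axiom is extra structure, not a property, so $\PS(S)$ cannot be obtained by localizing at a set $W$ in the way you propose. In a stable category, forcing $u\colon A\to B$ to be null by inverting $B\to\cofib(u)$ forces $A\simeq0$. Here $A=\bV(\cL)/S$, so you would be imposing $\bA^1$-invariance, which fails in $\MS_S$ (cf.\ the remark following Theorem \ref{gysin-introduction}). This is why Definition \ref{definition-motivic-space} takes the \emph{initial} accessible Zariski sheaf of presentably symmetric monoidal stable categories under $\Sh_{\Zar,\qbu}(\cP_S;\Sp)$ equipped with a \emph{chosen} such homotopy. It also requires adjointability of $f_\sharp$ for $f\in\cP$, which is used to transport the homotopy to line bundles and sections over $T\in\cP_S$.

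The comparison functors then come from exhibiting this structure in each target:
\begin{itemize}
\item In $\SH^{\Sone}(S)$, the space of such pointed homotopies is contractible, since $0\colon S\to\bV(\cL)$ is an $\bA^1$-equivalence. This gives uniqueness of the functor.
\item In $\MS_S$, the homotopy must be supplied, and this is exactly \cite[Proposition 4.9]{annala-hoyois-iwasa-2023} with $\cE=\cO$, $\cF=\cL$. This is why that functor is only ``canonical'', not unique.
\end{itemize}

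Your handling of descent, and your purity argument for blowup excision in $\SH^{\Sone}(S)$, are an acceptable alternative to the paper's route. The paper instead reduces via \cite[Proposition 2.2]{annala-hoyois-iwasa-2023} to the zero section of $\bA^n_Y$.
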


\begin{remark}
    In \cite{morel-voevodsky-a1}, the Gysin map is constructed even before $\Sone$-stabilization. Unfortunately, in our non-$\bA^1$-invariant theory, we are only able to do so after $\Sone$-stabilization. 
\end{remark}

\subsection{Applications}
During the preparation of this paper, various applications of Theorems \ref{gysin-introduction} and \ref{gysin-properties-introduction} have already been out for a while. See \cite{annalahoyoisiwasa2024atiyahdualitymotivicspectra}, \cite{hoyois2024remarksmotivicsphere}, \cite{annala2025motivicsteenrodproblemaway} and \cite{annala2025motivicsteenrodoperationscharacteristic}. We list a few of their theorems here, which use our Gysin map as an essential input: 

\begin{theorem}[{\cite[Theorem 1.1]{annalahoyoisiwasa2024atiyahdualitymotivicspectra}}]
    For any scheme $S$ and any projective smooth $S$-scheme $X$, $X$ is dualizable in $\MS_S$ with dual $\Th_X(-\Omega_{X/S})$. 
\end{theorem}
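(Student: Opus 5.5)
The plan is to construct explicit duality data for $X$, an evaluation and a coevaluation, out of the Gysin maps of Theorem \ref{gysin-introduction}, and then check the two triangle identities using the properties in Theorem \ref{gysin-properties-introduction}. Throughout we work in $\MS_S$, where $\bP^1$ is invertible, so Thom spectra of virtual bundles such as $\Th_X(-\Omega_{X/S})=\Th_X(-T_X)$ make sense. We use Proposition \ref{thom-space-additive} freely to identify $\Th_X(\cE)\otimes_X\Th_X(\cF)\simeq\Th_X(\cE\oplus\cF)$.

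\emph{Evaluation.} Apply $\gys$ to the diagonal $\Delta\colon X\to X\times_S X$, whose normal bundle is $T_X$. This gives a map $X\times_S X\to\Th_X(T_X)$. Both sides are spectra over $X$ via the second projection, so we may twist this map by $-T_X$ pulled back from the second factor. This produces
$$X\otimes\Th_X(-T_X)\to\Th_X(T_X-T_X)=X.$$
Composing with the structure map $X\to S=\mathbf 1$ defines $\mathrm{ev}$.

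\emph{Coevaluation.} Since $X$ is projective, choose a closed immersion $i\colon X\to\bP^n_S$ with normal bundle $\N_i$. The space $\bP^n_S$ is dualizable in $\MS_S$ with dual $\Th_{\bP^n}(-T_{\bP^n})$: it is a cellular object, built from Tate twists of $\mathbf 1$ by the projective bundle formula in \cite{annala-hoyois-iwasa-2023}. Alternatively, this case can be done by hand using Normalization for $\bP^{n}\subset\bP^{n+1}$.

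Now twist $\gys(i)\colon\bP^n\to\Th_X(\N_i)$ by $-T_{\bP^n}$ to obtain a map
$$\Th_{\bP^n}(-T_{\bP^n})\to\Th_X(\N_i-i^*T_{\bP^n})=\Th_X(-T_X).$$
Here we use the exact sequence $0\to T_X\to i^*T_{\bP^n}\to\N_i\to0$. Compose this with the coevaluation of $\bP^n$ and with the graph map $\Th_X(-T_X)\to\Th_X(-T_X)\otimes X$, the latter induced by the diagonal. The result is $\mathrm{coev}\colon\mathbf 1\to\Th_X(-T_X)\otimes X$.

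\emph{Triangle identities.} Each composite $(\mathrm{ev}\otimes\id)\circ(\id\otimes\mathrm{coev})$, and its mirror, unwinds into a composite of Gysin maps for chains of closed immersions. The relevant chain is $X\xrightarrow{\Delta}X\times X\xrightarrow{i\times\id}\bP^n\times X$, together with the graph immersion $X\to\bP^n\times X$. Using Symmetric monoidality to handle the products and Composition to merge consecutive Gysin maps, each composite should reduce to twists of the known duality of $\bP^n$ and to a Gysin map for the zero section of a vector bundle. By Normalization, that last Gysin map is the tautological quotient map, and the identity follows.

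The main obstacle is this bookkeeping. The Composition statement only gives commutative squares for chains of length two, and its bottom-right corner is an iterated Thom space of $j^*\N_i$ and $\N_j$ rather than of $\N_{ij}$. So one must check that the identification $\Th(\N_{ij})\simeq\Th(j^*\N_i)\otimes\Th(\N_j)$ used in building the maps matches the one arising from the deformation geometry. I would first try to arrange the construction so that only one Composition square and one Normalization step are needed per triangle identity. If that fails, the fallback is to verify only that the composite is an isomorphism, for instance after checking it on $\bP^n$-cells, since an isomorphism suffices to deduce dualizability.
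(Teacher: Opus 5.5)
The paper contains no proof of this statement. It is quoted from Annala--Hoyois--Iwasa as an application, with this paper's Gysin map as an input. So your outline has to stand on its own, and as written it has a genuine gap.

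The Thom-splitting bookkeeping you worry about is not the problem: in $\MS$ it is handled by Proposition~\ref{thom-splitting-comparison}. The gap is elsewhere. Suppose you carry out the unwinding, using Theorem~\ref{theorem-composition-gysin} for $X\xrightarrow{\Delta}X\times X\xrightarrow{\id\times i}X\times\bP^n$ and Remark~\ref{trivial-functoriality-gysin} for the transverse square that moves the Thom diagonal past the diagonal Gysin map. Then both zig-zag composites become $p_\sharp$ of a single endomorphism (the second one after tensoring with $\Th_X(-T_X)$):
\[
\phi\colon\mathbf{1}_X\xrightarrow{\;t\;}\pi_\sharp\Th(-T_{\bP^n_X/X})\xrightarrow{\;\gys(\Gamma_i)\;}\mathbf{1}_X
\]
in $\MS_X$. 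Here $\pi\colon\bP^n_X\to X$ is the projection and $\Gamma_i$ is the graph section. The map $t$ is the transfer, i.e.\ your coevaluation of $\bP^n$ followed by $\bP^n\to S$. You need $t$ rather than the coevaluation itself, otherwise your $\mathrm{coev}$ does not land in $\Th_X(-T_X)\otimes X$.

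Now $\phi$ is the pullback along $i$ of the same composite for the diagonal section of $\bP^n\times\bP^n\to\bP^n$. So proving $\phi=\id$, or even that $\phi$ is invertible, amounts to the $\bP^n$ case of the theorem over a general base. It is needed in the specific form where the evaluation is the twisted Gysin map of the diagonal and $t$ is dual to the structure map.

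You assert this case rather than prove it. Cellularity only shows that $\bP^n$ is dualizable with dual $\bigoplus_{k=0}^n(\bP^1/\infty)^{\otimes -k}$. It says nothing about whether the Gysin pairing $\bP^n\otimes\Th_{\bP^n}(-T_{\bP^n})\to\mathbf{1}$ is perfect. Theorem~\ref{gysin-normalization} and Corollary~\ref{gysin-normalization-corollary} identify Gysin maps of linear subspaces with cell collapses, but they say nothing about the Gysin map of the diagonal. To close the gap you must compute the diagonal Gysin pairing of $\bP^n$ against the cells and show it is invertible, uniformly in the base, or give another argument for that case.

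Two further repairs are needed.

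First, the Gysin map here is a map in $\MS_S$, so twisting it by $-T_X$ on one factor, or by $-T_{\bP^n}$, is not yet defined. For $\Delta$ and $\Gamma_i$ you can run Construction~\ref{construction-gysin} over the base $X$ via a projection. But $X$ is not smooth over $\bP^n$, so for $i$ you must first make $\gys(i)$ linear over the ambient. One option is to make it natural in smooth $\bP^n$-schemes and compatible with $\otimes\bP^1$, then extend to $\MS_{\bP^n}$. Another is to build the virtual twist from positive twists, as in the definition of $\gys(\Th_j(\N_i))$, together with Corollary~\ref{thom-splitting} and Proposition~\ref{thom-splitting-comparison}. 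Either way, Composition must be re-established in that twisted form.

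Second, your fallback is unsound. A general smooth projective $X$ has no cell structure to check on. Moreover, the criterion ``zig-zags invertible implies dualizable'' requires both zig-zag composites to be equivalences, not just one.
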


Not only does this theorem prove a Poincar\'e duality for virtually any cohomology theory for projective smooth schemes, it also has further arithmetic application \cite{carmeli2025prismaticsteenrodoperationsarithmetic} which really uses that the duality holds already in $\MS$. 

\begin{theorem}[{\cite[Theorem 1.7]{annalahoyoisiwasa2024atiyahdualitymotivicspectra}},\,{\cite[Theorem C]{park2026poincaredualitylogarithmicmotivic}}]
    Let $k$ be a perfect field. Then there is a canonical $\bA^1$-invariant cohomology theory $\RG_{\crys,\bA^1}\colon\Sm_k^\op\to\De(\Witt(k))$ such that whenever $U=X\setminus\partial X$ with $X$ projective smooth and $\partial X$ a strict normal crossing divisor, there is a canonical isomorphism
    $$\RG_{\crys,\bA^1}(U)=\RG_{\crys,\log}(X,\partial X).$$
    In short, log crystalline cohomology is independent of the good compactification. 
\end{theorem}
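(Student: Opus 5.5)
The plan is to build $\RG_{\crys,\bA^1}$ by forcing $\bA^1$-invariance on log crystalline cohomology through a motivic realization. The comparison will then rest on the fact that, for a good compactification, the log theory is already insensitive to the boundary.

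\textbf{Step 1: a realization in $\MS_k$.} Crystalline cohomology $\RG_\crys(-/\Witt(k))$ satisfies Nisnevich descent, smooth blowup excision and the projective bundle formula (the $\bP^1$-bundle formula with $\ho^2_\crys(\bP^1)=\Witt(k)$). By the universal property of $\MS_k$, these properties make it a module over a motivic ring spectrum $\E_\crys\in\MS_k$ whose $\bP^1$-periodicity is given by the crystalline first Chern class. I then define
$$\RG_{\crys,\bA^1}(U):=\Map_{\MS_k}\big(\Sigma^\infty_{\bP^1}U_+,\ \cL_{\bA^1}\E_\crys\big),$$
where $\cL_{\bA^1}$ denotes the right adjoint of the localization $\SH(k)\subset\MS_k$ applied after forgetting, that is, the $\bA^1$-local (co)reflection of $\E_\crys$. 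By construction this theory is $\bA^1$-invariant.

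\textbf{Step 2: identify the $\bA^1$-localization on good pairs.} I would aim to show that for $(X,\partial X)$ with $X$ projective smooth and $\partial X$ a strict normal crossing divisor, the $\bA^1$-invariant motive of $U$ is computed by a ``log motive'' of $(X,\partial X)$ formed inside $\MS_k$. Write $\partial X=\bigcup D_j$. The candidate is the cubical total cofiber of the diagram of Thom spaces of the normal bundles of the strata $D_J=\bigcap_{j\in J}D_j$, glued by the Gysin maps of Theorem~\ref{gysin-introduction}. The composition square in Theorem~\ref{gysin-properties-introduction} supplies the compatibilities needed for the cube, at least for two-step faces. Higher coherence is the real difficulty here, and I would sidestep it by working with deformation to the normal cone, so that the cube is realized by actual maps of spaces.

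Mapping into $\E_\crys$ and using the Gysin--Thom isomorphism for crystalline cohomology, the total cofiber gives the weight/residue complex that computes $\RG_{\crys,\log}(X,\partial X)$, by the Mokrane-style residue sequence. Atiyah duality for projective smooth $X$ \cite{annalahoyoisiwasa2024atiyahdualitymotivicspectra} then shows the object is dualizable, and identifies its dual with the compactly supported side. This dualizability is what is needed to commute $\cL_{\bA^1}$ with the mapping spectrum.

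\textbf{Step 3: independence and the main obstacle.} It remains to show that this log motive becomes $\bA^1$-invariant, that is, that it lies in the image of $\SH(k)$ and agrees there with $\Sigma^\infty U_+$. The $\bA^1$-localization sends it to the Morel--Voevodsky purity cofiber, which is $U_+$. The hard part is to show that before localization it is already $\bA^1$-local in the crystalline-module sense. I would prove this by checking invariance under admissible blowups of $(X,\partial X)$ at strata, using smooth blowup excision together with the normalization property of Theorem~\ref{gysin-properties-introduction}. Then, by weak factorization, any two good compactifications are connected by such blowups. Log crystalline cohomology is known to be invariant under these log blowups, so both sides are determined by the same $\bA^1$-local object.

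This last step is where I expect the main obstacle. It requires the blowup-invariance of the log motive, and specifically an explicit computation of the blowup of $\bP(\cE\oplus\cO)$ along strata meeting the boundary. For that I would first treat the local model $\bA^n$ with coordinate-hyperplane boundary and reduce the general case to it by Nisnevich descent.
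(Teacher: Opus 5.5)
\emph{Note:} the paper gives no proof of this statement. It quotes it from \cite{annalahoyoisiwasa2024atiyahdualitymotivicspectra}, where it is described as an application of Atiyah duality in $\MS_k$ (which rests on the Gysin map of Construction~\ref{construction-gysin}), and from \cite{park2026poincaredualitylogarithmicmotivic}, which uses a logarithmic Gysin map instead. So there is no in-paper argument to compare with.

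\textbf{Main gap: Step 3.} Judged on its own, your outline breaks down exactly at the step you flag as the main obstacle.

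First fix the definition. Read $\RG_{\crys,\bA^1}$ as the $\bA^1$-colocalization, via the right adjoint of the inclusion $\iota\colon\SH(k)\subset\MS_k$. This is the reading that comes with a natural map to $\RG_{\crys}$, matching $\RG_{\crys,\log}(X,\partial X)\to\RG_{\crys}(U)$. Your Step~1 blurs it with the localization $L_{\bA^1}$. Then $\RG_{\crys,\bA^1}(U)=\Map(\iota L_{\bA^1}\Sigma^\infty_{\bP^1}U_+,E_{\crys})$.

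Now grant your identification $L_{\bA^1}M\simeq L_{\bA^1}\Sigma^\infty_{\bP^1}U_+$ for the log motive $M=M(X,\partial X)$.
\begin{itemize}
\item $M$ is dualizable. Since $\underline{\Hom}(M^\vee,-)$ preserves $\bA^1$-local objects, one gets formally $\iota L_{\bA^1}M\simeq\iota L_{\bA^1}\mathbf 1\otimes M$.
\item Hence $\RG_{\crys,\bA^1}(U)\simeq\Map(\iota L_{\bA^1}\mathbf 1,M^\vee\otimes E_{\crys})$, while the log side is $\Map(\mathbf 1,M^\vee\otimes E_{\crys})$.
\item So the theorem amounts to showing that these $E_{\crys}$-modules do not see $\cofib(\mathbf 1\to\iota L_{\bA^1}\mathbf 1)$.
\end{itemize}

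Blowup invariance says nothing about this. Take $(X,\partial X)=(\Spec k,\varnothing)$, or $(\bP^1,\infty)$, whose log motive is the unit by Theorem~\ref{gysin-normalization}. In both cases there is nothing to blow up. Yet the required statement $\Map(\iota L_{\bA^1}\mathbf 1,E_{\crys})\simeq\Witt(k)$ is a non-formal fact about how $E_{\crys}$ sees the $\bA^1$-localization of the unit. Independence of the compactification is a consequence of the theorem, not a route to $\bA^1$-locality. Moreover, weak factorization is not known in positive characteristic, which is exactly the case relevant to crystalline cohomology.

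\textbf{Further problems in Step 2.}
\begin{itemize}
\item The cube of Gysin maps needs coherence for chains of length $|J|$. The paper only provides the two-fold composition of Theorem~\ref{theorem-composition-gysin} and explicitly defers higher coherence.
\item ``Realizing the cube by actual maps of spaces via deformation to the normal cone'' does not help. Each Gysin map is produced from the $\bP$-homotopy of Remark~\ref{f-sharp-p-homotopy} and a stable cofiber sequence, not from maps of schemes.
\item You can bypass Gysin cubes entirely. The closed immersions of strata give an honest, automatically coherent cube $\Th_{D_J}(-\Omega_{X/k}|_{D_J})\to\Th_X(-\Omega_{X/k})$. Atiyah duality turns its total cofiber into a dualizable object whose dual is built from $\Sigma^\infty_{\bP^1}X_+$ and the $\Th_{D_J}(\N_{D_J/X})$. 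This is a natural place for the dualizability the paper cites to enter.
\item Identifying $\Map(M,E_{\crys})$ with $\RG_{\crys,\log}(X,\partial X)$ requires comparing the crystalline realization of these maps with the residue maps of the log de Rham--Witt complex. You assert this but do not argue it.
\end{itemize}
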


In \cite{annalahoyoisiwasa2024atiyahdualitymotivicspectra}, this theorem is a clever application of the above dualizability which relies on our Gysin map. In \cite{park2026poincaredualitylogarithmicmotivic}, the proof is independent of this paper, using a version of the Gysin map in the log motivic setting constructed in that paper. 

\subsection{Notation and convention}\label{dag-notation}
We summarize our global notation and convention here, mainly regarding derived algebraic geometry. In short, everything is animated or derived unless otherwise stated, and readers familiar with the setup can safely skip this subsection. Precisely:
\begin{enumerate}
    \item A \emph{category} means an $(\infty,1)$-category. For $n\in\NN$, an \emph{$n$-category} means an $(\infty,n)$-category as in \cite{unicity-barwick} for example. We will at most use $(\infty,2)$-categories in this paper. 
    \item For $n\in\NN$, we use $[n]$ to denote the poset $(\{0,1,\ldots,n\},\le)$, often viewed as a category. 
    \item A \emph{ring} is an animated ring. 
    \item For a ring $R$, an $R$-\emph{module} is an animated $R$-module, or in other words an object in $\De_{\ge0}(R)$. The category $\De_{\ge0}(R)$ is left complete prestable and canonically presentably symmetric monoidal under $\otimes_R$. We denote the presentable stabilization of $\De_{\ge0}(R)$ by $\De(R)$ and call its objects \emph{$R$-complexes}. 
    \item For a ring $R$, an $R$-\emph{algebra} is a ring along with a map from $R$ to it. We let $\Alg(R)$ denote the category of $R$-algebras, which is canonically presentably symmetric monoidal under $\otimes_R$. 
    \item A \emph{surjection} of rings, modules, or algebras means a map that is surjective on classical truncations, i.e.\ on $\pi_0$.
    \item A \emph{prestack} is an accessible functor $\Ring\to\Ani$. For a prestack $X$ and a ring $R$, an \emph{$R$-point} of $X$ is a point in the anima $X(R)$.
    \item For an accessible topology $\tau$ on $\Ring$, a \emph{$\tau$-stack} is a prestack satisfying $\tau$-descent. A \emph{stack} means a Zariski stack in this paper.
    \item An \emph{affine scheme} is a representable prestack. For a ring $R$, we let $\Spec(R)$ denote the prestack represented by $R$, i.e.\ the functor $\Map(R,-)$, and we call it the \emph{spectrum} of $R$.
    \item A \emph{scheme} is a stack that is Zariski covered by Zariski open substacks that are affine schemes.
    \item For a prestack $X$, an \emph{$\cO_X$-module} is a natural transformation $X\to\Mod$. $\cO_X$-modules form a presentably symmetric monoidal left complete prestable category denoted $\De_{\ge0}(X)$ as each $\De_{\ge0}(R)$ is one. Similarly, we denote the presentable stabilization of $\De_{\ge0}(X)$ by $\De(X)$, and call its objects \emph{$\cO_X$-complexes}. Note that this notion is the derived analog of \emph{quasicoherent $\cO_X$-modules} rather than general $\cO_X$-modules for any topos built from $X$ and the structure sheaf $\cO_X$ thereon. We choose to omit the word ``quasicoherent'' because the latter notion does not make sense for a general prestack, and also because we only care about quasicoherent ones. 
    \item From the previous item, for a prestack map $Y\to X$, an $\cO_X$-module $\M$ naturally pulls back to an $\cO_Y$-module, which we denote by $\M|_Y$. 
    \item For a prestack $X$, an \emph{$\cO_X$-algebra} is a natural transformation $X\to\Alg$. $\cO_X$-algebras form a presentably symmetric monoidal category as each $\Alg(R)$ is one. For an $\cO_X$-algebra $\cA$, its \emph{relative spectrum}, denoted $\underline{\Spec}(\cA)$, is the prestack over $X$ whose $R$-point is the datum of an $x\in X(R)$ plus an $R$-algebra map $\cA(x)\to R$. The structure map $\underline{\Spec}(\cA)\to X$ is clearly representable by affine schemes. Conversely, every prestack map representable by affine schemes is a relative spectrum. 
    \item A \emph{closed immersion} of prestacks is a prestack map representable by ring surjections. Equivalently a closed immersion $i\colon Y\to X$ is the relative spectrum of an $\cO_X$-algebra $i_*\cO_Y$ where the structure map $\cO_X\to i_*\cO_Y$ is a surjection. In this case, we call the $\cO_X$-module $\fib(\cO_X\to i_*\cO_Y)$ the \emph{ideal} of the closed immersion $i$, and we often just write $\cO_Y$ to mean $i_*\cO_Y$. We will not try to define the notion of ideals seriously, but will rather view ideals only as modules with a map to $\cO_X$. In classical algebraic geometry, a closed immersion is a monomorphism, so one often calls it a \emph{closed substack}. In contrast, in derived algebraic geometry, a general closed immersion is almost never a monomorphism, so we will avoid using the terminology \emph{closed substack}. 
    \item\label{dag-notation:divisor} A \emph{divisor} is a closed immersion whose ideal is an invertible module. This generalizes the notion of an \emph{effective Cartier divisor} in classical algebraic geometry to derived algebraic geometry. 
    \item\label{dag-notation:adding-closed-immersion} For two closed immersions $Y\to X$ and $Z\to X$, we let $Y+Z\to X$ denote the closed immersion given by the spectrum of the $\cO_X$-algebra $\cO_Y\times_{\cO_{Y\times_XZ}}\cO_Z$, whose structure map is easily seen to be a surjection. More generally, for a finite family $(Y_\omega\to X)_{\omega\in\Omega}$ of closed immersions, denote by $\sum_{\omega\in\Omega}Y_\omega$ the closed immersion given by the $\cO_X$-algebra $\lim_{\varnothing\ne\Psi\subseteq\Omega}\cO_{Y_\Psi}$, where $Y_\Psi$ is the fiber product of $(Y_\omega)_{\omega\in\Psi}$ over $X$ (see Example \ref{example-product-excessive}). This defines a commutative monoid structure on closed immersions to $X$, where divisors form a commutative submonoid. 
    \item For a ring $A$ and an $A$-module $M$, we define $\bV_A(M)=\Spec(\LSym^\bullet(M))$ and call it the \emph{total space} of $M$. By definition, its $R$-point is the data of a ring map $A\to R$ and an $R$-linear map $M\otimes_AR\to R$. This construction globalizes to an $\cO_X$-module $\M$ for a prestack $X$, which we denote by $\bV_X(\M)$ or $\bP(\M)$. 
    \item For a ring $A$ and an $A$-module $M$, we define $\bP_A(M)=\Proj(\LSym^\bullet(M))$ and call it the \emph{projective space} of $M$. By definition, its $R$-point is the data of a map $A\to R$, an invertible $R$-module $L$, and a surjection $M\otimes_AR\to L$. This is a scheme, and is proper over $\Spec(A)$ if $M$ is finitely generated. This construction globalizes to an $\cO_X$-module $\M$ for a prestack $X$, which we denote by $\bP_X(\M)$. 
    \item For a ring map $R\to A$, the \emph{cotangent module} $\cc_{A/R}$ is the $A$-module whose map to an $A$-module $M$ amounts to a section of the trivial square-zero extension $A\oplus M\to A$ treated as an $R$-algebra map. When $R\to A$ is surjective, $\pi_0\cc_{A/R}=0$, and we call $\cc_{A/R}[-1]$ the \emph{conormal module} and denote it by $\cn_{A/R}$. These constructions globalize at least to prestack maps representable by schemes, which we denote by $\cL_{Y/X}$ and $\N_{Y/X}$ for such a map $Y\to X$. This usage of the letter N might seem strange, but it aligns with the previous two items so that a $Y$-point in $\bV_Y(\N_{Y/X})$ is a normal vector field of $Y$ in $X$. 
    \item\label{dag-notation:pseudocoherent} For a ring $R$, we say that an $R$-complex $M$ is \emph{pseudocoherent} if for every $n\in\ZZ$, there exists a perfect complex $M_n$ and a map $M_n\to M$ whose cofiber is $n$-connective. We say that an $R$-algebra $A$ is \emph{pseudocoherent} if there exists a $d\in\NN$ and an $R$-algebra map $R[x_1,\ldots,x_d]\to A$ such that $A$ is pseudocoherent as an $R[x_1,\ldots,x_d]$-module. This notion globalizes at least to prestack maps representable by schemes. This notion is called \emph{almost of finite presentation} in \cite[Definition 7.2.4.26]{ha} and \cite[Definition 4.2.0.1]{sag}, but we choose to follow \cite[\texttt{067G},\texttt{067X}]{stacks} and call it \emph{pseudocoherence} to avoid potential confusion with almost mathematics. 
    \item\label{dag-notation:d-smooth} Following \cite[Definition 2.18]{tang2024slicingcriterionindsmoothring}, we say that a ring map $R\to A$ is \emph{$d$-smooth}, if it is pseudocoherent and its cotangent complex $\cc_{A/R}$ has tor-amplitude $[0,d]$. Moreover, \emph{smooth} means $0$-smooth, and \emph{quasismooth} means $1$-smooth. This notion globalizes at least to prestack maps representable by schemes. 
\end{enumerate}

The above is not to introduce derived algebraic geometry but only to resolve possible ambiguity. For introductions, see \cite[\S 5.1]{cesnaviciusscholze2024purityflat}, \cite[Part 1]{jerc2025derivedalgebraicgeometrydmodules}, and \cite{sag}. 

\subsection{Acknowledgements}
I apologize for the long delay and the lack of higher composition coherence in this paper. The construction of the Gysin map itself came to me on a transatlantic flight in Spring 2023, but the complexity of the structures needed for proving its properties went out of my expectations several times, until I finally became content with a two-map composition compatibility, realizing that I would not be able to write down the full composition compatibility timely before my graduation. I thank Marc Hoyois for his continuous encouragement and our extensive discussions regarding this project, which essentially motivate my proofs for both the normalization and the composition of the Gysin map. I thank Toni Annala for many valuable discussions and many comments on a preliminary draft. I thank Elden Elmanto, Lars Hesselholt, Ryomei Iwasa, and Noam Zimhoni for valuable discussions and comments on an earlier draft. Finally, I thank Bhargav Bhatt, Federico Binda, Tess Bouis, Dustin Clausen, Alessandro D'Angelo, Rune Haugseng, Yuanyang Jiang, Ishan Levy, Shizhang Li, Yixiao Li, Jacob Lurie, Shiji Lyu, Linquan Ma, Lucas Mann, Akhil Mathew, Arnaud Mayeux, Arjun Nigam, Arpon Raksit, Brian Shin, Vladimir Sosnilo, Gleb Terentiuk, Vadim Vologodsky, Qixiang Wang, Kirsten Wickelgren, Yuchen Wu, Wei Yang, Bogdan Zavyalov, Daming Zhou, and Xinwen Zhu for helpful discussions. 

This work is my PhD thesis at Princeton University. I want to thank my advisor Bhargav Bhatt for his constant encouragement and support, and Princeton University for its support, during my entire PhD life. This work was firstly announced, and its earliest part was done, during the trimester program on the arithmetic of the Langlands program in Summer 2023. I thank the Hausdorff Research Institute for Mathematics for its hospitality and support during the trimester. A considerable part of this work was done during my several visits to the University of Regensburg. I thank Marc Hoyois for hosting me, the SFB 1085 Higher Invariants for support, and the University of Regensburg for hospitality and support. Besides, many valuable discussions occurred during my visits at various institutes. I would like to thank all of them, as well as my hosts Ruochuan Liu at Peking University, Daxin Xu at the Chinese Academy of Sciences, Hui Gao at the Southern University of Science and Technology, Elden Elmanto at the Univeristy of Toronto, Kirsten Wickelgren at Duke University, Toni Annala at the University of Chicago, Tong Liu at Purdue University, Ryomei Iwasa at the University of Copenhagen, Richard Taylor at Stanford University, and Bogdan Zavyalov at the University of Maryland.

\section{\texorpdfstring{$\bP^1$}{P1}-unstable motivic spectra}

In this section, we introduce a minimal version of motivic spectra where the Gysin map is available. It will be very similar to but a little more general than the version in \cite{annala-iwasa-2022} and \cite{annala-hoyois-iwasa-2023}, and will clarify the structures that we actually use in constructing the Gysin map. Impatient readers who know the theory in \cite{annala-hoyois-iwasa-2023} can skip this section and do everything with their motivic spectra, i.e.\ read all the $\cP$ as $\Sm$ and read all the $\PS(\cP_S)$ as the Nisnevich version of $\MS_S$. 

Here, all our schemes are quasicompact and quasiseparated. 

\subsection{Basic definitions}

\begin{definition}[motivic setup]\label{definition-motivic-setup}
    A \emph{$\qbu$-motivic setup}, or simply a \emph{motivic setup}, is a wide subcategory $\cP\subset\Ring$, i.e.\  a class $\cP$ of ring maps closed under finite composition, such that:
    \begin{enumerate}
        \item\label{definition-motivic-setup:product} $\cP$ is closed under base change;
        \item $\cP$ contains open immersions;
        \item $\cP$ is Zariski local on both the source and the target; accordingly, we can talk about a scheme map being in $\cP$, meaning Zariski locally so; 
        \item\label{definition-motivic-setup:accessible} for any ring $R$, the full subcategory $\cP_R$ of $\cP$-algebras over $R$ is small, and moreover the Zariski sheaf of categories $R\mapsto\cP_R$ is accessible;
        \item\label{definition-motivic-setup:projective} $\cP$ contains projective bundles, i.e.\ for any scheme $Y$ and any vector bundle $\cE/Y$, the projective bundle map $\bP(\cE)\to Y$ lies in $\cP$;
        \item\label{definition-motivic-setup:blowup} $\cP$ is closed under quasismooth blowups, i.e.\ for any scheme $S$ and any quasismooth closed immersion $Y\to X$ in $\cP_S$, the blowup $\Bl_YX$ (see \cite[Definition 4.3]{hekkingkhanrydh2025deformationnormalbundleblowups}, denoted as $\Bl_{Y/X}$ loc.\ cit.) lies in $\cP_S$. 
    \end{enumerate}
    Note that in the situation of (\ref{definition-motivic-setup:blowup}) the exceptional divisor $\E_YX$ also lies in $\cP_S$, due to (\ref{definition-motivic-setup:projective}) and quasismoothness: by the discussion at the end of \cite[\S 4.3]{hekkingkhanrydh2025deformationnormalbundleblowups}, $\E_YX=\bP(\N_{Y/X})$ is the projective bundle of a vector bundle. 
\end{definition}

\begin{example}\label{example-motivic-setup}
    Examples of motivic setups include:
    \begin{enumerate}
        \item\label{example-motivic-setup:smooth} The class $\Sm$ of smooth morphisms. In this case, for any scheme $S$, every map between smooth $S$-schemes is quasismooth. 
        \item More generally, for any $d\in\NN$, the class $d\text{-}\Sm$ of $d$-smooth morphisms as in \S \ref{dag-notation}(\ref{dag-notation:d-smooth}); when $d=1$ this is the usual quasismoothness. 
        \item The class of pseudo-coherent morphisms as in \S \ref{dag-notation}(\ref{dag-notation:pseudocoherent}). 
        \item The class of perfect morphisms as in \cite[\texttt{0685}]{stacks}. Note that the notion obviously generalizes to derived algebraic geometry. 
    \end{enumerate}
    In this paper, whenever we specialize, we only use (\ref{example-motivic-setup:smooth}). 
\end{example}

From now on, we fix a motivic setup $\cP$. 


\begin{definition}[quasismooth blowup excision]\label{definition-qbu}
    Let $S$ be a scheme. We say that a presheaf on $\cP_S$ satisfies \emph{quasismooth blowup excision}, if for any quasismooth closed immersion $Y\to X$ in $\cP_S$, its blowup square
    $$\begin{tikzcd}
        \E_YX\ar[r]\ar[d]&\Bl_YX\ar[d]\\
        Y\ar[r]&X
    \end{tikzcd}$$
    is sent to a pullback square. We let $\Sh_{\Zar,\qbu}(\cP_S)$ denote the category of Zariski sheaves of animas on $\cP_S$ that satisfy quasismooth blowup excision. In other words, $\Sh_{\Zar,\qbu}(\cP_S)$ is the accessible localization of $\Sh_\Zar(\cP_S)$ along morphisms of the form $\Bl_YX\sqcup_{\E_YX}Y\to X$. 
    
    $\Sh_{\Zar,\qbu}(\cP_S)$ comes with a presentably symmetric monoidal structure induced by the cartesian product of $\cP_S$, as well as a symmetric monoidal functor $\cP_S\to\Sh_{\Zar,\qbu}(\cP_S)$ obtained from Yoneda. We often abuse the notation to let $X\in\cP_S$ denote its image in $\Sh_{\Zar,\qbu}(\cP_S)$, and for a map $Y\to X$ in $\cP_S$, to let $X/Y$ denote its cofiber, i.e.\ the pushout $X\sqcup_YS$, in $\Sh_{\Zar,\qbu}(\cP_S)$. 
\end{definition}

\begin{remark}
    The reason that we only impose quasismooth blowup excision is very pratical: the non-$\bA^1$-invariant cohomology theories that we want to study in practice only satisfy quasismooth blowup excision; see \cite[\S 9.4]{bhatt-lurie-absoluteprismaticcohomology}. Of course, one can impose stronger excision conditions specified by larger classes of closed immersions, study the corresponding motivic theories, and construct the Gysin map as in Construction \ref{construction-gysin} below. However, even if your cohomology satisfies stronger excision conditions, the Thom spectrum occurred in the Gysin map still looks weird when the conormal module is not finite locally free; it seems that one should not think of it as a twist in any sense. 
\end{remark}

\begin{remark}
    In fact, our entire theory carries over for the base $S$ in Definition \ref{definition-qbu} being merely a stack (or even an analytic stack, as long as the blowup behaves the same way as it does here) rather than a scheme, with the category $\cP_S$ taken as that of stacks over $S$ with structure maps representable by scheme maps in $\cP$. The reason that we only allow a scheme base is twofold:
    \begin{itemize}
        \item The case of a scheme base already illustrates our ideas very well.
        \item We are not sure that the categories $\Sh_{\Zar,\qbu}(\cP_S)$ and $\PS(\cP_S)$ below still capture a good notion of motives over a stack $S$. 
    \end{itemize}
\end{remark}

We collect some categorical properties of $\Sh_{\Zar,\qbu}(\cP_S)$. 

\begin{proposition}\label{L-qbu-adjointable}
    The functor $\Sch^\op\to\CAlg(\PrL)$, $S\mapsto\Sh_{\Zar,\qbu}(\cP_S)$ is an accessible Zariski sheaf, and for any map $f\colon T\to S$ in $\cP$: 
    \begin{enumerate}
        \item The square
        $$\begin{tikzcd}
            \Sh_\Zar(\cP_S)\ar[r,"L_\qbu"]\ar[d,"f^*"']&\Sh_{\Zar,\qbu}(\cP_S)\ar[d,"f^*"]\\
            \Sh_\Zar(\cP_T)\ar[r,"L_\qbu"']&\Sh_{\Zar,\qbu}(\cP_T)
        \end{tikzcd}$$
        is adjointable: the $f^*$ functors have left adjoints $f_\sharp$, and the Beck--Chevalley transformation $f_\sharp\circ L_\qbu\to L_\qbu\circ f_\sharp$ is an equivalence. 
        \item\label{L-qbu-adjointable:projection-formula} The left adjoint $f_\sharp$ satisfies the projection formula: for $X\in\Sh_{\Zar,\qbu}(\cP_T)$ and $Y\in\Sh_{\Zar,\qbu}(\cP_S)$, the natural map
        $$f_\sharp(X\otimes f^*Y)\to f_\sharp X\otimes Y$$
        is an isomorphism. In other words, the adjuntion
        $$f_\sharp\colon\Sh_{\Zar,\qbu}(\cP_T)\rightleftarrows\Sh_{\Zar,\qbu}(\cP_S):\!f^*$$
        is $\Sh_{\Zar,\qbu}(\cP_S)$-linear, where $\Sh_{\Zar,\qbu}(\cP_S)$ acts on $\Sh_{\Zar,\qbu}(\cP_T)$ via $f^*$. 
    \end{enumerate}
\end{proposition}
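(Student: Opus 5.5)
Everything reduces to statements about the generating morphisms of the localization. First I treat the Zariski-sheaf statement for $S\mapsto\Sh_\Zar(\cP_S)$. Since $\cP$ is Zariski local on source and target and $R\mapsto\cP_R$ is an accessible Zariski sheaf of categories (Definition \ref{definition-motivic-setup}(\ref{definition-motivic-setup:accessible})), a Zariski sheaf on $\cP_S$ is the same as a compatible family of Zariski sheaves on the $\cP_{U}$ for a Zariski cover $\{U\to S\}$. This gives descent for $S\mapsto\Sh_\Zar(\cP_S)$ in $\CAlg(\PrL)$. To pass to $\Sh_{\Zar,\qbu}$, I observe that the generating maps $\Bl_YX\sqcup_{\E_YX}Y\to X$ are Zariski local on $X$, because blowups commute with flat (in particular open) base change. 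Hence a Zariski sheaf is $\qbu$-local over $S$ if and only if its restriction to each $\cP_U$ is. So the subcategories of local objects also glue, since limits in $\PrL$ of localizations compatible with restriction are computed as full subcategories. Accessibility is inherited from that of $R\mapsto\cP_R$.

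For (1), for $f\colon T\to S$ in $\cP$ the functor $f^*$ on Zariski sheaves is restriction along $\cP_T\to\cP_S$, $X\mapsto X\times_ST$ (precomposition). It has a left adjoint $f_\sharp$, given by left Kan extension along $\cP_T\to\cP_S$, $V\mapsto V$ (composition with $f$, which lies in $\cP$ by closure under composition), followed by Zariski sheafification. In fact $f^*$ is then simply restriction along this forgetful functor, and $f_\sharp$ sends representables $V$ to $V$. Adjointability is equivalent to two facts: $f^*$ preserves $\qbu$-local objects, and $f_\sharp$ sends $\qbu$-equivalences to $\qbu$-equivalences. The second follows from the first by adjunction. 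For the first, I check that $f_\sharp$ of a generating map $\Bl_YX\sqcup_{\E_YX}Y\to X$ over $T$ is the same blowup square viewed over $S$. This holds because $f_\sharp$ preserves colimits and representables, and because a quasismooth closed immersion $Y\to X$ in $\cP_T$ is also one in $\cP_S$; the blowup is intrinsic to $Y\to X$. So $f^*$ of a local object $F$ is local, by the adjunction $\Map(f_\sharp(-),F)=\Map(-,f^*F)$. The Beck--Chevalley map $f_\sharp L_\qbu\to L_\qbu f_\sharp$ is then an equivalence, since both sides are left adjoint to the same functor $f^*$ on local objects.

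For (2), both sides of $f_\sharp(X\otimes f^*Y)\to f_\sharp X\otimes Y$ preserve colimits in each variable, since the tensor product on $\Sh_{\Zar,\qbu}$ is presentably symmetric monoidal. It therefore suffices to take $X=L_\qbu V$ and $Y=L_\qbu W$ for representables $V\in\cP_T$ and $W\in\cP_S$. Using (1) to move $L_\qbu$ outside and the symmetric monoidality of $L_\qbu$, the problem reduces to the presheaf statement $V\times_T(W\times_ST)\cong V\times_SW$, which is a tautology. The main obstacle is only bookkeeping: checking that sheafification and $L_\qbu$ commute with $f_\sharp$ and $\otimes$ at the level of generators. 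I would handle this by noting that $\qbu$-equivalences and Zariski-local equivalences are stable under $-\times W$, since a blowup base changes along the flat map $X\times_SW\to X$.
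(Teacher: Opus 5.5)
Your proposal is correct and follows essentially the same route as the paper. In both, $f^*$ is restriction along the forgetful functor $\cP_T\to\cP_S$, so it preserves Zariski descent and quasismooth blowup excision; you phrase this dually, via $f_\sharp$ sending each generating blowup square over $T$ to the same square over $S$. This gives the Beck--Chevalley equivalence, and the projection formula is then checked on representables, using that all functors involved commute with colimits and with $L_\qbu$. One small slip: for general $\cP$ the map $X\times_SW\to X$ need not be flat, but this is harmless because derived blowups commute with arbitrary base change (cf.\ Remark \ref{functoriality-blowup-poset}).
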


\begin{proof}
    That $S\mapsto\Sh_{\Zar,\qbu}(\cP_S)$ is an accessible Zariski sheaf follows from the facts that $S\mapsto\Sh_\Zar(\cP_S)$ obviously is and that quasismooth blowup excision is a Zariski local condition. 
    
    By definition, for a presheaf $F$ on $\cP_S$, the presheaf $f^*F$ on $\cP_T$ is given by 
    $$(f^*F)(U)=\colim_{\substack{V\in\cP_S\\ U\to V\times_ST\text{ over } T}}F(V)=\colim_{\substack{V\in\cP_S\\ U\to V\text{ over } S}}F(V)$$
    for $U\in\cP_T$. When $f\in\cP$, the index category of the above colimit has a final object $U\in\cP_S$, and the formula is just $(f^*F)(U)=F(U)$. Note that in this case the presheaf $f^*$ preserves the conditions of Zariski descent and quasismooth blowup excision, so it gives the $f^*$ functors on both $\Sh_\Zar$ and $\Sh_{\Zar,\qbu}$. From this we easily see that $f^*$ has a left adjoint that sends the image of $U\in\cP_T$ in $\Sh_{\Zar,\qbu}(\cP_T)$ to that of $U\in\cP_S$ in $\Sh_{\Zar,\qbu}(\cP_S)$, which gives the desired adjointability. 

    Now everything in the projection formula commutes with $L_\qbu$, so it follows from the projection formula for Zariski sheaves, which is easy: for example, one can verify the formula for representable sheaves and deduce the general case from the fact that everything in the formula commutes with colimits. 
\end{proof}

Now we define our version of unstable $\bP$-motivic spaces and spectra. 

\begin{definition}[$\bP$-motivic spectra]\label{definition-motivic-space}
    We let $S\mapsto\PS(\cP_S)$ denote the initial accessible Zariski sheaf $\Sch^\op\to\CAlg(\PrL_\st)$ under $S\mapsto\Sh_{\Zar,\qbu}(\cP_S;\Sp)$ with the following property and additional structure: 
    \begin{enumerate}
        \item\label{definition-motivic-space:adjointable} for any map $f\colon T\to S$ in $\cP$, the square
        $$\begin{tikzcd}
            \Sh_{\Zar,\qbu}(\cP_S)\ar[r,"L_\bP"]\ar[d,"f^*"']&\PS(\cP_S)\ar[d,"f^*"]\\
            \Sh_{\Zar,\qbu}(\cP_T)\ar[r,"L_\bP"']&\PS(\cP_T)
        \end{tikzcd}$$
        is adjointable: the $f^*$ functors have left adjoints $f_\sharp$, and the Beck--Chevalley transformation $f_\sharp\circ L_\bP\to L_\bP\circ f_\sharp$ is an equivalence;
        \item\label{definition-motivic-space:p-homotopy} functorially in $S\in\Sch$ and $\cL\in\B\Gm(S)$, there is a chosen homotopy between the open immersion $j\colon\bV(\cL)\to\bP(\cL\oplus\cO)$ and the composition of the projection map and the zero section
        $$z\colon\bV(\cL)\to S\to\bP(\cL\oplus\cO)$$
        in the undercategory $\PS(\cP_S)_{S/}$, where we use the zero section as structure maps for both $\bV(\cL)$ and $\bP(\cL\oplus\cO)$; in other words, there is a chosen commutative square
        $$\begin{tikzcd}
            (\bullet\rightrightarrows\bullet)\times\B\Gm(S)\ar[r]\ar[d,"{(j,z)}"']&(\bullet\to\bullet)\times\B\Gm(S)\ar[d]\\
            \Sh_{\Zar,\qbu}(\cP_S;\Sp)_{S/}\ar[r,"L_\bP"']&\PS(\cP_S)_{S/}
        \end{tikzcd}$$
        of category-valued big Zariski sheaves, where $\bullet\rightrightarrows\bullet$ and $\bullet\to\bullet$ denote the categories as they indicate, and the left arrow sends the two objects to $\bV(\cL)$ and $\bP(\cL\oplus\cO)$ and the two maps $j$ and $z$ described above.
    \end{enumerate}
    We call $\PS(\cP_S)$ the category of \emph{$\bP$-unstable motivic spectra}. The ``$\bP$-unstable'' here emphasizes that the theory is \emph{not} $\bP^1$-stable, i.e.\ that the object $\bP^1/\infty$ is not tensor-invertible. 

    When $\cP=\Sm$, we omit the $\cP$ in the notation and write $\PS(S)$. As before, we often abuse the notation to let $X$ in $\cP_S$, $\Sh_{\Zar,\qbu}(\cP_S)$, or $\Sh_{\Zar,\qbu}(\cP_S;\Sp)$ denote its image in $\PS(\cP_S)$.
\end{definition}

\begin{remark}\label{p-homotopy-unpointed-statement}
    By the definition of maps in an undercategory, (\ref{definition-motivic-space:p-homotopy}) is saying that, in $\PS(\cP_S)$, we have added a homotopy $\eta\colon j=z\colon\bV(\cL)\to\bP(\cL\oplus\cO)$, as well as a homotopy between its precomposition with the zero section $S\to\bV(\cL)$ and the identity of the zero section $S=\bP(\cO)\to\bP(\cL\oplus\cO)$. 
\end{remark}

\begin{remark}\label{f-sharp-p-homotopy}
    Using the $f_\sharp$ given by (\ref{definition-motivic-space:adjointable}), we see that (\ref{definition-motivic-space:p-homotopy}) also holds for any $T\to S$ in $\cP$ and $\cL\in\B\Gm(T)$, which means that
    \begin{itemize}
        \item in $\PS(\cP_S)$, there is a canonical homotopy $\eta\colon j=z\colon\bV_T(\cL)\to\bP_T(\cL\oplus\cO)$, as well as a canonical homotopy of homotopies between its precomposition with the zero section $T\to\bV_T(\cL)$ and the identity of the zero section $T=\bP_T(\cO)\to\bP_T(\cL\oplus\cO)$. 
    \end{itemize}
    In particular, in $\PS(\cP_S)$, after composed with $j\colon\bV_T(\cL)\to\bP_T(\cL\oplus\cO)$, any section $s\colon T\to\bV_T(\cL)$ is canonically homotopic to the zero section, and when $s$ is the zero section this homotopy is canonically the identity. 
    
    In our Construction \ref{construction-gysin} of the Gysin map, we will only use this remark in the case $\cL=\cO$ and $s=1$. However, we will need the full generality of this remark through Proposition \ref{proposition-unstable-gysin} when proving Theorem \ref{gysin-normalization}, the normalization of the Gysin map (i.e.\ calculating the Gysin map of the zero section inclusion into a vector bundle).
\end{remark}

\begin{remark}[unstable version]
    Our previous arXiv version stated Definition \ref{definition-motivic-space} unstably, but actually that was morally wrong, due to the confusion between $S$ and the final object (which is not necessarily $S$ as $\Sh_{\Zar,\qbu}(\cP_S)\to\PS(\cP_S)$ does not preserve the final object). The author thanks Marc Hoyois for pointing this out. 

    However, there is an unstable but pointed version of Definition \ref{definition-motivic-space} such that most of the results in this section still hold, e.g.\ it maps to $\MH_*$ generalizing Proposition \ref{compare-ph-mv}, and maps to $\MS^\un$ generalizing Proposition \ref{compare-ph-ms}. For this, one has to replace $\bP(\cL\oplus\cO)$ by $\frac{\bP(\cL\oplus\cO)}{\bP(\cL)}$ in Definition \ref{definition-motivic-space}(\ref{definition-motivic-space:p-homotopy}) and in the proofs of several propositions below. Since the main result of this paper, the Gysin map, can only be constructed in the stable setting, we choose not to elaborate on this unstable version. 
\end{remark}

\subsection{Comparing with other motivic theories}
The following propositions show that our $\bP$-unstable motivic spectrum is more general than both the $\bA^1$-motivic $\Sone$-spectrum in \cite{morel-voevodsky-a1} and the $\bP^1$-motivic spectrum in \cite{annala-hoyois-iwasa-2023}. 

\begin{proposition}\label{compare-ph-mv}
    For a scheme $S$, let $\MH(S)$ denote the category of $\bA^1$-motivic spaces, i.e.\ the category of $\bA^1$-invariant Nisnevich sheaves on $\Sm_S$, and let $\SH^{\Sone}(S)$ denote the category of $\bA^1$-motivic $\Sone$-spectra, i.e.\ the stabilization $\MH(S)\otimes\Sp$. Then there is a unique functor $\PS(S)\to\SH^{\Sone}(S)$ that is both functorial in $S$ and compatible with functors from $\Sh_\Zar(\Sm_S)$. 
\end{proposition}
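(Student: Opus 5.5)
The approach is to use the universal property of $\PS$ from Definition \ref{definition-motivic-space}. Since $S\mapsto\PS(S)$ is the \emph{initial} accessible Zariski sheaf $\Sch^\op\to\CAlg(\PrL_\st)$ under $S\mapsto\Sh_{\Zar,\qbu}(\Sm_S;\Sp)$ carrying properties (\ref{definition-motivic-space:adjointable}) and (\ref{definition-motivic-space:p-homotopy}), it suffices to do three things. First, exhibit $S\mapsto\SH^{\Sone}(S)$ as an accessible Zariski sheaf under $S\mapsto\Sh_{\Zar,\qbu}(\Sm_S;\Sp)$. Second, check the adjointability (\ref{definition-motivic-space:adjointable}). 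Third, supply the structure (\ref{definition-motivic-space:p-homotopy}). Uniqueness should then come from initiality, provided the space of $\bP$-homotopy structures on $\SH^{\Sone}$ is contractible.

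For the first step, I would use the localization $L_{\Nis,\bA^1}\colon\Sh_\Zar(\Sm_S;\Sp)\to\SH^{\Sone}(S)$, which is symmetric monoidal and natural in $S$. It remains to see that this functor inverts the quasismooth blowup excision maps $\Bl_YX\sqcup_{\E_YX}Y\to X$ for a closed immersion $Y\to X$ of smooth $S$-schemes. This is the classical fact that $\bA^1$-invariant Nisnevich sheaves satisfy smooth blowup excision. The argument is that by Morel--Voevodsky purity, $X/(X\setminus Y)\simeq\Th_Y(\N)$ and $\Bl_YX/(\Bl_YX\setminus\E_YX)\simeq\Th_{\E}(\cO(-1))$. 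The blowup square therefore reduces, Nisnevich locally, to the linear case $Y\to\bV_Y(\N)$, where it follows from the projective bundle formula, or equivalently from the cofiber of $\bP(\N)\to\bP(\N\oplus\cO)$. Zariski descent of $\SH^{\Sone}$ and its accessibility are standard. For adjointability along smooth $f$, I would use that $f_\sharp$ exists on $\SH^{\Sone}$ and commutes with $L_{\Nis,\bA^1}$, since $f^*$ preserves $\bA^1$-invariant Nisnevich sheaves. Combining this with Proposition \ref{L-qbu-adjointable} gives the compatibility $f_\sharp L=Lf_\sharp$.

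The third step, the $\bP$-homotopy structure, is where I expect the real work. We need a homotopy $j\simeq z\colon\bV(\cL)\to\bP(\cL\oplus\cO)$ in $\SH^{\Sone}(S)_{S/}$, natural in $(S,\cL)\in\B\Gm$, together with the compatibility on the zero section. My plan is to note that in $\SH^{\Sone}(S)$ the projection $p\colon\bV(\cL)\to S$ is an equivalence, with inverse the zero section $0$. The space of maps $\bV(\cL)\to\bP(\cL\oplus\cO)$ under $S$ is then equivalent to the space of maps $S\to\bP(\cL\oplus\cO)$ under $S$, and that space is contractible. Both $j$ and $z=z_S\circ p$ restrict along $0$ to the zero section $z_S$. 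Hence $j$ and $z$ are homotopic via a canonical (contractible) choice, which is automatically natural in $(S,\cL)$ and compatible with the zero section as in Remark \ref{p-homotopy-unpointed-statement}. This also gives uniqueness: the relevant space of structures is contractible, so the map from the initial object is unique.

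The main obstacle is making this naturality precise as a map of category-valued big Zariski sheaves. One must check that the undercategory formulation indeed reduces to the contractible mapping space. I would handle this by showing that the restriction functor $\Fun([1],-)\to\Fun(\bullet\rightrightarrows\bullet,-)$ composed with $L_{\Nis,\bA^1}$ has contractible fibers over $(j,z)$ precisely because $p$ is inverted. This is a pointwise statement, so it glues over $\B\Gm$ automatically.
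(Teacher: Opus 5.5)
Your proof is correct, and its skeleton is the paper's. You verify blowup excision, check the Zariski-sheaf and adjointability condition, and supply the $\bP$-homotopy. Uniqueness comes from the fact that the zero section $S\to\bV(\cL)$ is an $\bA^1$-equivalence, so the relevant space of homotopies in $\SH^{\Sone}(S)_{S/}$ is contractible. That uniqueness argument and your adjointability argument are exactly what the paper does.

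The one genuine difference is the blowup-excision step. The paper appeals to \cite[Proposition 2.2]{annala-hoyois-iwasa-2023}, a formal Nisnevich-local reduction of smooth blowup excision to the blowup square of $0\colon Y\to\bA^n_Y$. There both horizontal maps $Y\to\bA^n_Y$ and $\bP^{n-1}_Y\to\Bl_Y\bA^n_Y$ are zero sections of vector bundles, hence inverted, so the square is trivially a pushout. You instead use Morel--Voevodsky homotopy purity, via deformation to the normal cone and naturally in the excessive map $(\Bl_YX,\E_YX)\to(X,Y)$, to identify $\cofib(\Bl_YX\to X)$ with its linear counterpart. That is valid, but it spends a deep theorem where a formal excision argument suffices. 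Also, you need only one of ``purity'' or ``Nisnevich locally'' to reach the linear case, not both.

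Finally, the linear case needs no projective bundle formula. The maps $Y\to\bV_Y(\N)$ and $\bP(\N)\to\bV_{\bP(\N)}(\cO(-1))$ are zero sections, hence equivalences in $\SH^{\Sone}(S)$. This is precisely the paper's one-line observation.
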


\begin{proof}
    It suffices to check the following three things:
    \begin{enumerate}
        \item Sheaves in $\SH^{\Sone}(S)$ satisfies quasismooth blowup excision. By \cite[Proposition 2.2]{annala-hoyois-iwasa-2023}, it suffices to check that the blowup square of $0\colon Y\to\bA^n_Y$ is sent to a pullback square. This is easy, as in this case both $Y\to \bA^n_Y$ and $\bP^{n-1}_Y=\E_Y\bA^n_Y\to\Bl_Y\bA^n_Y=\bV_{\bP^{n-1}_Y}(\cO(1))$ are sent to isomorphisms. 
        \item $\SH^{\Sone}$ is an accessible Zariski sheaf, and for any smooth map $T\to S$, 
        $$\begin{tikzcd}
            \Sh_{\Zar,\qbu}(\Sm_S;\Sp)\ar[r,"L_{\bA^1}"]\ar[d,"f^*"']&\SH^{\Sone}(S)\ar[d,"f^*"]\\
            \Sh_{\Zar,\qbu}(\Sm_T;\Sp)\ar[r,"L_{\bA^1}"']&\SH^{\Sone}(T)
        \end{tikzcd}$$
        is adjointable. For this, it suffices to show that
        $$\begin{tikzcd}
            \Sh_{\Zar,\qbu}(\Sm_S)\ar[r,"L_{\bA^1}"]\ar[d,"f^*"']&\MH(S)\ar[d,"f^*"]\\
            \Sh_{\Zar,\qbu}(\Sm_T)\ar[r,"L_{\bA^1}"']&\MH(T)
        \end{tikzcd}$$
        is adjointable, as then we can base change this $\Ani$-linear adjunction to $\Sp$ to conclude. This follows by the exact same argument as Proposition \ref{L-qbu-adjointable}. 
        \item There is a unique nullhomotopy of the open immersion $j\colon\bV(\cL)\to\bP(\cL\oplus\cO)$ in $\SH^{\Sone}(S)_{S/}$. This is obvious as the zero section map $S\to\bV(\cL)$ is an isomorphism in $\MH(S)$ and hence in $\SH^{\Sone}(S)$.
        \qedhere
    \end{enumerate}
\end{proof}

\begin{proposition}\label{compare-ph-ms}
    For a scheme $S$, let $\MS^\un(\cP_S)$ denote the presentably symmetric monoidal category obtained from the pointed category $\Sh_{\Zar,\qbu}(\cP_S)_*$ by smash-inverting $\bP^1_S/\infty$, and let $\MS(\cP_S)=\MS^\un(\cP_S)\otimes\Sp$. Then there is a canonical functor $\PS(\cP_S)\to\MS(\cP_S)$, functorial in $S$ and compatible with functors from $\Sh_{\Zar,\qbu}(\cP_S)$. 

    In particular, when $\cP=\Sm$, with the Nisnevich version of the theory in \cite{annala-hoyois-iwasa-2023}, we have a canonical functor $\PS(S)\to\MS_S$. 
\end{proposition}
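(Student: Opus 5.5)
Since $\PS(\cP_S)$ is defined by a universal property (Definition \ref{definition-motivic-space}), the functor $\PS(\cP_S)\to\MS(\cP_S)$ comes from checking that $S\mapsto\MS(\cP_S)$, together with the composite $\Sh_{\Zar,\qbu}(\cP_S;\Sp)\to\MS(\cP_S)$, has the required property and carries the required structure. The check follows the three-step pattern of Proposition \ref{compare-ph-mv}. The composite is $\Sh_{\Zar,\qbu}(\cP_S;\Sp)=\Sh_{\Zar,\qbu}(\cP_S)_*\otimes\Sp\to\MS^\un(\cP_S)\otimes\Sp$, a map in $\CAlg(\PrL_\st)$. The Nisnevich statement for $\cP=\Sm$ then follows by composing with the Nisnevich sheafification $\MS(\Sm_S)\to\MS_S$, which is compatible with everything in sight.

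\textbf{Step 1: sheaf property.} $S\mapsto\MS(\cP_S)$ should be an accessible Zariski sheaf. Formally inverting $\bP^1/\infty$ is a colimit in $\CAlg(\PrL)$ along the sequence $\Sigma_{\bP^1}$. Since $\bP^1/\infty$ is pulled back from $\Spec\ZZ$, inverting it commutes with the limits defining Zariski descent: limits in $\PrL$ of diagrams with left adjoints compute as colimits along right adjoints, and $\otimes$-inverting a symmetric object can be written as a sequential limit along $\Omega_{\bP^1}$. The same argument handles $\otimes\Sp$, and accessibility is inherited. The claim then follows from Proposition \ref{L-qbu-adjointable}.

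\textbf{Step 2: adjointability.} For $f\colon T\to S$ in $\cP$, I would start from the $\Sh_{\Zar,\qbu}(\cP_S)$-linear adjunction $f_\sharp\dashv f^*$ of Proposition \ref{L-qbu-adjointable}(\ref{L-qbu-adjointable:projection-formula}). Because it is linear, it can be base changed along the symmetric monoidal localization $\Sh_{\Zar,\qbu}(\cP_S)_*\to\MS^\un(\cP_S)$, giving $\MS^\un(\cP_T)=\Sh_{\Zar,\qbu}(\cP_T)_*\otimes_{\Sh_{\Zar,\qbu}(\cP_S)_*}\MS^\un(\cP_S)$. Base changing further along $-\otimes\Sp$ yields a $\MS(\cP_S)$-linear adjunction $f_\sharp\dashv f^*$ compatible with the original one, and the Beck--Chevalley map is an equivalence by construction. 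I expect this step to be the main technical point: one has to see that the relative tensor product really computes $\MS^\un(\cP_T)$, i.e.\ that $\bP^1_T/\infty=f^*(\bP^1_S/\infty)$. That equality holds, so the step should go through.

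\textbf{Step 3: the $\bP$-homotopy.} In $\MS(\cP_S)$ we need a homotopy $j\simeq z$ in the undercategory of $S$, functorial in $S$ and $\cL\in\B\Gm(S)$. The cofiber sequence $\bV(\cL)\to\bP(\cL\oplus\cO)\to\bP(\cL\oplus\cO)/\bV(\cL)$ identifies the target of $\cofib(j)$. The key input is the projective bundle formula in $\MS$ of \cite{annala-hoyois-iwasa-2023}. It splits $\bP(\cL\oplus\cO)\simeq S\oplus\Th(\cL)$ in a way that is canonical and functorial in $\cL$, with $\bP(\cL)=\infty\to\bP(\cL\oplus\cO)$ inducing the splitting onto the $S$-summand. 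Under this splitting, $\bV(\cL)\simeq\bP(\cL\oplus\cO)\setminus\bP(\cL)$, and the elementary blowup excision yields $\bP(\cL\oplus\cO)/\bV(\cL)\simeq\Th(\cL)$. The composite $\bV(\cL)\to\bP(\cL\oplus\cO)\to\Th(\cL)$ should then be canonically null, since it factors through the quotient by $\bP(\cL)$ and $\bV(\cL)\to\bP(\cL\oplus\cO)/\bP(\cL)$ is null via the $\bP^1$-homotopy inherent to $\MS$. Hence $j$ factors through the $S$-summand, giving $j\simeq z$.

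Compatibility with the zero section reduces to the fact that $S\to\bV(\cL)\to S$ is the identity. To make this functorial in $\B\Gm(S)$, I would produce the homotopy once for the universal line bundle over $\B\Gm$ and pull it back, using Step 1 to work over the stack. Should the projective bundle formula not be available in that form, the fallback is to reduce to $\cL=\cO$ Zariski-locally and glue. However, gluing homotopies coherently needs the functoriality anyway, so the universal-case argument is preferable.
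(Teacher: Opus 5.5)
Your Steps 1 and 2 are fine, and Step 2 is the paper's argument. The paper tensors with $\Sp$ before inverting $\bP^1/\infty$, which makes no difference. In Step 1, the telescope formula tacitly assumes $\bP^1/\infty$ is already symmetric in $\Sh_{\Zar,\qbu}(\cP_S)_*$. You can avoid this: the descent limits are taken along functors $f^*$ with linear left adjoints $f_\sharp$, so they are colimits in $\PrL$ and are preserved by $-\otimes_{\Sh_{\Zar,\qbu}(\cP_S;\Sp)}\MS(\cP_S)$.

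The substance of the proposition is Step 3, and there your argument has a genuine gap. First, the identification $\bP(\cL\oplus\cO)/\bV(\cL)\simeq\Th(\cL)$ ``by elementary blowup excision'' is false. Blowup excision identifies $X/Z$ with $\Bl_ZX/\E_ZX$; it never compares the quotient by an open complement with the quotient by the closed piece. That comparison is homotopy purity, which fails without $\bA^1$-invariance (see the remark following Theorem~\ref{gysin-introduction}). Concretely, for $\cL=\cO$, once $j\simeq z$ is known one finds $\bP^1/\bA^1\simeq\bP^1/\infty\oplus\Sigma(\bA^1/0)$. In general $\bA^1/0\neq0$; algebraic $\K$-theory over a non-$\K$-regular base detects it. Second, your fallback is circular: you say $\bV(\cL)\to\bP(\cL\oplus\cO)/\bP(\cL)$ is null ``via the $\bP^1$-homotopy inherent to $\MS$'', but that composite is exactly the map whose nullity is at stake. $\MS(\cP_S)$ as defined only inverts $\bP^1/\infty$ and has no built-in $\bP^1$-homotopy.

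Such a homotopy is a theorem of Annala--Hoyois--Iwasa, proved from $\bP^1$-stability together with blowup excision, and it is exactly what the paper invokes. \cite[Proposition 4.9]{annala-hoyois-iwasa-2023} with $\cE=\cO$ and $\cF=\cL$ gives a canonical nullhomotopy of $j$, pointed at the zero sections and natural in $(S,\cL)$. This is the datum required by Definition~\ref{definition-motivic-space}(\ref{definition-motivic-space:p-homotopy}).

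Even granting that $\bV(\cL)\to\Th(\cL)$ is null, your splitting detour does not produce this datum. The $S$-summand is split off by the section at $\infty$. So factoring $j$ through it gives $j\simeq\infty\circ\pi$, with $\pi\colon\bV(\cL)\to S$ the projection, rather than $z=0\circ\pi$, and identifying the two sections needs the nullhomotopy once more. The undercategory condition is also a $2$-cell (Remark~\ref{p-homotopy-unpointed-statement}). The restriction of the homotopy along $0\colon S\to\bV(\cL)$ must be trivialized, and this does not follow from $S\to\bV(\cL)\to S$ being the identity.

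Both points come for free once the imported nullhomotopy is pointed. At that stage the splitting is superfluous, and so are the projective bundle formula (splitting off a section is formal in a stable category) and the universal-line-bundle argument for functoriality in $\B\Gm(S)$.
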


\begin{proof}
    It suffices to check the following two things:
    \begin{enumerate}
        \item $\MS(\cP_S)$ is an accessible Zariski sheaf, and for any $T\to S$ in $\cP$, the square
        $$\begin{tikzcd}
            \Sh_{\Zar,\qbu}(\cP_S;\Sp)\ar[r,"\Sigma^\infty_{\bP^1}"]\ar[d,"f^*"']&\MS(\cP_S)\ar[d,"f^*"]\\
            \Sh_{\Zar,\qbu}(\cP_T;\Sp)\ar[r,"\Sigma^\infty_{\bP^1}"']&\MS(\cP_T)
        \end{tikzcd}$$
        is adjointable. To prove this, we base change the $\Sh_{\Zar,\qbu}(\cP_S)$-linear adjunction 
        $$f_\sharp\colon\Sh_{\Zar,\qbu}(\cP_T)\rightleftarrows\Sh_{\Zar,\qbu}(\cP_S):\!f^*$$ 
        in Proposition \ref{L-qbu-adjointable}(\ref{L-qbu-adjointable:projection-formula}) along $\Sh_{\Zar,\qbu}(\cP_S)\to\Sh_{\Zar,\qbu}(\cP_S;\Sp)$, and see that the adjunction
        $$f_\sharp\colon\Sh_{\Zar,\qbu}(\cP_T;\Sp)\rightleftarrows\Sh_{\Zar,\qbu}(\cP_S;\Sp):\!f^*$$
        is $\Sh_{\Zar,\qbu}(\cP_S;\Sp)$-linear. Since $f^*(\bP^1_S/\infty)=\bP^1_T/\infty$, we have 
        $$\MS(\cP_T;\Sp)=\Sh_{\Zar,\qbu}(\cP_T;\Sp)\otimes_{\Sh_{\Zar,\qbu}(\cP_S;\Sp)}\MS(\cP_S);$$
        base changing the above $\Sh_{\Zar,\qbu}(\cP_S;\Sp)$-linear adjunction $(f_\sharp,f^*)$ along $\Sh_{\Zar,\qbu}(\cP_S;\Sp)\to\MS(\cP_S)$ gives the desired adjointability. 
        \item In $\MS(\cP_S)$, there is a canonical pointed nullhomotopy of the open immersion $j\colon\bV(\cL)\to\bP(\cL\oplus\cO)$, where both are viewed as pointed at $0$. This is exactly \cite[Proposition 4.9]{annala-hoyois-iwasa-2023} with $\cE=\cO$ and $\cF=\cL$.
        \qedhere
    \end{enumerate}
\end{proof}

\begin{remark}[comparing with {\cite[Definition 3.6]{bouis2025beilinsonlichtenbaumphenomenonmotiviccohomology}}]
    Bouis and Kundu define in \cite[Definition 3.6]{bouis2025beilinsonlichtenbaumphenomenonmotiviccohomology} a notion called \emph{deflatability} for presheaves on schemes: for a scheme $S$, they call a presheaf $F$ on $\cP_S$ \emph{deflatable}, if functorially in $T\in\cP_S$ there exists a homotopy between the maps
    $$F(\bP^1_T)\rightrightarrows F(\bA^1_T),$$
    where the two maps are induced by $\bA^1_T=\bP^1_T\setminus\infty\subset\bP^1_T$ and $\bA^1_T\to T\xrightarrow{\infty}\bP^1_T$, respectively. At least a priori, this is a weaker condition (or structure) than our unstable $\bP^1$-motivic spectra. More precisely, if a presheaf of spectra $F$ factors through the canonical functor $\cP_S\to\PS(\cP_S)$, then $F$ is deflatable, because: 
    \begin{itemize}
        \item for such $F$, Remark \ref{f-sharp-p-homotopy} implies that the map $F(\bP^1_T)\to F(\bA^1_T)$ induced by $\bA^1_T=\bP^1_T\setminus\infty\subset\bP^1_T$ is canonically homotopic to that induced by $\bA^1_T\to T\xrightarrow{0}\bP^1_T$, which is then canonically homotopic to that induced by $\bA^1_T\to T\xrightarrow{\infty}\bP^1_T$ again by Remark \ref{f-sharp-p-homotopy}, because both $0$ and $\infty$ in $\bP^1_T$ factors through the inclusion $\bA^1_T=\bP^1_T\setminus1\subset\bP^1_T$. 
    \end{itemize}
    The author thanks Toni Annala for pointing out \cite[Definition 3.6]{bouis2025beilinsonlichtenbaumphenomenonmotiviccohomology} to him. 
\end{remark}

\begin{remark}[changing topology]\label{changing-topology}
    Let $\tau$ be a topology on $\Sch$ such that:
    \begin{enumerate}
        \item $\tau$ is finer than the Zariski topology;
        \item every $\tau$-cover can be refined by one with maps in $\cP$;
        \item $S\mapsto\Sh_\tau(\cP_S)$ is an accessible Zariski sheaf.
    \end{enumerate}
    Then we have an obvious variant of Definition \ref{definition-motivic-space} with $\Sh_{\tau,\qbu}(\cP_S)$ in place of $\Sh_{\Zar,\qbu}(\cP_S)$, and all results in this paper should carry over. For example, $\tau=\Nis$ might be more morally correct for motivic homotopy theory. However, in this paper we only need the Zariski descent, so we choose not to include this in our formulation. 
\end{remark}

\begin{remark}
    During the preparation of this paper, Noam Zimhoni has proposed to the author another candidate for an unstable $\bP^1$-motivic theory: 

    Let $\Ani^\hypo\subset\Ani$ denote the full subcategory of animas whose every component has a hypoabelian fundamental group, i.e.\ having no perfect subgroup. Then the inclusion has a left adjoint $(-)^+$ which is Quillen's plus construction; see \cite{nikolaus-plus-construction}. Now consider the initial presentable category $\bP^+(\cP_S)$ fitting into the diagram
    $$\begin{tikzcd}
        \vect_S\ar[r]\ar[d,"\bP(-)"']&\vect_S^+\ar[d]\\
        \Sh_{\Zar,\qbu}(\cP_S)\ar[r]&\bP^+(\cP_S)
    \end{tikzcd}$$
    where $\vect_S$ denotes the anima of vector bundles over $S$ and $\vect_S^+$ its plus construction. This seems to be a plausible proposal because
    $$\begin{bmatrix}
        1&1\\
        0&1
    \end{bmatrix}\in\pi_1(\vect_S,\cO^{\oplus2})$$
    is often killed in $\pi_1(\vect_S^+,\cO^{\oplus2})$, thus giving a $\bP^1$-homotopy between $0$ and $1$ in $\bP^+(\cP_S)$. If this is the case, our $\PS(\cP_S)$ should map to $\bP^+(\cP_S)\otimes\Sp$. 
\end{remark}

\subsection{Constructions in \texorpdfstring{$\PS$}{PS}}
We develop constructions in $\bP$-unstable motivic spaces that will be used in our study of Gysin maps, starting with the Thom space. 

\begin{definition}[Thom space]\label{definition-thom-space}
    Let $T$ be a scheme and let $\cE$ be a finite locally free $\cO_T$-module. We define the \emph{Thom space} $\Th(\cE)$ as the quotient $\frac{\bP_T(\cE\oplus\cO)}{\bP_T(\cE)}$ taken in $\Sh_{\Zar,\qbu}(\cP_T)$ or $\PS(\cP_T)$. When $T\to S$ is a map in $\cP$, we often abuse the notation and let $\Th_T(\cE)$ denote the quotient $\frac{\bP_T(\cE\oplus\cO)}{\bP_T(\cE)}$ taken in $\Sh_{\Zar,\qbu}(\cP_S)$ or $\PS(\cP_S)$, also called the Thom space. By Definition \ref{definition-motivic-space}(\ref{definition-motivic-space:adjointable}), this is the image under $f_\sharp$ of the Thom space taken in $\Sh_{\Zar,\qbu}(\cP_T)$ or $\PS(\cP_T)$. 
\end{definition}

\begin{proposition}\label{thom-space-additive}
    $\Th(-)$ can be canonically enhanced to a symmetric monoidal functor $(\Vect_T^\emb,\oplus)\to\Sh_{\Zar,\qbu}(\cP_T)$, where $\Vect_T^\emb$ denotes the opposite of the category of finite locally free $\cO_T$-modules with morphisms surjections. 
\end{proposition}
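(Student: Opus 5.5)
Functoriality comes first. A surjection $\cE\to\cF$ of finite locally free modules induces a closed immersion $\bP_T(\cF\oplus\cO)\to\bP_T(\cE\oplus\cO)$. This immersion carries $\bP_T(\cF)$ into $\bP_T(\cE)$, so it induces a map of cofibers $\Th(\cF)\to\Th(\cE)$, covariant in $\Vect_T^\emb$. To make this an honest $\infty$-functor, I will define $\Th$ as the composite of two functors. The first is $\Vect_T^\emb\to\mathrm{Fun}([1],\Sh_{\Zar,\qbu}(\cP_T))$, which sends $\cE$ to the arrow $\bP_T(\cE)\to\bP_T(\cE\oplus\cO)$ and comes from the representable-functor description of $\bP$ via Yoneda. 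The second is the cofiber functor. Note that the closed immersions above need not lie in $\cP$, but $\bP_T(\cE)$ does lie in $\cP_T$ by Definition \ref{definition-motivic-setup}(\ref{definition-motivic-setup:projective}), so these are maps in $\cP_T$ between objects of $\cP_T$.

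For the monoidal structure, the key is a natural identification $\Th(\cE)\otimes\Th(\cF)\simeq\Th(\cE\oplus\cF)$ in $\Sh_{\Zar,\qbu}(\cP_T)$, together with $\Th(0)=\bP_T(\cO)/\varnothing=T_+$, the unit. Since $\otimes$ commutes with colimits, the left side is the total cofiber of the square with corners $\bP(\cE)\times\bP(\cF)$, $\bP(\cE\oplus\cO)\times\bP(\cF)$, $\bP(\cE)\times\bP(\cF\oplus\cO)$ and $\bP(\cE\oplus\cO)\times\bP(\cF\oplus\cO)$. In other words it is $P/Q$, with $P=\bP(\cE\oplus\cO)\times\bP(\cF\oplus\cO)$ and $Q$ the pushout of the two "boundary" pieces. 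The right side is $\bP(\cE\oplus\cF\oplus\cO)/\bP(\cE\oplus\cF)$. To compare them I will use the rational map from $\bP(\cE\oplus\cF\oplus\cO)$ to $\bP(\cE\oplus\cO)\times\bP(\cF\oplus\cO)$, which is resolved by blowing up $\bP(\cE)$ and $\bP(\cF)$ (disjoint linear subspaces inside $\bP(\cE\oplus\cF)$). Concretely, I will blow up $P$ along $\bP(\cE)\times\bP(\cF)$, a quasismooth (indeed regular) immersion. The result is identified with the blowup of $\bP(\cE\oplus\cF\oplus\cO)$ along $\bP(\cE)\sqcup\bP(\cF)$, and both are the $\bP^1$-bundle over $\bP(\cE)\times\bP(\cF)$ glued appropriately. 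This identity is the analogue of the classical join picture, and I would prove it by writing both as $\bP$ of a rank-two bundle over $\bP(\cE)\times\bP(\cF)$ away from the centers. Quasismooth blowup excision (Definition \ref{definition-qbu}) then says that replacing $P$ by its blowup and $Q$ by its preimage does not change the cofiber $P/Q$. This works because the center lies in $Q$, so the exceptional divisor is absorbed into the boundary. The same applies on the other side. Both quotients are thereby computed as the same quotient on the common blowup, which gives the isomorphism.

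Coherence comes last. A symmetric monoidal enhancement requires the identification to be natural in surjections and compatible with associativity and symmetry for all finite families. I will handle all finite families $(\cE_\omega)_{\omega\in\Omega}$ simultaneously. The left side is $\prod_\omega\bP(\cE_\omega\oplus\cO)$ modulo its union of boundary divisors, and the right side is $\bP(\bigoplus\cE_\omega\oplus\cO)/\bP(\bigoplus\cE_\omega)$. I will construct a single iterated blowup, functorial in $\Omega$ (in $\Fin$) and in surjections, that dominates both. Symmetry and associativity are then automatic from naturality in $\Omega$, and the symmetric monoidal functor is assembled as a map of commutative monoids via the operadic description over $\Fin_*$.

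I expect the main obstacle to be this last step: producing the blowup identities functorially enough to get full $\infty$-categorical coherence. This will likely require the derived blowup theory of Appendix \ref{appendix-dag}. The first thing I would try is to realize everything inside the poset of excessive maps and blowups, where coherence is reduced to checking that certain diagrams are posets.
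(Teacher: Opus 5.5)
This is correct and is essentially the paper's proof, which just invokes \cite[\S3]{annala-hoyois-iwasa-2023}: there one forms the common iterated blowup $\bB(\cE)$ of $\prod_\omega\bP(\cE_\omega\oplus\cO)$ and $\bP(\bigoplus_\omega\cE_\omega\oplus\cO)$ with boundary $\partial\bB(\cE)$. Quasismooth blowup excision then shows that $\bB(\cE)/\partial\bB(\cE)$ maps isomorphically to both $\bigotimes_\omega\Th(\cE_\omega)$ and $\Th(\bigoplus_\omega\cE_\omega)$ (cf.\ Remark \ref{thom-space-additive-alternative}), and coherence comes from functoriality in $\Omega$ and cartesianness in surjections (cf.\ Remark \ref{thom-splitting-symmetric-monoidal}), exactly as you plan.

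Two small points need care. First, the boundaries must be the colimits of the divisors and their intersections, not scheme-theoretic unions. Second, on the $\Th(\cE\oplus\cF)$ side, "the center lies in the boundary" is not enough: you also need blowup excision for the non-divisorial center $\bP(\cE)\sqcup\bP(\cF)\subset\bP(\cE\oplus\cF)$.
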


\begin{proof}
    The construction in \cite[\S3]{annala-hoyois-iwasa-2023} carries over verbatim. 
\end{proof}

The following is a version of \cite[Proposition 4.9]{annala-hoyois-iwasa-2023}, and will play a crucial role in the normalization of the Gysin map (Theorem \ref{gysin-normalization}). 

\begin{proposition}\label{proposition-unstable-gysin}
    Let $T\to S$ be a map in $\cP$ and let $\cE$ be a finite locally free $\cO_T$-module. Then the map
    $$\Th_T(\cE)=\frac{\bP_T(\cE\oplus\cO)}{\bP_T(\cE)}\to\frac{\bP_T(\cE\oplus\cO)}{\bP_T(\cE\oplus\cO)\setminus\bP_T(\cO)}=\frac{\bV_T(\cE)}{\bV_T(\cE)\setminus0}$$
    has a canonical retract in $\PS(\cP_S)$ functorially in $(S,T,\cE)$, in the sense that:
    \begin{itemize}
        \item For surjections $\cE\twoheadrightarrow\cE'$, the retract is compatible with $\Th_T(\cE')\to\frac{\bV_T(\cE')}{\bV_T(\cE')\setminus0}$ mapping to $\Th_T(\cE)\to\frac{\bV_T(\cE)}{\bV_T(\cE)\setminus0}$ in $\PS(\cP_S)^{[1]}$.
        \item For maps $T\to T'$, the retract is compatible with $\Th_T(\cE)\to\frac{\bV_T(\cE)}{\bV_T(\cE)\setminus0}$ mapping to $\Th_{T'}(\cE)\to\frac{\bV_{T'}(\cE)}{\bV_{T'}(\cE)\setminus0}$ in $\PS(\cP_S)^{[1]}$.
        \item For maps $f\colon S\to S'$ in $\cP$, the retract is compatible with the identification between $f_\sharp\left(\Th_T(\cE)\to\frac{\bV_T(\cE)}{\bV_T(\cE)\setminus0}\right)$ and $\left(\Th_T(\cE)\to\frac{\bV_T(\cE)}{\bV_T(\cE)\setminus0}\right)\in\PS(\cP_{S'})^{[1]}$.
        \item For maps $S'\to S$, the retract is compatible with the identification between $f^*\left(\Th_T(\cE)\to\frac{\bV_T(\cE)}{\bV_T(\cE)\setminus0}\right)$ and $\left(\Th_{T'}(\cE)\to\frac{\bV_{T'}(\cE)}{\bV_{T'}(\cE)\setminus0}\right)\in\PS(\cP_{S'})^{[1]}$, where $T'=T\times_SS'$.
    \end{itemize}
\end{proposition}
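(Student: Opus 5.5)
We follow the strategy of \cite[Proposition 4.9]{annala-hoyois-iwasa-2023}, replacing the $\bP^1$-stable input by the homotopy of Definition \ref{definition-motivic-space}(\ref{definition-motivic-space:p-homotopy}) in the form of Remark \ref{f-sharp-p-homotopy}. Write $P=\bP_T(\cE\oplus\cO)$, $H=\bP_T(\cE)$, $V=\bV_T(\cE)=P\setminus H$ and $U=V\setminus0=P\setminus\bP_T(\cO)\setminus H$. The retraction $\frac{V}{U}\to\Th_T(\cE)$ will be built geometrically. First blow up the origin: $\Bl_0P$ is the projective bundle $\bP_{\bP(\cE)}(\cO(1)\oplus\cO)$ over $\bP_T(\cE)$, and $P\setminus0\to\bP_T(\cE)$ is the total space of the line bundle $\cO(1)$. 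Quasismooth blowup excision (the origin $T\to P$ is a quasismooth closed immersion) identifies $P/(P\setminus 0)$ with $\Bl_0P/(\Bl_0P\setminus\E_0P)$. The latter is a relative Thom space of a line bundle over $\bP_T(\cE)$. This reduces the problem to the rank one case $\cE=\cL$ over the base $\bP_T(\cE)$, provided the reduction can be made compatible with the quotients involved.

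In the line bundle case the target $\frac{\bV(\cL)}{\bV(\cL)\setminus0}$ receives the map from $\Th(\cL)=\frac{\bP(\cL\oplus\cO)}{\bP(\cL)}$. We produce the retraction from the homotopy $\eta\colon j\simeq z$ of Remark \ref{f-sharp-p-homotopy}. Consider the map $\bV(\cL)\to\bP(\cL\oplus\cO)\to\Th(\cL)$. On $\bV(\cL)\setminus0=\bV(\cL)\times_{\bP(\cL\oplus\cO)}(\bP(\cL\oplus\cO)\setminus0)$ we use the $\bP$-homotopy for the "inverse" line bundle $\cL^{-1}$ viewed near $\infty$. Concretely, $\bP(\cL\oplus\cO)\setminus0=\bV(\cL^{-1})$ is a line bundle neighbourhood of $\bP(\cL)$, and $\eta$ for $\cL^{-1}$ contracts the inclusion $\bV(\cL^{-1})\to\bP(\cL\oplus\cO)$ onto the section $\bP(\cL)$. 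Hence the restriction of $\bV(\cL)\to\Th(\cL)$ to $\bV(\cL)\setminus0$ is nullhomotopic in $\PS$. This gives the map $r\colon\frac{\bV(\cL)}{\bV(\cL)\setminus0}\to\Th(\cL)$.

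To check $r\circ(\text{canonical map})\simeq\id$, note that it suffices to check the composite $\bP(\cL\oplus\cO)\to\Th(\cL)$ together with its nullhomotopy on $\bP(\cL)$. On $\bV(\cL)$ the composite agrees with the quotient map. On the open $\bV(\cL^{-1})$ the two nullhomotopies are matched by the "homotopy of homotopies" part of Remark \ref{f-sharp-p-homotopy}: the restriction of $\eta$ to the zero section is the identity. By Zariski descent for the cover $\{\bV(\cL),\bV(\cL^{-1})\}$ these glue to an identification with $\id$.

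Functoriality is handled separately for each change of data. In $T$ and $S$ (the $f^*$ and $f_\sharp$ compatibilities) it follows because every ingredient (blowups, $\eta$, the adjointability in Definition \ref{definition-motivic-space}(\ref{definition-motivic-space:adjointable})) is natural in these variables. For surjections $\cE\twoheadrightarrow\cE'$, we use that $\bP(\cE')\subset\bP(\cE)$ and that blowing up the origin is compatible with this inclusion, so the construction is natural on $\Vect^\emb_T$.

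\textbf{Main obstacle.} I expect the hardest step to be the coherence: making the higher-rank reduction through $\Bl_0P$ and the Zariski gluing homotopy coherent in $(S,T,\cE)$, rather than only objectwise. In particular, the rank-one retraction on $\bP_T(\cE)$ must descend back to $T$ compatibly with the quotient by $H$. For this I would first try to phrase everything as a map of diagrams indexed over $\Vect^\emb_T\times\B\Gm$, using the functoriality built into Definition \ref{definition-motivic-space}(\ref{definition-motivic-space:p-homotopy}).
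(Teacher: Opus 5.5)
Your rank-one argument is fine. It is the paper's argument in the special case where the origin $\bP_T(\cO)$ is already a divisor, so that $\Bl_0P=P$. The reduction of the general case to it, however, has a genuine gap.

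The identification you draw from quasismooth blowup excision is false. Excision gives $P\simeq\Bl_0P\sqcup_{\E_0P}T$, hence $\frac{P}{P\setminus0}\simeq\frac{\Bl_0P}{P\setminus0}\sqcup_{\E_0P}T$. So the natural map $\frac{\Bl_0P}{\Bl_0P\setminus\E_0P}\to\frac{P}{P\setminus0}$ has cofiber $\cofib(\bP_T(\cE)\to T)$. This is nonzero in general once $\cE$ has rank at least $2$: for $\cP=\Sm$ and $\cE=\cO_T^{2}$ it is the cofiber of $\bP^1_T\to T$, which is already nonzero in $\SH^{\Sone}$. So a retraction built on the ``relative Thom space'' over $\bP_T(\cE)$ does not yet give a map out of $\frac{V}{U}$. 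The descent back to $T$ that you list as the main obstacle is exactly the missing part of the proof, and the proposal does not supply it.

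The missing idea is that you need neither blowup excision nor the rank-one \emph{statement} over $\bP_T(\cE)$; you only need the rank-one \emph{homotopy}.
\begin{enumerate}
\item A factorization of the open immersion $P\setminus0\to P$ through $H\to P$ in $\PS(\cP_S)_{H/}$ already induces the desired retraction $\frac{P}{P\setminus0}\to\frac{P}{H}$.
\item Both maps factor through $\Bl_0P=\bP_H(\cO(-1)\oplus\cO)$. This is your first observation, up to the sign convention for $\cO(\pm1)$. There they become the open immersion $j$ of the line bundle $P\setminus0=\bV_H(\cO(-1))$ and its zero section $H$.
\item Since $H=\bP_T(\cE)\to S$ lies in $\cP$, Remark~\ref{f-sharp-p-homotopy} with base $H$ and $\cL=\cO(-1)$ gives $j\simeq z$ under $H$.
\item Composing with $\Bl_0P\to P$ gives the factorization.
\end{enumerate}

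This is the paper's proof. It is your rank-one mechanism (a line bundle over the hyperplane at infinity, compactified to a $\bP^1$-bundle) run on the blowup itself rather than on a quotient of it.

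It also dissolves your coherence worry, because there is no Zariski gluing or descent to control. Functoriality in $(S,T,\cE)$ is inherited from three sources: the functoriality of $\eta$ in Definition~\ref{definition-motivic-space}(\ref{definition-motivic-space:p-homotopy}), the adjointability, and the fact that for $\cE\twoheadrightarrow\cE'$ the whole picture for $\cE'$ is the restriction of the one for $\cE$ along $\bP_T(\cE')\subset\bP_T(\cE)$.
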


\begin{proof}
    It suffices to factor the open immersion $\bP_T(\cE\oplus\cO)\setminus\bP_T(\cO)\to\bP_T(\cE\oplus\cO)$ through the closed immersion $\bP_T(\cE)\to\bP_T(\cE\oplus\cO)$ canonically in $\PS(\cP_S)$. Since they both factor through $\Bl_{\bP_T(\cO)}\bP_T(\cE\oplus\cO)$, we may as well try to factor the open immersion $\bP_T(\cE\oplus\cO)\setminus\bP_T(\cO)\to\Bl_{\bP_T(\cO)}\bP_T(\cE\oplus\cO)$ through the closed immersion $\bP_T(\cE)\to\Bl_{\bP_T(\cO)}\bP_T(\cE\oplus\cO)$. Note that
    $$\bP_T(\cE\oplus\cO)\setminus\bP_T(\cO)=\bV_{\bP_T(\cE)}(\cO(-1)),\,\Bl_{\bP_T(\cO)}\bP_T(\cE\oplus\cO)=\bP_{\bP_T(\cE)}(\cO(-1)\oplus\cO)$$
    compatible with the open immersion above, and the closed immersion above is the zero section. Now Remark \ref{f-sharp-p-homotopy} gives the desired factorization. 
\end{proof}

\begin{remark}\label{thom-space-additive-alternative}
    Since $\cE\mapsto\frac{\bV(\cE)}{\bV(\cE)\setminus0}$ is obviously symmetric monoidal as a functor $(\Vect_T^\emb,\oplus)\to\Sh_\Zar(\cP_T)$, Proposition \ref{proposition-unstable-gysin} gives a symmetric monoidal structure of $\Th(-)\colon(\Vect_T^\emb,\oplus)\to\PS(\cP_T)$. Since the blowup loci occurring in the proof of Proposition \ref{thom-space-additive} are away from the zero section, one can see that this symmetric monoidal structure coincides with the one from Proposition \ref{thom-space-additive}. 

    (A more detailed explanation: For a finite collection $\cE=(\cE_i)_{i\in I}$ of objects in $\Vect_T^\emb$, we have a commutative diagram
    $$\begin{tikzcd}
        \prod_{i\in I}\Th(\cE_i)\ar[d]&\bB(\cE)/\partial\bB(\cE)\ar[l]\ar[r]\ar[d]&\Th\left(\bigoplus_{i\in I}\cE_i\right)\ar[d]\\
        \prod_{i\in I}\frac{\bP(\cE\oplus\cO)}{\bP(\cE\oplus\cO)\setminus0}&\frac{\bB(\cE)}{\bB(\cE)\setminus0}\ar[l]\ar[r]&\frac{\bP\left(\bigoplus_{i\in I}\cE_i\oplus\cO\right)}{\bP\left(\bigoplus_{i\in I}\cE_i\oplus\cO\right)\setminus0}
    \end{tikzcd}$$
    in $\Sh_{\Zar,\qbu}(\cP_S)$ by \cite[Proposition 3.6]{annala-hoyois-iwasa-2023}, whose horizontal maps are all isomorphisms. The upper row is the symmetric monoidal structure of $\Th(-)$ coming from Proposition \ref{thom-space-additive} while the lower one is that of $\frac{\bV(-)}{\bV(-)\setminus0}$. Now Proposition \ref{proposition-unstable-gysin} gives retracts of the left and the right vertical arrows in $\PS(\cP_T)$. By the definition of retracts, this already identifies the two symmetric monoidal structures of $\Th(-)$, without knowing that the retract on the left coincides with the one on the right.)
\end{remark}

We can also reproduce \cite[Theorem 4.1]{annala-hoyois-iwasa-2023} in our unstable setting. 

\begin{proposition}\label{p-homotopy-vector-bundle}
    Let $T\to S$ be a map in $\cP$ and let $\cE$ be a finite locally free $\cO_T$-module. Then there is a canonical homotopy between $j\colon\bV_T(\cE)\to\bP_T(\cE\oplus\cO)$ and $\bV_T(\cE)\to T=\bP_T(\cO)\to\bP_T(\cE\oplus\cO)$ in $\PS(\cP_S)_{T/}$ functorially in $(S,T,\cE)$, where both $\bV_T(\cE)$ and $\bP_T(\cE\oplus\cO)$ are viewed as under $T$ by the zero section. Let $\sigma\colon T\to\bV_T(\cE)$ be a section and let $0\colon T\to\bV_T(\cE)$ denote the zero section. Then there is a canonical homotopy between $j\circ\sigma$ and $j\circ0$ functorially in $(S,T,\cE,\sigma)$, which is the identity when $\sigma=0$. 
\end{proposition}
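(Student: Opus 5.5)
We reduce the rank-$n$ statement to the line-bundle case of Definition \ref{definition-motivic-space}(\ref{definition-motivic-space:p-homotopy}), transported along $f_\sharp$ as in Remark \ref{f-sharp-p-homotopy}. We use the blowup geometry of Proposition \ref{proposition-unstable-gysin}, as in \cite[Theorem 4.1]{annala-hoyois-iwasa-2023}.

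Set $P=\bP_T(\cE)$. Blow up $\bP_T(\cE\oplus\cO)$ along the point section $T=\bP_T(\cO)$; this is a quasismooth (indeed regular) closed immersion. We obtain
$$B:=\Bl_{\bP_T(\cO)}\bP_T(\cE\oplus\cO)=\bP_P(\cO(-1)\oplus\cO),$$
whose exceptional divisor is $\bP_T(\cE)=P$, embedded as the zero section of the line bundle $\bV_P(\cO(-1))$. Similarly, $\Bl_T\bV_T(\cE)=\bV_P(\cO(-1))$ with exceptional divisor the zero section $P$. By quasismooth blowup excision in $\Sh_{\Zar,\qbu}(\cP_S)$, the square $(P\to\bV_P(\cO(-1));T\to\bV_T(\cE))$ is a pushout, and likewise for $(P\to B;T\to\bP_T(\cE\oplus\cO))$. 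Hence
$$\bV_T(\cE)=\bV_P(\cO(-1))\sqcup_P T,\qquad \bP_T(\cE\oplus\cO)=\bP_P(\cO(-1)\oplus\cO)\sqcup_P T.$$
The open immersion $j$ is induced by the open immersion $j'\colon\bV_P(\cO(-1))\to\bP_P(\cO(-1)\oplus\cO)$, together with the identity on $P$ and on $T$. Moreover, both squares are compatible with the zero sections.

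Now apply Remark \ref{f-sharp-p-homotopy} to $P\to S$, which lies in $\cP$ since projective bundles and compositions lie in $\cP$, and to $\cL=\cO(-1)$. This gives a homotopy $j'\simeq z'$ in $\PS(\cP_S)_{P/}$, i.e.\ a homotopy restricting to the canonical identification on the zero section $P$. Because the homotopy is relative to $P$, it glues along the pushouts above with the identity homotopy on $T$. The result is a homotopy in $\PS(\cP_S)_{T/}$ between $j$ and the map $\bV_T(\cE)\to\bP_T(\cE\oplus\cO)$ induced by $z'$. That map is $\bV_P(\cO(-1))\to P\to T\to\bP_T(\cE\oplus\cO)$, which factors as $\bV_T(\cE)\to T\to\bP_T(\cE\oplus\cO)$. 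Functoriality in $(S,T,\cE)$ follows from the functoriality of the line-bundle homotopy in $\cL\in\B\Gm$ and in the base, via the adjointability of $f_\sharp,f^*$, combined with the naturality of the blowup squares. Under surjections $\cE\twoheadrightarrow\cE'$, $P'\to P$ is a closed immersion and the blowups are compatible. This compatibility is routine but requires care with the derived blowup identities of Appendix \ref{appendix-dag}.

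For the second claim, precompose the homotopy with $\sigma$ and with $0$. This gives $j\sigma\simeq(\text{zero section})\circ(\text{projection})\circ\sigma=\text{zero section}$, and similarly $j0\simeq\text{zero section}$. Composing the first with the inverse of the second yields $j\sigma\simeq j0$. When $\sigma=0$ this composite is canonically the identity. Functoriality in $\sigma$ is inherited. Note that only a homotopy of maps out of $T$ is needed here, so no under-$T$ structure on $\sigma$ is required.

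The main obstacle is the gluing step. We must check that the homotopy produced by Remark \ref{f-sharp-p-homotopy} is genuinely a homotopy under $P$, i.e.\ that it comes with the homotopy of homotopies on the zero section, so that it descends through the pushout over $P$ to a homotopy under $T$. We must also make all of this coherent over the family of $(S,T,\cE)$. We would handle this by formulating everything as a diagram of pushout squares indexed by the relevant category of triples, and applying the functorial data of Definition \ref{definition-motivic-space}(\ref{definition-motivic-space:p-homotopy}) once, over the universal case $\B\Gm$.
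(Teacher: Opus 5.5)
Your proposal is correct and is essentially the paper's proof. In both, you blow up the zero section in $\bV_T(\cE)$ and in $\bP_T(\cE\oplus\cO)$ and identify the results with a line bundle over $\bP_T(\cE)$ and its projective completion. You then apply Remark \ref{f-sharp-p-homotopy} to get a homotopy under $\bP_T(\cE)$ and push out along $\bP_T(\cE)\to T$ using blowup excision; the second claim follows from the first exactly as you say.

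One harmless slip: with the paper's conventions ($\bV=\Spec\LSym$, and $\bP$ classifying invertible quotients), the blowup is $\Bl_0\bV_T(\cE)=\bV_{\bP_T(\cE)}(\cO(1))$, with the exceptional divisor as zero section. The $\cO(-1)$ you borrowed from Proposition \ref{proposition-unstable-gysin} describes the complement of the point, whose zero section is the strict transform of $\bP_T(\cE)$. Since Remark \ref{f-sharp-p-homotopy} applies to any line bundle, this does not affect the argument.
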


\begin{proof}
    It suffices to prove the first claim. Consider the map of blowup squares
    $$\begin{tikzcd}
        \bP_T(\cE)\ar[r]\ar[d]&\Bl_0\bV_T(\cE)\ar[d]\\
        T\ar[r,"0"']&\bV_T(\cE)
    \end{tikzcd}\longrightarrow\begin{tikzcd}
        \bP_T(\cE)\ar[r]\ar[d]&\Bl_0\bP_T(\cE\oplus\cO)\ar[d]\\
        T\ar[r,"0"']&\bP_T(\cE\oplus\cO)
    \end{tikzcd}$$
    and note that
    $$\Bl_0\bV_T(\cE)=\bV_{\bP_T(\cE)}(\cO(1)),\,\Bl_0\bP_T(\cE\oplus\cO)=\bP_{\bP_T(\cE)}(\cO(1)\oplus\cO)$$
    compatible with the open immersion above. Therefore, Remark \ref{f-sharp-p-homotopy} gives a canonical factorization of $\Bl_0\bV_T(\cE)\to\Bl_0\bP_T(\cE\oplus\cO)$ through $\bP_T(\cE)$ in $\PS(\cP_S)_{\bP_T(\cE)/}$, which implies the claim by pushing out along $\bP_T(\cE)\to T$. 
\end{proof}

\section{Construction and properties}

In this section we construct the Gysin map in our unstable $\bP$-motivic spectra and establish its basic properties. Throughout this section, we fix a motivic setup $\cP$ and a base scheme $S$. 

\subsection{Construction of the Gysin map}
\begin{construction}[Gysin map]\label{construction-gysin}
    Let $i\colon Y\to X$ be a quasismooth closed immersion in $\cP_S$. Consider the map of blowup squares
    $$\begin{tikzcd}
        \bP_Y(\N_i)\ar[r]\ar[d]&\Bl_YX\ar[d]\\
        Y\ar[r]&X
    \end{tikzcd}\longrightarrow\begin{tikzcd}
        \bP_Y(\N_i\oplus\cO)\ar[r]\ar[d]&\Bl_{Y\times0}(X\times\bP^1)\ar[d]\\
        Y\times0\ar[r]&X\times\bP^1
    \end{tikzcd}$$
    and take its cofiber in $\Sh_{\Zar,\qbu}(\cP_S)$. We get a cofiber sequence
    $$\Th_Y(\N_i)\to\Q(i)\to X\wedge\bP^1/0$$
    in $\Sh_{\Zar,\qbu}(\cP_S)_*$, where $\Q(i)=\frac{\Bl_{Y\times0}(X\times\bP^1)}{\Bl_{Y\times0}(X\times0)}$ and we are using $\frac{X\times\bP^1}{X\times0}=X\wedge\bP^1/0$. We call it the \emph{fundamental cofiber sequence} of $i\colon Y\to X$. The closed immersion $X=X\times1\to\Bl_{Y\times0}(X\times\bP^1)$ gives a map $X\to\Q(i)$, and Remark \ref{f-sharp-p-homotopy} gives a nullhomotopy of its composition to $X\wedge\bP^1/0$ in $\PS(\cP_S)$, so we get a map
    $$\gys(i)\colon X\to\Th_Y(\N_i)$$
    in $\PS(\cP_S)$, which we call the \emph{Gysin map}, or the \emph{Pontryagin--Thom collapse map}. 
\end{construction}

\begin{remark}
    Readers familiar with deformation to the normal cone will realize that $\Bl_{Y\times0}(X\times\bA^1)\setminus\Bl_{Y\times0}(X\times0)$ is the usual deformation to the normal cone of $i\colon Y\to X$. Here we are using a compactified version of it. 
\end{remark}

Before looking at the functoriality of Gysin maps with respect to maps of closed immersions, we first recall the notion of excessive maps:

\begin{definition}[excessive map, cf.\ {\cite[Definition 3.30]{hekkingkhanrydh2025deformationnormalbundleblowups}}]\label{definition-excessive-map-arrow}
    We say that a map $f$ of closed immersions
    $$\begin{tikzcd}
        Y'\ar[r]\ar[d]&X'\ar[d,"f"]\\
        Y\ar[r]&X
    \end{tikzcd}$$
    is \emph{excessive}, if the map $f^*\cI_{Y/X}\to\cI_{Y'/X'}$ is surjective. 
\end{definition}

\begin{remark}[comparing with {\cite[Definition 3.30]{hekkingkhanrydh2025deformationnormalbundleblowups}}]\label{remark-compare-conormal-surjective}
    Keep the notation of Definition \ref{definition-excessive-map-arrow}. \cite[Definition 3.30]{hekkingkhanrydh2025deformationnormalbundleblowups} defines excessivity as:
    \begin{itemize}
        \item after classical truncation, the above square becomes a pullback;
        \item the map of conormal sheaves $f'^*\N_{Y/X}\to\N_{Y'/X'}$ is surjective. 
    \end{itemize}
    In this remark, we prove that our Definition \ref{definition-excessive-map-arrow} coincides with theirs. For this, pulling back $Y$ along $f$, one reduces to the case where $X'=X$. One also reduces to the affine case immediately. Therefore, it suffices to prove that for surjections of rings $A\to B\to B'$, the following are equivalent: 
    \begin{enumerate}
        \item\label{remark-compare-conormal-surjective:ideal} $\I_{B/A}\twoheadrightarrow\I_{B'/A}$.
        \item\label{remark-compare-conormal-surjective:conormal} $\pi_0(B)=\pi_0(B')$ and $\cn_{B/A}\otimes_BB'\twoheadrightarrow\cn_{B'/A}$. 
    \end{enumerate}
    Now, using the fiber sequences
    $$\I_{B/A}\to\I_{B'/A}\to\I_{B'/B}$$
    and
    $$\cn_{B/A}\otimes_BB'\to\cn_{B'/A}\to\cn_{B'/B}$$
    one reduces to the case where $A=B$. In this case we want to prove that the following are equivalent: 
    \begin{enumerate}
        \item $\I_{B'/B}$ is $1$-connective.
        \item $\pi_0(B)=\pi_0(B')$ and $\cn_{B'/B}$ is $1$-connective. 
    \end{enumerate}
    Obviously, $1$-connectivity of $\I_{B'/B}$ implies $\pi_0(B)=\pi_0(B')$, so we only need to prove that if $B=\pi_0(B)=\pi_0(B')$, then $1$-connectivities of $\I_{B'/B}$ and $\cn_{B'/B}$ are equivalent. This follows from Lemma \ref{lemma-connectivity-comparison} below. 
\end{remark}

\begin{lemma}\label{lemma-connectivity-comparison}
    Let $A\to B$ be a ring surjection where $\ker(\pi_0(A)\to\pi_0(B))$ is a nilpotent ideal in $\pi_0(A)$. Then for all $n\in\NN$, the following are equivalent: 
    \begin{enumerate}
        \item $\I_{B/A}$ is $n$-connective.
        \item $\cn_{B/A}$ is $n$-connective. 
    \end{enumerate}
\end{lemma}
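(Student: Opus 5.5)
Throughout, write $I=\I_{B/A}$ and note that $\cn_{B/A}=\cc_{B/A}[-1]$. Both modules are connective: $I$ because $\pi_0 A\to\pi_0 B$ is surjective, and $\cn_{B/A}$ because $\pi_0\cc_{B/A}=0$. So the case $n=0$ is trivial, and we may assume $n\ge1$. The plan is to compare $I\otimes_A B$ with $\cn_{B/A}$ through a connectivity estimate, and then to pass between $I$ and $I\otimes_A B$ using the nilpotence hypothesis.

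\textbf{Step 1.} We use the standard (Hurewicz-type) estimate for a surjection. If $I$ is $m$-connective with $m\ge1$, then the natural map $I\otimes_A B\to\cn_{B/A}$ has $(m+1)$-connective cofiber; in general it is always an isomorphism on $\pi_0$. This comparison is the one relating the first graded piece of the $I$-adic filtration to the cotangent complex. One proof reduces, via sifted colimits, to the universal case $\ZZ[x_1,\ldots]\to\ZZ$ together with its cell attachments. Another uses $\cc_{B/A}=\cofib(\cc_A\otimes_A B\to\cc_B)$ together with the connectivity of $B\otimes_A B\to B$. I would cite Lurie's result (HA 7.4.3.1 / SAG 25.3.6.1) rather than reprove it.

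\textbf{Step 2: (1)$\Rightarrow$(2).} Suppose $I$ is $n$-connective. Then $I\otimes_A B$ is $n$-connective, and by Step 1 the cofiber of $I\otimes_A B\to\cn_{B/A}$ is $(n+1)$-connective. Hence $\cn_{B/A}$ is $n$-connective. This direction does not use nilpotence.

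\textbf{Step 3: (2)$\Rightarrow$(1).} We argue by induction on $k\le n$, showing that $I$ is $k$-connective.

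For the start $k=1$, we must show $\pi_0 I=0$. By Step 1, $\pi_0(I\otimes_A B)\cong\pi_0\cn_{B/A}=0$ when $n\ge1$. Moreover $\pi_0(I\otimes_A B)=\pi_0 I\otimes_{\pi_0A}\pi_0B=J/J^2$, where $J=\ker(\pi_0A\to\pi_0B)$; here we use that $\pi_0 I\to J$ is surjective and the relevant tor term is handled via $\pi_0I\to J$. Making this identification careful is the first place requiring attention. It yields $J=J^2$, so $J=J^N=0$ by nilpotence. Thus $\pi_0A=\pi_0B$, and $\pi_0 I$ is the image of $\pi_1B\to\pi_0I$. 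In any case $\pi_0(I\otimes_A B)=\pi_0 I\otimes_{\pi_0A}\pi_0B=\pi_0 I$ once $\pi_0A=\pi_0B$, which vanishes. So $I$ is $1$-connective.

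For the inductive step, assume $I$ is $k$-connective with $1\le k<n$. By Step 1, $\pi_k(I\otimes_A B)\cong\pi_k\cn_{B/A}=0$. Since $\pi_0A\cong\pi_0B$ and $I$ is $k$-connective, $\pi_k(I\otimes_AB)=\pi_kI\otimes_{\pi_0A}\pi_0B=\pi_k I$. Hence $\pi_k I=0$, and $I$ is $(k+1)$-connective. Continuing up to $k=n$ proves (1).

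The main obstacle is Step 1: stating the connectivity estimate for the cofiber with the correct indexing, and checking that the $\pi_0$-level identification at $k=0$ holds without assuming $\pi_0$-flatness. In this base case the nilpotence hypothesis is essential; without it, the implication fails (for example, for the surjection $A\to A[e^{-1}]$ with $e$ idempotent).
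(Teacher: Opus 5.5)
Your argument is correct in substance, but it takes a different route from the paper. The paper never applies the Hurewicz estimate while $\I_{B/A}$ is merely connective. It first base-changes $A\to B$ along itself so that the map acquires a retraction $B\to A$. Nilpotence of $J=\ker(\pi_0A\to\pi_0B)$ is used here, via Nakayama, to see that this base change reflects the connectivity of both $\I$ and $\cn$. For the retraction one has $\I_{A/B}=\I_{B/A}[1]$ and $\cn_{A/B}=\cn_{B/A}[1]\otimes_BA$, so the claim for $A\to B$ at $n$ becomes the claim for $B\to A$ at $n+1$, now with $\I_{A/B}$ $1$-connective. In that range $\pi_0A=\pi_0B$ holds automatically, and the estimate shows that $\I$ and $\cn$ have exactly the same connectivity.

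You instead induct directly and use the nilpotence in the base case, through the $\pi_0$-isomorphism $\pi_0(\I_{B/A}\otimes_AB)\cong\pi_0\cn_{B/A}$. This avoids the base-change and retraction trick. The price is that you need the estimate in its sharp form at the bottom degree. The $(m+3)$-connective bound the paper uses would fail there: for $\ZZ[x,y]\to\ZZ$ the fiber of $\cofib\otimes_AB\to\cc_{B/A}$ has $\pi_2=\ZZ$.

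Two points need fixing. First, Step 1 as you state it (an $(m+1)$-connective cofiber) yields only a surjection $\pi_k(\I_{B/A}\otimes_AB)\to\pi_k\cn_{B/A}$. Knowing the target vanishes then gives nothing, so your inductive step for (2)$\Rightarrow$(1) does not follow; you need the cofiber to be $(k+2)$-connective. The result you cite does provide this. If $\I_{B/A}$ is $m$-connective, then $\cofib(A\to B)=\I_{B/A}[1]$ is $(m+1)$-connective, and the fiber of $\cofib(A\to B)\otimes_AB\to\cc_{B/A}$ is $2(m+1)$-connective. Hence $\I_{B/A}\otimes_AB\to\cn_{B/A}$ has $(2m+2)$-connective cofiber. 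This gives both your $\pi_0$-isomorphism (the case $m=0$) and the isomorphism on $\pi_k$ (the case $m=k\ge1$).

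Second, $\pi_0\I_{B/A}\otimes_{\pi_0A}\pi_0B=\pi_0\I_{B/A}/J\pi_0\I_{B/A}$ is not $J/J^2$ in general. For $\ZZ\to\ZZ\oplus\ZZ[1]$ one has $J=0$ but $\pi_0\I=\ZZ$. It only surjects onto $J/J^2$, and that surjection is all your argument uses. More simply, $\pi_0\I/J\pi_0\I=0$ with $J$ nilpotent gives $\pi_0\I=0$ directly by Nakayama.
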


\begin{proof}
    Since $\ker(\pi_0(A)\to\pi_0(B))$ is nilpotent, one can base change the map along itself, and assume without loss of generality that it has a retract $B\to A$. Then $\I_{A/B}=\I_{B/A}[1]$ and $\cn_{A/B}=\cn_{B/A}[1]\otimes_BA$, so the lemma is equivalent to the same claim for $B\to A$ with $n$ replaced by $n+1$. This way one can reduce to the case where $\I_{B/A}$ is sufficiently connective, in particular $1$-connective. Suppose $\I_{B/A}$ is $m$-connective but not $(m+1)$-connective for some $m\in\ZZ_+$; we want to prove the same for $\cn_{B/A}$. Now \cite[Proposition 25.3.6.1]{sag} says that the universal derivation
    $$\cofib(A\to B)\otimes_AB\to\cc_{B/A}$$
    has an $(m+3)$-connective fiber, which means that its $(-1)$-shift
    $$\I_{B/A}\otimes_AB\to\cn_{B/A}$$
    has an $(m+2)$-connective fiber, which clearly implies our claim. 
\end{proof}

In fact, the Gysin map is functorial with respect to excessive maps:

\begin{remark}[functoriality]\label{trivial-functoriality-gysin}
    Keep the notation in Construction \ref{construction-gysin}, and let
    $$\begin{tikzcd}
        Y'\ar[r]\ar[d]&X'\ar[d]\\
        Y\ar[r]&X
    \end{tikzcd}$$
    be an excessive map of closed immersions in $\cP_S$. Then by functoriality of the blowup as in Definition \ref{definition-blowup-poset} or \cite[\S 4.5]{hekkingkhanrydh2025deformationnormalbundleblowups}, we see that Construction \ref{construction-gysin} is also functorial in the sense that there is a canonical commutative square
    $$\begin{tikzcd}
        X'\ar[r,"\gys(i')"]\ar[d]&\Th_{Y'}(\N_{i'})\ar[d]\\
        X\ar[r,"\gys(i)"']&\Th_Y(\N_i)
    \end{tikzcd}$$

    In particular, since $\cP$ contains open immersions, we can take $X'=X\setminus Y$ and $Y'=\varnothing$, and the above commutative square becomes
    $$\begin{tikzcd}
        X\setminus Y\ar[r]\ar[d]&0\ar[d]\\
        X\ar[r,"\gys(i)"']&\Th_Y(\N_i)
    \end{tikzcd}$$
    so the Gysin map automatically factors through $\frac{X}{X\setminus Y}$. By abuse of notation, we often use $\gys(i)$ to also denote the resulting map $\frac{X}{X\setminus Y}\to\Th_Y(\N_i)$, and also call it the \emph{Gysin map}. 
\end{remark}

\begin{remark}\label{remark-properties-gysin}
    The rest of this section is devoted to some of the nontrivial properties of the Gysin map, including: 
    \begin{enumerate}
        \item\label{remark-properties-gysin:symmetric-monoidal} symmetric monoidality: for finitely many closed immersions, the Gysin map of their product is the tensor product of their Gysin maps;
        \item\label{remark-properties-gysin:normalization} normalization: the Gysin map of the zero section immersion of a vector bundle is the canonical quotient map; 
        \item\label{remark-properties-gysin:composition} composition: the Gysin map of a composition of closed immersions is the composition of the Gysin maps;
    \end{enumerate}
    (\ref{remark-properties-gysin:symmetric-monoidal}) will be addressed in Theorem \ref{gysin-symmetric-monoidal}, (\ref{remark-properties-gysin:normalization}) will be addressed in Theorem \ref{gysin-normalization}, and the composition-of-two case of (\ref{remark-properties-gysin:composition}) will be addressed in Theorem \ref{theorem-composition-gysin}. The composition-of-$n$ case of (\ref{remark-properties-gysin:composition}) (full coherence of composition) will be addressed in future work. 

    Our proofs of these properties extensively use blowups of more complicated diagrams of closed immersions, whose theory is developed in Appendix \ref{appendix-dag} for our need. We introduce the blowup of a poset of closed immersions in Definition \ref{definition-blowup-poset} for the proof of (\ref{remark-properties-gysin:symmetric-monoidal}), and the pushout-blowup of an excessive square of closed immersions in Definition \ref{definition-pushout-blowup} for the proof of the composition-of-two case of (\ref{remark-properties-gysin:composition}). 
\end{remark}

\subsection{Symmetric monoidality}
For symmetric monoidality, we introduce a version of Construction \ref{construction-gysin} for a finite family of quasismooth closed immersions, which we will call the \emph{multiple deformation space}, as it is used to \emph{multiply} Gysin maps. Beware that this is \emph{not} the generalization of what people called the double deformation space before, which we will call the \emph{composite deformation space} in Construction \ref{composite-deformation-space} below, as it is used to \emph{compose} Gysin maps. 

For any finite set $\Omega$, any $\Omega$-family $X=(X^\omega)_{\omega\in\Omega}$ in $\cP_S$, and any $\Psi\subseteq\Omega$, we denote $X^\Psi=\prod_{\omega\in\Psi}X^\omega$ and $X|_\Psi=(X^\omega)_{\omega\in\Psi}$ for brevity, and similarly for maps. 

\begin{construction}[multiple deformation space]\label{multiple-deformation-space}
    For every finite set $\Omega$ and every $\Omega$-family of quasismooth closed immersions $i=(i^\omega\colon Y^\omega\to X^\omega)_{\omega\in\Omega}$ in $\cP_S$, we define the $\ps(\Omega)^\op$-diagram of closed immersions
    $$\C(i)=(\C_\Psi(i)\colon Y^\Psi\times X^{\Omega\setminus\Psi}\to X^\Omega\times\bP^1)_{\Psi\subseteq\Omega}$$
    in $\cP_S$, and consider its blowup $\D(i)=\Bl(\C(i))$ (Definition \ref{definition-blowup-poset}). By Example \ref{example-product-excessive}, $\C(i)$ is excessive (Definition \ref{definition-excessive-diagram}), so by Theorem \ref{theorem-excessive-diagram-blowup-representable}, $\D(i)$ and arbitrary intersections of the strict exceptional divisors $(\E'_\Psi\D(i))_{\Psi\subseteq\Omega}$ remain in $\cP_S$. Roughly speaking, Theorem \ref{theorem-excessive-diagram-blowup-representable} exhibits $\D(i)$ as the iterated blowup of $X^\Omega\times\bP^1$ along 
    \begin{itemize}
        \item first $Y^\Omega$, 
        \item next the strict transform of $Y^{\Omega\setminus\omega}\times X^\omega$ for all $\omega\in\Omega$, 
        \item then that of $Y^{\Omega\setminus\Psi}\times X^\Psi$ for all $2$-element subsets $\Psi\subseteq\Omega$, 
        \item ...
        \item and finally that of $X^\Omega$, 
    \end{itemize}
    successively, and $\E'_\Psi\D(i)$ as the strict transform of the exceptional divisor appearing in the step that blows up $Y^\Psi\times X^{\Omega\setminus\Psi}$. 
    
    Similarly to Construction \ref{construction-gysin}, form the quotient in $\Sh_{\Zar,\qbu}(\cP_S)$ of $\D(i)$ by the divisors $(\E'_\Psi\D(i))_{\Psi\subsetneq\Omega}$, which we denote by $\Q(i)$, as well as that of $\E'_\Omega\D(i)$ by the divisors $(\E'_\Omega\D(i)\cap\E'_\Psi\D(i))_{\Psi\subsetneq\Omega}$, which we denote by $\E'_\Omega\Q(i)$. More precisely, form the $\ps(\Omega)$-cube given by intersecting the divisors $(\E'_\Psi\D(i))_{\Psi\in\ps(\Omega)}$, take total cofibers in all the directions except that of $\Omega\in\ps(\Omega)$, and get a map $\E'_\Omega\Q(i)\to\Q(i)$. By Theorem \ref{theorem-blowup-cofiber-comparison}, its cofiber is $X^\Omega\wedge\bP^1/0\in\Sh_{\Zar,\qbu}(\cP_S)_*$. In other words, in $\Sh_{\Zar,\qbu}(\cP_S)_*$ we have a cofiber sequence
    $$\E'_\Omega\Q(i)\to\Q(i)\to X^\Omega\wedge\bP^1/0$$
    which we call the \emph{fundamental cofiber sequence} of $i=(i^\omega\colon Y^\omega\to X^\omega)_{\omega\in\Omega}$. Note that $\E'_\Omega\D(i)$ is exactly the $\bB(\N_i)$ in \cite[\S 3]{annala-hoyois-iwasa-2023}, and $\E'_\Omega\Q(i)$ is the $\bB(\N_i)/\partial\bB(\N_i)$ there. Similarly to Construction \ref{construction-gysin}, the immersion $X^\Omega=X^\Omega\times1\to X^\Omega\times\bP^1$ as well as its nullhomotopy in $X^\Omega\wedge\bP^1/0$ induces the \emph{multiple Gysin map} 
    $$\gys(i)\colon X^\Omega\to\E'_\Omega\Q(i).$$
    
    When $\Omega$ is a singleton and there is only one immersion $i\colon Y\to X$, we have $\D(i)=\Bl_{Y\times0}(X\times\bP^1)$, $\E'_\Omega\D(i)=\bP_Y(\N_i\oplus\cO)$ and $\E'_\varnothing\D(i)=\Bl_YX$, so the $\Q(i)$ here is the $\Q(i)$ in Construction \ref{construction-gysin}, the $\E'_\Omega\Q(i)$ here is the $\Th_Y(\N_i)$ there, and the fundamental cofiber sequence here coincides with the one there. 

    Now, properties of such iterated blowups give rise to functorialities of the multiple deformation space with respect to the finite set $\Omega$: 
    \begin{enumerate}
        \item\label{multiple-deformation-space:trivial-functoriality} By Corollary \ref{corollary-blowup-cofiber-comparison-minimum}, the canonical map $\D(i)\to\D(i^\Omega)$ induces an isomorphism from the fundamental cofiber sequence of $i$ to that of $i^\Omega\colon Y^\Omega\to X^\Omega$, whose restriction $\E'_\Omega\Q(i)\to\Th(\N_{i^\Omega})$ gives the symmetric monoidality in one half of Proposition \ref{thom-space-additive}. More generally, for any map $\varphi\colon\Omega\to\Gamma$ of finite sets, define the $\Gamma$-family $\varphi_*i$ as
        $$(\varphi_*i)^\gamma=i^{\varphi^{-1}(\gamma)}\colon Y^{\varphi^{-1}(\gamma)}\to X^{\varphi^{-1}(\gamma)};$$
        then the canonical map $\D(i)\to\D(\varphi_*i)$ induces an isomorphism from the fundamental cofiber sequence of $i$ to that of $\varphi_*i$, compatibly with composition of the map $\varphi$. 
        \item\label{multiple-deformation-space:product-functoriality} For a map $\pi\colon\Omega\to\Lambda$ of finite sets, thought of as a partition of $\Omega$ indexed by $\Lambda$, by Remark \ref{functoriality-blowup-poset}(\ref{functoriality-blowup-poset:change-poset}), there is a canonical map $\D(i)\to\prod_{\lambda\in\Lambda}\D(i|_{\Omega_\lambda})$ which takes $\E'_\Psi\D(i)$ to $\prod_{\lambda\in\Lambda}\E'_{\Psi_\lambda}\D(i|_{\Omega_\lambda})$ for $\Psi\subseteq\Omega$, where $\Omega_\lambda$ denotes the preimage $\pi^{-1}(\lambda)$ of $\lambda$ in $\Omega$ and similarly for $\Psi_\lambda$. For $\Psi=\Omega$ this map gives the other half of the symmetric monoidality in Proposition \ref{thom-space-additive}, which accordingly identifies $\gys(i)$ with $\bigotimes_{\lambda\in\Lambda}\gys(i|_{\Omega_\lambda})$, compatibly with refinements of partitions. 
    \end{enumerate}
\end{construction}

The following theorem is the analog of \cite[\S 3]{annala-hoyois-iwasa-2023} for Gysin maps. 

\begin{theorem}[symmetric monoidality]\label{gysin-symmetric-monoidal}
    Let $S$ be a scheme. The Gysin map is canonically symmetric monoidal as a functor taking quasismooth closed immersions in $\cP_S$ to arrows in $\PS(\cP_S)$. In other words, for a finite family of quasismooth closed immersions $(i^\omega\colon Y^\omega\to X^\omega)_{\omega\in\Omega}$ in $\cP^S$, there is a canonical commutative diagram
    $$\begin{tikzcd}
        \bigotimes_{\omega\in\Omega}X^\omega\ar[rr,"\bigotimes_{\omega\in\Omega}\gys(i^\omega)"]\ar[d,equal]&&\bigotimes_{\omega\in\Omega}\Th_{Y^\omega}(\N_{i^\omega})\ar[d,equal,"\text{Proposition \ref{thom-space-additive}}"]\\
        X^\Omega\ar[rr,"\gys(i^\Omega)"']&&\Th_{Y^\Omega}(\N_{i^\Omega})
    \end{tikzcd}$$
    in $\PS(\cP_S)$, functorially with respect to $\Omega$. 
\end{theorem}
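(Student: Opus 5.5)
The plan is to route both the top and bottom rows of the square through the multiple Gysin map $\gys(i)\colon X^\Omega\to\E'_\Omega\Q(i)$ of Construction \ref{multiple-deformation-space}, for the family $i=(i^\omega)_{\omega\in\Omega}$. I will produce two commutative squares. The first compares $\gys(i)$ with $\gys(i^\Omega)$ and comes from the collapse map $\Omega\to\ast$. The second compares $\gys(i)$ with $\bigotimes_\omega\gys(i^\omega)$ and comes from the identity partition $\Omega\to\Omega$. The two right-hand vertical maps are exactly the two halves of the symmetric monoidal structure of Proposition \ref{thom-space-additive}, namely $\bB(\N_i)/\partial\bB(\N_i)\to\Th(\N_{i^\Omega})$ and $\bB(\N_i)/\partial\bB(\N_i)\to\bigotimes_\omega\Th(\N_{i^\omega})$, as noted in Construction \ref{multiple-deformation-space}. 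Pasting the two squares along $\gys(i)$ then gives the required diagram.

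For the first square I use item (\ref{multiple-deformation-space:trivial-functoriality}) with $\varphi\colon\Omega\to\ast$. It says the canonical map $\D(i)\to\D(i^\Omega)=\Bl_{Y^\Omega\times0}(X^\Omega\times\bP^1)$ induces an isomorphism of fundamental cofiber sequences in $\Sh_{\Zar,\qbu}(\cP_S)_*$, and this isomorphism is the identity on the third term $X^\Omega\wedge\bP^1/0$. The map $X^\Omega\times1\to\D(i)$ is compatible with $X^\Omega\times1\to\D(i^\Omega)$, because the blowup centers lie over $0$. The nullhomotopy coming from Remark \ref{f-sharp-p-homotopy} lives entirely on $X^\Omega\times\bP^1$, which the map of cofiber sequences fixes. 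So passing to $\PS(\cP_S)$, the induced maps to the fibers agree, which is the square $\gys(i^\Omega)\simeq(\text{iso})\circ\gys(i)$.

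For the second square I use item (\ref{multiple-deformation-space:product-functoriality}) with $\pi=\id_\Omega$. This gives $\D(i)\to\prod_\omega\D(i^\omega)$, sending $\E'_\Psi$ to products of the corresponding $\E'$'s. Taking the relevant total cofibers gives a map from the fundamental cofiber sequence of $i$ to the tensor product over $\omega$ of the fundamental cofiber sequences of the $i^\omega$. On the third term this map is the diagonal-type map $X^\Omega\wedge\bP^1/0\to\bigwedge_\omega(X^\omega\wedge\bP^1/0)$ induced by $\bP^1\to(\bP^1)^\Omega$. The point $X^\Omega\times1$ goes to $\prod_\omega(X^\omega\times1)$.

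The main obstacle is checking that the nullhomotopies match. The nullhomotopy for the product is the tensor product of the individual $\bP^1$-homotopies, transported along the diagonal $\bP^1\to(\bP^1)^\Omega$. I need this to agree with the nullhomotopy used for $\gys(i)$, which comes from the homotopy $1\sim0$ on a single $\bP^1$. My first attempt is to choose the nullhomotopy in the multiple construction from the start as the pullback, along the diagonal, of the product homotopy on $(\bP^1)^\Omega$. Then I use the functoriality of Definition \ref{definition-motivic-space}(\ref{definition-motivic-space:p-homotopy}) in $\cL$ and base, together with the compatibility under products ($\cL=\cO$), to identify it with the single-$\bP^1$ homotopy. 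If that fails, I would instead define $\gys(i)$ for the multiple case via $(\bP^1)^\Omega$ outright and treat the collapse comparison of the first square through the diagonal.

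Finally, functoriality in $\Omega$ follows from the stated compatibility of both constructions with composition of maps of finite sets and with refinement of partitions. Bijections and partitions assemble into the coherence data for a symmetric monoidal functor, as in \cite[\S 3]{annala-hoyois-iwasa-2023}.
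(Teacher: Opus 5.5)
Your route is the same as the paper's. The paper's proof builds the symmetric monoidal functor on $\Span(\Fin)\times[1]$ from multiple Gysin maps on the fibers of $\Omega\to\Lambda$. Its only inputs are items~(\ref{multiple-deformation-space:trivial-functoriality}) and~(\ref{multiple-deformation-space:product-functoriality}) of Construction~\ref{multiple-deformation-space}, which are exactly your collapse square and your partition square. Your first square is fine: $\D(i)\to\D(i^\Omega)$ lies over the identity of $X^\Omega\times\bP^1$, so both sides use the same $\bP^1$-homotopy.

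The gap is the step you yourself call the main obstacle. The paper does not argue it separately; it is part of what item~(\ref{multiple-deformation-space:product-functoriality}) asserts (``accordingly identifies $\gys(i)$ with $\bigotimes_\lambda\gys(i|_{\Omega_\lambda})$''), so you could cite that. Your own justification, however, does not work, for three reasons.

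\textbf{What actually has to be shown.} The map $\D(i)\to\prod_\omega\D(i^\omega)$ lies over the diagonal $\Delta\colon\bP^1\to(\bP^1)^\Omega$. Its target is the total fiber of the cube $\bigotimes_\omega(\Q(i^\omega)\to X^\omega\wedge\bP^1/0)$, not a tensor product of cofiber sequences. You must show that $\Delta_*$ of the single-$\bP^1$ nullhomotopy agrees with the cubical nullhomotopy built from tensor products of the individual $\bP^1$-homotopies.

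\textbf{The ``pullback along the diagonal'' is not a construction.} Homotopies are transported covariantly, so along $\Delta$ you can only push forward. The required agreement is a property to be proved, not something you can arrange by a choice.

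\textbf{There is no ``compatibility under products'' to invoke.} Definition~\ref{definition-motivic-space}(\ref{definition-motivic-space:p-homotopy}) makes $\eta$ natural only in $(S,\cL)$. Via $f_\sharp$ and the projection formula, this lets you tensor $\eta$ with a fixed object. It says nothing relating $\Delta_{\bP^1}\circ\eta$ to $(\eta\otimes\eta)\circ\Delta_{\bA^1}$. Since $\otimes$ on $\PS(\cP_S)$ is not cartesian, and $\eta$ is only a homotopy of maps (not of maps compatible with the diagonal coalgebra structures), that relation is not formal.

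\textbf{The fallback does not help.} Defining the multiple Gysin map over $(\bP^1)^\Omega$ only moves the same diagonal comparison into the collapse square.

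So, as written, your second square is unsupported. Either take the identification from item~(\ref{multiple-deformation-space:product-functoriality}) as the paper does, or give an actual argument that the $\bP$-homotopy is compatible with the diagonal in the relevant quotients.
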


\begin{proof}
    This follows immediately from Construction \ref{multiple-deformation-space}. More precisely, a symmetric monoidal functor is a functor
    $$\Span(\Fin)\times[1]\to\Cat.$$
    In order to define a symmetric monoidal functor as promised, we have to specify categories and functors for objects and maps in $\Span(\Fin)\times[1]$ compatibly with composition. Among these, the only data yet to be provided is for every span
    $$\Omega'\gets\Omega\to\Lambda$$
    a functor taking $\Omega'$-families of quasismooth closed immersions in $\cP_S$ to $\Lambda$-families of arrows in $\PS(\cP_S)$. Clearly, the only reasonable choice for this is
    \begin{itemize}
        \item first pulling an $\Omega'$-family back to an $\Omega$-family, and
        \item next for each fiber $\Omega_\lambda$ of the map $\Omega\to\Lambda$, using the multiple Gysin map in Construction \ref{multiple-deformation-space} for the $\Omega_\lambda$-subfamily.
    \end{itemize}
    Then (\ref{multiple-deformation-space:trivial-functoriality}) and (\ref{multiple-deformation-space:product-functoriality}) of Construction \ref{multiple-deformation-space} exactly guarantee the composition compatibility of this construction, thus defining the desired symmetric monoidal functor. 
\end{proof}

\subsection{Normalization}
In fact, the normalization theorem by the author has appeared as \cite[Proposition 4.7]{hoyois2024remarksmotivicsphere}. We include here to emphasize that it holds in the generality of this paper. 

\begin{theorem}[normalization]\label{gysin-normalization}
    Let $S$ be a scheme. Let $Y\in\cP_S$ and $\cE$ be a vector bundle on $Y$. Let $0\colon Y\to\bV_Y(\cE)$ be the zero section. Then the Gysin map
    $$\gys(0)\colon\frac{\bV_Y(\cE)}{\bV_Y(\cE)\setminus0}\to\Th_Y(\cE)$$
    can be canonically identified with the retract in Proposition \ref{proposition-unstable-gysin} put in $\PS(\cP_S)$. In particular, the Gysin map $\bP_Y(\cE\oplus\cO)\to\bP_Y(\cE\oplus\cO)/\bP_Y(\cE)=\Th_Y(\cE)$ of the immersion $\bP_Y(\cO)\to\bP_Y(\cE\oplus\cO)$ is the canonical quotient map. 
\end{theorem}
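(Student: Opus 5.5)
The plan is to compare the two constructions at the level of the blowup geometry. The Gysin map of Construction \ref{construction-gysin} and the retract of Proposition \ref{proposition-unstable-gysin} are both produced by one device: the canonical $\bP$-homotopy of Remark \ref{f-sharp-p-homotopy}, applied to a line bundle over some base. So it should suffice to exhibit a map of the relevant bases and line bundles, compatible with the chosen sections, and then invoke the functoriality in $(T,\cL)$ built into Definition \ref{definition-motivic-space}(\ref{definition-motivic-space:p-homotopy}).

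\textbf{Step 1 (reduction).} Write $V=\bV_Y(\cE)$ and $P=\bP_Y(\cE\oplus\cO)$, with $V\subset P$ the complement of $\bP_Y(\cE)$. The open immersion $V\to P$, over the identity of $Y=\bP_Y(\cO)$, is an excessive map of closed immersions, since it is a pullback. So Remark \ref{trivial-functoriality-gysin} gives $\gys(0)$ as the restriction of $\gys(Y\to P)$ along $V\to P$, after passing to quotients by the complements of $Y$. The retract of Proposition \ref{proposition-unstable-gysin} has the same form: it is induced by the identity of $P$, via a factorization of $P\setminus Y\to P$ through $\bP_Y(\cE)$. Therefore both claims of the theorem reduce to one statement: the map $\gys(Y\to P)\colon P\to\Th_Y(\cE)$ agrees with the quotient map $P\to P/\bP_Y(\cE)$, compatibly with the maps out of $P\setminus Y$ (which is what makes it descend to $P/(P\setminus Y)$). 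The ``in particular'' clause is then immediate, because the retract precomposed with $P\to P/(P\setminus Y)$ is by construction the quotient map.

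\textbf{Step 2 (geometry of the deformation space).} For the zero section $Y\to V$, the open part $\Bl_{Y\times0}(V\times\bP^1)\setminus\Bl_{Y\times0}(V\times0)$ is identified with $V\times\bA^1$ via $(v,t)\mapsto(tv,t)$. Under this identification, the fiber at $t=1$ is the embedding $V=V\times1$, and the fiber at $t=0$ is $V$ sitting inside the exceptional divisor $\bP_Y(\cE\oplus\cO)$. The analogous statement holds for $Y\to P$, where we use $\Bl_{Y\times0}(P\times\bP^1)$ and its exceptional divisor $\bP_Y(\cE\oplus\cO)$. The nullhomotopy used in Construction \ref{construction-gysin} is Remark \ref{f-sharp-p-homotopy} for $\cL=\cO$ and the section $s=1$ over the base $X$. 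Through the identification above, I will rewrite this as a homotopy, inside $\Bl_{Y\times0}(X\times\bP^1)$ and modulo the strict transform $\Bl_Y X$, between $X\times1$ and the inclusion of $V$ into the exceptional divisor. Modulo $\bP_Y(\cE)$, this is a homotopy between $X\times 1$ and the map $V\to\Th_Y(\cE)$ coming from $V\subset P$.

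\textbf{Step 3 (matching homotopies), the expected main obstacle.} I must show that this homotopy coincides with the one used in the proof of Proposition \ref{proposition-unstable-gysin}. That one comes from Remark \ref{f-sharp-p-homotopy} applied to $\cO(-1)$ on $\bP_Y(\cE)$, via $\Bl_Y P=\bP_{\bP_Y(\cE)}(\cO(-1)\oplus\cO)$. Both line-bundle families should embed compatibly into a single space, namely $\Bl_{Y\times0}(P\times\bP^1)$ blown up further along the strict transform of $Y\times\bP^1$ (or a closely related space). There the two $\bP^1$-directions, $t$ and the fiber coordinate of $\cO(-1)$, are swapped by the $\Gm$-scaling. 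I would first try to write down an explicit map from $\bP_{\bP_Y(\cE)}(\cO(-1)\oplus\cO)\times\bP^1$ into this space, compatible with all the divisors involved. I would then use naturality of the chosen homotopy in $(T,\cL)$, including the second-level coherence of Remark \ref{p-homotopy-unpointed-statement} on the zero section, to identify the two nullhomotopies. Passing to the cofiber in the fundamental cofiber sequence would then identify $\gys(Y\to P)$ with the quotient map. The coherence of the second-level homotopies is where I expect the real difficulty.
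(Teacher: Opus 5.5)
Your Step 1 is correct. By Zariski excision $V/(V\setminus0)=P/(P\setminus Y)$, so the theorem amounts to identifying the Gysin map with the quotient map, together with the two nullhomotopies on $V\setminus0$ (equivalently on $P\setminus Y$). Steps 2--3 carry all the content, however, and they do not yet form an argument.

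\textbf{Step 2.} The family $(w,t)\mapsto(tw,t)$ identifies the complement of $\Bl_YV$ with $V\times\bA^1$ only over $\bA^1\subset\bP^1$; it has no limit at $t=\infty$. So it is an $\bA^1$-family, and $\PS(\cP_S)$ has no $\bA^1$-homotopies: Remark \ref{f-sharp-p-homotopy} only contracts families that extend over some $\bP(\cL\oplus\cO)$. Moreover, the nullhomotopy in Construction \ref{construction-gysin} lives in $X\wedge\bP^1/0$, not in $\Q(i)$. It cannot simply be ``rewritten'' as a homotopy inside $\Q(i)$ from $X\times1$ to $V\hookrightarrow\bP_Y(\cE\oplus\cO)$; you would have to construct such a lift and then check that its image recovers the Gysin nullhomotopy.

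\textbf{Step 3.} This is a plan (an auxiliary second blowup, a $\Gm$-swap of directions) rather than a proof. It also compares homotopies of different kinds: the one in Proposition \ref{proposition-unstable-gysin} is between maps $P\setminus Y\to P$, while yours is between maps into $\Q$.

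\textbf{The missing idea.} The fundamental cofiber sequence of $0\colon Y\to V$ splits canonically. The morphism $\Bl_{Y\times0}(V\times\bP^1)\to\bP_Y(\cE\oplus\cO)$, $(v,t)\mapsto[v:t]$, is the identity on the exceptional divisor and sends $\Bl_YV$ into $\bP_Y(\cE)$. Hence it induces a retraction $r\colon\Q(0)\to\Th_Y(\cE)$. The analogous formula on $P\times\bP^1$ is indeterminate along $\bP_Y(\cE)\times\infty$, so it pays to stay on $V$ rather than pass to $P$.

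In a stable category, the map into the fiber determined by a map $f$ to the middle term plus a nullhomotopy is just $r\circ f$. Hence $\gys(0)=r\circ(V\times1)$, i.e.\ $v\mapsto[v:1]$ followed by the quotient, and no homotopy inside $\Q$ is needed. Likewise, the nullhomotopy on $V\setminus0$ coming from Remark \ref{trivial-functoriality-gysin} becomes the image under $r$ of the $\bP$-homotopy on $(V\setminus0)\times\bP^1$, namely the family $(v,t)\mapsto[v:t]$.

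This family factors through $\Bl_{\bP_Y(\cO)}\bP_Y(\cE\oplus\cO)=\bP_{\bP_Y(\cE)}(\cO(-1)\oplus\cO)$ as a map of $\bP(\cL\oplus\cO)$-bundles over $v\mapsto[v]$, $V\setminus0\to\bP_Y(\cE)$. It carries $(V\setminus0)\times\bA^1$ into $\bV_{\bP_Y(\cE)}(\cO(-1))$ compatibly with zero sections, and $\cO(-1)$ pulls back to the trivial bundle. The $\bP$-homotopy is natural in the base, the line bundle and the section (Definition \ref{definition-motivic-space}(\ref{definition-motivic-space:p-homotopy}), Remark \ref{f-sharp-p-homotopy}). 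This naturality identifies the Gysin nullhomotopy directly with the homotopy of Proposition \ref{proposition-unstable-gysin}, with no second blowup and no further coherence needed.
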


\begin{proof}
    Specializing the fundamental cofiber sequence in this case, we get
    $$\frac{\bP_Y(\cE\oplus\cO)}{\bP_Y(\cE)}\to\frac{\Bl_{(0,0)}(\bV_Y(\cE)\times\bP^1)}{\Bl_{(0,0)}(\bV_Y(\cE)\times0)}\to\frac{\bV_Y(\cE)\times\bP^1}{\bV_Y(\cE)\times0}$$
    and note that it splits in $\PS(\cP_S)$ canonically by the map
    $$\Bl_{(0,0)}(\bV_Y(\cE)\times\bP^1)\to\bP_Y(\cE\oplus\cO)$$
    $$(v,t)\mapsto[v:t]$$
    which maps $\Bl_{(0,0)}(\bV_Y(\cE)\times0)$ to $\bP_Y(\cE)$. This canonically identifies the Gysin map $\bV_Y(\cE)\to\Th_Y(\cE\oplus\cO)$ with the inclusion $\bV_Y(\cE)\to\bP_Y(\cE\oplus\cO)$ (which is $v\mapsto[v:1]$ in coordinates) composed with the quotient map. Now it remains to identify the nullhomotopy of $\bV_Y(\cE)\setminus0\to\bV_Y(\cE)\to\Th_Y(\cE\oplus\cO)$ given by the Gysin map with the one in Proposition \ref{proposition-unstable-gysin}. By the above splitting, the former is given by
    $$(\bV_Y(\cE)\setminus0)\times\bP^1\to\bP_Y(\cE\oplus\cO)$$
    $$(v,t)\mapsto[v:t]$$
    as a $\bP^1$-homotopy from the inclusion $\bV_Y(\cE)\setminus0\to\bP_Y(\cE\oplus\cO)$ to the composition $\bV_Y(\cE)\setminus0\to\bP_Y(\cE)\to\bP_Y(\cE\oplus\cO)$; since this $\bP^1$-homotopy canonically factors through $\Bl_{\bP_Y(\cO)}\bP_Y(\cE\oplus\cO)$ with a commutative diagram
    $$\begin{tikzcd}
        (\bV_Y(\cE)\setminus0)\times\bA^1\ar[r]\ar[d]&\bP_Y(\cE\oplus\cO)\setminus\bP_Y(\cO)=\bV_{\bP_Y(\cE)}(\cO(-1))\ar[d]\\
        (\bV_Y(\cE)\setminus0)\times\bP^1\ar[r]&\Bl_{\bP_Y(\cO)}\bP_Y(\cE\oplus\cO)=\bP_{\bP_Y(\cE)}(\cO(-1)\oplus\cO)
    \end{tikzcd}$$
    compatible with the proof of Proposition \ref{proposition-unstable-gysin}, we see that the two nullhomotopies are canonically equal. 
\end{proof}

\begin{corollary}\label{gysin-normalization-corollary}
    Let $S$ be a scheme. Let $Z\in\cP_S$ and $\cE,\cF$ be vector bundles on $Z$. Then the Gysin map of the immersion $\bP_Z(\cF)\to\bP_Z(\cE\oplus\cF)$ can be expressed as
    \begin{align*}
        \bP_Z(\cE\oplus\cF)&\to\frac{\bP_Z(\cE\oplus\cF)}{\bP_Z(\cE)}=\frac{\Bl_{\bP_Z(\cE)}\bP_Z(\cE\oplus\cF)}{\E_{\bP_Z(\cE)}\bP_Z(\cE\oplus\cF)}=\frac{\bP_{\bP_Z(\cF)}(\cE(-1)\oplus\cO)}{\bP_{\bP_Z(\cF)}(\cE(-1))}
        \\&=\Th_{\bP_Z(\cF)}(\cE(-1)),
    \end{align*}
    where we note that $\N_{\bP_Z(\cF)/\bP_Z(\cE\oplus\cF)}=\cE(-1)$. 
    
    In particular, taking quotient of the Gysin map of $\bP_Z(\cF\oplus\cO)\to\bP_Z(\cE\oplus\cF\oplus\cO)$ by that of $\bP_Z(\cF)\to\bP_Z(\cE\oplus\cF)$, the resulting map
    $$\gys(\Th_Z(\cF)\to\Th_Z(\cE\oplus\cF))\colon\Th_Z(\cE\oplus\cF)\to\Th_{\Th_Z(\cF)}(\cE(-1))$$
    is an isomorphism.
\end{corollary}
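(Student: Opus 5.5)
The plan is to reduce the first claim to Theorem \ref{gysin-normalization} by an excessive change of the ambient scheme, and then to deduce the second claim by functoriality of the Gysin map in excessive maps.

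\textbf{First claim.} Set $i\colon\bP_Z(\cF)\to\bP_Z(\cE\oplus\cF)$ and $X'=\Bl_{\bP_Z(\cE)}\bP_Z(\cE\oplus\cF)$. The center $\bP_Z(\cE)$ is disjoint from $\bP_Z(\cF)$, so $i$ lifts to a closed immersion $i'\colon\bP_Z(\cF)\to X'$. There is a standard identification
$$X'=\bP_{\bP_Z(\cF)}(\cE(-1)\oplus\cO),$$
obtained by projecting from $\bP_Z(\cE)$ onto $\bP_Z(\cF)$. Under it, $i'$ is the section $\bP(\cO)$ and the exceptional divisor is $\bP_{\bP_Z(\cF)}(\cE(-1))$. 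The square formed by $i'$, $i$ and the blowup map $X'\to X$ is excessive, because it is an isomorphism near $\bP_Z(\cF)$. Remark \ref{trivial-functoriality-gysin} then gives $\gys(i)\circ(X'\to X)=\gys(i')$ compatibly. By Theorem \ref{gysin-normalization} applied to $Y=\bP_Z(\cF)$ and the bundle $\cE(-1)$, $\gys(i')$ is the quotient map $X'\to X'/\E=\Th_{\bP_Z(\cF)}(\cE(-1))$.

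To descend this from $X'$ to $X$, I would use quasismooth blowup excision for the blowup of $\bP_Z(\cE)$. It gives $X/\bP_Z(\cE)=X'/\E$, and these are exactly the identifications written in the statement. It remains to see that $\gys(i)$ factors through $X/\bP_Z(\cE)$ compatibly. For this I would apply Remark \ref{trivial-functoriality-gysin} to the open complement $X\setminus\bP_Z(\cF)$, which contains $\bP_Z(\cE)$. This makes $\gys(i)$ null on $\bP_Z(\cE)$. Since $X'\to X/\bP_Z(\cE)$ is the pullback of $X\to X/\bP_Z(\cE)$ along the blowup map, and the blowup square is a pushout, $\gys(i)$ agrees with the composite of the quotient maps.

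The step I expect to be the main obstacle is checking that the nullhomotopies on $\bP_Z(\cE)$ and on $\E$ match. I would handle it by naturality of the open-complement nullhomotopy along the excessive map from $\E\subset X'\setminus\bP_Z(\cF)$ to $\bP_Z(\cE)\subset X\setminus\bP_Z(\cF)$.

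\textbf{Second claim.} The inclusion of $\bP_Z(\cE\oplus\cF)$ as the hyperplane at infinity in $\bP_Z(\cE\oplus\cF\oplus\cO)$, restricting $\bP_Z(\cF)\subset\bP_Z(\cF\oplus\cO)$, is an excessive map of closed immersions. Functoriality therefore gives a map of Gysin arrows, and taking cofibers yields $\gys(\Th_Z(\cF)\to\Th_Z(\cE\oplus\cF))$. Applying the first claim to both immersions, with $\cF$ replaced by $\cF\oplus\cO$ in the larger one, turns both Gysin maps into the quotient-map descriptions above. Their cofiber is then the induced map
$$\frac{\bP_Z(\cE\oplus\cF\oplus\cO)}{\bP_Z(\cE\oplus\cF)}\to\Th_{\bP_Z(\cF\oplus\cO)}(\cE(-1))\big/\Th_{\bP_Z(\cF)}(\cE(-1)).$$
This map is an isomorphism because each side is a quotient of the same blowup-excision identification, now relative to the pair. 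The right side is $\Th_{\Th_Z(\cF)}(\cE(-1))$ by definition, in the sense of Thom spaces over the quotient, since Thom spaces commute with these cofibers.
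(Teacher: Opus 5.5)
Your proposal is correct and follows essentially the same route as the paper: factor $\gys(i)$ through $\bP_Z(\cE\oplus\cF)/\bP_Z(\cE)$ using disjointness, and identify this quotient with $\Bl/\E$ by blowup excision. You then compare with the Gysin map of the lifted immersion $\bP_Z(\cF)\to\Bl_{\bP_Z(\cE)}\bP_Z(\cE\oplus\cF)\cong\bP_{\bP_Z(\cF)}(\cE(-1)\oplus\cO)$ and conclude by Theorem \ref{gysin-normalization}. You make explicit two points the paper's short proof leaves implicit: the matching of nullhomotopies on $\bP_Z(\cE)$ and $\E$, and the second claim, which follows because both Gysin maps are quotients by the same $\bP_Z(\cE)$, so the induced map on cofibers is an isomorphism.
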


\begin{proof}
    Since $\bP_Z(\cE)$ and $\bP_Z(\cF)$ are disjoint in $\bP_Z(\cE\oplus\cF)$, the Gysin map in question factors through the quotient $\frac{\bP_Z(\cE\oplus\cF)}{\bP_Z(\cE)}=\frac{\Bl_{\bP_Z(\cE)}\bP_Z(\cE\oplus\cF)}{\E_{\bP_Z(\cE)}\bP_Z(\cE\oplus\cF)}$ and equals the Gysin map of $\bP_Z(\cF)\to\Bl_{\bP_Z(\cE)}\bP_Z(\cE\oplus\cF)$ factoring through the same quotient. Now the corollary follows from Theorem \ref{gysin-normalization} by the identification $\Bl_{\bP_Z(\cE)}\bP_Z(\cE\oplus\cF)=\bP_{\bP_Z(\cF)}(\cE(-1)\oplus\cO)$ that respects the $\bP_Z(\cF)$ on both sides. 
\end{proof}

\begin{corollary}\label{thom-splitting}
    Let $S$ be a scheme and let $Z\in\cP_S$. Let
    $$0\to\cE\to\cG\to\cF\to0$$
    be a short exact sequence of finite locally free $\cO_Z$-modules. Note that in this situation we still have $\N_{\bP_Z(\cF)/\bP_Z(\cE\oplus\cF)}=\cE(-1)$. Going through the equalities in Corollary \ref{gysin-normalization-corollary} the other way, we get a map
    $$\gys(\bP_Z(\cF)\to\bP_Z(\cG))\colon\bP_Z(\cG)\to\Th_{\bP_Z(\cF)}(\cE(-1))=\frac{\bP_Z(\cE\oplus\cF)}{\bP_Z(\cE)}.$$
    Doing the above construction also for $\cG\oplus\cO\to\cF\oplus\cO$ and taking quotient, we get
    $$\gys(\Th_Z(\cF)\to\Th_Z(\cG))\colon\Th_Z(\cG)\to\Th_{\Th_Z(\cF)}(\cE(-1))=\Th_Z(\cE\oplus\cF).$$
    This map is an isomorphism in $\PS(\cP_S)$. 
\end{corollary}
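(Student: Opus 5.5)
The plan is to reduce Corollary \ref{thom-splitting} to the split case of Corollary \ref{gysin-normalization-corollary} by a deformation argument. Choose a $\bP^1$-family of extensions interpolating between $\cG$ and $\cE\oplus\cF$. Concretely, let $\xi\in\mathrm{Ext}^1(\cF,\cE)$ be the class of the given sequence. Over $Z\times\bP^1$, form the extension $\widetilde\cG$ of $\cF$ by $\cE(-1)$ (twisting along $\bP^1$) whose class is $t\cdot\xi$. Then $\widetilde\cG|_{t=0}=\cE\oplus\cF$ and $\widetilde\cG|_{t=1}=\cG$. Equivalently, take the pullback of $0\to\cE\to\cG\to\cF\to0$ along the scaling-by-$t$ family, which is the Rees/deformation construction. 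Locally on $Z$ the sequence splits, but we want a canonical global statement, so we work with this family.

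Over $Z\times\bP^1$ we have the closed immersion $\bP_{Z\times\bP^1}(\cF)\to\bP_{Z\times\bP^1}(\widetilde\cG)$, which is quasismooth with normal bundle $\cE(-1)$ (up to the $\bP^1$-twist). Its Gysin map, built by Construction \ref{construction-gysin} relative to $Z\times\bP^1$, is functorial under the excessive base changes to the fibers $t=0$ and $t=1$ (Remark \ref{trivial-functoriality-gysin}). The same holds after adding $\oplus\cO$ and taking the quotient. We therefore get a map of $\Th$-objects over $Z\times\bP^1$, which restricts to the map in question at $t=1$ and to the map of Corollary \ref{gysin-normalization-corollary} at $t=0$. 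The latter is an isomorphism.

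It remains to show that the property of being an isomorphism transports from $t=0$ to $t=1$. For this I would use $\bP$-homotopy invariance of the relevant objects. The inclusions of the fibers over $0$ and $1$ into the total family should induce isomorphisms on these Thom-type quotients in $\PS(\cP_S)$. The tool is Proposition \ref{p-homotopy-vector-bundle} together with Remark \ref{f-sharp-p-homotopy}, applied to the sections $0,1$ of $\bA^1\subset\bP^1$. These give canonical homotopies making the two fiber inclusions agree after composing into the family, so the two restrictions are related by isomorphisms coming from the family.

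The step I expect to be the main obstacle is that $\PS$ is not $\bA^1$-invariant, so a $\bP^1$-homotopy does not by itself make fiber restriction an equivalence. I would try first to avoid needing that. Instead, I would directly exhibit an isomorphism of the quotients $\bP_Z(\cG)/\bP_Z(\cE)\cong\bP_Z(\cE\oplus\cF)/\bP_Z(\cE)$. Both equal the Thom space of $\cE(-1)$ over $\bP_Z(\cF)$ after blowing up $\bP_Z(\cE)$, via the identification $\Bl_{\bP_Z(\cE)}\bP_Z(\cG)=\bP_{\bP_Z(\cF)}(\cE(-1)\oplus\cO)$, which holds for nonsplit extensions as well since the blowup is the projectivized bundle $\bP(\cE(-1)\oplus\cO)$ over $\bP_Z(\cF)$. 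Then, as in the proof of Corollary \ref{gysin-normalization-corollary}, disjointness of $\bP_Z(\cE)$ and $\bP_Z(\cF)$ reduces the Gysin map to that of a zero section of a projective bundle. Theorem \ref{gysin-normalization} identifies this with the quotient map, so after quotienting by the $\oplus\cO$-free version, the map is an isomorphism.
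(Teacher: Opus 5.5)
\textbf{There is a genuine gap: neither of your two routes proves the statement.}

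\textbf{The deformation route.} It stalls exactly where you suspect. Remark \ref{f-sharp-p-homotopy} and Proposition \ref{p-homotopy-vector-bundle} only identify two maps with a \emph{common source} after composing into a $\bP^1$-family. They do not make the fibre inclusions $\Th_Z(\cE\oplus\cF)\to\Th_{Z\times\bP^1}(\widetilde\cG)\leftarrow\Th_Z(\cG)$ equivalences. Since $\PS(\cP_S)$ is not $\bA^1$-invariant, invertibility at $t=0$ tells you nothing at $t=1$.

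\textbf{The ``direct'' route.} It is wrong for a non-split sequence. In the paper's convention $\bP_Z(-)$ parametrizes invertible quotients, so $\bP_Z(\cF)\to\bP_Z(\cG)$ comes from the surjection $\cG\twoheadrightarrow\cF$. But $\cE\hookrightarrow\cG$ is an injection, so there is no closed immersion $\bP_Z(\cE)\to\bP_Z(\cG)$. Consequently the following do not exist:
\begin{itemize}
\item the quotient $\bP_Z(\cG)/\bP_Z(\cE)$;
\item the blowup $\Bl_{\bP_Z(\cE)}\bP_Z(\cG)$;
\item a linear projection onto $\bP_Z(\cF)$, which would need a map $\cF\to\cG$.
\end{itemize}
In fact, a linear subspace of $\bP_Z(\cG)$ of the rank of $\cE$ that is disjoint from $\bP_Z(\cF)$ exists only if the sequence splits, since it would give $\cG\cong\cF\oplus\cE'$. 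In the non-split case the projection goes the other way, $\bP_Z(\cG)\setminus\bP_Z(\cF)\to\bP_Z(\cE)$. This is exactly why the statement defines the target through the split model $\bP_Z(\cE\oplus\cF)/\bP_Z(\cE)$ rather than through $\cG$.

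\textbf{The missing idea.} It is the step you set aside when you wrote that the sequence splits locally but you want a canonical global statement. The map $\gys(\Th_Z(\cF)\to\Th_Z(\cG))$ is already canonically defined globally. What remains is only the \emph{property} that it is invertible, and that can be checked Zariski locally on $Z$. The reasons are:
\begin{itemize}
\item the construction is compatible with restriction to opens of $Z$, since open immersions give excessive maps (Remark \ref{trivial-functoriality-gysin});
\item the objects involved are glued from their restrictions along Čech nerves of a Zariski cover, and these colimits are preserved in $\PS(\cP_S)$.
\end{itemize}
Choose a cover on which the sequence splits, and fix a splitting compatible with it. This identifies the map with the one in Corollary \ref{gysin-normalization-corollary}, which is an isomorphism. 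In that split setting your ``direct'' argument is valid; it is essentially the proof of that corollary. This Zariski-local reduction is the paper's proof.
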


\begin{proof}
    Isomorphy in $\PS(\cP_S)$ can be checked Zariski locally on $Z$, which means that we can assume $\cG=\cE\oplus\cF$, in which case it follows from Corollary \ref{gysin-normalization-corollary}. 
\end{proof}

\begin{remark}\label{thom-splitting-symmetric-monoidal}
    Note that the map $\gys(\bP_Z(\cF)\to\bP_Z(\cG))$ in Corollary \ref{thom-splitting} and the map $\gys(\Th_Z(\cF)\to\Th_Z(\cE\oplus\cF))$ in Corollary \ref{gysin-normalization-corollary} can be made symmetric monoidal with respect to the direct sum, because the construction of $(\bB,\partial\bB)$ in \cite[\S 3]{annala-hoyois-iwasa-2023} is functorial and cartesian in surjections of locally free $\cO_Z$-modules. 
\end{remark}

The following proposition compares the Thom splitting isomorphism provided by Corollary \ref{thom-splitting} with the one coming from invertibility of Thom objects in $\MS$. 

\begin{proposition}[comparing Thom splittings]\label{thom-splitting-comparison}
    In $\MS(\cP_S)$, the isomorphism of Corollary \ref{thom-splitting} coincides with the one in the beginning of \cite[\S 7]{annala-hoyois-iwasa-2023}. 
\end{proposition}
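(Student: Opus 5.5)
The plan is to reduce to the split case by a universality argument, and then to show that the remaining discrepancy, an automorphism of $\Th_Z(\cE\oplus\cF)$ depending on the choice of splitting, is trivial in $\MS(\cP_S)$.

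First, I will put both isomorphisms $\Th_Z(\cG)\simeq\Th_Z(\cE\oplus\cF)$ on the same footing. The isomorphism of Corollary \ref{thom-splitting} is natural in the short exact sequence: it is functorial along isomorphisms of extensions and compatible with base change $Z'\to Z$ by Remark \ref{trivial-functoriality-gysin} and Proposition \ref{proposition-unstable-gysin}. The isomorphism at the beginning of \cite[\S 7]{annala-hoyois-iwasa-2023} is likewise natural in these data, being built from the symmetric monoidal functor $\Th$ of Proposition \ref{thom-space-additive} together with invertibility of Thom objects in $\MS(\cP_S)$. Consequently both isomorphisms are obtained by pullback from the universal extension. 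That universal extension lives over the stack of extensions, which Zariski locally on $Z$ is the affine bundle $\bV_Z(\mathcal{H}om(\cF,\cE)[1])$; in practice I will work with the torsor of splittings, which is the affine bundle $W\to Z$ modelled on $\mathcal{H}om(\cF,\cE)$.

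Second, I will compare the two isomorphisms after choosing a splitting $\cG\cong\cE\oplus\cF$.
\begin{itemize}
\item By Corollary \ref{thom-splitting}, the Gysin isomorphism becomes the map of Corollary \ref{gysin-normalization-corollary}.
\item By Remark \ref{thom-space-additive-alternative}, the additivity equivalence built from $\bV(-)/(\bV(-)\setminus 0)$ agrees with the one of Proposition \ref{thom-space-additive}.
\item By Theorem \ref{gysin-normalization}, the Gysin map of a zero section is the retraction of Proposition \ref{proposition-unstable-gysin}.
\end{itemize}
Combining these, and using Remark \ref{thom-splitting-symmetric-monoidal} for the compatibility with $\oplus$, I expect the Gysin isomorphism for $\cE\oplus\cF$ to be identified with the additivity isomorphism $\Th(\cE)\otimes\Th(\cF)\simeq\Th(\cE\oplus\cF)$. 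This additivity isomorphism is exactly what \cite[\S 7]{annala-hoyois-iwasa-2023} uses in the split case. So the two isomorphisms agree canonically whenever a splitting is chosen.

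Third, and this is the step I expect to be the main obstacle, I must show that the resulting identification is independent of the splitting coherently, so that it glues Zariski locally and descends from the torsor $W$. Two splittings differ by a unipotent automorphism $u_\varphi=\begin{pmatrix}1&\varphi\\0&1\end{pmatrix}$ of $\cE\oplus\cF$, with $\varphi\in\Hom(\cF,\cE)$. The Gysin side is defined without reference to a splitting, so the question is whether the induced automorphism $\Th(u_\varphi)$ is canonically the identity on the \S 7 side. I would first try to prove this by an explicit homotopy. The family $t\mapsto u_{t\varphi}$, for $t\in\bA^1\subset\bP^1$, extends to a map out of a suitable blowup of $\bP^1\times\bP_Z(\cE\oplus\cF\oplus\cO)$. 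Applying Proposition \ref{p-homotopy-vector-bundle} to the section $\varphi$ of $\bV_Z(\mathcal{H}om(\cF,\cE))$ then gives a canonical homotopy $\Th(u_\varphi)\simeq\Th(u_0)=\id$, and this homotopy is the identity when $\varphi=0$. It remains to check that these homotopies are compatible with composition $u_{\varphi}u_{\psi}=u_{\varphi+\psi}$, at least up to the coherence actually needed, namely contractibility of the relevant torsor action in $\MS(\cP_S)$. Since $\Th(\cE\oplus\cF)$ is invertible in $\MS(\cP_S)$, automorphisms of it are given by units of the endomorphism ring of the unit. This should reduce the required coherence to a statement about maps $W\to\Omega^\infty\mathrm{End}(\mathbf 1)$. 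That statement I would settle by the same $\bP^1$-homotopy contracting $W$ onto its zero section, in the sense of Proposition \ref{p-homotopy-vector-bundle}.
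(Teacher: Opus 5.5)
\textbf{The gap is Step 3, and Step 3 is where all the content lies.} Step 2 is fine: for a split sequence, the isomorphism of Corollary \ref{thom-splitting} is by construction a Gysin map followed by its own inverse, and the \S 7 isomorphism is the monoidal structure. Descent along $W\to Z$ is also not the issue, since $W\to Z$ is a Zariski-locally split epimorphism.

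First, Step 3 misstates the problem. Both isomorphisms are defined without a splitting, and each already gives a canonical trivialization of $\Th(u_\varphi)$. One comes from functoriality of the Gysin isomorphism in automorphisms of the extension (Remark \ref{trivial-functoriality-gysin}); the other comes from naturality of the $\ws_2$-cell. Gluing your split-case identifications over the \v{C}ech nerve of $W$ requires that these two trivializations agree, coherently. Building a third trivialization from a $\bP^1$-homotopy does not address this.

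Second, the proposed tool fails without $\bA^1$-invariance. Proposition \ref{p-homotopy-vector-bundle} only identifies $j\circ\sigma$ with $j\circ0$ after mapping into $\bP_T(\cE\oplus\cO)$. It does not contract an affine bundle onto a section; that would force $\bA^1$-invariance, cf.\ the remark after Theorem \ref{gysin-introduction}. So a map $W\to\Omega^\infty\mathrm{End}(\mathbf 1)$ is not determined by its restriction to a section unless it extends over a projective completion, and you have no reason to expect that.

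Nor does the family $u_{t\varphi}$ extend over $t=\infty$. Already for $\cE=\cF=\cO$ and $\varphi=1$ it is, in coordinates, the rational map $([t:s],[x:y:z])\mapsto[sx:tx+sy:sz]$. This is undefined along $\{\infty\}\times\{x=0\}$. On the blowup of that line, the exceptional divisor maps onto $\{x=0\}$, which is not contained in the boundary $\{z=0\}=\bP(\cE\oplus\cF)$. So nothing descends to $\Th(\cE\oplus\cF)$. In any case, the homotopy of Proposition \ref{p-homotopy-vector-bundle} lives on the unblown-up product, and you give no way to lift it.

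\textbf{The missing idea is additivity of algebraic $\K$-theory, which disposes of all unipotent automorphisms at once; this is the paper's route.} Compose the isomorphism of Corollary \ref{thom-splitting} with the $\K$-theoretic isomorphism $\Th_Z(\cE\oplus\cF)\simeq\Th_Z(\cG)$. This gives a functor $\ws_2\Vect_Z\to\MS(\cP_Z)^{\Sone}$, sending a sequence to an object with an automorphism. It is symmetric monoidal for $\oplus$ by Remark \ref{thom-splitting-symmetric-monoidal} and lands in invertible objects. Hence it factors through the group completion of $(\ws_2\Vect,\oplus)$, taken as Zariski sheaves.

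By the additivity theorem, the map $(\vect^2,\oplus)\to(\ws_2\Vect,\oplus)$, $(\cE,\cF)\mapsto(0\to\cE\to\cE\oplus\cF\to\cF\to0)$, becomes an equivalence after group completion. This is precisely the statement that automorphisms like $u_\varphi$ of a split sequence become coherently trivial. So it suffices to check on pairs $(\cE,\cF)$, where the Gysin composite is literally the identity. No torsor of splittings, descent, or $\bP^1$-homotopy is needed.
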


\begin{proof}
    In fact, the latter isomorphism comes from the fact that the $\K$-theory, as a Zariski sheaf, can be defined in two ways: 
    \begin{enumerate}
        \item\label{thom-splitting-comparison:quillen} Quillen $+$-construction, i.e.\ $\K=(\vect,\oplus)^\gp$ as the group completion of the $\EE_\infty$-monoid of vector bundles with respect to the direct sum;
        \item\label{thom-splitting-comparison:waldhausen} Waldhausen $\ws$-construction, i.e.\ $\K=\Omega|\ws_\bullet\Vect|$ as the loop space of the geometric realization of the simplicial anima $\ws_\bullet\Vect$ defined as in \cite[Lecture 18, Definition 9]{lurie-281}. In short, a point in $\ws_n\Vect$ is a vector bundle along with a length-$n$ filtration whose graded pieces are still vector bundles; 
    \end{enumerate}
    by (\ref{thom-splitting-comparison:quillen}), since $\Th_Z\colon(\vect_Z,\oplus)\to(\MS(\cP_Z),\otimes_Z)$ lands in invertible objects, it factors through $\K$; then by (\ref{thom-splitting-comparison:waldhausen}), the $2$-cell
    $$(0\to\cE\to\cG\to\cF\to0)\in\ws_2\Vect_Z$$
    witnesses an equality $[\cE]+[\cF]=[\cG]$ in $\K=\Omega|\ws_\bullet\Vect_Z|$, so we get an isomorphism $\Th_Z(\cE)\otimes\Th_Z(\cF)=\Th_Z(\cG)$ accordingly. Composing it with the isomorphism of Corollary \ref{thom-splitting} gives a functor
    $$\ws_2\Vect_Z\to\MS(\cP_Z)^{\Sone}:=\{(X,f)\mid X\in\MS(\cP_Z),\,f\colon X\xrightarrow{\sim}X\}$$
    \begin{align*}
        (0\to\cE\to\cG\to\cF\to0)&\mapsto\\
        (\Th_Z(\cG)&\xrightarrow{\text{Corollary \ref{thom-splitting}}}\Th_Z(\cE\oplus\cF)\xrightarrow{\K\text{-theory}}\Th_Z(\cG))
    \end{align*}
    which is symmetric monoidal under the direct sum on the left and the pointwise tensor product on the right by Remark \ref{thom-splitting-symmetric-monoidal}. Since Thom spaces are invertible in $\MS(\cP_Z)$, the functor factors through the group completion of $\ws_2\Vect_Z$. Now, from the proof of the equivalence of the two ways to define the $\K$-theory (more precisely, \cite[Lecture 18, Theorem 10]{lurie-281}), we see that the map
    $$(\vect^2,\oplus)\to(\ws_2\Vect,\oplus)$$
    $$(\cE,\cF)\mapsto(0\to\cE\to\cE\oplus\cF\to\cF\to0)$$
    of $\EE_\infty$-monoids becomes an isomorphism after group completion. Therefore, to prove that the above functor $\ws_2\Vect_Z\to\MS(\cP_Z)^{\Sone}$ factors through $\MS(\cP_Z)\to\MS(\cP_Z)^{\Sone}$, $X\mapsto(X,\id_X)$, it suffices to do so after precomposed with the map $\vect^2\to\ws_2\Vect$ above, but this is clear because the isomorphism in Corollary \ref{thom-splitting} is defined as the composite of 
    $$\gys(\Th_Z(\cF)\to\Th_Z(\cG))\colon\Th_Z(\cG)\to\Th_{\Th_Z(\cF)}(\cE(-1))$$
    and the inverse of
    $$\gys(\Th_Z(\cF)\to\Th_Z(\cE\oplus\cF))\colon\Th_Z(\cE\oplus\cF)\to\Th_{\Th_Z(\cF)}(\cE(-1))$$
    which is manifestly the identity for $\cG=\cE\oplus\cF$. 
\end{proof}

\subsection{Composition}
The following is the composition-of-two-immersion version of Construction \ref{construction-gysin}, and will be the main ingredient of the composition compatibility of Gysin maps. Due to its complexity, we advice the reader to skip it first. 

\begin{construction}[composite deformation space]\label{composite-deformation-space}
    Let $Z\to Y\to X$ be a chain of two quasismooth closed immersions. Define the following schemes: 
    $$\D(Z\to Y\to X)=\Bl'\left(\begin{tikzcd}
        Z\times0\times0\ar[r]\ar[d]&Z\times0\times\bP^1\ar[d]\\
        Y\times\bP^1\times0\ar[r]&X\times\bP^1\times\bP^1
    \end{tikzcd}\right),$$
    $$\partial_Y\D(Z\to Y\to X)=\Bl'\left(\begin{tikzcd}
        Z\times0\times0\ar[r]\ar[d]&Z\times0\times0\ar[d]\\
        Y\times\bP^1\times0\ar[r]&X\times\bP^1\times0
    \end{tikzcd}\right),$$
    $$\partial_Z\D(Z\to Y\to X)=\Bl'\left(\begin{tikzcd}
        Z\times0\times0\ar[r]\ar[d]&Z\times0\times\bP^1\ar[d]\\
        Y\times0\times0\ar[r]&X\times0\times\bP^1
    \end{tikzcd}\right),$$
    $$\partial_{Y,Z}\D(Z\to Y\to X)=\Bl'\left(\begin{tikzcd}
        Z\times0\times0\ar[r]\ar[d]&Z\times0\times0\ar[d]\\
        Y\times0\times0\ar[r]&X\times0\times0
    \end{tikzcd}\right),$$
    where $\Bl'$ is the pushout-blowup in Definition \ref{definition-pushout-blowup}, with the commutative square
    $$\begin{tikzcd}
        \partial_{Y,Z}\D(Z\to Y\to X)\ar[r]\ar[d]&\partial_Z\D(Z\to Y\to X)\ar[d]\\
        \partial_Y\D(Z\to Y\to X)\ar[r]&\D(Z\to Y\to X)
    \end{tikzcd}$$
    where the four arrows are divisors. When $Z\to Y\to X$ is understood, we omit it from the notation. By Definition \ref{definition-pushout-blowup}, these four blowups come with three strict exceptional divisors each. We denote $\E'_Y\D=\E'_{Y\times\bP^1\times0}\D$, $\E'_Z\D=\E'_{Z\times0\times\bP^1}\D$, and $\E'_{Y,Z}\D=\E'_{Z\times0\times0}\D$, and similarly for $\partial_Y\D$, $\partial_Z\D$, and $\partial_{Y,Z}\D$. By Theorem \ref{pushout-blowup-description}, it is easy to see that $\E'_Z\partial_Y\D=\E'_Z\partial_{Y,Z}\D=\varnothing$. 

    Now let $S$ be a scheme and suppose that $Z\to Y\to X$ is in $\cP_S$. Define objects $\Q,\E'_Y\Q,\E'_Z\Q,\E'_Y\Q\cap\E'_Z\Q$ as the total cofibers of
    $$\begin{tikzcd}
        \dfrac{\partial_{Y,Z}\D}{\E'_{Y,Z}\partial_{Y,Z}\D}\ar[r]\ar[d]&\dfrac{\partial_Z\D}{\E'_{Y,Z}\partial_Z\D}\ar[d]\\
        \dfrac{\partial_Y\D}{\E'_{Y,Z}\partial_Y\D}\ar[r]&\dfrac{\D}{\E'_{Y,Z}\D}
    \end{tikzcd}$$
    $$\begin{tikzcd}
        \dfrac{\E'_Y\partial_{Y,Z}\D}{\E'_Y\partial_{Y,Z}\D\cap\E'_{Y,Z}\partial_{Y,Z}\D}\ar[r]\ar[d]&\dfrac{\E'_Y\partial_Z\D}{\E'_Y\partial_Z\D\cap\E'_{Y,Z}\partial_Z\D}\ar[d]\\
        \dfrac{\E'_Y\partial_Y\D}{\E'_Y\partial_Y\D\cap\E'_{Y,Z}\partial_Y\D}\ar[r]&\dfrac{\E'_Y\D}{\E'_Y\D\cap\E'_{Y,Z}\D}
    \end{tikzcd}$$
    $$\begin{tikzcd}
        \dfrac{\E'_Z\partial_{Y,Z}\D}{\E'_Z\partial_{Y,Z}\D\cap\E'_{Y,Z}\partial_{Y,Z}\D}\ar[r]\ar[d]&\dfrac{\E'_Z\partial_Z\D}{\E'_Z\partial_Z\D\cap\E'_{Y,Z}\partial_Z\D}\ar[d]\\
        \dfrac{\E'_Z\partial_Y\D}{\E'_Z\partial_Y\D\cap\E'_{Y,Z}\partial_Y\D}\ar[r]&\dfrac{\E'_Z\D}{\E'_Z\D\cap\E'_{Y,Z}\D}
    \end{tikzcd}$$
    $$\begin{tikzcd}
        \dfrac{\E'_Y\partial_{Y,Z}\D\cap\E'_Z\partial_{Y,Z}\D}{\E'_Y\partial_{Y,Z}\D\cap\E'_Z\partial_{Y,Z}\D\cap\E'_{Y,Z}\partial_{Y,Z}\D}\ar[r]\ar[d]&\dfrac{\E'_Y\partial_Z\D\cap\E'_Z\partial_Z\D}{\E'_Y\partial_Z\D\cap\E'_Z\partial_Z\D\cap\E'_{Y,Z}\partial_Z\D}\ar[d]\\
        \dfrac{\E'_Y\partial_Y\D\cap\E'_Z\partial_Y\D}{\E'_Y\partial_Y\D\cap\E'_Z\partial_Y\D\cap\E'_{Y,Z}\partial_Y\D}\ar[r]&\dfrac{\E'_Y\D\cap\E'_Z\D}{\E'_Y\D\cap\E'_Z\D\cap\E'_{Y,Z}\D}
    \end{tikzcd}$$
    respectively, taken in $\Sh_{\Zar,\qbu}(\cP_S)$, where $\cap$ for divisors means fiber products. Then by Theorem \ref{pushout-blowup-description}, there is a canonical commutative diagram
    $$\begin{tikzcd}
        \E'_Y\Q\cap\E'_Z\Q\ar[r]\ar[d]&\E'_Z\Q\ar[r]\ar[d]&\Th(\N_{Z/X})\wedge\bP^1/0\ar[d]\\
        \E'_Y\Q\ar[r]\ar[d]&\Q\ar[r]\ar[d]&\Q(Z\to X)\wedge\bP^1/0\ar[d]\\
        \Th(\N_{Y/X})\wedge\bP^1/0\ar[r]&\Q(Y\to X)\wedge\bP^1/0\ar[r]&X\wedge\bP^1/0\wedge\bP^1/0
    \end{tikzcd}$$
    in $\Sh_{\Zar,\qbu}(\cP_S)$, whose rows and columns can be described as follow:
    \begin{enumerate}
        \item\label{composite-deformation-space:middle-row} Its middle row is the fundamental cofiber sequence of 
        $$\Bl_{Z\times0}(Y\times\bP^1)\to\Bl_{Z\times0}(X\times\bP^1)$$
        quotiented by that of 
        $$\Bl_{Z\times0}(Y\times0)\to\Bl_{Z\times0}(X\times0).$$
        \item\label{composite-deformation-space:upper-row} Its upper row is the fundamental cofiber sequence of 
        $$\E_{Z\times0}(Y\times\bP^1)\to\E_{Z\times0}(X\times\bP^1)$$ 
        quotiented by that of 
        $$\E_{Z\times0}(Y\times0)\to\E_{Z\times0}(X\times0).$$
        \item\label{composite-deformation-space:lower-row} Its lower row is the fundamental cofiber sequence of 
        $$Y\times\bP^1\to X\times\bP^1$$
        quotiented by that of 
        $$Y\times0\to X\times0.$$
        \item\label{composite-deformation-space:middle-column} Its middle column is the fundamental cofiber sequence of 
        $$Z\times\bP^1=\Bl_{Z\times0}(Z\times\bP^1)\to\Bl_{Y\times0}(X\times\bP^1)$$
        quotiented by that of 
        $$\varnothing=\Bl_{Z\times0}(Z\times0)\to\Bl_{Y\times0}(X\times0).$$
        \item\label{composite-deformation-space:left-column} Its left column is the fundamental cofiber sequence of 
        $$Z\times0=\E_{Z\times0}(Z\times\bP^1)\to\E_{Y\times0}(X\times\bP^1)$$ 
        quotiented by that of 
        $$\varnothing=\E_{Z\times0}(Z\times0)\to\E_{Y\times0}(X\times0).$$
        \item\label{composite-deformation-space:right-column} Its right column is the fundamental cofiber sequence of 
        $$Z\times\bP^1\to X\times\bP^1$$ 
        quotiented by that of 
        $$Z\times0\to X\times0.$$
    \end{enumerate}
    Note that (\ref{composite-deformation-space:upper-row}) and (\ref{composite-deformation-space:left-column}) give rise to isomorphisms
    $$\E'_Y\Q\cap\E'_Z\Q=\Th_{\Th_Z(\N_{Z/Y})}((\N_{Y/X}|_Z)(-1))$$
    and
    $$\E'_Y\Q\cap\E'_Z\Q=\Th_Z(\N_{Y/X}|_Z\oplus\N_{Z/Y})$$
    respectively, which compose to the equality at the end of Corollary \ref{thom-splitting}. 
\end{construction}

The following lemma is an immediate consequence of the above construction. 

\begin{lemma}\label{semi-composition-gysin}
    Let $S$ be a scheme and let $Z\xrightarrow{j}Y\xrightarrow{i}X$ be a chain of two quasismooth closed immersions in $\cP_S$. Then there is a canonical commutation of the square
    $$\begin{tikzcd}
        X\ar[rrrr,"\gys(ij)"]\ar[d,"\gys(i)"']&&&&\Th_Z(\N_{ij})\ar[d,equal,"\gys(\Th_Z(\N_{Z/i}))"]\\
        \Th_Y(\N_i)\ar[rrrr,"\gys(0\colon Z\to\Th_Y(\N_i))"']&&&&\Th_Z(j^*\N_i\oplus\N_j)
    \end{tikzcd}$$
    in $\PS(\cP_S)$, where 
    \begin{enumerate}
        \item\label{semi-composition-gysin:two-step} $\gys(\Th_Z(\N_{Z/i}))$ means the Gysin map of $\bP_Z(\N_j\oplus\cO)\to\bP_Z(\N_{ij}\oplus\cO)$ quotiented by that of $\bP_Z(\N_j)\to\bP_Z(\N_{ij})$ as in Corollary \ref{thom-splitting} applied to
        $$0\to j^*\N_i\to\N_{ij}\to\N_j\to0$$
        and hence is an isomorphism; 
        \item\label{semi-composition-gysin:one-step} $\gys(0\colon Z\to\Th_Y(\N_i))$ means the Gysin map of the composition $Z\to Y=\bP_Y(\cO)\to\bP_Y(\N_i\oplus\cO)$ quotiented by that of $\varnothing\to\bP_Y(\N_i)$. 
    \end{enumerate}
\end{lemma}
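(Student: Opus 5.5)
We will read off the square from the $3\times3$ diagram of Construction \ref{composite-deformation-space}, in the same way that Construction \ref{construction-gysin} reads the Gysin map off the fundamental cofiber sequence. The closed immersion $X=X\times1\times1\to X\times\bP^1\times\bP^1$ does not meet any of the blowup centers. So it lifts to $\D=\D(Z\to Y\to X)$ and gives a map $X\to\Q$. Its images in the right column and the bottom row are $X\wedge\bP^1/0\wedge\bP^1/0$, $\Q(Z\to X)\wedge\bP^1/0$ and $\Q(Y\to X)\wedge\bP^1/0$. Each of these is induced by the point $1$ in one of the $\bP^1$-factors. Remark \ref{f-sharp-p-homotopy} (with $\cL=\cO$, $s=1$) applied to the second $\bP^1$-coordinate nullhomotopes the composite $X\to\Q\to\Q(Z\to X)\wedge\bP^1/0$. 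Applied to the first coordinate, it nullhomotopes $X\to\Q\to\Q(Y\to X)\wedge\bP^1/0$. Both nullhomotopies restrict to the corner $X\wedge\bP^1/0\wedge\bP^1/0$. There they give the two nullhomotopies coming from the two coordinates, and these are compatible because both factor through $1\times1$ in the product. Since the rows and columns are cofiber sequences in the stable category $\PS(\cP_S)$, the $3\times3$ diagram is a bicartesian grid. The data above therefore lift $X\to\Q$ to a map $X\to\E'_Y\Q\cap\E'_Z\Q$, together with homotopies identifying its two images, in $\E'_Z\Q$ and in $\E'_Y\Q$.

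We then identify the two resulting composites with the two sides of the square. For the top-right path, the middle and right columns are described in items (\ref{composite-deformation-space:middle-column}) and (\ref{composite-deformation-space:right-column}) of Construction \ref{composite-deformation-space}. The point $1$ in the second $\bP^1$ lies away from all the centers, so the map $X\to\Q$, followed by the projection of the middle column and then by the nullhomotopy in the first coordinate, is the Gysin map of the middle column's immersion restricted along $X\times1\to\Bl_{Y\times0}(X\times\bP^1)$. Restricting to $1$ in the first coordinate as well, this becomes $\gys(ij)\colon X\to\Th_Z(\N_{ij})=\E'_Z\Q/(\text{corner})$. Next comes the map through $\E'_Y\Q\cap\E'_Z\Q$. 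By items (\ref{composite-deformation-space:upper-row}) and (\ref{composite-deformation-space:left-column}), the left column identifies the corner with $\Th_Z(j^*\N_i\oplus\N_j)$, and the upper row is the fundamental sequence whose Gysin map is $\gys(\Th_Z(\N_{Z/i}))$ as in Corollary \ref{thom-splitting}. The bottom-left path is handled symmetrically. There the middle row, by item (\ref{composite-deformation-space:middle-row}), restricts at $1$ to $\gys(i)\colon X\to\Th_Y(\N_i)$. The upper row, or rather the left column with exceptional locus $\bP_Y(\N_i\oplus\cO)$, contributes the Gysin map of $Z\to\bP_Y(\N_i\oplus\cO)$ modulo that of $\varnothing\to\bP_Y(\N_i)$, which is $\gys(0\colon Z\to\Th_Y(\N_i))$. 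The statement's remark that the two identifications of the corner compose to the equality of Corollary \ref{thom-splitting} is what makes the right vertical arrow of the square the isomorphism $\gys(\Th_Z(\N_{Z/i}))$.

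The main obstacle is the second step. We must check that restricting the composite deformation space to the slices $t_1=1$ and $t_2=1$ recovers the single-immersion deformation spaces compatibly with all the quotients. We must also check that the nullhomotopies obtained from the grid agree with those used in Construction \ref{construction-gysin} for each factor. We would handle this with the excessive functoriality of pushout-blowups, Theorem \ref{pushout-blowup-description} and Remark \ref{trivial-functoriality-gysin}: the inclusions of the slices $X\times\bP^1\times1$ and $X\times1\times\bP^1$ into $X\times\bP^1\times\bP^1$ are excessive maps of the defining squares, and they induce maps of $3\times3$ diagrams onto the single-immersion fundamental cofiber sequences. Naturality of Remark \ref{f-sharp-p-homotopy} in the base then matches the nullhomotopies.
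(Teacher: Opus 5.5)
Your overall route is the paper's. Its proof only says that the lemma follows from the $3\times3$ diagram of Construction~\ref{composite-deformation-space} by the method of Construction~\ref{construction-gysin}, with the upper row and left column supplying the right and bottom arrows. Your first paragraph makes that method explicit: writing $t_1,t_2$ for the two $\bP^1$-coordinates, you lift $X\to\Q$ to $\E'_Y\Q\cap\E'_Z\Q$ using the $t_1$- and $t_2$-nullhomotopies and their interchange on $X\wedge\bP^1/0\wedge\bP^1/0$.

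The step that fails as written is the identification $\Th_Z(\N_{ij})=\E'_Z\Q/(\text{corner})$.

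\begin{itemize}
\item By the upper row, $\E'_Z\Q/(\E'_Y\Q\cap\E'_Z\Q)$ is $\Th_Z(\N_{ij})\wedge\bP^1/0$, not $\Th_Z(\N_{ij})$.
\item The composite $X\to\E'_Z\Q\to\Th_Z(\N_{ij})\wedge\bP^1/0$ is exactly the map that must be null for the second lift. So it cannot be $\gys(ij)$.
\end{itemize}

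What is true is a factorization, not a quotient.

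\begin{itemize}
\item The slice $X\times\bP^1\times1$ misses $Z\times0\times0$ and $Y\times\bP^1\times0$. Restricting along it therefore maps the fundamental cofiber sequence of $ij$ (deformation parameter $t_1$) into the middle column.
\item Hence the lift $X\to\E'_Z\Q$ obtained from the $t_1$-nullhomotopy equals $X\xrightarrow{\gys(ij)}\Th_Z(\N_{ij})\to\E'_Z\Q$. The second arrow is the inclusion at $t_2=1$.
\item That arrow is precisely the one from which the upper row's Gysin map $\gys(\Th_Z(\N_{Z/i}))$ is built.
\item So the further lift through the $t_2$-nullhomotopy is $\gys(\Th_Z(\N_{Z/i}))\circ\gys(ij)$. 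This needs one check: the grid's $t_2$-nullhomotopy must agree with $\gys(ij)$ followed by the $t_2$-nullhomotopy on $\Th_Z(\N_{ij})\wedge\bP^1/0$. That follows from the same $h\wedge h$ interchange you used at the corner.
\end{itemize}

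Symmetrically, $\Th_Y(\N_i)$ enters through the $t_1=1$ map $\Th_Y(\N_i)\to\E'_Y\Q$, not as $\E'_Y\Q/(\text{corner})=\Th_Y(\N_i)\wedge\bP^1/0$. Your last paragraph on slices is the right repair. Note only that the slices give maps from the single-immersion fundamental sequences into the middle column and middle row of the grid, not maps of the grid onto them.
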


\begin{proof}
    This follows immediately from Construction \ref{composite-deformation-space} using the same method as in Construction \ref{construction-gysin}, noting that the descriptions (\ref{semi-composition-gysin:two-step}) and (\ref{semi-composition-gysin:one-step}) above align exactly with (\ref{composite-deformation-space:upper-row}) and (\ref{composite-deformation-space:left-column}) in Construction \ref{composite-deformation-space}, respectively. 
\end{proof}

\begin{theorem}[composition]\label{theorem-composition-gysin}
    Let $S$ be a scheme and let $Z\xrightarrow{j}Y\xrightarrow{i}X$ be a chain of two quasismooth closed immersions in $\cP_S$. Then there is a canonical commutation of the square
    $$\begin{tikzcd}
        X\ar[rrr,"\gys(ij)"]\ar[dd,"\gys(i)"']&&&\Th_Z(\N_{ij})\ar[d,equal,"\gys(\Th_Z(\N_{Z/i}))"]\\
        &&&\Th_Z(j^*\N_i\oplus\N_j)\ar[d,equal,"\text{Proposition \ref{thom-space-additive}}"]\\
        \Th_Y(\N_i)\ar[rrr,"\gys(\Th_j(\N_i))"']&&&\Th_Z(j^*\N_i)\otimes_Z\Th_Z(\N_j)
    \end{tikzcd}$$
    in $\PS(\cP_S)$, where 
    \begin{enumerate}
        \item $\gys(\Th_Z(\N_{Z/i}))$ means the Gysin map of $\bP_Z(\N_j\oplus\cO)\to\bP_Z(\N_{ij}\oplus\cO)$ quotiented by that of $\bP_Z(\N_j)\to\bP_Z(\N_{ij})$ as in Corollary \ref{thom-splitting} applied to
        $$0\to j^*\N_i\to\N_{ij}\to\N_j\to0$$
        and hence is an isomorphism; 
        \item $\gys(\Th_j(\N_i))$ means the Gysin map of $\bP_Z(j^*\N_i\oplus\cO)\to\bP_Y(\N_i\oplus\cO)$ quotiented by that of $\bP_Z(j^*\N_i)\to\bP_Y(\N_i)$. 
    \end{enumerate}
\end{theorem}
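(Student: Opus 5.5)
The plan is to deduce the theorem from Lemma \ref{semi-composition-gysin}. That lemma already commutes the square, except that its bottom arrow is $\gys(0\colon Z\to\Th_Y(\N_i))$, the Gysin map of $Z\to Y=\bP_Y(\cO)\to\bP_Y(\N_i\oplus\cO)$ taken relative to $\varnothing\to\bP_Y(\N_i)$. We want instead the composite of $\gys(\Th_j(\N_i))$ with the identification of Proposition \ref{thom-space-additive}. So the whole proof reduces to one claim: there is a canonical commutative triangle
$$\Th_Y(\N_i)\xrightarrow{\gys(\Th_j(\N_i))}\Th_Z(j^*\N_i)\otimes_Z\Th_Z(\N_j)\xrightarrow{\sim}\Th_Z(j^*\N_i\oplus\N_j)$$
whose composite is $\gys(0\colon Z\to\Th_Y(\N_i))$. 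Stacking this triangle under the square of Lemma \ref{semi-composition-gysin} gives the theorem.

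To prove the claim, factor the immersion $Z\to\bP_Y(\N_i\oplus\cO)$ as
$$Z\xrightarrow{0}\bP_Z(j^*\N_i\oplus\cO)\xrightarrow{k}\bP_Y(\N_i\oplus\cO).$$
Here $k$ is the base change of $j$ along the projective bundle, and $0$ is the zero section $\bP_Z(\cO)\to\bP_Z(j^*\N_i\oplus\cO)$. Then apply Lemma \ref{semi-composition-gysin} a second time, to this chain, relative to the chain $\varnothing\to\bP_Z(j^*\N_i)\to\bP_Y(\N_i)$. Excessive functoriality (Remark \ref{trivial-functoriality-gysin}) makes the relative version legitimate, because the composite deformation space is functorial in excessive maps of chains.

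This second application expresses $\gys(0\colon Z\to\Th_Y(\N_i))$ as $\gys(k)$ relative to $\bP_Z(j^*\N_i)\to\bP_Y(\N_i)$, that is $\gys(\Th_j(\N_i))$, followed by the Gysin map of the zero section of the normal bundle of $k$ restricted to $Z$, taken inside $\Th$ of it. Then identify the remaining pieces:
\begin{enumerate}
\item The normal bundle of $k$ is the pullback of $\N_j$, so the relevant Thom object of $\bP_Z(j^*\N_i\oplus\cO)$ relative to $\bP_Z(j^*\N_i)$ is $\Th_Z(j^*\N_i)\otimes_Z\Th_Z(\N_j)$. This is the multiplicativity of Thom spaces from Construction \ref{multiple-deformation-space}(\ref{multiple-deformation-space:product-functoriality}).
\item The zero-section Gysin map that appears is, by the normalization Theorem \ref{gysin-normalization} together with Corollary \ref{gysin-normalization-corollary}, the quotient identification. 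The extension is split here, namely $j^*\N_i\oplus\N_j$ restricted to the zero section, so the Thom splitting map of Corollary \ref{thom-splitting} is the one that is "manifestly the identity", as in the proof of Proposition \ref{thom-splitting-comparison}.
\item That identity map is then identified with the isomorphism of Proposition \ref{thom-space-additive}, using Remark \ref{thom-space-additive-alternative}.
\end{enumerate}

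The main obstacle I expect is the bookkeeping in step (i) of the plan: checking that the Thom-space identifications produced by this second composite deformation space agree with the symmetric monoidal structure of Proposition \ref{thom-space-additive}. For this I would use the symmetric monoidality Theorem \ref{gysin-symmetric-monoidal}. Near $Z$, the immersion $k$ is, Zariski locally, the product of $j$ with the identity of the fibre, and Remark \ref{thom-splitting-symmetric-monoidal} lets the Zariski-local identifications glue canonically.

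If that route gets tangled, the fallback is to compare the two deformation spaces directly. The composite deformation space of $Z\to Y\to X$ restricted to $\E'_Y\D$ should be identified with the composite deformation space of $Z\to\bP_Z(j^*\N_i\oplus\cO)\to\bP_Y(\N_i\oplus\cO)$, using the nested blowup identities of Appendix \ref{appendix-dag} (Theorem \ref{pushout-blowup-description}).
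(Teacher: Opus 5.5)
Your proposal is correct and is essentially the paper's proof: reduce via Lemma \ref{semi-composition-gysin}, then apply that lemma again to $Z\to\bP_Z(j^*\N_i\oplus\cO)\to\bP_Y(\N_i\oplus\cO)$ relative to $\varnothing\to\bP_Z(j^*\N_i)\to\bP_Y(\N_i)$, use Corollary \ref{gysin-normalization-corollary} to see that the (split) Thom-splitting arrow is the identity, and use Theorem \ref{gysin-normalization} with Remark \ref{thom-space-additive-alternative} to identify the zero-section Gysin map with the isomorphism of Proposition \ref{thom-space-additive}. The extra bookkeeping you anticipated in step (i), and the fallback, are not needed: $\N_k$ is pulled back from $Z$, so the target of $\gys(\Th_j(\N_i))$ is $\Th_Z(j^*\N_i)\otimes_Z\Th_Z(\N_j)$ by definition.
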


\begin{proof}
    Lemma \ref{semi-composition-gysin} reduces the question to identifying $\gys(0\colon Z\to\Th_Y(\N_i))$ and $\gys(\Th_j(\N_i))$, up to the isomorphism of Proposition \ref{thom-space-additive}. For this, apply Lemma \ref{semi-composition-gysin} again, now on $Z\xrightarrow{0}\Th_Z(j^*\N_i)\to\Th_Y(\N_i)$. Namely, apply Lemma \ref{semi-composition-gysin} on $Z=\bP_Z(\cO)\to\bP_Z(j^*\N_i\oplus\cO)\to\bP_Y(\N_i\oplus\cO)$ and $\varnothing\to\bP_Z(j^*\N_i)\to\bP_Y(\N_i)$, and take quotient. This gives us the commutative square
    $$\begin{tikzcd}
        \Th_Y(\N_i)\ar[rrr,"\gys(0\colon Z\to\Th_Y(\N_i))"]\ar[d,"\gys(\Th_j(\N_i))"']&&&\Th_Z(j^*\N_i\oplus\N_j)\ar[d,equal]\\
        \Th_Z(j^*\N_i)\otimes_Z\Th_Z(\N_j)\ar[rrr]&&&\Th_Z(j^*\N_i\oplus\N_j)
    \end{tikzcd}$$
    where
    \begin{enumerate}
        \item the right vertical isomorphism is $\gys(\Th_Z(j^*\N_i)\to\Th_Z(j^*\N_i\oplus\N_j))$ which by Corollary \ref{gysin-normalization-corollary} is the identity; 
        \item the lower horizontal arrow is $\gys(0\colon Z\to\Th_Z(j^*\N_i)\otimes_Z\Th_Z(\N_j))$ which by Theorem \ref{gysin-normalization} and Remark \ref{thom-space-additive-alternative} is the map given by Proposition \ref{thom-space-additive}. 
    \end{enumerate}
    This finishes the proof. 
\end{proof}


\appendix

\section{Blowup in derived algebraic geometry}\label{appendix-dag}

In this appendix, we collect definitions and properties of nested blowups of some types of diagrams in derived algebraic geometry used in the main body of the paper, partially generalizing \cite{hekkingkhanrydh2025deformationnormalbundleblowups}. We start with a simple but useful fact. 

\begin{proposition}[subtracting a divisor]\label{equivalence-subtract-divisor}
    Let $X$ be a stack and $D\to X$ be a divisor. Then there is an adjunction
    $$\{\text{closed immersion }Z\to X\}\rightleftarrows\{D\to Y\to X\mid Y\to X\text{ is a closed immersion}\}$$
    where the left adjoint is $Z\mapsto Z+D$ (see \S \ref{dag-notation}(\ref{dag-notation:adding-closed-immersion})) and the right adjoint is $Y\mapsto\Bl_DY$. In this case, we denote $\Bl_DY$ by $Y-D$. Moreover, this adjunction is an equivalence between divisors on both sides.
\end{proposition}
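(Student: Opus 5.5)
The approach is to make everything local and then translate both functors into operations on ideals. Both sides of the adjunction, and both functors, are compatible with Zariski localization on $X$: closed immersions, the sum $Z+D$ of \S\ref{dag-notation}(\ref{dag-notation:adding-closed-immersion}), and $\Bl_D$ all are. So I would first build the two functors and the candidate unit and counit globally, and then check the triangle identities, or equivalently the equivalence on mapping anima, Zariski locally. Locally $X=\Spec A$ and the ideal of $D$ is an invertible module $L\to A$, which I may take to be free, $L=A\cdot f\to A$.

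\textbf{Step 1: compute the ideal of $Z+D$.} For $Z=\Spec B$ with ideal $I_Z=\fib(A\to B)$, divisoriality gives $\cO_{Z\times_XD}=B\otimes_A A/f$. Hence $\fib(B\to B\otimes_AA/f)=B\otimes_AL$, and pulling back along $A/f$ shows that the ideal of $Z+D$ is $\fib(L\to B\otimes_AL)=I_Z\otimes_AL$, with its map to $A$ induced by $L\to A$. The point is that $L$ is flat, so no Tor-independence hypothesis is needed.

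\textbf{Step 2: describe $Y-D=\Bl_DY$.} If $D\to Y\to X$, then $I_Y\to A$ factors through $L=I_D$. I claim $\Bl_DY$ is the closed immersion whose ideal is $I_Y\otimes L^{-1}$, with structure map
\[
I_Y\otimes L^{-1}\to L\otimes L^{-1}=A.
\]
To check this I use the universal property of the blowup (\cite[\S 4]{hekkingkhanrydh2025deformationnormalbundleblowups}): $\Bl_DY$ represents maps $T\to Y$ for which $D\times_YT\to T$ is a (virtual Cartier) divisor. Suppose $T\to X$ factors through the closed immersion $Y'$ with ideal $I_Y\otimes L^{-1}$. Then $Y'\to X$ factors through $Y$, because multiplying by $L$ gives $I_Y\to I_{Y'}$. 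Moreover the pullback of $D$ to $Y'$ has ideal $L|_{Y'}$, which is invertible, so $Y'$ maps to $\Bl_DY$. Conversely, on $\Bl_DY$ the ideal of $Y$ is divisible by the exceptional divisor, which is the pullback of $D$. I would verify this locally by writing the blowup as a relative $\Proj$, where only the degree-one chart survives because $I_{D/Y}$ is generated by the image of $f$.

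\textbf{Step 3: the adjunction.} For a morphism $Z+D\to Y$ of closed immersions under $D$, compose with $Z\to Z+D$ to get $Z\to Y$; pulling $D$ back to $Z+D$ gives the divisor $D|_{Z+D}$, so the universal property yields $Z\to\Bl_DY$. In the other direction, given $Z\to\Bl_DY$, compose with the counit $\Bl_DY+D\to Y$ of Step 2, which is division by $L$ followed by multiplication by $L$. Under the ideal descriptions, both composites are the tensor–hom correspondence $\Hom(I_Z\otimes L,I_Y)=\Hom(I_Z,I_Y\otimes L^{-1})$ over $A$.

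\textbf{Main obstacle.} Morphisms of closed immersions over $X$ are maps of $\cO_X$-algebras, not maps of ideals. So Step 3 needs a justification that the relevant mapping anima are computed compatibly on both sides. My plan is to avoid ideals in the mapping spaces altogether and instead argue representably. Using Step 2 and the blowup universal property, the anima $\Map_X(Z,\Bl_DY)$ is the anima of $X$-maps $Z\to Y$, since the divisor condition on $Z$ is automatic after adding $D$. I would then show that $\Map_{D/}(Z+D,Y)$ is that same anima by the pushout description
\[
\cO_{Z+D}=\cO_Z\times_{\cO_{Z\cap D}}\cO_D .
\]
This description makes an algebra map out of $\cO_Y$ into $\cO_{Z+D}$ under $\cO_D$ the same as an algebra map $\cO_Y\to\cO_Z$.

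\textbf{Divisors.} If $Z$ is a divisor, then $I_Z\otimes L$ is invertible, so $Z+D$ is a divisor. If $Y\supseteq D$ is a divisor, then $I_Y\otimes L^{-1}$ is invertible. For divisors, the unit and counit become the canonical isomorphisms $I\otimes L\otimes L^{-1}\simeq I$, and a map of closed immersions that is an isomorphism on ideals is an isomorphism. This gives the claimed equivalence.
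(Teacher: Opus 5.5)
\textbf{Gap 1: your Step 2 is false beyond divisors.} The problem sits where derived and classical geometry diverge. You treat $Y-D$ as determined by its ideal, and you verify Step 2 with the classical universal property of the blowup.

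Take $A=\ZZ[f]$, $D=\Spec(A/f)$, and $Y=D\times_XD$ with the diagonal $D\to Y$. Then $\pi_1(\cO_Y)=\mathrm{Tor}_1^A(A/f,A/f)\cong A/f$. So $I_Y\simeq A\oplus A/f$, with $I_Y\to A$ given by $(a,\bar b)\mapsto fa$. Hence $\cofib(I_Y\otimes L^{-1}\to\cO_X)\simeq(A/f)[1]$. This has $\pi_0=0$, so it carries no ring structure (a ring with $\pi_0=0$ vanishes). Thus no closed immersion has ideal $I_Y\otimes L^{-1}$. Meanwhile $\Bl_DY=\varnothing$: $I_{D/Y}\simeq(A/f)[1]$ is $1$-connective, so on any nonempty $T$ it cannot surject onto an invertible ideal.

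The derived blowup does not classify maps $T\to Y$ such that $D\times_YT$ is a divisor. It classifies a virtual Cartier divisor $E\subseteq T$ together with an excessive map $(E\subseteq T)\to(D\subseteq Y)$. Then $L|_T\to I_{D/Y}|_T\to I_E$ is a surjection of invertible modules, so $E=D\times_XT$ (not $D\times_YT$). What is left is genuine data: a homotopy between the two maps $D\times_XT\rightrightarrows Y$, one through $T$ and one through $D$. Ideals, as $\cO_X$-modules, do not see this datum. Likewise, a module map $I_Y\to I_{Y'}$ does not give a map $Y'\to Y$ over $X$.

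\textbf{Gap 2: the same datum is missing from your ``Main obstacle'' fix.} Neither $\Map_X(Z,\Bl_DY)$ nor $\Map_{D/}(Z+D,Y)$ equals $\Map_X(Z,Y)$. For $Y=Z=D$ both are empty: $\Bl_DD=\varnothing$, and $\cO_{D+D}\simeq A/f^2$ receives no $A$-algebra map from $A/f$. But $\Map_X(D,D)$ is a point.

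By the fiber product formula, a map $\cO_Y\to\cO_{Z+D}$ under $\cO_D$ is a map $\cO_Y\to\cO_Z$ together with a homotopy between the two composites $\cO_Y\rightrightarrows\cO_{Z\times_XD}$. That homotopy need not exist, because $A$-algebra maps out of a quotient of $A$ are not unique. For instance, the two maps $A/f\rightrightarrows A/f\otimes_AA/f$ differ by the generator of $\pi_1$.

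\textbf{How the paper closes the argument.} If you describe both sides as an $X$-map $Z\to Y$ plus such a homotopy over $Z\times_XD$, the comparison goes through. This is exactly the paper's argument: it rewrites $\Bl_DY(S)$ as maps $\cO_Y\to S$ compatible with $\cO_D\to S/fS$.

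The paper then proves what your Step 2 was meant to supply, namely that $\Bl_DY\to X$ is a closed immersion. The functor of points turns colimits in $\cO_Y$ into limits, so it suffices to treat the generators $\cO_Y=A/fg$, where $\Bl_DY=\Spec(A/g)$. The divisor equivalence then follows from conservativity of $Z\mapsto Z+D$ and the identity $A/fg=A/g\times_{A/(g,f)}A/f$.

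Your Step 1 is correct, and $I_{Y-D}=I_Y\otimes L^{-1}$ does hold when $Y$ is a divisor. So your final paragraph on divisors survives once the adjunction and the closed-immersion property are established as above.
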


\begin{proof}
    Clearly, it suffices to treat the case where $X=\Spec(R)$ and $D=\Spec(R/r)$. Then a $D\to Y\to X$ where $Y\to X$ is a closed immersion corresponds to a factorization $R\to A\to R/r$ where $R\to A$ is surjective. 
    
    We first show that the claimed functor is well-defined, namely that $\Bl_DY\to X$ is indeed a closed immersion. Let $I=\fib(R\to A)$. Recall from \cite[Definition 4.3]{hekkingkhanrydh2025deformationnormalbundleblowups} that $\Bl_DY$, as a functor, takes a ring $S$ to the anima of excessive diagrams
    $$\begin{tikzcd}
        A\ar[r]\ar[d]&R/r\ar[d]\\
        S\ar[r]&S/J
    \end{tikzcd}$$
    where $J$ is an invertible ideal of $S$. Recall that the diagram being excessive means $\fib(A\to R/r)\otimes_AS\twoheadrightarrow J$, or equivalently $rR\otimes_RS\twoheadrightarrow J$, or equivalently $rS=J$. Therefore, we can rewrite the above anima as that of diagrams
    $$\begin{tikzcd}
        R\ar[r]\ar[d]&A\ar[r]\ar[ld]&R/r\ar[d]\\
        S\ar[rr]&&S/rS
    \end{tikzcd}$$
    Now this manifestly takes colimits of $A$ as $R$-algebras to limits of $R$-stacks, so in order to prove that $\Bl_DY\to X$ is a closed immersion, it suffices to treat the case where $A=R/rg$ for some $g\in R$ and $A\to R/r$ is the obvious map, as these generate all surjections from $R$ under colimits. Now this is \cite[Lemma 3.4]{tang2024slicingcriterionindsmoothring}, which says that $\Bl_DY=\Spec(R/g)$. 

    From the above discussion, it is clear that $Y\mapsto\Bl_DY$ is right adjoint to $Z\mapsto Z+D$. It is also easy to see that $Z\mapsto Z+D$ is conservative. So it remains to see $Y=\Bl_DY+D$ when $Y$ is a divisor, and we can again assume that $Y=\Spec(R/rg)$, and it becomes the obvious fact that $R/rg=R/g\times_{R/(g,r)}R/r$.
\end{proof}

\begin{remark}
    In classical algebraic geometry, i.e.\ if $X$ is a classical scheme and $D\to X$ is a classical divisor locally cut out by a nonzerodivisor, then the adjunction in Proposition \ref{equivalence-subtract-divisor} is actually an equivalence. However, this is not true in derived algebraic geometry. In fact, $Z\mapsto Z+D$ does not preserves products as seen in the following simplest example:
    \begin{itemize}
        \item Let $X=\Spec(R)$, $D=\Spec(R/r)$, $Z_1=\Spec(R/g_1)$ and $Z_2=\Spec(R/g_2)$. Then $Z_1\times_XZ_2+D$ is the spectrum of the ring
        $$R/(a_1,a_2)r:=R/(a_1,a_2)\times_{R/(a_1,a_2,r)}R/r$$
        instead of $R/(a_1r,a_2r)$ whose spectrum is $(Z_1+D)\times_X(Z_2+D)$. The two rings $R/(a_1,a_2)r$ and $R/(a_1r,a_2r)$ have the same $\pi_0$, but are in general different. For example when $a_1=a_2=1$ and $r=0$, they are $R/0=R\oplus R[1]$ and $R/(0,0)=R/0\otimes_RR/0=R\oplus R^{\oplus2}[1]+R[2]$, respectively. 
    \end{itemize}
\end{remark}

\begin{corollary}\label{blowup-subtract-divisor}
    Let $D\to Y\to X$ be closed immersions of stacks where $D\to X$ is a divisor. Then $\Bl_YX=\Bl_{Y-D}X$. More precisely, 
    $$\begin{tikzcd}
        \E_{Y-D}X+D|_{\Bl_{Y-D}X}\ar[r]\ar[d]&\Bl_{Y-D}X\ar[d]\\
        Y\ar[r]&X
    \end{tikzcd}$$
    is the universal diagram defining $\Bl_YX$. 
\end{corollary}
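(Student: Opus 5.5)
We will compare the two functors of points directly, working Zariski locally: $X=\Spec(R)$, $D=\Spec(R/r)$, and $Y$ given by a surjection $R\to A\to R/r$. By Proposition \ref{equivalence-subtract-divisor}, $Y-D=\Bl_DY$ is a closed immersion into $X$ with $(Y-D)+D\to Y$ the counit. The functor $\Bl_YX$ sends a ring $S$ to excessive squares
$$\begin{tikzcd}
    A\ar[r]\ar[d]&R/r\ar[d]\\
    S\ar[r]&S/J
\end{tikzcd}$$
with $J$ invertible. The plan has two steps. First produce the diagram in the statement. Then check it is universal by constructing mutually inverse maps between $\Bl_YX$ and $\Bl_{Y-D}X$ over $X$.

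\textbf{From $\Bl_{Y-D}X$ to $\Bl_YX$.} Let $E=\E_{Y-D}X$ be the exceptional divisor on $B=\Bl_{Y-D}X$. Then $E+D|_B$ is a sum of divisors, hence a divisor, because divisors form a submonoid under $+$ (\S\ref{dag-notation}(\ref{dag-notation:adding-closed-immersion})). The map $E\to Y-D$, together with $D|_B\to D$, gives a map $E+D|_B\to(Y-D)+D\to Y$ over $X$; here $+$ is functorial as a limit of structure sheaves. It remains to show this square is excessive, i.e.\ that $\cI_Y|_B\to\cI_{E+D|_B}=\cI_E\otimes\cI_{D|_B}$ is surjective. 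The natural approach is to factor $\cI_Y|_B\to\cI_{Y-D}|_B\otimes\cI_D|_B$, using the identity $\cI_Y=\cI_{Y-D}\cdot r$ on $\pi_0$, which Proposition \ref{equivalence-subtract-divisor} provides via the case $A=R/rg$. Then combine with the surjection $\cI_{Y-D}|_B\twoheadrightarrow\cI_E$ coming from excessiveness of the blowup square of $Y-D$. Surjectivity only concerns $\pi_0$, so this reduces to classical ideal bookkeeping.

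\textbf{From $\Bl_YX$ to $\Bl_{Y-D}X$.} Suppose we are given an excessive square of $S$-points with divisor $V(J)$ of $\Spec S$ mapping to $Y$. The composite $D|_S\to D\to Y$ together with excessiveness should show that $J\subseteq rS$ on $\pi_0$ in a divisorial sense, so $V(J)$ contains the divisor $D|_S$ as a summand. Applying the divisor equivalence of Proposition \ref{equivalence-subtract-divisor} on $\Spec S$, we write $V(J)=E'+D|_S$ with $E'=V(J)-D|_S$ a divisor. By adjunction, the map $V(J)\to Y$ over $X$, which lies under $D$, corresponds to $E'\to Y-D$. We then check excessiveness of the resulting square for $Y-D$: this holds because $J=J'\cdot rS$ and $\cI_Y\otimes S\twoheadrightarrow J$, so dividing by the invertible $r$ gives $\cI_{Y-D}\otimes S\twoheadrightarrow J'$.

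\textbf{Main obstacle.} The hard step is the second one. We must show that an arbitrary $S$-point of $\Bl_YX$ canonically factors its divisor as $E'+D|_S$, coherently in the derived sense and not just on $\pi_0$. To handle this, I would use that $\Bl_YX$, as a functor of the $R$-algebra $A$, turns colimits of $A$ into limits, exactly as in the proof of Proposition \ref{equivalence-subtract-divisor}; $\Bl_{Y-D}X$ behaves the same way, since $A\mapsto\Bl_D\Spec(A)$ is a right adjoint. This reduces the problem to the generators $A=R/rg$. There $Y-D=\Spec(R/g)$, and both sides are explicitly $\Bl_{(g)}X=X$ with exceptional divisor $V(g)+D$ versus $V(g)$, so the identification is immediate. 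Finally, the two constructions are inverse by the adjunction unit/counit, which completes the check of universality.
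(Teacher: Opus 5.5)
Your Step 1 and the excessivity check at the end of Step 2 are fine in substance. They amount to the observation on which the paper's proof ends: $Z'+D'\to Y$ is excessive if and only if $Z'\to Y-D$ is. The input, surjectivity of $\cI_Y\to\cI_{Y-D}\otimes\cI_D$, is not literally classical ideal bookkeeping, because the derived $\pi_0\cI_{Y-D}$ need not be an ideal of $\pi_0R$. It does, however, reduce legitimately to $A=R/rg$: $\cI_Y$ and $\cI_{Y-D}$ commute with sifted colimits in $A$, and $\cI_{Y_1}\oplus\cI_{Y_2}\twoheadrightarrow\cI_{Y_1\times_XY_2}$ handles coproducts. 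The genuine gap is the step you flag yourself. For an arbitrary $S$-point $V(J)\to Y$ of $\Bl_YX$, you need an actual map of divisors $D|_S\to V(J)$ over $\Spec S$ before Proposition~\ref{equivalence-subtract-divisor} can write $V(J)=E'+D|_S$. A containment ``on $\pi_0$'' does not provide one.

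Your proposed fix is to reduce to $A=R/rg$, on the grounds that $\Bl_YX$ turns colimits in $A$ into limits ``exactly as in'' that proposition. This fails. There the center is the fixed divisor $D$ and the ambient varies, so excessivity pins the exceptional divisor to $D|_S$, and the functor of points becomes a fiber of $\Map_R(A,S)\to\Map_R(A,S/rS)$. In $\Bl_YX$, by contrast, the divisor $J$ is free data and excessivity is not a limit condition. Indeed, your displayed square describes $\Bl_DY(S)$, not $\Bl_YX(S)$, which is where the conflation enters.

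Here is a counterexample. Take $X=\Spec k[x,y,r]$ and $D=V(r)$. The coproduct $R/rx\otimes_RR/ry$ of $R$-algebras over $R/r$ gives $Y=V(rx,ry)$. Since both centers are divisors, $\Bl_{V(rx)}X\times_X\Bl_{V(ry)}X=X$. But $\Bl_{V(rx,ry)}X\to X$ is not an isomorphism, because $(rx,ry)$ is nowhere locally principal along $V(x,y)$. The same objection applies to $\Bl_{Y-D}X$: $A\mapsto Y-D$ preserves limits, but $\Bl_{(-)}X$ does not. So Step 2, and with it the claim that the two constructions are mutually inverse, remains unproved.

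The paper avoids this pointwise problem by obtaining the containment once and for all on the normal deformation. The universal property of $\fD_YX$ gives a map of divisors $D|_{\fD_YX}\to\fN_YX$. Proposition~\ref{equivalence-subtract-divisor} then matches $\fN_YX$ with $\fN_{Y-D}X+D|_{\fD_{Y-D}X}$, and only the excessivity comparison is left.
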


\begin{proof}
    We use the normal deformation studied in \cite[\S 3]{hekkingkhanrydh2025deformationnormalbundleblowups}. Let
    $$\begin{tikzcd}
        \fN_YX\ar[r]\ar[d]&\fD_YX\ar[d]\\
        Y\ar[r]&X
    \end{tikzcd}$$
    be the defining diagram of the normal deformation of $Y\to X$, and similar for $Y-D\to X$. Then the defining universal property of the normal deformation gives a map $D|_{\fD_YX}\to\fN_YX$ of divisors in $\fD_YX$, and using Proposition \ref{equivalence-subtract-divisor} one easily sees an isomorphism $\fD_YX=\fD_{Y-D}X$ which takes $\fN_YX$ to $\fN_{Y-D}X+D|_{\fD_{Y-D}X}$. Now the corollary is reduced to the following easy observation: 
    \begin{itemize}
        \item Let $X'$ be a stack over $X$, $D'\to X'$ be the base change of $D\to X$, and $Z'\to X'$ be a closed immersion. Then a map
        $$\begin{tikzcd}
            Z'+D'\ar[r]\ar[d]&X'\ar[d]\\
            Y\ar[r]&X
        \end{tikzcd}$$
        is excessive if and only if
        $$\begin{tikzcd}
            Z'\ar[r]\ar[d]&X'\ar[d]\\
            Y-D\ar[r]&X
        \end{tikzcd}$$
        is excessive. \qedhere
    \end{itemize}
\end{proof}

\begin{corollary}\label{total-transform-excessive}
    Let $Z\to Y\to X$ be closed immersions of stacks. Then
    $$\Bl_ZY=\Bl_{\E_ZX}(Y|_{\Bl_ZX})=Y|_{\Bl_ZX}-\E_ZX\text{ and }\E_ZY=\E_{\E_ZX}(Y|_{\Bl_ZX}).$$
\end{corollary}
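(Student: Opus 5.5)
The plan is to check both identities against the functor of points of the blowup, namely the universal property in \cite[Definition 4.3]{hekkingkhanrydh2025deformationnormalbundleblowups}. A $T$-point of $\Bl_ZY$ is an excessive map of closed immersions from a divisor $E\to T$ to $Z\to Y$. Write $\pi\colon\Bl_ZX\to X$ and $Y':=Y|_{\Bl_ZX}=Y\times_X\Bl_ZX$. The goal is to produce natural maps in both directions between $\Bl_ZY$ and $\Bl_{\E_ZX}(Y')$, and to show they are mutually inverse.

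First, the map $\Bl_ZY\to\Bl_ZX$. Given a $T$-point of $\Bl_ZY$, compose $T\to Y\to X$. Excessivity of $(E\to T)\to(Z\to Y)$ means $\cI_{Z/Y}|_T\twoheadrightarrow\cI_E$. Since $\cI_{Z/X}\to\cI_{Z/Y}$ is surjective (because $Y\to X$ is a closed immersion, via the fiber sequence $\cI_{Y/X}\to\cI_{Z/X}\to\cI_{Z/Y}$ used in Remark \ref{remark-compare-conormal-surjective}), the composite $\cI_{Z/X}|_T\to\cI_E$ is also surjective. So $(E\to T)\to(Z\to X)$ is excessive and gives a $T$-point of $\Bl_ZX$. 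Together with $T\to Y$, this gives $\Bl_ZY\to Y'$, under which $E$ maps to $\E_ZX|_{Y'}$. Applying the universal property once more, this time to the pair $\E_ZX|_{Y'}\to Y'$, gives a map $\Bl_ZY\to\Bl_{\E_ZX}(Y')$. The required excessivity is $\cI_{\E_ZX}|_T\twoheadrightarrow\cI_E$, and it holds because $\cI_{Z/X}|_{\Bl_ZX}\twoheadrightarrow\cI_{\E_ZX}$ and $\cI_{Z/X}|_T\twoheadrightarrow\cI_E$ by the previous step.

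Conversely, a $T$-point of $\Bl_{\E_ZX}(Y')$ gives $T\to Y'\to Y$ together with a divisor $E\subset T$ mapping to $\E_ZX$, with $\cI_{\E_ZX}|_T\twoheadrightarrow\cI_E$. Here I would first check that $E\to T$ factors over $Z$: $E$ maps to $\E_ZX$, which sits over $Z$, and $T\to X$ factors through $Y$, so the square $E\to T$ over $Z\to Y$ commutes. For excessivity, the identity $\cI_{Z/X}|_{\Bl_ZX}\twoheadrightarrow\cI_{\E_ZX}$ together with the factorization of $\cI_{Z/X}\to\cI_E$ through $\cI_{Z/Y}$ shows that $\cI_{Z/Y}|_T\to\cI_E$ is surjective. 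Both constructions are natural in $T$, and they are mutually inverse because the data agree: a divisor $E\subset T$, a map $T\to Y$, and a compatible map to $Z$. Under this identification, the universal divisors correspond, so $\E_ZY=\E_{\E_ZX}(Y')$. Finally, since $\E_ZX|_{Y'}$ is a divisor contained in the closed immersion $Y'\to\Bl_ZX$, Proposition \ref{equivalence-subtract-divisor} identifies $\Bl_{\E_ZX}(Y')$ with $Y'-\E_ZX$.

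The main obstacle is the derived bookkeeping in the converse direction. The map $E\to Z$ is not obviously given once one remembers only its image in $\E_ZX$, because in the derived setting $Z\times_XY$ is not $Z$. So "$E$ lies over $Z$" has to be recorded as genuine homotopy-coherent data, not as a property. If that becomes messy, the fallback is to reduce to the affine case with $Z=\Spec(R/I)\to Y=\Spec(R/J)\to X=\Spec R$ and to argue as in the proof of Proposition \ref{equivalence-subtract-divisor}. Both sides send colimits of the relevant $R$-algebras to limits, so it suffices to check generators where $I$ and $J$ are generated by finitely many elements. There the statement becomes the standard description of the strict transform in the Koszul-type presentation of \cite[\S 4]{hekkingkhanrydh2025deformationnormalbundleblowups}.
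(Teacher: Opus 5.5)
There is a genuine gap. It is the derived subtlety you flag at the end, but it already enters at the choice of center. You blow up $Y'$ along the pullback $\E_ZX|_{Y'}=\E_ZX\times_{\Bl_ZX}Y'$ and measure excessivity by $\cI_{\E_ZX/\Bl_ZX}|_T\twoheadrightarrow\cI_E$. A derived pullback of a divisor is again a divisor, so $\E_ZX|_{Y'}\to Y'$ is a divisor of $Y'$, and blowing up a divisor does nothing. Indeed, a surjection of invertible modules $\cI_{\E_ZX}|_T\twoheadrightarrow\cI_E$ is an isomorphism, which forces $E\simeq\E_ZX\times_{\Bl_ZX}T$; hence $\Bl_{\E_ZX|_{Y'}}(Y')=Y'$. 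Your argument would therefore prove $\Bl_ZY\simeq Y\times_X\Bl_ZX$. This is false already for $Z=0\in Y=\{y=0\}\subset X=\bA^2$, where $Y'$ is the union of the strict transform and the exceptional line.

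The center in the statement is $\E_ZX$ itself, mapped to $Y'$ via $\E_ZX\to Z\to Y$ and $\E_ZX\to\Bl_ZX$. It has the same classical truncation as $\E_ZX|_{Y'}$, but it is a divisor of $\Bl_ZX$ rather than of $Y'$. This is also the only reading under which your final appeal to Proposition \ref{equivalence-subtract-divisor} makes sense. That proposition needs $D\to Y'\to\Bl_ZX$ with $D$ a divisor of the ambient $\Bl_ZX$, which $\E_ZX|_{Y'}$ in general is not.

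Correspondingly, your converse step asserting that the square from $E\to T$ to $Z\to Y$ commutes is unjustified. A $T$-point of $Y'$ only gives a homotopy between the two composites $E\to Y$ after composing to $X$, and $Y\to X$ is not a monomorphism. So the homotopy in $Y$ is genuinely extra data; equivalently, it is a factorization of $\cO_Z\otimes_{\cO_X}\cO_T\to\cO_E$ through $\cO_Z\otimes_{\cO_Y}\cO_T$. Your excessivity argument for $\cI_{Z/Y}|_T\twoheadrightarrow\cI_E$ presupposes this factorization.

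With the correct center, this is repaired as follows. A map of closed immersions $(E\to T)\to(\E_ZX\to Y')$ contains exactly this homotopy as its $Y$-component. The excessivity condition becomes $\cI_{\E_ZX/Y'}|_T\twoheadrightarrow\cI_E$, where $\cI_{\E_ZX/Y'}=\fib(\cO_{Y'}\to\cO_{\E_ZX})$. This is equivalent to $\cI_{Z/Y}|_T\twoheadrightarrow\cI_E$ because $\cI_{Z/Y}|_{Y'}\to\cI_{\E_ZX/Y'}$ is surjective. Its cofiber $\cI_{\E_ZX/(Z\times_X\Bl_ZX)}$ is $1$-connective by the defining surjection $\cI_{Z/X}|_{\Bl_ZX}\twoheadrightarrow\cI_{\E_ZX}$.

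The paper organizes this more economically by working over $\Bl_ZX$. Fix a $T$-point of $\Bl_ZX$ with divisor $E_T$. A lift of it to $\Bl_ZY$ is a lift $T\to Y$ together with a homotopy between $E_T\to Z\to Y$ and $E_T\to T\to Y$ refining the given one in $X$; excessivity is then automatic. That is, it is a map $T=T+E_T\to Y\times_XT$ under $E_T$. By the adjunction of Proposition \ref{equivalence-subtract-divisor}, after base change to $T$, this is exactly a $T$-point of $Y'-\E_ZX$ over $\Bl_ZX$.

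Your fallback reduction to generators does not address this issue, and as stated it is too vague to serve as a proof.
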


\begin{proof}
    Note that $\Bl_ZY$ naturally lives over $\Bl_ZX$, because the square
    $$\begin{tikzcd}
        Z\ar[r]\ar[d]&Y\ar[d]\\
        Z\ar[r]&X
    \end{tikzcd}$$
    is obviously excessive. The corollary then follows from Proposition \ref{equivalence-subtract-divisor} by viewing both sides as $\Bl_ZX$-stacks and comparing their defining universal properties.
\end{proof}

\begin{definition}[strict transform]
    In particular, in the setting of Corollary \ref{total-transform-excessive}, $\Bl_ZY\to\Bl_ZX$ is a closed immersion, which we call the \emph{strict transform} of $Y$ in $\Bl_ZX$. Note that the strict transform depends on the map $Z\to Y$ of stacks over $X$, which is really an extra datum, unlike in classical algebraic geometry. 
\end{definition}

\subsection{Blowing up a poset of closed immersions}
The following definition is central in the symmetric monoidality of the Gysin map. 

\begin{definition}[blowing up a poset of closed immersions]\label{definition-blowup-poset}
    Let $P$ be a finite poset. Let $X$ be a stack and $i=(i_p\colon Y_p\to X)_{p\in P}$ a $P$-diagram of closed immersions to $X$. Generalizing Definition \ref{definition-excessive-map-arrow}, for a stack $X'$ and a $P$-diagram $i'=(i'_p\colon Y'_p\to X')_{p\in P}$, a map $f\colon i'\to i$ is said to be \emph{excessive} if for every $p\in P$,
    $$\begin{tikzcd}
        Y'_p\ar[r]\ar[d]&X'\ar[d,"f"]\\
        Y_p\ar[r]&X
    \end{tikzcd}$$
    is excessive, i.e.\ $f^*\cI_{Y_p/X}\twoheadrightarrow\cI_{Y'_p/X'}$. The \emph{blowup} of $i$, denoted $\Bl(i)$ or $\Bl_YX$, is defined as the final stack along with a $P$-diagram of divisors, denoted $(\E_p(i))_{p\in P}$ or $(\E_pX)_{p\in P}$, and an excessive map of $P$-diagrams from $(\E_p(i)\to\Bl(i))_{p\in P}$ to $i$. We call $(\E_p(i))_{p\in P}$ the \emph{exceptional divisors}, and more specifically call $\E_p(i)$ the \emph{exceptional divisor over $Y_p$}. 
\end{definition}

\begin{remark}\label{remark-compare-mayeux}
    \cite[Definition 3.26]{mayeux2025multigradedprojschemes} defines a similar notion in classical algebraic geometry, where only the set of closed subschemes matters. Here in derived algebraic geometry, the poset structure really matters. For example, take a closed immersion $Y\to X$, let $P=\{0,1\}$, and let $Y_0=Y_1=Y$; if we view $P$ as a discrete poset, the resulting blowup is $\Bl_YX\times_X\Bl_YX$; if we instead view $P$ as the usual poset where $0\le 1$, the resulting blowup is $\Bl_YX$; the two are in general different. 
\end{remark}

    

\begin{remark}[functoriality of blowup]\label{functoriality-blowup-poset}
    Definition \ref{definition-blowup-poset} has the following functorialities (keeping the notation there):
    \begin{enumerate}
        \item Excessive maps of $P$-diagrams of closed immersions induce maps of blowups. 
        \item\label{functoriality-blowup-poset:base-change} For $X'\to X$, let $i'$ denote the base change of $i$ to $X'$; then $\Bl(i')=\Bl(i)\times_XX'$, and $\E_p(i')=\E_p(i)\times_XX'$ for all $p\in P$. 
        \item\label{functoriality-blowup-poset:change-poset} For $Q\subseteq P$, there is a canonical map $\Bl(i)\to\Bl(i|_Q)$. 
    \end{enumerate}
\end{remark}

\begin{lemma}\label{blowup-poset-subtract-divisor}
    Let $P$ be a finite poset and $i=(i_p\colon Y_p\to X)_{p\in P}$ be a $P$-diagram of closed immersions. Suppose $0\in P$ is an element with $Y_0=D$ a divisor in $X$. Define $i'=(i'_p\colon Y'_p\to X)_{p\in P\setminus0}$ as
    $$Y'_p=\begin{cases}
        Y_p-D,&p>0;\\
        Y_p,&\text{otherwise}.
    \end{cases}$$
    Then $\Bl(i)=\Bl(i')$, where
    $$\E_p(i)=\begin{cases}
        D|_{\Bl(i')},&p=0;\\
        \E_p(i')+D|_{\Bl(i')},&p>0;\\
        \E_p(i'),&\text{otherwise}.
    \end{cases}$$
\end{lemma}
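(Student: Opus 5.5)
Both sides are defined by universal properties over stacks over $X$, so I will show they corepresent the same functor. An $X$-stack $T$ maps to $\Bl(i)$ exactly when it carries a $P$-diagram of divisors $(E_p\to T)_{p\in P}$ with an excessive map to $i$. It maps to $\Bl(i')$ exactly when it carries a $(P\setminus0)$-diagram of divisors $(E'_p)$ with an excessive map to $i'$. Everything is local, so I reduce at once to $X=\Spec(R)$ and $D=\Spec(R/r)$, and work with the pulled-back data on $T$.

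\textbf{Step 1: the divisor over $0$.} Excessivity of $E_0\to T$ over $D\to X$ means $rR\otimes_R\cO_T\twoheadrightarrow\cI_{E_0}$. Both sides are invertible, so this surjection is an isomorphism, and $E_0=D|_T$ is forced. Hence $E_0$ carries no extra data.

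\textbf{Step 2: elements $p>0$.} For such $p$, the diagram structure gives maps $D|_T=E_0\to E_p$ over $D\to Y_p$. By Proposition \ref{equivalence-subtract-divisor} applied on $T$ with the divisor $D|_T$, divisors $E_p$ under $D|_T$ correspond to divisors $E'_p$ on $T$ via $E_p=E'_p+D|_T$ and $E'_p=E_p-D|_T$. This correspondence is an equivalence on divisors. The key observation from the proof of Corollary \ref{blowup-subtract-divisor} then says the following: the square $E'_p+D|_T\to T$ over $Y_p\to X$ is excessive if and only if $E'_p\to T$ over $Y_p-D\to X$ is excessive. I also have to check that the maps between the $E_p$ for $q<p$ (both $>0$) transport to maps between the $E'_p$. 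This follows from functoriality of the adjunction in Proposition \ref{equivalence-subtract-divisor}: both sides are under $D$, and $-D$ is a functor, which also gives $Y'_q\to Y'_p$ from $Y_q\to Y_p$. Since the adjunction is an equivalence on divisors, the mapping anima agree.

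\textbf{Step 3: the remaining elements.} For $p$ not above $0$, nothing changes, except possibly maps $E_q\to E_p$ with $q$ not above $0$ and $p>0$. These become $E'_q\to E'_p+D|_T$. I handle them by the left adjoint: a map from $E'_q$ into $E'_p+D$ over $Y_q\to Y_p$ should correspond to a map into $E'_p$ over $Y_q\to Y_p-D$. This is exactly how $i'$ must be read as a diagram: $Y_q\to Y_p\to$ and then $Y_p-D$ via the counit-type map, so I need the unit $Y_q\to Y_p-D$. That would come from the right adjoint property when $Y_q$ is a divisor-free piece; in general I expect I must instead use that $E'_q$ is a divisor in $T$, and apply adjointness on $T$ rather than on $X$.

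Assembling Steps 1--3 gives a natural equivalence of the two moduli functors, so $\Bl(i)=\Bl(i')$ with the stated exceptional divisors.

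\textbf{Main obstacle.} I expect the hardest part to be the coherent bookkeeping of diagram maps across elements above $0$ and elements not above $0$ (Step 3). The clean route is to treat everything as divisors on $T$ and to use that $-\,D|_T$ is an equivalence between divisors under $D|_T$ and all divisors, with inverse $+D|_T$.
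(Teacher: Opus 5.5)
Your Steps 1 and 2 are exactly the paper's argument. The paper's proof is the one line ``unwind Definition \ref{definition-blowup-poset} and apply Corollary \ref{blowup-subtract-divisor}'', which amounts to three facts. First, $E_0=D|_T$ is forced. Second, divisors under $D|_T$ correspond to divisors via $\pm D|_T$ (Proposition \ref{equivalence-subtract-divisor}). Third, excessivity over $Y_p$ matches excessivity over $Y_p-D$, by the observation at the end of the proof of Corollary \ref{blowup-subtract-divisor}.

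The gap is Step 3. You leave it as an expectation, and the route you sketch cannot work. The adjunction of Proposition \ref{equivalence-subtract-divisor} only converts maps $Z+D\to Y$ \emph{under} $D$ into maps $Z\to Y-D$. A map $E_q\to E'_p+D|_T$ with $E_q$ not under $D|_T$ is not of this form; classically, $E_q\le E'_p+D$ does not imply $E_q\le E'_p$. Passing to $T$ does not change this.

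More seriously, the map $Y_q\to Y_p-D$ you need can fail to exist, and then $i'$ is not a $(P\setminus0)$-diagram at all. Two examples:
\begin{enumerate}
\item Take $X=\bA^1$, $D=\{x=0\}$, the chain $q<0<p$, and $Y_q=Y_0=Y_p=D$. Then $Y_p-D=\Bl_DD=\varnothing$, which receives no map from $Y_q$.
\item With $q$ incomparable to $0$, take $X=\bA^2$, $Y_p=\{xy=0\}$ and $Y_q=\{x=0,\,y=1\}$. Then $Y_p-D=\{y=0\}$, which again does not receive $Y_q$.
\end{enumerate}
So no argument closes Step 3 in that generality. The missing observation is that the statement only makes sense when no element outside $P_{\ge0}$ lies below an element of $P_{>0}$. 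An example is when $0$ is the minimum of $P$, which is the only situation where the paper uses this manoeuvre (in the proof of Theorem \ref{theorem-excessive-diagram-blowup-representable}).

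Under that hypothesis Step 3 is trivial. The $E_q$ with $q\not\ge0$, and the arrows among them, are untouched. If some $q<0$ exists, then $P_{>0}=\varnothing$. In that case the arrow $E_q\to E_0=D\times_XT$ is uniquely determined by $E_q\to Y_q\to D$ and $E_q\to T$. With this hypothesis made explicit, your Steps 1 and 2 complete the proof.
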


\begin{proof}
    This is an immediate consequence of Definition \ref{definition-blowup-poset} and Corollary \ref{blowup-subtract-divisor}. 
\end{proof}

\subsection{Representability of the poset blowup}
In this subsection, we show that in practice the blowup in Definition \ref{definition-blowup-poset} is reasonably representable. The main results are Theorem \ref{theorem-excessive-diagram-blowup-representable} and Theorem \ref{theorem-blowup-cofiber-comparison} at the end of this subsection, which say that for a motivic setup $\cP$ as in Definition \ref{definition-motivic-setup} and a base scheme $S$, the blowup of an excessive diagram in $\cP_S$ stays in $\cP_S$, and blowing up an excessive diagram does not change the total cofiber of the diagram in $\Sh_{\Zar,\qbu}(\cP_S)$. 

Recall that a \emph{lattice} means a poset that has finite infimums and supremums. For any lattice, we often denote its minimum (empty supremum) by $0$ and maximum (empty infimum) by $1$. 

\begin{definition}[excessive diagram]\label{definition-excessive-diagram}
    Let $L$ be a finite lattice and $Y=(Y_\ell)_{\ell\in L}$ be an $L$-diagram of stacks. We say that $Y$ is \emph{excessive} if: 
    \begin{enumerate}
        \item all maps in the diagram are affine;
        \item for any downward-closed subset $K\subseteq L$, the derived $\cO_{Y_1}$-algebra
        $$\lim_{k\in K}\cO_{Y_k}$$
        is connective, and the structure map $\cO_{Y_1}\to\lim_{k\in K}\cO_{Y_k}$ is surjective. 
    \end{enumerate}
    In this case we let $Y_K$ denote the spectrum over $Y_1$ of the above limit, which is closed immersed to $Y_1$. 
    
    If $X$ is a stack, we denote the category of excessive $L$-diagram of stacks over $X$ by $\Exc_L\Stack_X$. If $R$ is a ring, we denote the category of excessive $L$-diagram of $R$-algebras by $\Exc_L\Ring_R$, which by definition is the full subcategory of $\Exc_L\Stack_R^\op$ consisting of the affines. 
\end{definition}

There are many equivalent characterizations of excessivity, coming from the following general fact for diagrams in stable categories with t-structures. 

\begin{proposition}\label{excessive-diagram-characterization}
    Let $L$ be a finite lattice and $\cC$ be a stable category with a t-structure $(\cC_{\ge0},\cC_{\le0})$. Let $C=(C_\ell)_{\ell\in L}$ be an $L^\op$-diagram in $\cC$. The following are equivalent:
    \begin{enumerate}
        \item\label{excessive-diagram-characterization:definition} For all downward-closed $K\subseteq L$, $\fib(C_1\to\lim_{k\in K}C_k)$ is connective. 
        \item\label{excessive-diagram-characterization:generation} For all $\ell\in L$, $\fib(C_1\to\lim_{k\not\ge\ell}C_k)$ is connective
        \item\label{excessive-diagram-characterization:point} For all $\ell\in L$, $\fib(C_\ell\to\lim_{k\le\ell}C_k)$ is connective. 
        \item\label{excessive-diagram-characterization:two-downward-sets} For all downward-closed $J\subseteq K\subseteq L$, 
        $$\fib\left(\lim_{k\in K}C_k\to\lim_{j\in J}C_j\right)$$
        is connective. 
    \end{enumerate}
    In particular, for $Y=(Y_\ell)$ an $L$-diagram of stacks with affine transition maps, taking $\cC=\De(Y_1)$ and $C_\ell=\cO_{Y_\ell}$, we get corresponding characterizations of excessivity. 
\end{proposition}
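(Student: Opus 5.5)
The plan is to organize the four conditions around cofiber sequences in the stable category $\cC$, using two facts about the t-structure: connective objects are closed under extensions and finite colimits (in particular cofibers), and fibers of maps between connective objects need not be connective. So we only ever take fibers of compositions when both factors already have connective fibers. Throughout, for downward-closed $K\subseteq L$ write $C_K=\lim_{k\in K}C_k$, so $C_L=C_1$ and $C_\varnothing=0$. We prove (\ref{excessive-diagram-characterization:two-downward-sets})$\Rightarrow$(\ref{excessive-diagram-characterization:definition})$\Rightarrow$(\ref{excessive-diagram-characterization:generation}) and (\ref{excessive-diagram-characterization:two-downward-sets})$\Rightarrow$(\ref{excessive-diagram-characterization:point}), which are immediate, then the two substantial implications.

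The trivial directions go as follows. (\ref{excessive-diagram-characterization:two-downward-sets}) with $K=L$ gives (\ref{excessive-diagram-characterization:definition}). The set $\{k\not\ge\ell\}$ is downward closed, so (\ref{excessive-diagram-characterization:definition}) gives (\ref{excessive-diagram-characterization:generation}). For (\ref{excessive-diagram-characterization:point}), read $\lim_{k\le\ell}$ as over $k<\ell$; otherwise the limit is just $C_\ell$. Then take $K=\{k\le\ell\}$, whose limit is $C_\ell$ since $\ell$ is its maximum, and $J=\{k<\ell\}$.

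Next we show (\ref{excessive-diagram-characterization:point})$\Rightarrow$(\ref{excessive-diagram-characterization:two-downward-sets}). Any inclusion $J\subseteq K$ of downward-closed sets is a chain of inclusions adding one element at a time: add a minimal element $\ell$ of $K\setminus J$, so that every $k<\ell$ lies in $J$. A composite of maps with connective fibers has connective fiber, by the fiber sequence $\fib(f)\to\fib(gf)\to\fib(g)$ and closure under extensions. So it suffices to treat $K=J\cup\{\ell\}$. Since $C$ is right Kan extended along inclusions of down-sets, there is a pullback square
\[C_{J\cup\ell}=C_\ell\times_{C_{<\ell}}C_J.\]
Hence $\fib(C_{J\cup\ell}\to C_J)\simeq\fib(C_\ell\to C_{<\ell})$, which is connective by (\ref{excessive-diagram-characterization:point}). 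Combined with the trivial directions, this makes (\ref{excessive-diagram-characterization:definition}), (\ref{excessive-diagram-characterization:point}) and (\ref{excessive-diagram-characterization:two-downward-sets}) equivalent.

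The main obstacle is (\ref{excessive-diagram-characterization:generation})$\Rightarrow$(\ref{excessive-diagram-characterization:point}), since (\ref{excessive-diagram-characterization:generation}) only constrains complements of principal up-sets. The plan is downward induction on $\ell$, proving (\ref{excessive-diagram-characterization:point}) at $\ell$ assuming it holds for all $m>\ell$. By the previous paragraph, the induction hypothesis gives that $C_{U\cup D}\to C_{D}$ has connective fiber whenever we adjoin elements $>\ell$. Write $L\setminus\{\ge\ell\}=:D_\ell$. Using the one-step pullback squares, $\fib(C_1\to C_{D_\ell})$ is an iterated extension of the objects $\fib(C_m\to C_{<m})$ for $m\ge\ell$, adjoining $\ell$ first since $D_\ell\cup\{\ell\}$ is downward closed. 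Thus there is a fiber sequence
\[\fib(C_1\to C_{D_\ell\cup\ell})\to\fib(C_1\to C_{D_\ell})\to\fib(C_\ell\to C_{<\ell}).\]
The first term is an extension of the $m>\ell$ terms, so it is connective by induction, and the middle term is connective by (\ref{excessive-diagram-characterization:generation}). This yields only that $\fib(C_\ell\to C_{<\ell})$ is an extension-quotient, i.e. its $\pi_{-1}$ injects into the $\pi_{-2}$-level of the connective first term. Connectivity of the first term therefore forces connectivity of the third only after the long exact sequence check: $\pi_n$ of the third term for $n<0$ sits between $\pi_n$ of the middle term, which is $0$, and $\pi_{n-1}$ of the first, which is $0$. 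So it does work; the main work is verifying this bookkeeping and the lattice-ordering of adjunctions carefully. The stack statement is the specialization $\cC=\De(Y_1)$, $C_\ell=\cO_{Y_\ell}$, noting that surjectivity plus connectivity of the limit is equivalent to connectivity of the fiber.
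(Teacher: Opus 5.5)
Your proof is correct and is essentially the paper's argument. The paper regards $L$ as a finite space whose open sets are the down-sets and writes $\fib(C_K\to C_J)=\Gamma_{K\setminus J}(C)$, so your one-step pullback $C_{J\cup\ell}=C_\ell\times_{C_{<\ell}}C_J$ is exactly its identification of $\Gamma_{\{\ell\}}(C)$, and your downward induction for (2)$\Rightarrow$(3), via $\Gamma_{L_{>\ell}}(C)\to\Gamma_{L_{\ge\ell}}(C)\to\Gamma_{\{\ell\}}(C)$, matches its induction over upward-closed sets; you also correctly read (3) as a limit over $k<\ell$. The only wobble is the hedging in your last step, which is unnecessary: $\fib(C_\ell\to C_{<\ell})$ is the cofiber of a map between connective objects, hence connective by the closure property you stated at the outset, and this avoids the $\pi_n$ bookkeeping, which tacitly assumes the t-structure detects connectivity on homotopy objects.
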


\begin{proof}
    Equip $L$ with the topology where downward-closed means open, so that it becomes a finite spectral space. View $C$ as a sheaf on $L$ where $C_\ell=C(L_{\le\ell})$. Then for all downward-closed $J\subseteq K\subseteq L$, we have $\lim_{k\in K}C_k=C(K)$, and
    $$\fib\left(\lim_{k\in K}C_k\to\lim_{j\in J}C_j\right)=\Gamma_{K\setminus J}(C)$$
    is the local cohomology on the locally closed subset $K\setminus J$. Recall the following property of local cohomology: for locally closed $U\subseteq V\subseteq L$ where $U$ is open in $V$, there is naturally a fiber sequence
    $$\Gamma_{V\setminus U}(L)\to\Gamma_V(L)\to\Gamma_U(L).$$
    
    With these observations in hand, we now start the proof. First, note that $(\ref{excessive-diagram-characterization:two-downward-sets})\implies(\ref{excessive-diagram-characterization:definition})\implies(\ref{excessive-diagram-characterization:generation})$ and $(\ref{excessive-diagram-characterization:two-downward-sets})\implies(\ref{excessive-diagram-characterization:point})$ are trivial. 
    \begin{description}
        \item[$(\ref{excessive-diagram-characterization:point})\implies(\ref{excessive-diagram-characterization:two-downward-sets})$] Let $V=K\setminus J$. We have noticed that the fiber in (\ref{excessive-diagram-characterization:two-downward-sets}) is $\Gamma_V(C)$. We do induction on the size of $V$. The case $V=\varnothing$ is obvious. Otherwise, let $\ell$ be a minimal element in $V$, so $U=\{\ell\}$ is open in $V$. Then (\ref{excessive-diagram-characterization:point}) says that $\Gamma_U(C)$ is connective, so (\ref{excessive-diagram-characterization:two-downward-sets}) follows from induction hypothesis and the fiber sequence displayed above. 
        \item[$(\ref{excessive-diagram-characterization:generation})\implies(\ref{excessive-diagram-characterization:point})$] We do induction on the size of an upward-closed $Z\subseteq L$ to show:
        \begin{itemize}
            \item For all upward-closed $W\subseteq Z$, $\Gamma_W(C)$ is connective, and for all $\ell\in Z$, $\Gamma_{\{\ell\}}(C)$ is connective. 
        \end{itemize}
        The case $Z=\varnothing$ is obvious. Otherwise, let $\ell$ be a minimal element in $Z$. By the induction hypothesis on $Z\setminus\ell$, it suffices to see that $\Gamma_{\{\ell\}}(C)$ is connective. This follows from (\ref{excessive-diagram-characterization:generation}) and the induction hypothesis, using the fiber sequence displayed above applied on $\{\ell\}\subseteq C_{\ge\ell}\subseteq Z$, because by upward-closedness of $Z$ we have $C_{\ge\ell}\subseteq Z$ and $C_{\ge\ell}\setminus\ell\subseteq Z\setminus\ell$. \qedhere
    \end{description}
\end{proof}

\begin{corollary}\label{excessive-diagram-restrict}
    Let $K$ and $L$ be finite lattices, and $Y=(Y_\ell)_{\ell\in L}$ be an excessive $L$-diagram. Let $f\colon K\to L$ be a map of posets that preserves nonempty infimums. Then $Y|_K$ is excessive. 
\end{corollary}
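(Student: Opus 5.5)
The plan is to reduce excessivity of $Y|_K$ to characterization (\ref{excessive-diagram-characterization:two-downward-sets}) of Proposition \ref{excessive-diagram-characterization} for $Y$ itself. The key step is a cofinality statement: limits over downward-closed subsets of $K$ agree with limits over suitable downward-closed subsets of $L$.

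First, the easy parts. The diagram $Y|_K$ is $k\mapsto Y_{f(k)}$, and its terminal vertex is $Y_{f(1_K)}$. All transition maps of $Y|_K$ are transition maps of $Y$, hence affine. Set $C_\ell=\cO_{Y_\ell}$, pushed forward to $Y_1$ along the affine maps; this gives an $L^\op$-diagram in $\De(Y_1)$. Pushforward along the affine map $Y_{f(1_K)}\to Y_1$ is t-exact and conservative on connectivity. It therefore suffices to prove the following: for every downward-closed $J\subseteq K$, the fiber of $C_{f(1_K)}\to\lim_{j\in J}C_{f(j)}$ is connective, computed in $\De(Y_1)$. Here I use characterization (\ref{excessive-diagram-characterization:definition}) for $Y|_K$. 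That characterization also requires connectivity of the limit itself, but this follows from the fiber statement together with connectivity of $C_{f(1_K)}$, since the cofiber of a map out of a connective object with connective fiber is connective.

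Second, the cofinality step, which I expect to be the main point. Given a downward-closed $J\subseteq K$, let $D(J)=\{\ell\in L\mid \ell\le f(j)\text{ for some }j\in J\}$; this is downward closed in $L$. I claim that the natural map
$$\lim_{\ell\in D(J)}C_\ell\to\lim_{j\in J}C_{f(j)}$$
is an equivalence. I will prove it with Quillen's Theorem A applied to the functor $f\colon J\to D(J)$, viewing the limits as taken over $D(J)^\op$ and $J^\op$.

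For this I need, for each $\ell\in D(J)$, that the poset $J_\ell=\{j\in J\mid \ell\le f(j)\}$ is weakly contractible. The argument goes as follows.
\begin{itemize}
\item $J_\ell$ is nonempty by the definition of $D(J)$.
\item It is closed under binary meets. Indeed, $J$ is downward closed, so it contains $j\wedge j'$; and since $f$ preserves nonempty infima, $f(j\wedge j')=f(j)\wedge f(j')\ge\ell$.
\item A finite nonempty poset closed under binary meets has a minimum, namely the meet of all its elements. Hence $J_\ell$ is contractible.
\end{itemize}
This is exactly where the hypothesis that $f$ preserves nonempty infima is used. I would double-check the variance carefully, making sure that the minimum is the correct kind of initial or terminal object for the relevant comma category. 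Alternatively, one can phrase the argument via the sheaf picture in the proof of Proposition \ref{excessive-diagram-characterization}: the opens $L_{\le f(j)}$ for $j\in J$ cover $D(J)$ and are closed under intersection, since $L_{\le f(j)}\cap L_{\le f(j')}=L_{\le f(j\wedge j')}$. Sections over $D(J)$ are then the limit over this intersection-closed cover.

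Third, conclude. Applying the previous step with $J=K$ gives $D(K)=L_{\le f(1_K)}$, so $\lim_{\ell\in D(K)}C_\ell=C_{f(1_K)}$. The fiber we need is therefore
$$\fib\left(\lim_{\ell\in D(K)}C_\ell\to\lim_{\ell\in D(J)}C_\ell\right),$$
where $D(J)\subseteq D(K)$ are both downward closed in $L$. This fiber is connective by characterization (\ref{excessive-diagram-characterization:two-downward-sets}) of Proposition \ref{excessive-diagram-characterization} applied to the excessive diagram $Y$. Hence $Y|_K$ is excessive.
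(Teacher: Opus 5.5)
Your proposal is correct and follows the paper's proof. The paper also shows that a downward-closed $J\subseteq K$ is cofinal in the downward closure of $f(J)$ in $L$, using that each $\{j\in J\mid \ell\le f(j)\}$ is nonempty and closed under nonempty infima, hence contractible. You additionally spell out the final appeal to characterization (\ref{excessive-diagram-characterization:two-downward-sets}) of Proposition \ref{excessive-diagram-characterization} and the passage to $\De(Y_1)$, both of which the paper leaves implicit.
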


\begin{proof}
    It suffices to show that any downward-closed subset $I$ of $K$ is cofinal in the downward closure $J$ of $f(I)$ in $L$, i.e.\ that for any $j\in J$, the anima $|I\cap f^{-1}(J_{\ge j})|$ is contractible. This is clear, as $I\cap f^{-1}(J_{\ge j})$ is nonempty and is closed under nonempty infimums. 
\end{proof}

\begin{corollary}\label{excessive-diagram-product}
    Let $\Omega$ be a finite set and $X$ be a stack. For each $\omega\in\Omega$, let $L^\omega$ be a finite lattice, and let $Y^\omega=(Y^\omega_{\ell^\omega})_{\ell^\omega\in L^\omega}$ be an excessive $L^\omega$-diagram with $Y^\omega_1=X$. Let $L^\Omega=\prod_{\omega\in\Omega}L^\omega$ and let $Y^\Omega$ be the $L^\Omega$-diagram where $Y^\Omega_\ell=\prod_{\omega\in\Omega}Y^\omega_{\ell^\omega}$ for $\ell=(\ell^\omega)_{\omega\in\Omega}\in L^\Omega$, where the products are taken over $X$. Then $Y^\Omega$ is excessive. 
\end{corollary}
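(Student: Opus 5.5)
The plan is to verify excessivity of $Y^\Omega$ through characterization (\ref{excessive-diagram-characterization:point}) of Proposition \ref{excessive-diagram-characterization}, since that condition is pointwise and should be multiplicative. Throughout I work in $\cC=\De(X)$. Here $X=Y^\Omega_1$, because $Y^\omega_1=X$ for every $\omega$. I set $C^\omega_{\ell^\omega}=\cO_{Y^\omega_{\ell^\omega}}$ and $C^\Omega_\ell=\cO_{Y^\Omega_\ell}=\bigotimes_{\omega\in\Omega}C^\omega_{\ell^\omega}$, with the tensor product over $\cO_X$. Two preliminary points are needed. Transition maps of $Y^\Omega$ are fiber products over $X$ of affine maps, hence affine. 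And since every $Y^\omega_{\ell^\omega}\to X$ is affine, $\cO$ of the fiber product is indeed the tensor product of the pushed-forward algebras.

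The first main step is to use the sheaf-theoretic language from the proof of Proposition \ref{excessive-diagram-characterization}. Give each $L^\omega$ the topology whose opens are the downward-closed sets, and regard $C^\omega$ as a sheaf on it. The product topology on $L^\Omega=\prod L^\omega$ is then the downward-closed topology, because products of downward-closed sets form a basis of downward-closed sets. I claim that $C^\Omega$ is the external tensor product $\boxtimes_\omega C^\omega$. Concretely, its value on a basic open $\prod U^\omega$ should be $\bigotimes_\omega C^\omega(U^\omega)$. This holds because each $C^\omega(U^\omega)$ is a finite limit, and the tensor product in a stable category commutes with finite limits in each variable separately.

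Next, fix $\ell=(\ell^\omega)$. The quantity to control is
$$\Gamma_{\{\ell\}}(C^\Omega)=\fib\bigl(C^\Omega(L^\Omega_{\le\ell})\to C^\Omega(L^\Omega_{\le\ell}\setminus\{\ell\})\bigr).$$
The open set $L^\Omega_{\le\ell}\setminus\{\ell\}$ is the union of the opens $V_\omega=L^\omega_{<\ell^\omega}\times\prod_{\omega'\ne\omega}L^{\omega'}_{\le\ell^{\omega'}}$. Using the Mayer--Vietoris (Čech) description of sections over this finite union, the fiber above becomes the total fiber of the $\ps(\Omega)$-cube $\Psi\mapsto C^\Omega(\bigcap_{\omega\in\Psi}V_\omega)$. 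By the external product formula, this cube is the tensor product of the one-dimensional cubes $C^\omega(L^\omega_{\le\ell^\omega})\to C^\omega(L^\omega_{<\ell^\omega})$. The total fiber of a tensor product of arrows is the tensor product of their fibers, again because $\otimes$ is exact in each variable. Hence
$$\Gamma_{\{\ell\}}(C^\Omega)\simeq\bigotimes_{\omega\in\Omega}\Gamma_{\{\ell^\omega\}}(C^\omega).$$

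Each factor is connective, by characterization (\ref{excessive-diagram-characterization:point}) applied to the excessive diagram $Y^\omega$. A tensor product over $\cO_X$ of connective $\cO_X$-complexes is connective, since this can be checked on $R$-points where it is the usual statement for $\De_{\ge0}(R)$. Therefore (\ref{excessive-diagram-characterization:point}) holds for $C^\Omega$, and $Y^\Omega$ is excessive. I should also read (\ref{excessive-diagram-characterization:point}) with the strict limit $\lim_{k<\ell}$ as the intended meaning.

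I expect the main obstacle to be the bookkeeping in the identification of $\Gamma_{\{\ell\}}(C^\Omega)$ with the tensor product of total fibers. Specifically, one must check that the limit over the non-basic open $L^\Omega_{\le\ell}\setminus\{\ell\}$ is computed by the Čech cube, which is sheaf descent for a finite cover on a finite space. One must also check that the external-product description of $C^\Omega$ is valid on all intersections, which are again products of opens, so the formula does apply there.
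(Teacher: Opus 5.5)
Your proposal is correct and follows the paper's proof. The paper also checks the pointwise characterization (\ref{excessive-diagram-characterization:point}) of Proposition \ref{excessive-diagram-characterization}, with the strict limit over $k<\ell$ exactly as you read it, via the identity $\fib\bigl(\cO_{Y_\ell}\to\lim_{k<\ell}\cO_{Y_k}\bigr)\simeq\bigotimes_{\omega\in\Omega}\fib\bigl(\cO_{Y_{\ell^\omega}}\to\lim_{k^\omega<\ell^\omega}\cO_{Y_{k^\omega}}\bigr)$ over $\cO_X$; the only difference is that the paper states this identity without comment, whereas you justify it through the external-product description and the \v{C}ech cube for the cover by the $V_\omega$, which is a valid elaboration.
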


\begin{proof}
    This is immediate from Proposition \ref{excessive-diagram-characterization} because for $\ell=(\ell^\omega)_{\omega\in\Omega}\in L^\Omega$, 
    $$\fib\left(\cO_{Y_\ell}\to\lim_{k<\ell}\cO_{Y_k}\right)=\bigotimes_{\omega\in\Omega}\fib\left(\cO_{Y_{\ell^\omega}}\to\lim_{k^\omega<\ell^\omega}\cO_{Y_{k^\omega}}\right),$$
    where the tensor product is over $\cO_X$. 
\end{proof}

\begin{corollary}\label{excess-diagram-limit-pointwise}
    Let $X$ be a stack and $L$ be a finite lattice. Then the category $\Exc_L\Stack_X$ has all limits which can be computed pointwise. 
\end{corollary}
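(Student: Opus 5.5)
We must show that $\Exc_L\Stack_X$ has all limits, computed pointwise. Take a diagram $(Y^\alpha)_{\alpha\in A}$ of excessive $L$-diagrams over $X$, and let $Y$ be the pointwise limit, $Y_\ell=\lim_\alpha Y^\alpha_\ell$, with limits taken in stacks over $X$. Once we know $Y$ lies in $\Exc_L\Stack_X$, it is automatically the limit there, because $\Exc_L\Stack_X$ is a full subcategory of $L$-diagrams in $\Stack_X$.

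\textbf{Reduction to products and pullbacks.} Pointwise limits are built from products and fiber products, so it suffices to treat these. Limits in $\Stack_X$ commute with base change along $X'\to X$, and excessivity is checked Zariski locally. We may therefore assume $X=\Spec(R)$ is affine. Then every $Y^\alpha_1$ is affine over $X$, and hence all $Y^\alpha_\ell$ are affine, by condition (1) of Definition \ref{definition-excessive-diagram}.

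\textbf{Transition maps and the top term.} The transition maps of $Y$ are limits of affine maps, hence affine, which gives condition (1). For condition (2) we use characterization (\ref{excessive-diagram-characterization:point}) of Proposition \ref{excessive-diagram-characterization}: for each $\ell$, the fiber $\fib(\cO_{Y_\ell}\to\lim_{k<\ell}\cO_{Y_k})$ should be connective. Write $B^\alpha_\ell=\cO(Y^\alpha_\ell)$. Limits of affine stacks correspond to colimits of rings, so
$$\cO(Y_\ell)=\colim_\alpha B^\alpha_\ell,$$
where the colimit is a relative tensor product for fiber products and an ordinary tensor product for products.

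\textbf{The key identity.} We want the formula
$$\lim_{k<\ell}\cO_{Y_k}=\colim_\alpha\lim_{k<\ell}B^\alpha_k$$
as $\cO_{Y_\ell}$-modules, i.e.\ that the limit over the finite punctured downset commutes with the tensor products. For a binary product this is exactly the Künneth-type formula already used in Corollary \ref{excessive-diagram-product}. There, the local cohomology $\Gamma_{\{\ell\}}$ on the product lattice splits as a tensor product, and restricting to the diagonal lattice $L\subseteq L\times L$ uses Corollary \ref{excessive-diagram-restrict}, since the diagonal preserves infimums.

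So the product case reduces to excessivity of the $L\times L$-diagram $(Y^1_{\ell_1}\times_X Y^2_{\ell_2})$, which is Corollary \ref{excessive-diagram-product}, followed by restriction along the diagonal.

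For a fiber product $Y^1\times_{Y^0}Y^2$, I would express it as the base change of $Y^1\times_X Y^2$ along the diagonal $Y^0\to Y^0\times_X Y^0$. I would then show that base change along an arbitrary map of excessive diagrams preserves excessivity. This should hold because the relevant fibers are connective and the tensor product is right t-exact: $\Gamma_{\{\ell\}}$ of the base change is $\Gamma_{\{\ell\}}(B^1\otimes B^2)\otimes_{\ldots}$ of connective terms.

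\textbf{Main obstacle.} The hard part is this fiber-product case. A tensor product over a varying diagram $B^0_\bullet$ does not obviously commute with $\Gamma_{\{\ell\}}$. I would try to write the relative tensor product as a geometric realization of a bar construction, $|B^1\otimes B^0{}^{\otimes n}\otimes B^2|$. Each simplicial level is an iterated product and hence excessive, and geometric realizations are colimits that preserve connectivity of fibers and commute with the finite limits defining $\Gamma_{\{\ell\}}$ in the stable category. Connectivity of $\Gamma_{\{\ell\}}$ then passes to the realization, which gives excessivity of the fiber product.
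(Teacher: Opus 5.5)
There is a genuine gap in your reduction step. The claim that, once $X$ is affine, every $Y^\alpha_1$ is affine over $X$ is false. Condition (1) of Definition~\ref{definition-excessive-diagram} only constrains the transition maps $Y_\ell\to Y_{\ell'}$, not the structure map $Y_1\to X$; for instance, the constant diagram $Y_\ell=\bP^1_X$ is excessive. So $Y^\alpha_\ell$ need not be the spectrum of $B^\alpha_\ell=\cO(Y^\alpha_\ell)$, and the formula $\cO(Y_\ell)=\colim_\alpha B^\alpha_\ell$ fails. Relatedly, Corollary~\ref{excessive-diagram-product} requires all factors to have the same top $Y^\omega_1=X$, and your product step silently assumes this.

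The missing move is the paper's first sentence. Set $T=\lim_\alpha Y^\alpha_1$ and replace each $Y^\alpha$ by $Y^\alpha\times_{Y^\alpha_1}T$. Base change preserves excessivity, because pullback of $\cO$-complexes is exact, right t-exact, and commutes with pushforward along affine maps. The pointwise limit, now taken over $T$, is unchanged. Afterwards all diagrams have top $T$, everything is affine over $T$, and you are working with $L^\op$-diagrams of $\cO_T$-algebras. From there your argument is essentially the paper's. Finite products are handled by Corollary~\ref{excessive-diagram-product} on $L\times L$ followed by restriction along the diagonal (Corollary~\ref{excessive-diagram-restrict}). Your bar construction for pullbacks is a special case of the paper's observation about sifted colimits of algebras: they are computed on underlying modules, commute with the finite limits defining $\Gamma_{\{\ell\}}$, and preserve connectivity.

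Two further corrections. First, drop the ``key identity'': $\lim_{k<\ell}$ does not commute with pointwise tensor products. Take the square lattice $0<a,b<1$ with $B_1=\ZZ[x,y]$, $B_a=\ZZ[x,y]/(x)$, $B_b=\ZZ[x,y]/(y)$, $B_0=\ZZ$, and a primed copy $B'$. The limit over $\{0,a,b\}$ of $B\otimes B'$ is $\ZZ[y,y']\times_\ZZ\ZZ[x,x']$, in which $xy'=0$. The tensor product of the limits is $\ZZ[x,y]/(xy)\otimes\ZZ[x',y']/(x'y')$, in which $xy'\neq0$. The identity holds for sifted colimits only. The K\"unneth formula in Corollary~\ref{excessive-diagram-product} is a statement about $\Gamma_{\{\ell\}}$ on the product lattice, which is exactly why the detour through $L\times L$ is needed. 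The same objection applies to your sketch that base change along an arbitrary map of excessive diagrams preserves excessivity, which is also circular; the bar-construction argument is the one that works.

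Second, reducing to products and pullbacks requires arbitrary products, and your binary case only yields finite ones. Infinite products are cofiltered limits of finite products, so the same connectivity argument covers them. This is why the paper decomposes limits directly into finite products and sifted limits, which also makes a separate treatment of pullbacks unnecessary.
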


\begin{proof}
    By base change, one immediately reduces the claim to the same claim on the subcategory of those $Y$ with $Y_1=X$. It suffices to treat finite products and sifted limits. The case of finite products follows from Corollaries \ref{excessive-diagram-product} and \ref{excessive-diagram-restrict}. Pointwise sifted limits in question are computed by pointwise sifted colimits of $\cO_X$-algebras, which commute with forgetting to $\cO_X$-modules. Since excessivity is a connectivity condition, it is clearly preserved by $\cO_X$-module colimits. 
\end{proof}

\begin{proposition}\label{excess-diagram-generation}
    Let $L$ be a finite lattice and let $R$ be a ring. For $k\in L$, define the $L^\op$-diagram $R[x_k]$ of $R$-algebras as
    $$R[x_k]_\ell=\begin{cases}
        R[x],&\ell\ge k,\\
        R,&\text{otherwise},
    \end{cases}$$
    whose transition map is either the identity or $x_\ell\mapsto0$. Then $(R[x_k])_{k\in L}$ is a family of compact projective generators of $\Exc_L\Ring_R$. 
\end{proposition}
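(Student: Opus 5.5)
We argue in three steps: first that each $R[x_k]$ lies in $\Exc_L\Ring_R$, then that it corepresents a colimit-preserving functor, and finally that the family is jointly conservative.

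\textbf{Excessivity.} We check characterization (\ref{excessive-diagram-characterization:point}) of Proposition \ref{excessive-diagram-characterization}, now with $C_\ell=R[x_k]_\ell$ viewed as $R[x]$-modules (note $Y_1$ corresponds to $R[x]$ when $k\le1$, which always holds). For $\ell\not\ge k$, every $j\le\ell$ also satisfies $j\not\ge k$, so the diagram on $L_{\le\ell}$ is constant at $R$. Since $L_{<\ell}$ is either empty (when $\ell=0$) or has a maximum-free but weakly contractible shape only if nonempty, we need more care; it is cleaner to compute $\lim_{j\in L_{<\ell}}C_j$ via the sheaf picture in the proof of Proposition \ref{excessive-diagram-characterization}. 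There $R[x_k]$ is $R\oplus xR[x]$ with the second summand the pushforward of the constant sheaf $xR[x]$ supported on the closed set $L_{\ge k}$. Local cohomology $\Gamma_{\{\ell\}}$ of the constant part vanishes except at $\ell=0$, where it is $R$. For the part supported on $L_{\ge k}$, it vanishes except at $\ell=k$, where it is $xR[x]$, because $L_{\ge k}$ has minimum $k$ and the sheaf is constant there. Both are connective, so the diagram is excessive.

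\textbf{Corepresentability.} For $A\in\Exc_L\Ring_R$, a map $R[x_k]\to A$ amounts to a point of $\fib(A_k\to\lim_{j<k}A_j)$, that is, a point of $\Omega^\infty\Gamma_{\{k\}}(A)$. This is essentially a Yoneda computation: a map out of $R[x]$ picks an element of $A_k$, and the compatibility with $x\mapsto0$ at all $j\not\ge k$ forces it to vanish on $L_{<k}$ and be freely propagated upward along $L_{\ge k}$. To make this precise, we compute mapping spaces in the functor category of $L^\op$-diagrams as ends, using that $R[x_k]$ is the left Kan extension of the free algebra at $k$ relative to the "zero" condition on $j\not\ge k$.

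\textbf{Compact projectivity.} By characterization (\ref{excessive-diagram-characterization:two-downward-sets}), $\Gamma_{\{k\}}(A)$ is connective. The functor $A\mapsto\Gamma_{\{k\}}(A)$ is a finite limit of evaluations. Evaluations commute with sifted colimits in $\Exc_L\Ring_R$, since by Corollary \ref{excess-diagram-limit-pointwise}, together with its proof, sifted colimits are pointwise and excessivity is preserved. Finite limits of connective modules that stay connective commute with sifted colimits computed in $\De_{\ge0}$. Hence $\Omega^\infty\Gamma_{\{k\}}$ preserves sifted colimits, so each $R[x_k]$ is compact projective.

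\textbf{Generation.} It suffices that the family $\{\Gamma_{\{k\}}\}_k$ is jointly conservative. Suppose $A\to B$ induces equivalences on every $\Gamma_{\{k\}}$. Inducting over upward-closed subsets using the fiber sequences of local cohomology, as in the proof of Proposition \ref{excessive-diagram-characterization}, shows that all $\Gamma_V$ agree, in particular every $A_\ell=\Gamma_{L_{\le\ell}}$. The main obstacle is the corepresentability step: we must identify derived mapping spaces of algebra diagrams with local cohomology of the underlying modules. We would first prove it for the free-module analogue in $\Fun(L^\op,\De_{\ge0}(R))$, where it is a direct Kan-extension computation, and then pass to algebras via the free $\LSym$ functor, since $R[x_k]=\LSym$ of that module diagram.
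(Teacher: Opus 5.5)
\textbf{The gap: the functor corepresented by $R[x_k]$ is misidentified.} A map $R[x_k]\to A$ does not amount to a point of $\Gamma_{\{k\}}(A)=\fib(A_k\to\lim_{j<k}A_j)$, and your compact-projectivity and generation steps both rest on that claim.

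In an $L^\op$-diagram the transition maps go $A_\ell\to A_{\ell'}$ for $\ell'\le\ell$, so nothing can be ``propagated upward'' from $A_k$. Since $1$ is the maximum of $L_{\ge k}$, a compatible family $(a_\ell)_{\ell\ge k}$ is the same as a single point $a_1\in A_1$. The constraint is then a nullhomotopy of its image at every $\ell\not\ge k$, including elements incomparable to $k$, not just $\ell<k$.

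The $\LSym$ reduction you propose at the end is the right tool, but carrying it out gives a different answer. Write $R[x_k]=\LSym(M_k)$ with $M_k$ equal to $R$ on $L_{\ge k}$ and $0$ elsewhere. Then $M_k=\cofib(N_k\to\underline R)$, where:
\begin{itemize}
\item $\underline R$ is the constant diagram, which corepresents $A\mapsto A_1$;
\item $N_k$ is $R$ on the downward-closed set $\{\ell\not\ge k\}$ extended by zero, which corepresents $A\mapsto\lim_{\ell\not\ge k}A_\ell$.
\end{itemize}
Hence
\[
\Map(R[x_k],A)=\Omega^\infty\fib\Bigl(A_1\to\lim_{\ell\not\ge k}A_\ell\Bigr).
\]
This is local cohomology with supports in the closed set $L_{\ge k}$, and it is the formula used in the paper.

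It agrees with $\Gamma_{\{k\}}$ for $k=1$ but not in general. For $k=0$, $R[x_0]$ is the constant diagram $R[x]$, and maps out of it are points of $\lim_{L^\op}A=A_1$, whereas $\Gamma_{\{0\}}(A)=A_0$. Concretely, take $L=\{0<1\}$ and $A=R[y_1]$, i.e.\ $R[y]\to R$ with $y\mapsto0$: the true mapping anima is $R[y]$, but your formula gives $R$. Your ``left Kan extension of the free algebra at $k$'' is also not $R[x_k]$. Left Kan extension from $\{k\}$ gives $R[x]$ on $L_{\le k}$ and $R$ elsewhere, which corepresents $A\mapsto A_k$.

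\textbf{The fix.} Once the functor is corrected, your outline goes through and becomes the paper's proof.
\begin{itemize}
\item Compact projectivity: connectivity of $\fib(A_1\to\lim_{\ell\not\ge k}A_\ell)$ is characterization (\ref{excessive-diagram-characterization:generation}) of Proposition \ref{excessive-diagram-characterization}, so your sifted-colimit argument works verbatim.
\item Generation: joint conservativity of these functors is exactly Lemma \ref{lemma-fiber-equivalence-implies-equivalence} applied to $\fib(f)$, using that $\Omega^\infty$ is conservative on connective objects. Presentability from Corollary \ref{excess-diagram-limit-pointwise} then turns separation into generation.
\end{itemize}

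Your excessivity check via $\Gamma_{\{\ell\}}$ of the splitting $R[x_k]=R\oplus xR[x]$ is correct. Replace the muddled sentence about $L_{<\ell}$ with the observation that $L_{<\ell}$ has minimum $0$ for $\ell\neq0$, hence is weakly contractible. This check is a reasonable alternative to the paper's direct computation of $\lim_{j\in J}R[x_k]_j$ over downward-closed $J$.
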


\begin{proof}
    For a fixed $k\in L$, note that: 
    \begin{itemize}
        \item $R[x_k]$ is excessive. To see this, take any downward-closed $J\subseteq L$ and we have to see that $R[x]\twoheadrightarrow\lim_{j\in J}R[x_k]_j$. If $k\notin J$ it is obvious. If $k\in J$, let $I=J\setminus J_{\ge k}$, $J'$ be the downward closure of $J_{\ge k}$, and $I'=I\cap J'$. Then
        $$\lim_{j\in J}R[x_k]_j=\lim_{i\in I}R[x_k]_i\times_{\lim_{i\in I'}R[x_k]_i}\lim_{j\in J'}R[x_k]_j=R\times_RR[x]=R[x],$$
        as $J_{\ge k}$ is obviously cofinal in $J'$. This gives the desired surjectivity. 
        \item For $A\in\Exc_L\Ring_R$, 
        $$\Hom(R[x_k],A)=\fib\left(A_1\to\lim_{\ell\not\ge k}A_\ell\right)$$
        which is also the fiber in $\De(R)$ by excessivity, and this commutes with sifted colimits of $A$, so $R[x_k]$ is compact projective. 
    \end{itemize}
    It remains to prove generation. By Corollary \ref{excess-diagram-limit-pointwise} it is easy to see that $\Exc_L\Ring_R$ is presentable, so generation is equivalent to separation, which amounts to saying that if a map $f\colon A\to B$ in $\Exc_L\Ring_R$ induces equivalences for the above fibers for all $k\in L$, then $f$ is an equivalence. This is Lemma \ref{lemma-fiber-equivalence-implies-equivalence} below, applied on $\fib(f)$ taken as diagrams in $\De(R)$. 
\end{proof}

\begin{lemma}\label{lemma-fiber-equivalence-implies-equivalence}
    Let $L$ be a finite lattice, $\cC$ a stable category, and $C=(C_\ell)_{\ell\in L}$ an $L^\op$-diagram in $\cC$. If for all $k\in L$, 
    $$\fib\left(C_1\to\lim_{\ell\not\ge k}C_\ell\right)=0,$$
    then $C=0$. 
\end{lemma}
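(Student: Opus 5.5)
We will use the sheaf-theoretic viewpoint from the proof of Proposition \ref{excessive-diagram-characterization}. Give $L$ the topology whose open sets are the downward-closed subsets, and regard $C$ as a sheaf on this finite space with $C(L_{\le\ell})=C_\ell$. For locally closed $V\subseteq L$ write $\Gamma_V(C)$ for local cohomology along $V$. The hypothesis says exactly that $\Gamma_{L_{\ge k}}(C)=0$ for every $k\in L$, since $L\setminus L_{\ge k}=\{\ell\not\ge k\}$ is open and $L_{\ge k}$ is its closed complement. We will prove the pointwise statement $\Gamma_{\{\ell\}}(C)=0$ for every $\ell$, and then deduce $C=0$.

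\textbf{Step 1: $\Gamma_W(C)=0$ for all upward-closed $W$.} We induct on the size of an upward-closed $Z\subseteq L$, following the proof of $(\ref{excessive-diagram-characterization:generation})\implies(\ref{excessive-diagram-characterization:point})$ with ``connective'' replaced by ``zero''. The inductive claim is that $\Gamma_W(C)=0$ for every upward-closed $W\subseteq Z$, and $\Gamma_{\{\ell\}}(C)=0$ for every $\ell\in Z$.

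For the inductive step, pick $\ell$ minimal in $Z$. Then $L_{\ge\ell}\subseteq Z$, and $\{\ell\}$ is closed in $L_{\ge\ell}$ with complement $L_{\ge\ell}\setminus\ell$, which is upward-closed and contained in $Z\setminus\ell$. The fiber sequence
$$\Gamma_{\{\ell\}}(C)\to\Gamma_{L_{\ge\ell}}(C)\to\Gamma_{L_{\ge\ell}\setminus\ell}(C)$$
therefore has vanishing middle term by hypothesis and vanishing right term by induction, so $\Gamma_{\{\ell\}}(C)=0$.

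For an arbitrary upward-closed $W\subseteq Z$, either $W\subseteq Z\setminus\ell$, or $\ell\in W$. In the latter case use the fiber sequence for $\{\ell\}$ closed in $W$ with open complement $W\setminus\ell$, which is again upward-closed since $\ell$ is minimal; both outer terms vanish, so $\Gamma_W(C)=0$.

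\textbf{Step 2: $C=0$.} Every $C_\ell=\Gamma_{L_{\le\ell}}(C)$ is local cohomology along a locally closed (indeed open) set. Filter $L_{\le\ell}$ by peeling off minimal elements one at a time, exactly as in the proof of $(\ref{excessive-diagram-characterization:point})\implies(\ref{excessive-diagram-characterization:two-downward-sets})$. At each stage the fiber sequence has the term $\Gamma_{\{m\}}(C)$, which vanishes by Step 1, so induction on size gives $\Gamma_V(C)=0$ for every locally closed $V$. In particular $C_\ell=0$ for all $\ell$.

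The only point needing care is the bookkeeping of which subsets are open or closed inside which, so that the local cohomology fiber sequences apply. The rest is the same induction as in Proposition \ref{excessive-diagram-characterization}, now run in an arbitrary stable category with vanishing in place of connectivity; the argument does not use the t-structure.
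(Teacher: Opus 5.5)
Your proof is correct and follows the paper's own argument. The paper proves the lemma by rerunning the implications (\ref{excessive-diagram-characterization:generation})$\implies$(\ref{excessive-diagram-characterization:point})$\implies$(\ref{excessive-diagram-characterization:two-downward-sets}) of Proposition \ref{excessive-diagram-characterization} with ``connective'' replaced by ``zero'', which is exactly what your Steps 1 and 2 do.

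One small slip: because $\ell$ is minimal, $\{\ell\}$ is \emph{open} (not closed) in $L_{\ge\ell}$ and in $W$, so the correct fiber sequence is $\Gamma_{L_{\ge\ell}\setminus\ell}(C)\to\Gamma_{L_{\ge\ell}}(C)\to\Gamma_{\{\ell\}}(C)$. The orientation does not matter for vanishing in a stable category, but it is the orientation that makes the connectivity version work.
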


\begin{proof}
    This follows from Proposition \ref{excessive-diagram-characterization} by taking ``connective'' as ``zero''. 
\end{proof}

\begin{corollary}\label{subtract-divisor-preserve-excessive}
    Let $L$ be a finite lattice and $Y=(Y_\ell)_{\ell\in L}$ be an excessive $L$-diagram of stacks. Let $D\to Y_0$ be a map such that the composite $D\to Y_1$ is a divisor of $Y_1$. Then $Y-D=(Y_\ell-D)_{\ell\in L}$ is also excessive, where we are using the notation of Proposition \ref{equivalence-subtract-divisor}, viewing everything as closed immersed into $Y_1$. 
\end{corollary}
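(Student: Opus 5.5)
Since $D\to Y_0\to Y_\ell\to Y_1$ for every $\ell$, each $Y_\ell$ receives the divisor $D$, so $Y_\ell-D=\Bl_D Y_\ell$ is a well-defined closed immersion into $Y_1$ by Proposition \ref{equivalence-subtract-divisor}. Functoriality of the right adjoint makes $Y-D$ an $L$-diagram with affine transition maps. The remaining task is the connectivity and surjectivity condition. I plan to reduce to the compact projective generators of Proposition \ref{excess-diagram-generation}, where everything is explicit.

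\textbf{Step 1: localize.} Work Zariski locally on $Y_1$ and assume $Y_1=\Spec(R)$ and $D=\Spec(R/r)$. Then $Y$ is an object $A\in\Exc_L\Ring_R$ with $A_1=R$, together with an augmentation $A_0\to R/r$. This is the same as a map $A\to \underline{R/r}$ in $\Exc_L\Ring_R$, where $\underline{R/r}$ is the diagram with value $R/r$ at all $\ell<1$ and $R$ at $\ell=1$, or more precisely a map compatible with $A_1=R$. From the proof of Proposition \ref{equivalence-subtract-divisor}, the functor $A_\ell\mapsto\cO_{Y_\ell-D}$ sends colimits of $R$-algebras under $R$ and over $R/r$ to colimits, since $\Bl_D$ takes such colimits to limits of closed subschemes.

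\textbf{Step 2: reduce to generators.} Excessivity is a connectivity condition on fibers. Corollary \ref{excess-diagram-limit-pointwise} shows it is closed under the relevant sifted colimits, and for finite coproducts I would use Corollary \ref{excessive-diagram-product} together with the compatibility of the subtraction with tensor products. Granting this, Proposition \ref{excess-diagram-generation} reduces the claim to free objects built from the generators $R[x_k]$, pushed out along $x\mapsto$ elements. Concretely, $A$ is a colimit of diagrams of the form $A_\ell=R/(g_k : k\not\le\ell)$, quotienting by finitely many elements $g_k$ indexed by $k\in L$. The map to $R/r$ forces each $g_k$ to be divisible by $r$ for $k\ne 1$. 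Passing to the universal case, take $g_k=r h_k$ and let $\Bl_D$ act as division by $r$. By \cite[Lemma 3.4]{tang2024slicingcriterionindsmoothring}, as used in Proposition \ref{equivalence-subtract-divisor}, we get $(Y-D)_\ell=\Spec R/(h_k : k\not\le\ell)$.

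\textbf{Step 3: check the generators.} The resulting diagram has the same shape as $A$ with $g_k$ replaced by $h_k$. It is therefore excessive by the same argument as in Proposition \ref{excess-diagram-generation}: it is a base change of a product of the generators $R[x_k]$, which is excessive by Corollary \ref{excessive-diagram-product}.

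\textbf{Main obstacle.} The hard part is Step 2. Subtracting $D$ does not commute with products, as the Remark after Proposition \ref{equivalence-subtract-divisor} shows, so "commutes with colimits" must be phrased carefully. The clean route is to express excessivity of $Y-D$ via characterization (\ref{excessive-diagram-characterization:point}) of Proposition \ref{excessive-diagram-characterization}, applied to the diagram of ideals. One then uses that $\cI_{Y_\ell-D}\otimes\cO(-D)\simeq\cI_{Y_\ell}$ as ideals, which I expect follows from the universal property in Proposition \ref{equivalence-subtract-divisor}. If this holds, the local cohomology fibers of $Y-D$ are those of $Y$ twisted by the invertible module $\cO(D)$, and the fibers at $\ell<1$ shift by a correction coming from $\cO_D$. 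Connectivity is preserved under both operations, which gives excessivity. I would try this ideal-twisting argument first and fall back on the generator reduction if it fails.
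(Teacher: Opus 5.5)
Your Steps 1--3 are the paper's proof. The paper localizes to $Y_1=\Spec(R)$, $D=\Spec(R/r)$, and uses Corollary \ref{excess-diagram-limit-pointwise} and Proposition \ref{excess-diagram-generation} to reduce to the diagrams with value $R$ for $\ell\ge k$ and $R/ar$ otherwise. It then concludes from $\Spec(R/ar)-D=\Spec(R/a)$. There is one small slip: the $g_k$ that must be divisible by $r$ are those with $k\ne0$, not $k\ne1$. Indeed $g_1$ is killed at every $\ell<1$, while $g_0$ never occurs.

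The ``main obstacle'', however, comes from misreading the Remark after Proposition \ref{equivalence-subtract-divisor}. That Remark says the \emph{left} adjoint $Z\mapsto Z+D$ fails to preserve products. The functor $Y\mapsto Y-D$ is the \emph{right} adjoint, so it preserves all limits of closed immersions under $D$. This covers both fiber products over $Y_1$ and the pointwise limits of Corollary \ref{excess-diagram-limit-pointwise}. That is exactly the fact the paper invokes to make Step 2 immediate. You state it yourself in Step 1 and use it when computing $(Y-D)_\ell=\Spec R/(h_k : k\not\le\ell)$, so nothing needs to be ``granted''.

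The ideal-twisting route you propose to try first does not work. Since $\cI_{Z+D}\simeq\cI_Z\otimes\cI_D$, the identity $\cI_{Y-D}\otimes\cO(-D)\simeq\cI_Y$ would say that $Y$ and $(Y-D)+D$ have isomorphic ideals. But the counit $(Y-D)+D\to Y$ is not an equivalence in general; Proposition \ref{equivalence-subtract-divisor} only gives this for divisors. Concretely, take $r=0$ and $Y=D\times_{Y_1}D=\Spec R/(0,0)$, with $D$ mapping diagonally. Then $Y-D=(D-D)\times_{Y_1}(D-D)=\varnothing$, so $\cI_{Y-D}\otimes\cO(-D)\simeq R$, whereas $\cI_Y\simeq R^{\oplus2}\oplus R[1]$. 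Drop that route and present the generator reduction as the proof.
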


\begin{proof}
    Working locally on $Y_1$, we can assume $Y_1=\Spec(R)$ and $D=\Spec(R/r)$. By Corollary \ref{excess-diagram-limit-pointwise}, Proposition \ref{excess-diagram-generation}, and the fact that ``${}-D$'' is a right adjoint and preserves limits, it is easy to reduce the claim to diagrams of the form
    $$Y_\ell=\Spec\begin{cases}
        R,&\ell\ge k,\\
        R/ar&\text{otherwise},
    \end{cases}$$
    for some $k\in L$. Now the claim is obvious, as $\Spec(R/ar)-D=\Spec(R/a)$. 
\end{proof}

\begin{proposition}\label{excessive-implies-surjective}
    Let $L$ be a finite lattice and $Y=(Y_\ell)_{\ell\in L}$ be an excessive $L$-diagram of stacks. Let $(Q_m)_{m\in M}$ be a finite family of downward-closed subsets of $L$, and let $Q=\bigcap_{m\in M}Q_m$. Then the map $\bigoplus_{m\in M}\cI_{Y_{Q_m}/X}\to\cI_{Y_Q/X}$ is surjective. 
\end{proposition}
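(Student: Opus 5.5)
We reduce the claim to the generators of Proposition \ref{excess-diagram-generation} and then check the explicit case directly. Here $X$ means $Y_1$, and $\cI_{Y_Q/X}=\fib(\cO_{Y_1}\to\lim_{q\in Q}\cO_{Y_q})$. Surjectivity is a statement about $\pi_0$ and is Zariski local on $Y_1$, so we may assume $Y_1=\Spec(R)$. Then $Y$ corresponds to an object $A\in\Exc_L\Ring_R$ with $A_1=R$; it is convenient to forget the condition $A_1=R$ and prove the statement for every $A\in\Exc_L\Ring_R$, with $\cI_{Q}(A)=\fib(A_1\to\lim_{q\in Q}A_q)$. By excessivity (Proposition \ref{excessive-diagram-characterization}(\ref{excessive-diagram-characterization:two-downward-sets})), each $\cI_{Q}(A)$ is connective. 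The functors $A\mapsto\cI_{Q}(A)$ are exact functors of diagrams, so they commute with sifted colimits in $\Exc_L\Ring_R$ (these are computed pointwise on underlying modules, as in the proof of Corollary \ref{excess-diagram-limit-pointwise}). Surjectivity on $\pi_0$ of a natural map between connective modules is preserved by sifted colimits, since $\pi_0$ of a sifted colimit is a quotient of $\pi_0$ of a coproduct.

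By Proposition \ref{excess-diagram-generation}, every object is a sifted colimit of finite coproducts of the generators $R[x_k]$. The coproduct in $\Exc_L\Ring_R$ is the pointwise tensor product over $R$, which is excessive by Corollaries \ref{excessive-diagram-product} and \ref{excessive-diagram-restrict}, the latter applied to the diagonal $L\to L^n$. So it suffices to treat $A=\bigotimes_{s}R[x_{k_s}]$ with $A_\ell=R[x_s: \ell\ge k_s]$. Since $Q$ is downward-closed, so is the set of $q\in Q$ whose pattern $\{s:q\ge k_s\}$ is contained in a given set. Hence $\lim_{q\in Q}A_q$ can be computed as a limit of polynomial rings along coordinate projections. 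Heuristically, $\lim_{q\in Q}A_q$ is the quotient of $R[x_s]_s$ by the monomial ideal generated by those $\prod_{s\in T}x_s$ with $T$ not contained in any pattern $\{s: q\ge k_s\}$, $q\in Q$.

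Now $T$ is contained in such a pattern iff $\bigvee_{s\in T}k_s$ exists in $Q$, i.e.\ iff $\bigvee_{s\in T}k_s\in Q$. Since $Q=\bigcap_m Q_m$, if $\bigvee_{s\in T}k_s\notin Q$ then it lies outside some $Q_m$. So every monomial generator of $\cI_{Q}$ is already a generator of some $\cI_{Q_m}$, which gives surjectivity on $\pi_0$. To make this rigorous without appealing to the monomial description, I would instead argue inductively, first with one generator and then using that $\cI_Q$ of a tensor product is computed by the formula in Corollary \ref{excessive-diagram-product}.

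The main obstacle is exactly this rigorous identification of $\pi_0\cI_{Q}(A)$ for the tensor products of generators, since the limits are derived. I would first try to show that $\lim_{q\in Q}A_q$ is discrete and equal to the Stanley--Reisner-type quotient, using the cover of $Q$ by the principal downsets $L_{\le q}$ and the fact that intersections of monomial ideals in polynomial rings are exact (no higher $\lim^1$-type terms). Alternatively, I would avoid products entirely by doing induction on $|M|$: write $Q=Q'\cap Q_m$ with $Q'=\bigcap_{m'\ne m}Q_{m'}$, and use the fiber sequence for $\Gamma_{(Q'\cup Q_m)\setminus Q}$ in the proof of Proposition \ref{excessive-diagram-characterization}. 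The Mayer--Vietoris fiber sequence $\cI_{Q'\cup Q_m}\to\cI_{Q'}\oplus\cI_{Q_m}\to\cI_Q$ has cofiber $\cI_{Q'\cup Q_m}[1]$, which is $1$-connective since $\cI_{Q'\cup Q_m}$ is connective by excessivity. Hence $\cI_{Q'}\oplus\cI_{Q_m}\to\cI_Q$ is surjective on $\pi_0$, and induction applied to $\cI_{Q'}$ finishes. This second route seems cleaner, so I would adopt it, keeping the generator reduction only as a fallback.
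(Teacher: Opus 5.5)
Your adopted second route is correct and is essentially the paper's own proof. The paper reduces to $M=\varnothing$ and $|M|=2$, and notes that $Q_1\cup Q_2$ is downward-closed, so that $\cI_{Y_{Q_1\cup Q_2}/X}=\cI_{Y_{Q_1}/X}\times_{\cI_{Y_Q/X}}\cI_{Y_{Q_2}/X}$ is connective by excessivity; this is exactly your Mayer--Vietoris step, in which the cofiber $\cI_{Q'\cup Q_m}[1]$ is $1$-connective, so the generator-reduction detour can simply be dropped.
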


\begin{proof}
    It suffices to treat the cases $M=\varnothing$ and $M=\{1,2\}$. The former is obvious. For the latter, note that $R=Q_1\cup Q_2$ is also downward-closed, and $\cO_{Y_R}=\cO_{Y_{Q_1}}\times_{\cO_{Y_Q}}\cO_{Y_{Q_2}}$, so $\cI_{Y_R/X}=\cI_{Y_{Q_1}/X}\times_{\cI_{Y_Q/X}}\cI_{Y_{Q_2/X}}$ is connective. This implies the desired surjectivity. 
\end{proof}

\begin{example}\label{example-product-excessive}
    Let $\Omega$ be a finite set and $(Y_\omega\to Y)_{\omega\in\Omega}$ an $\Omega$-family of closed immersions. For $\Psi\subseteq\Omega$, let $Y_\Psi$ be the fiber product of $(Y_\omega)_{\omega\in\Psi}$ over $Y$. We equip the power set $\ps(\Omega)$ with the inverse inclusion partial order, denoted $\ps(\Omega)^\op$. Then the $\ps(\Omega)^\op$-diagram $(Y_\Psi)_{\Psi\subseteq\Omega}$ is excessive by Corollary \ref{excessive-diagram-product}. Moreover, let $Y\to X$ be a closed immersion and let $\ps(\Omega)^\op_+=\ps(\Omega)^\op\sqcup\{1\}$, the lattice obtained from $\ps(\Omega)^\op$ by adding an element and setting it as the maximum. Then if we let $Y_1=X$, it is easy to see that the resulting $\ps(\Omega)^\op_+$-diagram is also excessive. 
\end{example}

Fix a motivic setup $\cP$ as in Definition \ref{definition-motivic-setup} and a base scheme $S$. 

\begin{theorem}\label{theorem-excessive-diagram-blowup-representable}
    Let $L$ be a finite lattice and $Y=(Y_\ell)_{\ell\in L}$ be an excessive $L$-diagram in $\cP_S$. Denote $\Pi=L\setminus1$ and $X=Y_1$, and view $Y$ as a diagram $i=(i_\pi\colon Y_\pi\to X)_{\pi\in\Pi}$ of closed immersions. Then there is a canonical $\Pi$-family $(\E'_\pi(i))_{\pi\in\Pi}$ of divisors of $\Bl(i)$ such that:
    \begin{enumerate}
        \item For all $\pi\in\Pi$, $\E_\pi(i)=\sum_{\rho\le\pi}\E'_\rho(i)$, and for $\pi\le\pi'$, the map $\E_\pi(i)\to\E_{\pi'}(i)$ is the obvious one coming from the implication $\rho\le\pi\implies\rho\le\pi'$. 
        \item For $\Gamma\subseteq\Pi$ running through all chains in $\Pi$, i.e.\ all totally ordered subsets of $\Pi$, the opens
        $$U_\Gamma=\Bl(i)\setminus\bigcup_{\pi\notin\Gamma}\E'_\pi(i)$$
        cover $\Bl(i)$. In other words, for $\pi,\rho\in\Pi$ incomparable, $\E'_\pi(i)\cap\E'_\rho(i)=\varnothing$. 
        \item\label{theorem-excessive-diagram-blowup-representable:divisors-representable} For all $\Psi\subseteq\Pi$, the fiber product $\E'_\Psi(i)$ of $(\E'_\pi(i))_{\pi\in\Psi}$ over $\Bl(i)$ lies in $\cP_S$. In particular, $\Bl(i)=\E'_\varnothing(i)\in\cP_S$. 
    \end{enumerate}
\end{theorem}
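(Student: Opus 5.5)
We argue by induction on $|\Pi|$, blowing up one minimal element at a time; this realizes $\Bl(i)$ as the iterated blowup described in Construction \ref{multiple-deformation-space}. The base case $\Pi=\varnothing$ is trivial: $\Bl(i)=X$ with no divisors. Otherwise, pick a minimal element $0\in L$. (If $Y_0\to X$ is the smallest closed immersion, it is the infimum of $L$; in general we pick a minimal element $\pi_0$ of $\Pi$, and $L$ has minimum $0$ below it.) Concretely, take $\pi_0=0$, the minimum of the lattice. By excessivity, $Y_0\to X$ is a quasismooth closed immersion in $\cP_S$. More precisely, quasismoothness should follow from the $\cP$-ness of the $Y_\ell$ together with the connectivity statements of Proposition \ref{excessive-diagram-characterization}; I expect this to need the hypothesis that everything lies in $\cP_S$ with quasismooth immersions, as in the intended applications. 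So $X':=\Bl_{Y_0}X\in\cP_S$ by Definition \ref{definition-motivic-setup}(\ref{definition-motivic-setup:blowup}), with exceptional divisor $D=\E_{Y_0}X=\bP(\N_{Y_0/X})\in\cP_S$.

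Next we transport the diagram to $X'$. Base change $i$ to $X'$ (Remark \ref{functoriality-blowup-poset}(\ref{functoriality-blowup-poset:base-change})). Since $D\to Y_0|_{X'}\to Y_\pi|_{X'}$ for all $\pi$, Corollary \ref{subtract-divisor-preserve-excessive} shows that the diagram $(Y_\pi|_{X'}-D)_{\pi>0}$, extended by $X'$ at the top, is again excessive. By Corollary \ref{total-transform-excessive} its members are the strict transforms $\Bl_{Y_0}Y_\pi$, which lie in $\cP_S$; moreover $\Bl_{Y_0}Y_0=\varnothing$, so the element $0$ disappears. The universal properties of Definition \ref{definition-blowup-poset} then give $\Bl(i)=\Bl(i|_{X'})$, where $i|_{X'}$ has $D$ at position $0$. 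Lemma \ref{blowup-poset-subtract-divisor} then identifies this with $\Bl(i')$ for the diagram $i'$ on $\Pi\setminus 0$ of strict transforms, and it gives $\E_0(i)=D$ and $\E_\pi(i)=\E_\pi(i')+D$. Now apply induction to $i'$, whose indexing lattice $L\setminus 0$ is still a lattice after re-adjoining a minimum if necessary; by Corollary \ref{excessive-diagram-restrict} the restriction stays excessive. Define $\E'_0(i)=D|_{\Bl(i')}$, the strict transform of $D$, and $\E'_\pi(i)=\E'_\pi(i')$. Property (1) then follows from the additive formula in Lemma \ref{blowup-poset-subtract-divisor} and the inductive formula $\E_\pi(i')=\sum_{0<\rho\le\pi}\E'_\rho(i')$.

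For (2), disjointness of $\E'_\pi,\E'_\rho$ for incomparable $\pi,\rho$ reduces by induction to the new pairs. The real content is at the last stage: after all smaller strata are blown up, the strict transforms of $Y_\pi$ and $Y_\rho$ meet only along the strict transform of $Y_{\pi\wedge\rho}$, which has already been removed. For this I would use Proposition \ref{excessive-implies-surjective}, which gives $\cI_{Y_\pi}\oplus\cI_{Y_\rho}\twoheadrightarrow\cI_{Y_{\pi\wedge\rho}}$ in the form $Y_\pi\times_XY_\rho=Y_{\{\pi,\rho\}\downarrow}$ classically. Combined with the identification $Y_{\pi\wedge\rho}-D$ being a divisor, this should force the subtracted strict transforms to have empty intersection.

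For (3), an intersection $\E'_\Psi(i)$ over a chain $\Psi$ is computed inductively. Each $\E'_\pi$ is the strict transform of an exceptional divisor $\bP(\N)$ of a quasismooth blowup in $\cP_S$, and intersecting the whole construction with such a divisor reproduces an excessive diagram of the same type over the smaller base $\E'_\pi$ (again using base change and Corollary \ref{total-transform-excessive}). Non-chains give $\varnothing$ by (2). I expect the main obstacle to be (2), together with checking that quasismoothness of each successive center propagates through the inductive step, since derived strict transforms carry extra data.
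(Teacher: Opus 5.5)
\textbf{The inductive step does not reduce the size of the lattice.} After blowing up $Y_0$ and subtracting $D$, you get the diagram $i'$ of strict transforms indexed by $\Pi\setminus 0$. Equivalently, this is an $L$-diagram with $\Bl_{Y_0}Y_0=\varnothing$ at the bottom, and you want to apply the induction hypothesis to it. But whenever $L$ has two or more atoms, $L\setminus 0$ has no minimum. Then it is not a lattice, and the theorem cannot be applied to $i'$. ``Re-adjoining a minimum'' just gives back $L$ itself with $\varnothing$ at the bottom, so $|\Pi|$ has not decreased. Repeating your step would then blow up $\varnothing$ and make no progress.

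This is not a corner case. For $L=\ps(\Omega)^\op_+$ with $|\Omega|\ge 2$, which are exactly the lattices used in Construction \ref{multiple-deformation-space}, blowing up $Y^\Omega$ leaves the strict transforms of the $Y^{\Omega\setminus\omega}\times X^\omega$ at several incomparable minimal positions. Since your (1), (2) and (3) are all derived from this induction, none of them is actually established.

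\textbf{The missing idea is a Zariski localization.} The disjointness you invoke only for (2) is precisely what makes the induction go. Once you have reduced to $Y_0=\varnothing$, for each $\pi\in\Pi\setminus 0$ put $U_\pi=X\setminus\bigcup_{\rho\not\ge\pi}Y_\rho$.
\begin{itemize}
\item Over $U_\pi$, every $Y_\rho$ with $\rho\not\ge\pi$ is empty, and so is the corresponding $\E_\rho$. Hence the blowup there is that of the $L_{\ge\pi}$-diagram.
\item This diagram is excessive by Corollary \ref{excessive-diagram-restrict}, since the inclusion $L_{\ge\pi}\subseteq L$ preserves nonempty infima.
\item $L_{\ge\pi}$ is a strictly smaller lattice, because $0\notin L_{\ge\pi}$, so the induction hypothesis applies to it.
\item The $U_\pi$ cover $X$. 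For $M\subseteq\Pi\setminus0$ with $\inf M=0$, Proposition \ref{excessive-implies-surjective} with $Q_m=L_{\le m}$ gives $\bigoplus_{m\in M}\cI_{Y_m/X}\twoheadrightarrow\cI_{Y_0/X}=\cO_X$, so $\bigcap_{m\in M}Y_m=\varnothing$. Consequently, a point lying in some $Y_m$ lies in $U_{\inf M}$ for $M$ the set of such $m$.
\item The poset blowup commutes with base change (Remark \ref{functoriality-blowup-poset}(\ref{functoriality-blowup-poset:base-change})) and the $\E'_\pi$ are canonical, so the local answers glue.
\item Statement (2) then becomes immediate, since over $U_\pi$ the divisors $\E'_\rho$ with $\rho\not\ge\pi$ are empty.
\end{itemize}

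\textbf{A side remark on quasismoothness.} Quasismoothness of $Y_0\to X$ does not follow from excessivity, which is only a connectivity condition. It is a tacit hypothesis, which the paper also uses silently. It is automatic for $\cP=\Sm$, and it propagates to strict transforms by Proposition \ref{proposition-blowup-edges-excessive-preserve-immersion}.
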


Under the notation above, we call $\E'_\pi(i)$ the \emph{strict exceptional divisor} over $Y_\pi$. 

\begin{proof}
    We do induction on the size of $L$. When $L=\{1\}$ there is nothing to prove. Otherwise, $0\ne1$ and $0\in\Pi$. By Definition \ref{definition-blowup-poset}, $\Bl(i)\to X$ factors through $\Bl_{Y_0}X$, and in fact $\Bl(i)=\Bl(i|_{\Bl_{Y_0}X})$. By Corollaries \ref{blowup-subtract-divisor} and \ref{total-transform-excessive} applied on the divisor $\E_{Y_0}X$, it is also the blowup of $(\Bl_{Y_0}Y_\pi\to\Bl_{Y_0}X)_{\pi\in\Pi}$, in which $\E_0(i)$ is the blowup of $(\E_{Y_0}Y_\pi\to\E_{Y_0}X)_{\pi\in\Pi}$. These diagrams stay in $\cP_S$ by Definition \ref{definition-motivic-setup} and stay excessive by Corollary \ref{subtract-divisor-preserve-excessive}, so we are reduced to the case $Y_0=\varnothing$, where I claim that Zariski locally the theorem reduces to the induction hypothesis: 
    
    For $\pi\in\Pi\setminus0$, let $U_\pi\subseteq X$ be the open complement of the union of $Y_\rho$ for all $\rho\not\ge\pi$. Then over $U_\pi$, the induction hypothesis applies with the smaller lattice $L_{\ge\pi}$. Now it remains to see that $(U_\pi)_{\pi\in\Pi\setminus0}$ cover $X$, or equivalently:
    \begin{itemize}
        \item For $M\subseteq\Pi\setminus0$ with $\inf(M)=0$, the intersection of $(Y_m)_{m\in M}$ is empty. 
    \end{itemize}
    This follows immediately from Proposition \ref{excessive-implies-surjective} where one takes $Q_m=\Pi_{\le m}$. 
\end{proof}

\begin{remark}
    Note that Definition \ref{definition-excessive-diagram} does not need the indexing poset to be a lattice. Theorem \ref{theorem-excessive-diagram-blowup-representable} might also hold in the same generality, but at this point the author is not sure about this. 
\end{remark}

\begin{theorem}\label{theorem-blowup-cofiber-comparison}
    Keep the notation in Theorem \ref{theorem-excessive-diagram-blowup-representable}. Consider the $\ps(\Pi)^\op$-diagram $\E'=(\E'_\Psi(i))_{\Psi\subseteq\Pi}$ resulting from Theorem \ref{theorem-excessive-diagram-blowup-representable}(\ref{theorem-excessive-diagram-blowup-representable:divisors-representable}). Let $\inf\colon\ps(\Pi)^\op\to L$ be the map of posets as the notation suggests. Then there is a natural map $\E'\to\inf^*Y$ of $\ps(\Pi)^\op$-diagrams in $\cP_S$, inducing an isomorphism on the total cofibers in $\Sh_{\Zar,\qbu}(\cP_S)$. Equivalently, one can form the left Kan extension $\inf_!\E'$ as an $L$-diagram in $\Sh_\Zar(\cP_S)$. Then in fact $(\inf_!\E')_\pi=\E_\pi$, which we view as
    $$\sum_{\rho\le\pi}\E'_\rho:=\colim_{\varnothing\ne\Psi\subseteq\Pi_{\le\pi}}\E'_\Psi\in\Sh_\Zar(\cP_S),$$
    and the natural map $\inf_!\E'\to Y$ induces an isomorphism on the total cofibers in $\Sh_{\Zar,\qbu}(\cP_S)$. In particular, the total cofiber of $\E'$ and that of $Y$ in $\Sh_{\Zar,\qbu}(\cP_S)$ are canonically isomorphic. 
\end{theorem}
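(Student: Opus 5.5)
We would mirror the induction in the proof of Theorem \ref{theorem-excessive-diagram-blowup-representable}, on the size of $L$. First we construct the natural map $\E'\to\inf^*Y$. For $\Psi\subseteq\Pi$ nonempty, $\E'_\Psi(i)$ maps to $\E_\pi(i)$ for each $\pi\in\Psi$. The excessive map $(\E_\pi(i)\to\Bl(i))\to i$ then gives maps $\E'_\Psi(i)\to Y_\pi$ for all $\pi\in\Psi$, compatible over $X$. Excessivity of $Y$ (Proposition \ref{excessive-implies-surjective} with $Q_m=\Pi_{\le\pi}$, $\pi\in\Psi$) identifies the limit of these closed immersions with $Y_{\inf\Psi}$, so we get $\E'_\Psi(i)\to Y_{\inf\Psi}$. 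For $\Psi=\varnothing$ this is $\Bl(i)\to X=Y_1$. The identification $(\inf_!\E')_\pi=\colim_{\varnothing\ne\Psi\subseteq\Pi_{\le\pi}}\E'_\Psi$ is a cofinality computation: $\inf^{-1}(L_{\le\pi})$ consists of the $\Psi$ with $\inf\Psi\le\pi$, and among them those contained in $\Pi_{\le\pi}$ should be cofinal. Next, item (2) of Theorem \ref{theorem-excessive-diagram-blowup-representable} says incomparable strict divisors are disjoint. So the colimit agrees Zariski locally on each $U_\Gamma$ with the colimit over chains. There the divisors form a strict normal crossing configuration and the sum $\sum_{\rho\le\pi}\E'_\rho$ is a pushout. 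Thus $(\inf_!\E')_\pi=\E_\pi$ in $\Sh_\Zar(\cP_S)$ by the formula in item (1), together with the Mayer--Vietoris identity $\cO_{D_1+D_2}=\cO_{D_1}\times_{\cO_{D_1\cap D_2}}\cO_{D_2}$ from \S\ref{dag-notation}(\ref{dag-notation:adding-closed-immersion}).

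For the total cofiber isomorphism we induct on $|L|$. Put $X'=\Bl_{Y_0}X$, $D=\E_{Y_0}X$. Consider the $L$-diagram $Y'$ on $X'$ given by the strict transforms $\Bl_{Y_0}Y_\pi$ ($\pi\ne0$), with $Y'_0=\varnothing$. The proof of Theorem \ref{theorem-excessive-diagram-blowup-representable} shows $\Bl(i)=\Bl(i')$, and the $\E_0$-part is the blowup of the diagram $(\E_{Y_0}Y_\pi\to D)$. Quasismooth blowup excision, applied to each $Y_\pi$ along $Y_0$, gives that the square $(\E_{Y_0}Y_\pi\to\Bl_{Y_0}Y_\pi;\,Y_0\to Y_\pi)$ is a pushout in $\Sh_{\Zar,\qbu}(\cP_S)$. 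Here quasismoothness holds since everything is in $\cP_S$ and $Y_0\to Y_\pi$ is quasismooth; this needs checking but follows from the excessive setup. Hence the total cofiber of $Y$ equals the total cofiber of the $L\times[1]$-diagram built from $Y'$ and $(\E_{Y_0}Y_\pi)$ over $D$. Applying induction to both smaller configurations reduces to the case $Y_0=\varnothing$. That reduction requires the divisor-subtraction Lemma \ref{blowup-poset-subtract-divisor} to match the strict exceptional divisors.

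When $Y_0=\varnothing$, both sides satisfy Zariski descent. The total cofiber is a finite colimit, and $\Sh_{\Zar,\qbu}$ localization is compatible with restriction to opens by Proposition \ref{L-qbu-adjointable}. So we check the isomorphism on the cover $(U_\pi)_{\pi\ne0}$ and its intersections, where the diagram becomes an excessive diagram over the smaller lattice $L_{\ge\pi}$ plus empty terms. Empty terms contribute trivially because the strict divisors over those $\rho$ are empty there. Induction then applies.

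The main obstacle is bookkeeping the reduction step coherently. The identification of $\E'$ for $Y$ with those for $Y'$ and for the $D$-diagram must be natural enough that the total cofiber identifications glue as a map of cubes, not just objectwise. I would handle this by proving the stronger statement that $\inf_!\E'\to Y$ is an equivalence of total cofibers functorially in excessive maps of diagrams, so the induction step composes canonical maps.
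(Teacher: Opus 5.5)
Your proposal is correct and follows essentially the paper's route. Blowup excision along $Y_0$ replaces $\inf^*Y$ by the diagram $F$ of strict transforms $\Bl_{Y_0}Y_\pi$ and exceptional divisors $\E_{Y_0}Y_\pi$; then, once $Y_0=\varnothing$, the induction hypothesis is applied Zariski locally on $\Bl_{Y_0}X$, over the cover $(U_\pi)_{\pi\ne0}$, to the smaller lattices $L_{\ge\pi}$.

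A few small corrections:
\begin{enumerate}
\item Quasismoothness of $Y_0\to Y_\pi$ does not follow from excessivity. It is an implicit standing hypothesis, automatic for $\cP=\Sm$.
\item $\sum_{\rho\le\pi}\E'_\rho$ should be taken as the definition of $\E_\pi$ in $\Sh_\Zar(\cP_S)$. It is not the representable of the scheme $\E_\pi$, which is typically not in $\cP_S$, and the Mayer--Vietoris identity for $\cO_{D_1+D_2}$ does not give a pushout of sheaves.
\item Subsets $\Psi$ with $\inf\Psi\le\pi$ but $\Psi\cap\Pi_{\le\pi}=\varnothing$ are not handled by cofinality. They must be discarded using $\E'_\Psi=\varnothing$.
\end{enumerate}
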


\begin{proof}
    We use the same induction process as in Theorem \ref{theorem-excessive-diagram-blowup-representable}. When $L=\{1\}$ there is still nothing to prove. Otherwise, $0\ne1$ and $0\in\Pi$. By Theorem \ref{theorem-excessive-diagram-blowup-representable} or rather its proof, the map $\E'\to\inf^*Y$ of $\ps(\Pi)^\op$-diagrams factors through the $\ps(\Pi)^\op$-diagram $F=(F_\Psi)_{\Psi\subseteq\Pi}$ where
    $$F_\Psi=\begin{cases}
        \E_{Y_0}Y_{\inf(\Psi\setminus0)},&0\in\Psi;\\
        \Bl_{Y_0}Y_{\inf(\Psi)},&0\notin\Psi.
    \end{cases}$$
    As in the proof of Theorem \ref{theorem-excessive-diagram-blowup-representable}, Zariski locally on $F_\varnothing=\Bl_{Y_0}X$, the induction hypothesis shows that the map $\E'\to F$ of $\ps(\Pi)^\op$-diagrams induces an isomorphism on the total cofibers. Therefore, it remains to show that $F\to\inf^*Y$ does too. For this, it suffices to show that for each $\Psi\subseteq\Pi\setminus0$, the square
    $$\begin{tikzcd}
        \E_{Y_0}Y_{\inf(\Psi)}\ar[r]\ar[d]&\Bl_{Y_0}Y_{\inf(\Psi)}\ar[d]\\
        Y_{\inf(\Psi\cup\{0\})}\ar[r]&Y_{\inf(\Psi)}
    \end{tikzcd}$$
    is a pushout in $\Sh_{\Zar,\qbu}(\cP_S)$. This follows immediately from the definition as $Y_{\inf(\Psi\cup\{0\})}=Y_0$. 
\end{proof}

\begin{corollary}\label{corollary-blowup-cofiber-comparison-minimum}
    Keep the notation in the proof of Theorem \ref{theorem-blowup-cofiber-comparison}, and consider the restrictions of the diagrams $\E'$ and $F$ from $\ps(\Pi)^\op$ to $\{\Psi\subseteq\Pi\mid0\in\Psi\}^\op$. Then the map $\E'\to F$ introduces an isomorphism on the total cofibers of the restrictions. In particular, the total cofiber of $(\E'_\Psi(i))_{0\in\Psi\subseteq\Pi}$ and that of $(\E_{Y_0}Y_\ell)_{\ell\in L}$ are canonically isomorphic. 
\end{corollary}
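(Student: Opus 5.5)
Since $\E'\to F$ and $F\to\inf^*Y$ are both maps of $\ps(\Pi)^\op$-diagrams, I would run the induction of Theorem \ref{theorem-blowup-cofiber-comparison} again, but only on the face of the cube where $0\in\Psi$. The first point to check is that the restriction of $\E'$ to $\{\Psi\mid0\in\Psi\}^\op$ is itself the strict-exceptional cube of an excessive diagram. Here $\E_0(i)=\E'_0(i)$, because nothing lies strictly below $0$, and by the proof of Theorem \ref{theorem-excessive-diagram-blowup-representable} it is the blowup of $(\E_{Y_0}Y_\pi\to\E_{Y_0}X)_{\pi\in\Pi}$. By Lemma \ref{blowup-poset-subtract-divisor}, Corollary \ref{total-transform-excessive} and Corollary \ref{blowup-subtract-divisor}, the divisors $\E'_\Psi(i)$ with $0\in\Psi$ are the strict exceptional intersections $\E'_{\Psi\setminus0}$ of the blowup of the $L$-diagram $G=(\E_{Y_0}Y_\ell)_{\ell\in L}$. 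Its top entry is $\E_{Y_0}X$ and its bottom entry is $\E_{Y_0}Y_0=\E_{Y_0}X$. After reindexing $\Psi\mapsto\Psi\setminus0$, this restricted diagram $F$ is $\inf^*$ of $G$ along $\ps(\Pi\setminus0)^\op\to L$.

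Next I would show that $G$ is excessive. Its entries are exceptional divisors of strict transforms, so they are obtained from the excessive diagram $(\Bl_{Y_0}Y_\ell)$ by base change along the divisor $\E_{Y_0}X\to\Bl_{Y_0}X$. That diagram is excessive by Corollary \ref{subtract-divisor-preserve-excessive}. Excessivity is a connectivity condition on the pointwise limit of structure sheaves, and base change is right t-exact, so it survives base change along a divisor by Remark \ref{functoriality-blowup-poset}(\ref{functoriality-blowup-poset:base-change}).

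With $G$ excessive, the argument finishes as follows. Zariski locally on $\E_{Y_0}X$ (the same cover by $U_\pi$ as in the proof of Theorem \ref{theorem-excessive-diagram-blowup-representable}, intersected with $\E_{Y_0}X$), Theorem \ref{theorem-blowup-cofiber-comparison} applied to $G$ for the smaller lattices $L_{\ge\pi}$ gives the isomorphism of total cofibers for $\E'|_{0\in\Psi}\to F|_{0\in\Psi}$. Total cofibers commute with Zariski descent in $\Sh_{\Zar,\qbu}(\cP_S)$, so these local isomorphisms glue. The ``in particular'' then holds because $F|_{0\in\Psi}=\inf^*G$ by construction, and $\inf$ is cofinal. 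More precisely, $\inf_!$ of a cube pulled back along $\inf$ has the same total cofiber, by the cofinality statement in Corollary \ref{excessive-diagram-restrict}. This identifies the total cofiber of $F|_{0\in\Psi}$ with that of $G=(\E_{Y_0}Y_\ell)_{\ell\in L}$.

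I expect the main obstacle to be bookkeeping rather than geometry: one has to verify that the faces $\E'_\Psi$ with $0\in\Psi$, as produced inside $\Bl(i)$, agree canonically with the strict-exceptional cube of $G$. This includes the agreement of the $\sum_{\rho\le\pi}$ decompositions, which is where Lemma \ref{blowup-poset-subtract-divisor} gets used. I would do this check affine-locally, using the universal property of Definition \ref{definition-blowup-poset}.
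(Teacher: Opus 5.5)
Your outline follows the paper's proof. You identify the face $(\E'_\Psi(i))_{0\in\Psi}$ with the strict-exceptional cube of $G=(\E_{Y_0}Y_\ell)_{\ell}$ over $\E_{Y_0}X$ and check that $G$ is excessive; your base-change argument from $(\Bl_{Y_0}Y_\ell)_{\ell\in L}$ is fine. You then localize on the opens $U_\pi\cap\E_{Y_0}X$ so the relevant lattice is $L_{\ge\pi}$, and invoke Theorem~\ref{theorem-blowup-cofiber-comparison}.

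However, the claim that the bottom entry of $G$ is $\E_{Y_0}Y_0=\E_{Y_0}X$ is false: in fact $\E_{Y_0}Y_0=\Bl_{Y_0}Y_0=\varnothing$. An excessive map from a divisor $E\to B$ to the identity $Y_0\to Y_0$ makes the invertible module $\cI_{E/B}$ a quotient of $\cI_{Y_0/Y_0}|_B=0$, which forces $B=\varnothing$. This is why the proof of Theorem~\ref{theorem-excessive-diagram-blowup-representable} ``reduces to the case $Y_0=\varnothing$''. Your own second paragraph also relies on it, since the base change of $\Bl_{Y_0}Y_0$ is empty. The claim even contradicts your preceding sentence: blowing up a diagram that contains the identity of $\E_{Y_0}X$ gives $\varnothing$, not $\E_0(i)$.

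This is not cosmetic, because it sits exactly where your write-up departs from the paper. The paper drops $0$ and applies the theorem to $(\E_{Y_0}Y_\ell)_{\ell\in L\setminus0}$, after localizing so that $L\setminus0$ acquires a minimum. You instead keep $0$ and blow up $G$ as an $L$-diagram. With $G_0=\E_{Y_0}X$ that blowup is empty, so your identification $\E'_\Psi(i)=\E'_{\Psi\setminus0}(G)$ fails. Already for $L=\{0,1\}$ the ``in particular'' would then compare $\E_{Y_0}X$ with a zero total cofiber.

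With $G_0=\varnothing$ your variant works and is slightly cleaner. We have $\E'_0(G)=\E_0(G)=\varnothing$, and $G_{\inf\Psi}=G_0=\varnothing$ whenever $0\in\Psi$. So the $\ps(\Pi)^\op$-cubes $\E'(G)\to\inf^*G$ of Theorem~\ref{theorem-blowup-cofiber-comparison} are empty on the face $\Psi\ni0$. Their total cofibers are therefore those of the complementary face, which under $\Psi\mapsto\Psi\cup\{0\}$ is exactly $\E'(i)|_{0\in\Psi}\to F|_{0\in\Psi}$. Since $L$ is a lattice, you can apply the theorem to $G$ globally, with no explicit Zariski localization. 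The paper needs that localization only because $L\setminus0$ need not be a lattice. The theorem's final assertion then gives the ``in particular'' directly, without your separate cofinality step.
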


\begin{proof}
    By the proof of Theorem \ref{theorem-excessive-diagram-blowup-representable}: 
    \begin{itemize}
        \item Passing to an open cover of $\E_{Y_0}X$, one can assume that $L\setminus0$ has a minimum, and thus is still a lattice. 
        \item The restricted diagram $(\E'_\Psi(i))_{0\in\Psi\subseteq\Pi}$ is obtained by applying Theorem \ref{theorem-excessive-diagram-blowup-representable}(\ref{theorem-excessive-diagram-blowup-representable:divisors-representable}) on the $(L\setminus0)$-diagram $(\E_{Y_0}Y_\ell)_{\ell\in L\setminus0}$. 
    \end{itemize}
    So the corollary follows from Theorem \ref{theorem-blowup-cofiber-comparison} applied on $(\E_{Y_0}Y_\ell)_{\ell\in L\setminus0}$. 
\end{proof}

\begin{remark}
    Similarly to Corollary \ref{corollary-blowup-cofiber-comparison-minimum}, one can describe the total cofiber of $(\E'_\Psi(i))_{\pi\in\Pi}$ for any chain $\pi\in\Pi$ as that of some diagram with final object mapped to $\Bl_{Y_\pi}X$; in fact, more generally, one can describe the total cofiber of $(\E'_\Psi(i))_{\Gamma\subseteq\Psi\subseteq\Pi}$ for any chain $\Gamma\subseteq\Pi$ as that of some diagram with final object mapped to $\E'_\Gamma(i|_\Gamma)$. We leave the details to the reader as this remark is not used in the paper. 
\end{remark}

\subsection{Pushout-blowups of excessive squares}
So far, we have finished developing the theory for symmetric monoidality of the Gysin map. For composition, we need a different blowup construction. 

\begin{definition}[pushout-blowup]\label{definition-pushout-blowup}
    For an excessive square of stacks
    $$\begin{tikzcd}
        W\ar[r]\ar[d]&Z\ar[d]\\
        Y\ar[r]&X
    \end{tikzcd}$$
    as in Definition \ref{definition-excessive-diagram}, its \emph{pushout-blowup} is the final stack $\Bl'$ along with
    \begin{itemize}
        \item three divisors $\E'_Y$, $\E'_Z$, and $\E'_W$, and
        \item an excessive map from
        $$\begin{tikzcd}
            (\E'_Y\cap\E'_Z)+\E'_W\ar[r]\ar[d]&\E'_Z+\E'_W\ar[d]\ar[dr]&\\
            \E'_Y+\E'_W\ar[r]\ar[rr,bend right]&\E'_Y+\E'_Z+\E'_W\ar[r]&\Bl'
        \end{tikzcd}$$
        to
        $$\begin{tikzcd}
            W\ar[r]\ar[d]&Z\ar[d]\ar[dr]&\\
            Y\ar[r]\ar[rr,bend right]&Y\sqcup_WZ\ar[r]&X
        \end{tikzcd}$$
    \end{itemize}
    We call $\E'_Y$, $\E'_Z$, and $\E'_W$ the \emph{strict exceptional divisors} over $Y$, $Z$, and $W$, respectively. Here, $\cap$ means taking fiber product over $\Bl'$, and the reason to use $(\E'_Y\cap\E'_Z)+\E'_W$ is that 
    $$(\E'_Y+\E'_W)\sqcup_{(\E'_Y\cap\E'_Z)+\E'_W}(\E'_Z+\E'_W)=\E'_Y+\E'_Z+\E'_W.$$
\end{definition}

\begin{remark}
    The reader might think that the pushout-blowup in Definition \ref{definition-pushout-blowup} is the blowup of the poset
    $$\begin{tikzcd}
        &Z\ar[d]\ar[dr]&\\
        Y\ar[r]\ar[rr,bend right]&Y\sqcup_WZ\ar[r]&X
    \end{tikzcd}$$
    in the sense of Definition \ref{definition-blowup-poset}; however, the author does not think so, at least at this point, although this is probably true in classical algebraic geometry, i.e.\ in the theory of \cite[\S 3.6]{mayeux2025multigradedprojschemes}. 
\end{remark}

\begin{remark}\label{remark-pushout-blowup-cube-characterization}
    It is easy to see that the datum of the excessive map in Definition \ref{definition-pushout-blowup} is equivalent to a map from
    $$\begin{tikzcd}
        \E'_Y\cap\E'_Z\cap\E'_W\ar[rr]\ar[rd]\ar[dd]&&\E'_Z\cap\E'_W\ar[rd]\ar[dd]&\\
        &\E'_Y\cap\E'_Z\ar[rr,crossing over]\ar[dd]&&\E'_Z\ar[dd]\\
        \E'_Y\cap\E'_W\ar[rr]\ar[rd]&&\E'_W\ar[rd]&\\
        &\E'_Y\ar[from=uu,crossing over]\ar[rr]&&\Bl'
    \end{tikzcd}$$
    to
    $$\begin{tikzcd}
        W\ar[rr]\ar[rd]\ar[dd]&&W\ar[rd]\ar[dd]&\\
        &W\ar[rr,crossing over]\ar[dd]&&Z\ar[dd]\\
        W\ar[rr]\ar[rd]&&W\ar[rd]&\\
        &Y\ar[from=uu,crossing over]\ar[rr]&&X
    \end{tikzcd}$$
    that induces surjections $\cI_{Y/X}|_{\Bl'}\twoheadrightarrow\cI_{(\E'_Y+\E'_W)/\Bl'}$, $\cI_{Z/X}|_{\Bl'}\twoheadrightarrow\cI_{(\E'_Z+\E'_W)/\Bl'}$, and $\cI_{(Y\sqcup_WZ)/X}|_{\Bl'}\twoheadrightarrow\cI_{(\E'_Y+\E'_Z+\E'_W)/\Bl'}$, or in other words, surjections on the total fibers (of structure sheaves) in the directions $\{Y,W\}$ (upper and lower faces), $\{Z,W\}$ (left and right faces), and $\{Y,Z,W\}$ (the whole cube). 
\end{remark}

We have to do some preparation before discussing representability of $\Bl'$. 

\begin{proposition}\label{proposition-blowup-edges-excessive-preserve-immersion}
    Let
    $$\begin{tikzcd}
        W\ar[r]\ar[d]&Z\ar[d]\\
        Y\ar[r]&X
    \end{tikzcd}$$
    be an excessive square. Then the canonical map $\Bl_WZ\to\Bl_YX$ is a closed immersion, and $\N_{\Bl_WZ/\Bl_YX}=\fib(\N_{Z/X}|_{\Bl_WZ}\to\N_{W/Y}|_{\E_WZ})(\E_WZ)$. Furthermore, if all four maps above are quasismooth closed immersions, then so is $\Bl_WZ\to\Bl_YX$. 
\end{proposition}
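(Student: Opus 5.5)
The plan is to identify $\Bl_WZ$ with an explicit closed substack of $\Bl_YX$, namely the ``strict transform'' of $Z$, and then to read off the conormal module from a chain of closed immersions. Throughout I work Zariski locally on $X$, say $X=\Spec(R)$, so that all objects are spectra of quotients of $R$.

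\emph{Step 1: the map exists.} Excessivity of the square (Definition \ref{definition-excessive-diagram}, or condition (\ref{excessive-diagram-characterization:two-downward-sets}) of Proposition \ref{excessive-diagram-characterization}) says that $\cO_X\to\cO_Y\times_{\cO_W}\cO_Z$ and $\cO_Z\to\cO_W$ are surjective with connective fibers. In particular, the induced map $\cI_{Y/X}|_Z\to\cI_{W/Z}$ is surjective. Hence the square is an excessive map of closed immersions from $(W\to Z)$ to $(Y\to X)$ in the sense of Definition \ref{definition-excessive-map-arrow}. By the functoriality of blowups in Remark \ref{functoriality-blowup-poset}, we obtain $\Bl_WZ\to\Bl_YX$, lying over $Z\to X$.

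\emph{Step 2: identify $\Bl_WZ$ inside $\Bl_YX$.} Put $Z'=Z\times_X\Bl_YX$, which is closed in $\Bl_YX$, and let $E=\E_YX$. By base change (Remark \ref{functoriality-blowup-poset}(\ref{functoriality-blowup-poset:base-change})), $\Bl_WZ=\Bl_{W'}Z'$ with $W'=W\times_Z Z'$. It is also natural to restrict $E$ to $Z'$, giving $E|_{Z'}=E\times_X Z$. I would use excessivity of the square to show two things: first, that $E|_{Z'}$ is a divisor of $Z'$; second, that it factors through $W'$, i.e.\ that $E\times_XZ\simeq E\times_YW$ as closed immersions into $Z'$. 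Granting this, Corollary \ref{blowup-subtract-divisor} gives $\Bl_{W'}Z'=\Bl_{W'-E}Z'$. The remaining task is to show that $W'-E$ is the whole of $Z'-E$, so that $\Bl_WZ=Z'-E|_{Z'}$. By Proposition \ref{equivalence-subtract-divisor}, $Z'-E|_{Z'}=\Bl_{E|_{Z'}}Z'$ is a closed immersion into $Z'$, hence into $\Bl_YX$. This generalizes Corollary \ref{total-transform-excessive}, which is the case $W=Y$.

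To verify these claims I would reduce, as in Corollary \ref{subtract-divisor-preserve-excessive}, to the generators of Proposition \ref{excess-diagram-generation}. All constructions involved are limit-preserving in the diagram, since they are fiber products, right adjoints ``${}-D$'', and the functor-of-points description of blowups. The generating squares are built from $R[x_k]$, that is, one coordinate vanishing on $Y$, $Z$, $W$, or none of them. For these squares everything is an explicit computation with \cite[Lemma 3.4]{tang2024slicingcriterionindsmoothring}. I expect this step to be the main obstacle: checking that the identification is compatible with limits of excessive squares, and not just true on the generators, is the delicate point in the derived setting.

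\emph{Step 3: normal bundle and quasismoothness.} The composite $\Bl_WZ\to Z'\to\Bl_YX$ gives a fiber sequence of conormal modules. The outer term is $\N_{Z'/\Bl_YX}=\N_{Z/X}|_{\Bl_WZ}$, by base change. The other term comes from the divisor subtraction in Step 2: removing $E|_{Z'}$ twists by the invertible module $\cO(\E_WZ)$, since $E|_{\Bl_WZ}=\E_WZ$. Combined with the piece $\N_{W/Y}|_{\E_WZ}$ supported on the exceptional divisor, this yields
$$\N_{\Bl_WZ/\Bl_YX}=\fib\bigl(\N_{Z/X}|_{\Bl_WZ}\to\N_{W/Y}|_{\E_WZ}\bigr)(\E_WZ).$$
I would again confirm the precise form of this formula on the generating squares.

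For quasismoothness, assume all four maps are quasismooth. Then $\N_{Z/X}$ and $\N_{W/Y}$ are vector bundles. The map $\N_{Z/X}|_W\to\N_{W/Y}$ is surjective: excessivity shows the conormal map is surjective on $\pi_0$, as in Remark \ref{remark-compare-conormal-surjective}. The fiber of a surjection from a vector bundle on $\Bl_WZ$ onto a vector bundle pushed forward from the divisor $\E_WZ$ is locally free, because pushing forward along a divisor has a two-term locally free resolution. Twisting by an invertible module keeps it locally free, so $\Bl_WZ\to\Bl_YX$ is quasismooth.
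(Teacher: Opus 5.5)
Your Step~2, which has to carry the closed-immersion claim, does not work.

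First, base change (Remark~\ref{functoriality-blowup-poset}(\ref{functoriality-blowup-poset:base-change})) gives $\Bl_{W'}Z'=\Bl_WZ\times_ZZ'=\Bl_WZ\times_X\Bl_YX$, not $\Bl_WZ$. Already classically, take $X=\bA^2$, $Y=W$ the origin and $Z$ a line through it. Then $\Bl_WZ=Z$, while $\Bl_WZ\times_X\Bl_YX$ is the total transform of $Z$, which contains the exceptional $\bP^1$.

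Second, the identification you aim for is empty. $E|_{Z'}$ is a divisor of $Z'$ itself, so Proposition~\ref{equivalence-subtract-divisor} gives $Z'-E|_{Z'}=\Bl_{E|_{Z'}}Z'=Z'$, which is the total transform again. Subtracting $E$ inside $\Bl_YX$ would need $E$ to factor through $Z'$, essentially $Y\subseteq Z$; this is exactly why Corollary~\ref{total-transform-excessive} is restricted to $W=Y$. The strict transform is genuinely not a divisor difference of $Z'$, even classically. For $X=\bA^3$, $Y=V(x,y)$, $Z=V(y,z)$ and $W$ the origin, on the chart $y=xt$ one has $\cI_{Z'}=(xt,z)$, $\cI_E=(x)$ and strict transform $V(t,z)$, but $(t,z)\cdot(x)=(xt,xz)\neq(xt,z)$.

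Your intermediate claim $E\times_XZ\simeq E\times_YW$ also fails in the derived setting. For $Y=Z=W=V(f)$ with $f$ a nonzerodivisor, $\Bl_YX=X$ and $E=Y$, but $Y\times_XY\not\simeq Y$.

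Finally, the reduction to the generators of Proposition~\ref{excess-diagram-generation} is not available. Unlike ``${}-D$'', blowing up does not convert limits of centers into limits. For the coordinate axes $Y_1,Y_2\subset\bA^2$, one has $\Bl_{Y_1}\bA^2\times_{\bA^2}\Bl_{Y_2}\bA^2=\bA^2$, but $\Bl_{Y_1\times_{\bA^2}Y_2}\bA^2=\Bl_0\bA^2$.

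The missing idea is simple. Locally, with $I=\fib(A\to B)$ and $J=\fib(C\to D)$, excessivity says exactly $I\twoheadrightarrow J$. Hence the graded map $I^\bullet\to J^\bullet$ is surjective. Since blowups are $\Proj$ of these graded algebras \cite[Theorem 4.7]{hekkingkhanrydh2025deformationnormalbundleblowups}, $\Bl_WZ\to\Bl_YX$ is a closed immersion, with no need to locate $\Bl_WZ$ inside $Z'$.

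Your Step~3 inherits the problem, and in any case it computes nothing. The transitivity sequence for $\Bl_WZ\to Z'\to\Bl_YX$ only exhibits $\N_{\Bl_WZ/\Bl_YX}$ as an extension of $\N_{\Bl_WZ/Z'}$ (never determined) by $\N_{Z/X}|_{\Bl_WZ}$. Moreover, the twist by $\cO(\E_WZ)$ in the formula applies to the whole fiber, not to one piece.

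The paper argues by Yoneda instead. Fix an $R$-point $C\to R$, $D\to R/K$ of $\Bl_WZ$. A map $\N_{\Bl_WZ/\Bl_YX}|_R\to M$ amounts to lifting the composites $A\to R$ and $B\to R/K$ to the square-zero extensions $R\oplus M[1]$ and $R/K\oplus M/KM[1]$, compatibly with the whole cube. Equivalently, it is a commutative square from $p\colon\N_{Z/X}|_R\to\N_{W/Y}|_{R/K}$ to $M\to M/KM$. Lemma~\ref{lemma-invertible-fiber-yoneda} turns this into a map $\fib(p)\to KM$, which is the stated formula.

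Your local-freeness argument in the quasismooth case is fine. However, quasismoothness also requires $\Bl_WZ\to\Bl_YX$ to be pseudocoherent, which you omit. It follows from pseudocoherence of $\Bl_YX\to X$, $\Bl_WZ\to Z$ and $Z\to X$.
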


\begin{proof}
    We may assume that all four schemes are affine, so we have ring surjections
    $$\begin{tikzcd}
        D&C\ar[l]\\
        B\ar[u]&A\ar[l]\ar[u]
    \end{tikzcd}$$
    and we set $I=\fib(A\to B)$ and $J=\fib(C\to D)$. In this case the excessivity condition is saying that $I\twoheadrightarrow J$, so the map of graded rings $I^\bullet\to J^\bullet$ is also surjective, and its $\Proj$ is a closed immersion, showing that $\Bl_WZ\to\Bl_YX$ is a closed immersion by \cite[Theorem 4.7]{hekkingkhanrydh2025deformationnormalbundleblowups}. To show the equality concerning normal bundles, we use a Yoneda argument. Let $R$ be a ring, $M$ be an $R$-module, and let
    $$\begin{tikzcd}
        C\ar[d]\ar[r]&D\ar[d]\\
        R\ar[r]&R/K
    \end{tikzcd}$$
    be an $R$-point of $\Bl_WZ$, so $J\twoheadrightarrow K$. Expanding the definitions, we can see that a map $\N_{\Bl_WZ/\Bl_YX}|_R\to M$ amounts to the datum of the following diagram
    $$\begin{tikzcd}
        A\ar[rr]\ar[rd]\ar[dd]&&B\ar[rd]\ar[dd]&\\
        &C\ar[rr,crossing over]&&D\ar[dd]\\
        R\oplus M[1]\ar[rr]\ar[rd]&&R/K\oplus M/KM[1]\ar[rd]&\\
        &R\ar[rr]\ar[from=uu,crossing over]&&K
    \end{tikzcd}$$
    where the direct sums denote trivial square-zero extensions. Note that this datum also amounts to a commutative diagram
    $$\begin{tikzcd}
        \N_{Z/X}|_R\ar[r]\ar[d]&\N_{W/Y}|_{R/K}\ar[d]\\
        M\ar[r]&M/KM
    \end{tikzcd}$$
    where the right vertical arrow is $R/K$-linear and the others are $R$-linear. Now the claim follows from Lemma \ref{lemma-invertible-fiber-yoneda} below. 

    Note that if $\N_{Z/X}$ and $\N_{W/Y}$ are locally free, since $\N_{Z/X}|_W\twoheadrightarrow\N_{W/Y}$, it is easy to see from what we just proved that $\N_{\Bl_WZ/\Bl_YX}$ is locally free. Therefore, to justify the last sentence, it remains to see that under its assumption, $\Bl_WZ\to\Bl_YX$ is a pseudocoherent map. This follows from the fact that $\Bl_YX\to X$, $\Bl_WZ\to Z$, and $Z\to X$ are all easily seen to be pseudocoherent. 
\end{proof}

\begin{lemma}\label{lemma-invertible-fiber-yoneda}
    Let $R$ be a ring and $K\to R$ be an invertible ideal. Let $N$ be an $R$-module, $Q$ be an $R/K$-module, and let $p\colon N\to Q$ be a surjection as $R$-modules. Then for any $R$-module $M$, the datum of a commutative diagram
    $$\begin{tikzcd}
        N\ar[r,"p"]\ar[d]&Q\ar[d]\\
        M\ar[r]&M/KM
    \end{tikzcd}$$
    where the right vertical arrow is $R/K$-linear and the others are $R$-linear, amounts to a map $\fib(p)\to KM$ through taking fibers.
\end{lemma}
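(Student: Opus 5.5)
We want to show that, for an $R$-module $M$, diagrams of the displayed shape correspond to maps $\fib(p)\to KM$. The plan is to compare both sides with a single anima and use that $K$ is invertible.

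The first step is to rewrite the datum. Because the right vertical arrow is $R/K$-linear, it is the same as an $R/K$-linear map $Q\to M/KM=M\otimes_RR/K$. Since $Q$ is an $R/K$-module, this is the same as an $R$-linear map $Q\to M/KM$, by the base change adjunction along $R\to R/K$, which is fully faithful on $R/K$-modules viewed over $R$. So the datum becomes a commutative square of $R$-modules, consisting of $g\colon N\to M$ and $h\colon Q\to M/KM$ with $h\circ p\simeq q\circ g$, where $q\colon M\to M/KM$ is the quotient. Such a square is exactly a map of arrows $(p\colon N\to Q)\to(q\colon M\to M/KM)$ in $\De(R)^{[1]}$.

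The second step is to use that, in a stable category, the functor $\fib\colon\De(R)^{[1]}\to\De(R)$ forgets too much. A map of arrows therefore induces a map $\fib(p)\to\fib(q)=KM$, using $\fib(q)=K\otimes_RM$ because $K$ is invertible, hence flat. This gives the comparison map. To see that it is an equivalence of anima, I would not work with arbitrary maps of arrows. Instead I would exploit the special shape of the target: $q$ is the unit $M\to M\otimes_RR/K$ of the base-change adjunction. For any $N\xrightarrow{p}Q$ with $Q$ an $R/K$-module, maps of arrows into $q$ are the same as pairs $(g,h)$, and $h$ is forced up to contractible choice to be the factorization of $q\circ g$ through $p$. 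The space of such data is then computed as a pullback
$$\Map(N,M)\times_{\Map(N,M/KM)}\Map(Q,M/KM).$$

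The third step identifies this pullback with $\Map(\fib(p),KM)$. Apply $\Map(-,M)$ and $\Map(-,M/KM)$ to the fiber sequence $\fib(p)\to N\to Q$. The key input is that $\Map_R(Q,M/KM)\simeq\Map_R(Q,\cofib(KM\to M))$, and that $\Map_R(Q,M)$ and the correction terms agree via the fiber sequence $KM\to M\to M/KM$. Concretely, the pullback above is the total fiber of the square
$$\begin{tikzcd}\Map(N,KM)\ar[r]\ar[d]&\Map(N,M)\ar[d]\\ \Map(Q,KM)\ar[r]&\Map(Q,M)\end{tikzcd}$$
up to rearrangement. This reduces to $\Map(\fib(p),KM)$ once one knows that $\Map_R(Q,KM)\to\Map_R(Q,M)$ becomes an equivalence after the relevant shift, that is, $\Map_R(Q,M/KM)\simeq\Map_R(Q,KM[1])$ on the part that matters.

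This last identification is the main obstacle. For $Q$ an $R/K$-module, I would use that $Q\otimes_R K\to Q$ is nullhomotopic, which holds for $R/K$-modules since $K\otimes_RR/K\to R/K$ is zero. I would also use that $K$ invertible gives a canonical splitting of $(M/KM)$ restricted to $R/K$-module maps, namely $\Map_R(Q,M/KM)\simeq\Map_{R/K}(Q,M/KM)$ and $\Map_R(Q,M)\simeq\Map_{R/K}(Q,\Map_R(R/K,M))$ with $\Map_R(R/K,M)=\fib(M\to K^{-1}M)=K^{-1}M/M[-1]$. I would verify this locally where $K=rR$, using the Koszul presentation $R/K=\cofib(r)$ to check the identifications by hand. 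The surjectivity of $p$ is only needed so that the resulting fiber is connective and the correspondence lands in ordinary (animated) modules rather than complexes.
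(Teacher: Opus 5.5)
There is a genuine gap, and it is in your first step. You drop the $R/K$-linearity of the right vertical arrow by claiming that restriction of scalars $\De(R/K)\to\De(R)$ is fully faithful. For animated modules this is false.

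Indeed $\Map_R(Q,M')\simeq\Map_{R/K}(Q\otimes_RR/K,M')$, and locally $R/K\otimes_RR/K\simeq R/K\oplus(K\otimes_RR/K)[1]$. Hence
$\Map_R(Q,M')\simeq\Map_{R/K}(Q,M')\times\Map_{R/K}(Q\otimes_RK[1],M')$.
The second factor is typically nontrivial once $M'=M/KM$ is not discrete, and this changes the answer. Take a prime $\ell$, $R=\ZZ$, $K=\ell\ZZ$, $p\colon\ZZ\to\ZZ/\ell$ and $M=\ZZ/\ell$, so that $M/KM\simeq\ZZ/\ell\oplus\ZZ/\ell[1]$.
\begin{itemize}
\item $\Map(\fib(p),KM)=\Map(\ell\ZZ,\ZZ/\ell)$ has $\pi_0\cong\ZZ/\ell$.
\item In $\pi_0\Map_\ZZ(\ZZ/\ell,M/KM)=\Hom(\ZZ/\ell,\ZZ/\ell)\oplus\mathrm{Ext}^1_\ZZ(\ZZ/\ell,\ZZ/\ell)$, the $\mathrm{Ext}^1$ summand restricts to zero on $N=\ZZ$.
\item The long exact sequence then shows that your pullback $\Map(N,M)\times_{\Map(N,M/KM)}\Map_R(Q,M/KM)$ has $\pi_0$ of order $\ell^2$.
\end{itemize}
So the anima of maps of arrows in $\De(R)^{[1]}$ is not $\Map(\fib(p),KM)$, and Steps 2--3 aim at a false statement.

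Correspondingly, your ``key input'' $\Map_R(Q,M/KM)\simeq\Map_R(Q,KM[1])$ is false as stated. In the same example with $Q=\ZZ/\ell$, the left side has $\pi_0=(\ZZ/\ell)^2$ and the right side has $\pi_0=\ZZ/\ell$. The justification you sketch again rests on the false equivalence $\Map_R(Q,M/KM)\simeq\Map_{R/K}(Q,M/KM)$. Your claim in Step 2 that $h$ is forced up to contractible choice is also wrong. For example, take $N=0$ and $Q=R/K[1]$: then $p$ is surjective, but the space of $h$ is $\Map_{R/K}(R/K[1],M/KM)$, which is not contractible in general.

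The correct identity is $\Map_{R/K}(Q,M/KM)\simeq\Map_R(Q,KM[1])$, and the adjunction you wrote down already gives it. Apply $\Map_R(Q,X)\simeq\Map_{R/K}(Q,\underline{\Hom}_R(R/K,X))$ with $X=KM[1]$, and use $\underline{\Hom}_R(R/K,KM[1])\simeq\fib(KM\to M)[1]\simeq M/KM$. You still need to check that this equivalence is $h\mapsto\partial\circ h$, where $\partial\colon M/KM\to KM[1]$ is the boundary map. Both sides turn colimits in $Q$ into limits, so this reduces to $Q=R/K$.

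With this, and with $\Map_R(N,M)\simeq\Map_R(N,M/KM)\times_{\Map_R(N,KM[1])}\{0\}$, the correct pullback $\Map_R(N,M)\times_{\Map_R(N,M/KM)}\Map_{R/K}(Q,M/KM)$ collapses to $\fib\bigl(\Map_R(Q,KM[1])\to\Map_R(N,KM[1])\bigr)\simeq\Map_R(\fib(p),KM)$. That direct computation would be a valid alternative to the paper's proof.

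The paper argues differently. It notes that both sides send colimits of triples $(N,Q,p)$ to limits, and checks the claim on the generators $(R,R/K)$ and $(R,0)$. The $R/K$-linearity enters there exactly where maps out of $(R,R/K)$ are identified with $M\simeq\Map_R(K,KM)$.
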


\begin{proof}
    The lemma is about mapping animas in the category of such $(N,Q,p)$. Since $(N,Q,p)\mapsto\fib
    (p)$ preserves colimits, it suffices to prove for a set of generators in the category, which is easy: note that $(R,R/K)$ and $(R,0)$ generate the category, and it is straightforward to verify the claim in these two cases. 
\end{proof}

\begin{remark}
    The closed immersion $\Bl_WZ\to\Bl_YX$ is the derived version
    of what is classically called the \emph{strict transform} of $Z$ in $\Bl_YX$. We will avoid this terminology, as in the derived setting we may have different choices of $W$ fitting into the diagram, and the strict transform will depend on $W$. 
\end{remark}

Fix a motivic setup $\cP$ as in Definition \ref{definition-motivic-setup} and a base scheme $S$. 

\begin{theorem}\label{pushout-blowup-description}
    For any excessive square in $\cP_S$
    $$\begin{tikzcd}
        W\ar[r]\ar[d]&Z\ar[d]\\
        Y\ar[r]&X
    \end{tikzcd}$$
    its pushout-blowup $\Bl'$ is representable in $\cP_S$, and so does the fiber product of every subset of $\{\E'_Y,\E'_Z,\E'_W\}$ over $\Bl'$. They can be described as follow: 
    \begin{enumerate}
        \item $\Bl'=\Bl_{\Bl_{\E_WY}(\E_ZX)_W}\Bl_{\Bl_WY}\Bl_ZX=\Bl_{\Bl_{\E_WZ}(\E_YX)_W}\Bl_{\Bl_WZ}\Bl_YX$.
        \item $\E'_Y=\Bl_{\E_{\E_WY}(\E_ZX)_W}\E_{\Bl_WY}\Bl_ZX=\Bl_{\Bl_{\E_WZ}(\E_YX)_W}\Bl_{\E_WZ}\E_YX$.
        \item $\E'_Z=\Bl_{\Bl_{\E_WY}(\E_ZX)_W}\Bl_{\E_WY}\E_ZX=\Bl_{\E_{\E_WZ}(\E_YX)_W}\E_{\Bl_WZ}\Bl_YX$. 
        \item $\E'_Y\cap\E'_Z=\Bl_{\E_{\E_WY}(\E_ZX)_W}\E_{\E_WY}\E_ZX=\Bl_{\E_{\E_WZ}(\E_YX)_W}\E_{\E_WZ}\E_YX$. 
        \item $\E'_W=\E_{\Bl_{\E_WY}(\E_ZX)_W}\Bl_{\Bl_WY}\Bl_ZX=\E_{\Bl_{\E_WZ}(\E_YX)_W}\Bl_{\Bl_WZ}\Bl_YX$
        \item $\E'_Y\cap\E'_W=\Bl_{\E_{\E_WY}(\E_ZX)_W}\E_{\E_WY}\E_ZX=\E_{\Bl_{\E_WZ}(\E_YX)_W}\Bl_{\E_WZ}\E_YX$.
        \item $\E'_Z\cap\E'_W=\E_{\Bl_{\E_WY}(\E_ZX)_W}\Bl_{\E_WY}\E_ZX=\Bl_{\E_{\E_WZ}(\E_YX)_W}\E_{\E_WZ}\E_YX$.
        \item $\E'_Y\cap\E'_Z\cap\E'_W=\E_{\E_{\E_WY}(\E_ZX)_W}\E_{\E_WY}\E_ZX=\E_{\E_{\E_WZ}(\E_YX)_W}\E_{\E_WZ}\E_YX$. 
    \end{enumerate}
\end{theorem}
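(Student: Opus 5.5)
By the symmetry of Definition \ref{definition-pushout-blowup} in $Y$ and $Z$, it suffices to prove the first expression in each of (1)--(8); the second follows by swapping $Y$ and $Z$. The plan is to build $\Bl'$ as an iterated blowup and then check the universal property stage by stage, as in the proof of Theorem \ref{theorem-excessive-diagram-blowup-representable}.

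The first step blows up $Z$. Set $X_1=\Bl_ZX$ with exceptional divisor $\E_ZX$. By Proposition \ref{proposition-blowup-edges-excessive-preserve-immersion}, applied to the excessive square $W\to Z$ over $Y\to X$, we get a closed immersion $\Bl_WY\to\Bl_ZX$ with quasismooth conormal. Likewise we get $\E_WY\to\E_ZX$, by restricting to exceptional divisors via Corollary \ref{total-transform-excessive}.

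The second step blows up $X_1$ along $\Bl_WY$, giving $X_2=\Bl_{\Bl_WY}\Bl_ZX$. In $X_2$ we have the strict transform $\Bl_{\E_WY}\E_ZX$ of $\E_ZX$, the exceptional divisor $\E_{\Bl_WY}\Bl_ZX$, and their intersection $\E_{\E_WY}\E_ZX$.

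The third step blows up the closed immersion denoted $\Bl_{\E_WY}(\E_ZX)_W$. I read it as the locus over $W$ inside the strict transform $\Bl_{\E_WY}\E_ZX$, namely the preimage of $W$. Because $\E_WY=\bP(\N_{W/Y})$ is a divisor in $\E_ZX|_W=\bP(\N_{Z/X}|_W)$, this locus is the blowup of $\bP(\N_{Z/X}|_W)$ along a divisor, hence is $\bP(\N_{Z/X}|_W)$ itself. It is quasismooth over $X_2$, so Definition \ref{definition-motivic-setup}(\ref{definition-motivic-setup:blowup}) keeps everything in $\cP_S$. The exceptional divisor of this last blowup is declared to be $\E'_W$. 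The strict transforms of $\E_{\Bl_WY}\Bl_ZX$ and of $\Bl_{\E_WY}\E_ZX$ are declared to be $\E'_Y$ and $\E'_Z$ respectively. Items (2)--(8) are then read off from Corollary \ref{total-transform-excessive} and Proposition \ref{proposition-blowup-edges-excessive-preserve-immersion}, which identify strict transforms and intersections of strict transforms with exceptional divisors as blowups and exceptional divisors of the corresponding restricted immersions. Membership in $\cP_S$ follows from Definition \ref{definition-motivic-setup}(\ref{definition-motivic-setup:projective}),(\ref{definition-motivic-setup:blowup}).

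The main obstacle is the universal property. I would check it through the cube characterization of Remark \ref{remark-pushout-blowup-cube-characterization}: an $R$-point of $\Bl'$ is a map of cubes inducing surjections on the total fibers in the directions $\{Y,W\}$, $\{Z,W\}$ and $\{Y,Z,W\}$. The strategy is to peel off divisors one at a time using Proposition \ref{equivalence-subtract-divisor}, Corollary \ref{blowup-subtract-divisor} and Lemma \ref{blowup-poset-subtract-divisor}.

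First, the $\{Z,W\}$ surjectivity says that $\cI_Z$ becomes the invertible ideal of $\E'_Z+\E'_W$. This gives a factorization through $\Bl_ZX$ with $\E_ZX|=\E'_Z+\E'_W$.

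Next, subtract this divisor. Corollary \ref{blowup-subtract-divisor} turns the $\{Y,W\}$ and total conditions into the condition that $\Bl_WY$ pulls back to $\E'_Y+\E'_W$ minus the overlap with $\E'_Z$. This yields the factorization through $X_2$.

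Finally, the remaining condition on $\E'_W$ gives the factorization through the last blowup. Conversely, the constructed triple satisfies the conditions, which can be checked Zariski locally where the divisors are cut out by single equations.

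The delicate point is matching the derived "$+$" bookkeeping of Proposition \ref{equivalence-subtract-divisor}, which does not commute with fiber products. For this I would reduce, via compact projective generators as in Proposition \ref{excess-diagram-generation}, to affine model squares with polynomial generators, where the identities become explicit computations with $R/ab$-type rings as in Corollary \ref{subtract-divisor-preserve-excessive}.
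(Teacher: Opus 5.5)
Your architecture agrees with the paper's: the same three blowups in the same order, checked through Remark~\ref{remark-pushout-blowup-cube-characterization}. However, the mechanism that proves the universal property is missing.

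After factoring through the first-stage cube (front face $\E_WY\to\E_ZX$ over $\Bl_WY\to\Bl_ZX$, degenerate back face $\E_WY\to(\E_ZX)_W$), you need all four left-to-right arrows to be excessive. That is, you need $\cI_{\Bl_WY/\Bl_ZX}|_{\Bl'}\twoheadrightarrow\cI_{\E'_Y/\Bl'}$ together with its base changes, so that the whole cube factors and not only the vertex $\Bl'$. The paper gets this by cancellation:
\begin{itemize}
\item On the target, the whole-cube total fiber is $\cI_{\Bl_WY/\Bl_ZX}\otimes\cI_{\E_ZX/\Bl_ZX}$, since $\E_WY=\Bl_WY\times_{\Bl_ZX}\E_ZX$ and the back face is degenerate.
\item On the source, it is $\cI_{\E'_Y/\Bl'}\otimes\cI_{(\E'_Z+\E'_W)/\Bl'}$.
\item The $\{Z,W\}$ condition identifies $\cI_{\E_ZX/\Bl_ZX}|_{\Bl'}$ with the invertible $\cI_{(\E'_Z+\E'_W)/\Bl'}$, so the $\{Y,Z,W\}$ condition reduces to the $\{Y\}$ condition.
\item Once the left-to-right arrows are divisors, the same cancellation turns the $\{Y,W\}$ face conditions into the $\{W\}$ condition. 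This is what gives the third factorization.
\end{itemize}

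Your step 2 does not supply this. Corollary~\ref{blowup-subtract-divisor} does not apply as stated, because $\E_ZX$ lies in the pullback of $Y\sqcup_WZ$, not of $Y$. Identifying $(Y\sqcup_WZ)|_{\Bl_ZX}-\E_ZX$ with $\Bl_WY$ would be a nontrivial derived claim that you do not prove. Step 2 also uses up the $\{Y,W\}$ condition, so your step 3's ``remaining condition on $\E'_W$'' has no source.

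The fallback via Proposition~\ref{excess-diagram-generation} does not help either. Blowups do not turn colimits of ring diagrams into limits; already $\Bl_{Y_1\times_XY_2}X\ne\Bl_{Y_1}X\times_X\Bl_{Y_2}X$. This is unlike the functor $Y\mapsto Y-D$ of Proposition~\ref{equivalence-subtract-divisor}, whose points are mere factorizations.

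Separately, you misread the third center. $\Bl_{\E_WY}(\E_ZX)_W$ is the strict transform of $(\E_ZX)_W$ inside $\Bl_{\E_WY}\E_ZX$. It is not the full preimage of $W$, which also contains $\E_{\E_WY}\E_ZX$.

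Moreover, $\E_WY=\bP(\N_{W/Y})\to\bP(\N_{Z/X}|_W)$ has codimension $\mathrm{rk}\,\N_{Z/X}-\mathrm{rk}\,\N_{W/Y}$, which need not be $1$. For the square defining $\D(Z\to Y\to X)$ in Construction~\ref{composite-deformation-space}, the kernel of $\N_{Z/X}|_W\twoheadrightarrow\N_{W/Y}$ (in the letters of the square) is $\N_{Y/X}|_Z$ (in the letters of the chain). Its rank is the codimension of $Y$ in $X$. So in the main application the center is a genuine blowup, not $\bP(\N_{Z/X}|_W)$, and your argument for its quasismoothness and membership in $\cP_S$ fails.

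The correct route is to apply Proposition~\ref{proposition-blowup-edges-excessive-preserve-immersion} to the excessive square $\E_WY\to(\E_ZX)_W$ over $\E_WY\to\E_ZX$. Then compose with the divisor $\Bl_{\E_WY}\E_ZX\to\Bl_{\Bl_WY}\Bl_ZX$.
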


\begin{proof}
    We use the equivalent formulation in Remark \ref{remark-pushout-blowup-cube-characterization}. Using the universal property of a single blowup, we see that the map there factors through
    $$\begin{tikzcd}
        \E_WY\ar[rr]\ar[rd]\ar[dd]&&(\E_ZX)_W\ar[rd]\ar[dd]&\\
        &\E_WY\ar[rr,crossing over]\ar[dd]&&\E_ZX\ar[dd]\\
        \E_WY\ar[rr]\ar[rd]&&(\E_ZX)_W\ar[rd]&\\
        &\Bl_WY\ar[from=uu,crossing over]\ar[rr]&&\Bl_ZX
    \end{tikzcd}$$
    inducing surjections on the total fibers of upper and lower faces, left and right faces, and the whole cube. The surjectivity condition on the whole cube is
    $$(\cI_{\Bl_WY/\Bl_ZX}\otimes\cI_{\E_ZX/\Bl_ZX})|_{\Bl'}\twoheadrightarrow\cI_{\E'_Y/\Bl'}\otimes\cI_{(\E'_Z+\E'_W)/\Bl'};$$
    by the surjectivity condition on the left and right faces, one can easily see that $\cI_{\E_ZX/\Bl_ZX}|_{\Bl'}=\cI_{(\E'_Z+\E'_W)/\Bl'}$ and is invertible, so the surjectivity condition on the whole cube implies the surjectivity condition in the direction $\{Y\}$, i.e.\ the left-to-right direction. Therefore, the map further factors through
    $$\begin{tikzcd}
        \E_{\E_WY}(\E_ZX)_W\ar[rr]\ar[rd]\ar[dd]&&\Bl_{\E_WY}(\E_ZX)_W\ar[rd]\ar[dd]&\\
        &\E_{\E_WY}\E_ZX\ar[rr,crossing over]\ar[dd]&&\Bl_{\E_WY}\E_ZX\ar[dd]\\
        \E_{\E_WY}(\E_ZX)_W\ar[rr]\ar[rd]&&\Bl_{\E_WY}(\E_ZX)_W\ar[rd]&\\
        &\E_{\Bl_WY}\Bl_ZX\ar[from=uu,crossing over]\ar[rr]&&\Bl_{\Bl_WY}\Bl_ZX
    \end{tikzcd}$$
    inducing surjections on the total fibers of left-to-right arrows, upper and lower faces, left and right faces, and the whole cube. By the same reason, since the left-to-right arrows are all divisors, the surjectivity condition on the total fibers of upper and lower faces implies the surjectivity condition in the direction $\{W\}$, i.e.\ the back-to-front direction. Therefore, the map further factors through
    $$\begin{tikzcd}[column sep=-5.4em]
        \E_{\E_{\E_WY}(\E_ZX)_W}\E_{\E_WY}\E_ZX\ar[rr]\ar[rd]\ar[dd]&&\E_{\Bl_{\E_WY}(\E_ZX)_W}\Bl_{\E_WY}\E_ZX\ar[rd]\ar[dd]&\\
        &\Bl_{\E_{\E_WY}(\E_ZX)_W}\E_{\E_WY}\E_ZX\ar[rr,crossing over]\ar[dd]&&\Bl_{\Bl_{\E_WY}(\E_ZX)_W}\Bl_{\E_WY}\E_ZX\ar[dd]\\
        \E_{\E_{\E_WY}(\E_ZX)_W}\E_{\Bl_WY}\Bl_ZX\ar[rr]\ar[rd]&&\E_{\Bl_{\E_WY}(\E_ZX)_W}\Bl_{\Bl_WY}\Bl_ZX\ar[rd]&\\
        &\Bl_{\E_{\E_WY}(\E_ZX)_W}\E_{\Bl_WY}\Bl_ZX\ar[from=uu,crossing over]\ar[rr]&&\Bl_{\Bl_{\E_WY}(\E_ZX)_W}\Bl_{\Bl_WY}\Bl_ZX
    \end{tikzcd}$$
    which is already a diagram of three divisors intersecting in an ambient stack, so:
    \begin{enumerate}
        \item $\Bl'=\Bl_{\Bl_{\E_WY}(\E_ZX)_W}\Bl_{\Bl_WY}\Bl_ZX$.
        \item $\E'_Y=\Bl_{\E_{\E_WY}(\E_ZX)_W}\E_{\Bl_WY}\Bl_ZX$.
        \item $\E'_Z=\Bl_{\Bl_{\E_WY}(\E_ZX)_W}\Bl_{\E_WY}\E_ZX$.
        \item $\E'_Y\cap\E'_Z=\Bl_{\E_{\E_WY}(\E_ZX)_W}\E_{\E_WY}\E_ZX$.
        \item $\E'_W=\E_{\Bl_{\E_WY}(\E_ZX)_W}\Bl_{\Bl_WY}\Bl_ZX$.
        \item $\E'_Y\cap\E'_W=\Bl_{\E_{\E_WY}(\E_ZX)_W}\E_{\E_WY}\E_ZX$.
        \item $\E'_Z\cap\E'_W=\E_{\Bl_{\E_WY}(\E_ZX)_W}\Bl_{\E_WY}\E_ZX$.
        \item $\E'_Y\cap\E'_Z\cap\E'_W=\E_{\E_{\E_WY}(\E_ZX)_W}\E_{\E_WY}\E_ZX$. 
    \end{enumerate}
    That they all remain in $\cP_S$ follows from Proposition \ref{proposition-blowup-edges-excessive-preserve-immersion}. The other half of the theorem follows by the symmetry of $Y$ and $Z$. 
\end{proof}

\subsection{Possible directions of further generalization}

\begin{remark}[more functoriality]
    The blowups introduced in this section should have more functoriality than in excessive maps, just as the $\Proj$ construction of graded rings has more functoriality than in certain graded maps: it is also functorial in certain maps that scale the grading. We plan to pursue such functoriality in future work, as we may need to use it in comparing the $\MS$ of \cite{annala-hoyois-iwasa-2023} and the $\logSH$ of \cite{binda-park-ostvaer-logsh}. It will probably involve the $\Proj$ construction of $\NN^n$-graded rings, i.e.\ an animated version of the theory in \cite{brenner-schroer-multi-proj} and \cite{mayeux2025multigradedprojschemes}. 
\end{remark}

\begin{remark}[pushout-blowup for cubes]
    One can define pushout-blowups for excessive $[1]^n$-diagrams similar to Definition \ref{definition-pushout-blowup}, with descriptions similar to Theorem \ref{pushout-blowup-description}, consisting of $2^n-1$ steps of blowups. This will be needed for composing $n$ Gysin maps, generalizing Theorem \ref{theorem-composition-gysin} that composes two Gysin maps. Ideally, the author expects a common generalization of such pushout-blowups and poset blowups as in Definition \ref{definition-blowup-poset}, but he has no concrete idea on this yet. 
\end{remark}

\printbibliography

\end{document}